\newcommand{\Ebb}{\mathbb{E}}
\newcommand{\Fbb}{\mathbb{F}}
\newcommand{\Rbb}{\mathbb{R}}
\newcommand{\CA}{\mathcal{A}}
\newcommand{\CF}{\mathcal{F}}
\newcommand{\CP}{\mathcal{P}}
\newtheorem{prp}{Proposition}[section]
\newtheorem{lem}[prp]{Lemma}
\newtheorem{ass}[prp]{Assumption}
\newtheorem{cor}[prp]{Corollary}
\theoremstyle{definition}
\newtheorem{dfn}[prp]{Definition}
\newtheorem{rmk}[prp]{Remark}
\newtheorem{notation}[prp]{Notation}
\numberwithin{equation}{section}
\newcommand{\pushright}[1]{\ifmeasuring@#1\else\omit\hfill$\displaystyle#1$\fi\ignorespaces}
\let\d\relax
\DeclareMathOperator{\d}{d\!}
\newcommand{\D}[1]{\,\textnormal{D}_{#1}}
\newcommand{\ind}{\mathbf{1}}
\renewcommand{\l}{\left}
\renewcommand{\r}{\right}
\newcommand\numberthis{\addtocounter{equation}{1}\tag{\theequation}}
\renewcommand{\theta}{\vartheta}
\renewcommand{\phi}{\varphi}
\renewcommand{\epsilon}{\varepsilon}
\DeclareMathOperator{\E}{\hat{\Ebb}}
\newcommand{\M}{\textnormal{M}_{\ast}}
\renewcommand{\H}{\textnormal{H}_{\ast}}
\newcommand{\F}{\textnormal{B}_b}
\renewcommand{\L}{\textnormal{L}_{\ast}}
\renewcommand{\S}{\textnormal{M}_b}
\newcommand{\C}{\textnormal{C}}
\newcommand{\Lip}{\textnormal{Lip}}
\begin{document}
    \begin{center}        
        {\LARGE Regularity of Solutions of Mean-Field $G$-SDEs}\vspace{.6cm}
        
        {\large Karl-Wilhelm Georg Bollweg, Thilo Meyer-Brandis}\vspace{.3cm}
        \section*{Abstract}
    	\begin{minipage}{12cm}
                We study regularity properties of the unique solution of a mean-field $G$-SDE. More precisely, we consider a mean-field $G$-SDE with square-integrable random initial condition and establish its first and second order Fréchet differentiability in the random initial condition and specify the $G$-SDEs of the respective Fréchet derivatives.
    	\end{minipage}
    \end{center}

    \section{Introduction}
    Mean-field stochastic differential equations have emerged as a powerful mathematical framework for modeling the dynamics of large populations of interacting agents subject to random perturbations. Their significance lies in their ability to capture both the individual stochastic behavior of agents and the macroscopic effects of collective interactions, making them essential tools in fields such as physics, biology, economics, and quantitative finance. In particular, mean-field SDEs serve as the probabilistic counterparts of mean-field control problems and mean-field games, where the system's evolution depends not only on the individual state but also on the distribution of the population. The pioneering work of Kac \cite{kac_foundations_1956} introduced the mean-field approach in the context of kinetic theory, while McKean \cite{mckean_class_1966} first formalized nonlinear Markov processes whose dynamics depend on their own law. Since then, mean-field SDEs have been extensively studied and generalized, with foundational contributions by Sznitman \cite{sznitman_topics_1991} on propagation of chaos and Lasry and Lions \cite{lasry_jeux_2006, lasry_jeux_2006-1} and Carmona and Delarue \cite{carmona_probabilistic_2018, carmona_probabilistic_2017} on mean-field games and controls. These equations also underpin numerous modern applications, from systemic risk modeling in finance to synchronisation phenomena in neuroscience, underscoring their broad relevance and mathematical richness.
    
    In the 2000s, Shige Peng introduced the theory of sublinear expectations and, as special case, the $G$-setting as framework to study Knightian uncertainty, cf. \cite{peng_multi-dimensional_2008, peng_g-expectation_2006, peng_new_2008, benth_g-expectation_2007}. There have been significant advancements in the theory of sublinear expectations and the $G$-setting in recent years. For instance, \cite{nutz_constructing_2013}, \cite{cohen_quasi-sure_2012}, \cite{hollender_levy-type_2016}, \cite{peng_filtration_2004} study the construction of sublinear expectations and their properties, and \cite{hu_g-levy_2021}, \cite{fadina_affine_2019}, \cite{neufeld_nonlinear_2016}, \cite{biagini_non-linear_2023} study different classes of stochastic processes in a sublinear expectation framework. 
    A sublinear expectation can be thought of the "worst" outcome within a class of models. The $G$-setting is used to quantify volatility uncertainty and consists of the so called $G$-Brownian motion and the $G$-expectation.
    Besides the probabilistic interpretation of quantifying Knightian uncertainty, there is a strong connection between sublinear expectations and fully non-linear partial differential equations. 
    This has been extensively studied in e.g. \cite{peng_bsde_2016}, \cite{hu_bsdes_2022}, \cite{li_doubly_2025}, \cite{hu_backward_2014}.
    
    Recently, the extension of mean-field theory to the $G$-expectation framework has received increased attention.
    First attempts in that direction can be found in \cite{sun_mean-field_2020} and \cite{sun_distribution_2023}. 
    In \cite{sun_mean-field_2020}, the author considers a SDE of the form
    \begin{align*}
        \d X_t &=  \E\l[ b\!\l(t,x,X_t\r) \r] \Big|_{x=X_t} \!\!\! \d t + \E\l[ h\!\l(t,x,X_t\r) \r] \Big|_{x=X_t} \!\!\! \d \l<B\r>_t +  \E\l[ h\!\l(t,x,X_t\r) \r] \Big|_{x=X_t} \!\!\! \d B_t, & 0\leq t\leq T, \\
        X_0 &= x,
    \end{align*}
    where $b, h, g:\,[0,T]\times \Rbb \times \Rbb \rightarrow \Rbb$, $B$ denotes a one-dimensional $G$-Brownian motion and $\E$ denotes the corresponding $G$-expectation.
    More details on the $G$-setting are provided in Section~\ref{sec:setting} or can be found in \cite{peng_nonlinear_2019}. 
    Let $\L^{2,d}$ denote the space of all $\Rbb^d$-valued random vectors $\xi$ with finite sublinear second moment $\E\l[ \l\|\xi\r\|^2 \r]<\infty$.
    For $\xi\in \L^{2,d}$, the functional $F_\xi$ defined by
    \begin{align*}
        F_\xi:&\, \textnormal{Lip}(\Rbb^d) \rightarrow \Rbb, & \phi \mapsto F_{\xi}(\phi)&:=\E\l[ \phi(\xi)\r]
    \end{align*}
    can be interpreted as the "sublinear distribution" of $\xi$. 
    In \cite{sun_distribution_2023}, the approach from \cite{sun_mean-field_2020} is extended to higher dimensions and to coefficients that depend on the sublinear distribution $F_{X_t}$ of the $d$-dimensional solution process $X_t$. 
    That is, the authors consider a SDE of the form
    \begin{align*}
        \d X_t &=  b\!\l(t,X_t,F_{X_t}\r) \d t + h\!\l(t,X_t,F_{X_t}\r) \d \l<B\r>_t +  g\!\l(t,X_t,F_{X_t}\r) \d B_t, & 0\leq t\leq T, \\
        X_0 &= x. 
    \end{align*}
    In \cite{sun_distribution_2023}, the authors define a space containing all sublinear distributions and endow it with a metric allowing them to define continuity conditions on the coefficients. However, the space of sublinear distributions is not a vector space and, thus, it does not have a natural notion of differentiability which limits the study of regularity properties of the solution. 
    
    In \cite{bollweg_mean-field_2025}, a novel formulation of a mean-field $G$-SDEs was introduced in which the coefficients depend on the solution process as random variable.
    More precisely, the authors consider a $G$-SDE of the form
    \begin{align*}
        \d X^{t,\xi}_s&= b\!\l(s,\omega, x, X^{t,\xi}_s\r)\Big|_{x=X^{t,\xi}_s}  \d s + h\!\l(s,\omega, x, X^{t,\xi}_s\r)\Big|_{x=X^{t,\xi}_s}  \d \l<B\r>_s
        \\&\quad 
        + g\!\l(s,\omega, x, X^{t,\xi}_s\r)\Big|_{x=X^{t,\xi}_s} \d B_s, & t\leq s\leq T, \\
        X^{t,\xi}_t&=\xi
        \numberthis\label{eq:MF-SDE-0}
    \end{align*}
    with coefficients defined on $[0,T]\times \Omega\times \Rbb^d\times \L^{2,d}$ and initial data $\xi\in\L^{2,d}$.
    This formulation generalises the formulations introduced in \cite{sun_mean-field_2020}, \cite{sun_distribution_2023} where the coefficients depend on the sublinear distribution.
    Moreover, $\L^{2,d}$ is a Banach space and, thus, the formulation in \cite{bollweg_mean-field_2025} comes with standard notions of differentiability.
    
    In this paper, we are interested in regularity properties of the solution of a mean-field SDE driven by $G$-Brownian motion. While the formulation in \cite{sun_distribution_2023} is closer to the classical formulation as it depends on the (sublinear) distribution of the solution process, we work with the formulation introduced in \cite{bollweg_mean-field_2025} since it allows us to consider Fréchet differentiable coefficients and study the Fréchet differentiability of the solution $X^{t,\xi}$ of \eqref{eq:MF-SDE-0} with respect to the random initial condition $\xi$.
    The Fréchet derivatives of $X^{t,\xi}$ capture how perturbations of the initial data propagate through the stochastic system and, thus, they are crucial for studying the sensitivity of the solution process with respect to changes in the initial data.
    This sensitivity analysis is a central tool for a wide range of applications.
    For instance, the Fréchet derivatives can be used to derive optimality conditions for stochastic control problems or establish recursive formulae for conditional expectations using the dynamic programming principle.     
    Further, the Fréchet derivatives of $X^{t,\xi}$ can be used in numerical approximations of $X^{t,\xi}$ as well as for (sub)gradient methods for optimisation problems.    
    
    For simplicity and conciseness, we use the following notation.
    \begin{notation}
        For a function $f$ on $[0,T]\times\Omega\times\Rbb^d\times\L^{2,d}$, define
        \begin{equation*}
            f\!\l(s,\omega,\eta,\xi\r):=f\!\l(s,\omega,\eta(\omega),\xi\r)=f\!\l(s,\omega,x,\xi\r)\Big|_{x=\eta(\omega)}
        \end{equation*}
        for any $0\leq s\leq T$, $\omega\in\Omega$ and $\xi,\eta\in\L^{2,d}$. 
        Often, we suppress the explicit dependence on $\omega$ and write $f\!\l(s,\eta,\xi \r)$ instead of $f\!\l(s,\omega,\eta,\xi\r)$.
    \end{notation}
    Thus, \eqref{eq:MF-SDE-0} can be written as
    \begin{align*}
        \d X^{t,\xi}_s
        &=
        b\!\l(s,X^{t,\xi}_s,X^{t,\xi}_s \r) \d s
        + h\!\l(s,X^{t,\xi}_s,X^{t,\xi}_s \r) \d \l<B\r>_s
        + g\!\l(s,X^{t,\xi}_s,X^{t,\xi}_s \r) \d B_s, 
        & t\leq s\leq T, \\ 
        X^{t,\xi}_t &= \xi. \numberthis\label{eq:MF-GSDE-xi-0}
    \end{align*}
    Under mild assumptions on the coefficients, it is shown in \cite{bollweg_mean-field_2025} that \eqref{eq:MF-GSDE-xi} admits a unique solution $X^{t,\xi}$, cf. Theorem 3.12 in \cite{bollweg_mean-field_2025}. 
    For $x\in \Rbb^d$, we associate to $X^{t,\xi}$ the $G$-SDE
    \begin{align*}
        \d X^{t,x,\xi}_s 
        &= 
        b\!\l(s,X^{t,x,\xi}_s,X^{t,\xi}_s \r) \d s
        + h\!\l(s,X^{t,x,\xi}_s,X^{t,\xi}_s \r) \d \l<B\r>_s
        + g\!\l(s,X^{t,x,\xi}_s,X^{t,\xi}_s \r) \d B_s, 
        & t\leq s\leq T, \\
        X^{t,x,\xi}_t &= x \numberthis\label{eq:MF-GSDE-x-0}
    \end{align*}
    with deterministic initial condition $x\in \Rbb^d$.
    The $G$-SDEs \eqref{eq:MF-GSDE-xi-0} and \eqref{eq:MF-GSDE-x-0} are closely connected.
    More precisely, if \eqref{eq:MF-GSDE-xi-0} and \eqref{eq:MF-GSDE-x-0} admit each a unique solution, then the process $X^{t,\xi}$ can be obtained from $X^{t,x,\xi}$ by evaluating at $x=\xi$ as formalised in Lemma~\ref{lem:concatenation-identity}. 
    This allows us to infer properties of $X^{t,\xi}$ from properties of $X^{t,x,\xi}$ using the aggregation property of the conditional sublinear expectation.
    Thus, many of our auxiliary results are formulated in terms of conditional sublinear expectations.
    
    Our main contribution is the derivation of first and second order Fréchet derivatives of the solution process as formalised in Propositions~\ref{prp:x-differentiability}, \ref{prp:xi-differentiability-xi}, \ref{prp:xi-differentiability-x} and \ref{prp:Dx2-differentiability}.
    For coefficients with Lipschitz and bounded Fréchet derivative, we establish the Fréchet differentiability of of $X^{t,x,\xi}$ and $X^{t,\xi}$. 
    Moreover, we characterise each of the Fréchet derivatives of $X^{t,x,\xi}$ and $X^{t,\xi}$ as the unique solution of a $G$-SDE.
    These results are in line with the results on classical mean-field SDEs, cf. \cite{buckdahn_mean-field_2014}.

    This paper is structured as follows.
    In Section~\ref{sec:setting}, we recall the $G$-framework before establishing preliminary results such as continuity and growth properties of the solution map $(x,\xi)\mapsto (X^{t,\xi},X^{t,x,\xi})$ in Section~\ref{sec:prelim}.
    Section~\ref{sec:derivative-1} is devoted to the first order Fréchet derivatives of the solution map in $x$ and $\xi$ while the second order derivatives are studied in Section~\ref{sec:derivative-2}.
    Finally, in Section~\ref{sec:comparison}, we show how the formulation in \cite{sun_distribution_2023} can be embedded into the formulation in \cite{bollweg_mean-field_2025} and develop a notion of differentiability for maps on the space of sublinear distributions.
    
    \begin{notation}
        Most of our results are obtained via approximations and the Grönwall inequality. For the sake of conciseness and readability, we use the symbol $\lesssim$ to denote proportionality in the following sense. 
        
        For two maps $f,g:\,\Theta \rightarrow \Rbb$ with domain $\Theta$, we define 
        \begin{equation*}
            f(\theta) \lesssim g(\theta) \qquad :\Longleftrightarrow \qquad \exists C\geq 1:\,\forall \theta\in \Theta:\, f(\theta)\leq C\,g(\theta).
        \end{equation*}
    \end{notation}

    \section{Setting}\label{sec:setting}
    In this section, we recall the generalized $G$-framework as introduced in Chapter~8 in \cite{peng_nonlinear_2019}. 
    Fix $n\geq 1$ and let $\Omega:=\textnormal{C}_0(\Rbb_+,\Rbb^n)$ denote the space of all continuous $\Rbb^n$-valued paths starting at the origin equipped with the topology of uniform convergence.
    Let $\CF$ denote the corresponding Borel $\sigma$-algebra.
    Moreover, let $\Fbb=(\CF_t)_{t\geq 0}$ denote the natural filtration generated by the coordinate mapping process $B:\,\Rbb_+\times\Omega\rightarrow\Rbb^n$ given by $B_t(\omega)=\omega(t)$.    
    
    Fix a convex and compact set $\Sigma\subseteq \mathbb{S}_+^n$ of symmetric non-negative definite $n\times n$-matrices and set
    \begin{equation*}
        \mathcal{A}^\Sigma:=\Big\{ \theta=(\theta_t)_{t\geq 0}\,:\,\theta \text{ is $\Sigma$-valued and $\mathbb{F}$-progressively measurable}\Big\}.
    \end{equation*}
    Let $P_0$ denote the Wiener measure on $(\Omega,\CF)$, and define
    \begin{equation*}
        \CP:=\l\{ P_0 \circ \l( \theta \bullet B\r)^{-1}\,:\, \theta \in\CA^{\Sigma}\r\},
    \end{equation*}
    where $\theta\bullet B:=\int_0^\cdot \theta_s \d B_s$ denotes the It\^o integral with respect to the stochastic basis $(\Omega,\CF,\Fbb,P_0)$.

    For $d\geq 1$ and a $\sigma$-algebra $\mathcal G\subseteq \CF$, let $\F^d(\mathcal G)$ denote the space of all bounded $\mathcal G$-measurable maps $\xi:\,\Omega\rightarrow \Rbb^d$.
    The set of probability measures $\CP$ induces an upper expectation on $\F(\CF):=\F^1(\CF)$, namely
    \begin{equation*}
        \E:\,\F(\CF) \rightarrow \Rbb,\qquad \xi\mapsto\E\!\l[ \xi \r]:=\sup_{P\in\CP} E_P\!\l[\xi\r],
    \end{equation*}
    where $E_P$ denotes the linear expectation with respect to $P$.
    The process $B$ is a $G$-Brownian motion with respect to $\E$ and $(\Omega,\F(\CF),\E)$ is a sublinear expectation space.
    For $p\geq 1$, define the norm
    \begin{equation*}
        \|\cdot\|_{\L^{p}}:\,\F^d(\CF) \rightarrow\Rbb_+,\qquad \xi\mapsto\|\xi\|_{\L^{p}}:=\E\!\l[\l\| \xi\r\|^p\r]^{\frac{1}{p}},
    \end{equation*}
    where $\l\|\cdot \r\|$ denotes the Euclidean norm on $\Rbb^d$ and let $\L^{p,d}(t)$ and $\L^{p,d}$ denote the completion of $\F^d(\CF_t)$ and $\F^d(\CF)$ with respect to $\|\cdot\|_{\L^{p}}$ for $t\geq 0$. We set $\L^p(t):=\L^{p,1}(t)$ and $\L^p:=\L^{p,1}$.   

    For $d\geq 1$ and $T>0$, let $\S^d(0,T)$ denote the space of all maps $X:\,[0,T]\times\Omega\rightarrow \Rbb^d$ of the form
    \begin{equation*}
        X=\sum_{k=0}^{m-1} \xi_k \ind_{\l[t_k,t_{k+1}\r)}
    \end{equation*}
    with $m\in\mathbb{N}$, $0=t_0<\ldots<t_m=T$, and $\xi_k\in\F^d(\CF_{t_k})$ for all $0\leq k\leq m-1$.
    For $p\geq 1$, define the norms
    \begin{align*}
        \|\cdot\|_{\M^{p}}:\,\S^d(0,T)\rightarrow\Rbb_+, \qquad \|X\|_{\M^{p}}&:= \l(\int_0^T \E\!\l[ \l\| X_s\r\|^p  \r] \d s\r)^{\frac{1}{p}}, \\
        \|\cdot\|_{\H^{p}}:\,\S^d(0,T)\rightarrow\Rbb_+,\qquad \|X\|_{\H^{p}}&:= \E\!\l[ \sup_{0\leq s\leq T} \l\| X_s\r\|^p  \r] ^{\frac{1}{p}},
    \end{align*}
    and let $\M^{p,d}(0,T)$ and $\H^{p,d}(0,T)$ denote the completion of $\S^d(0,T)$ with respect to $\|\cdot\|_{\M^{p}}$ and $\|\cdot\|_{\H^{p}}$ respectively.
    Clearly, $\H^{p,d}(0,T)\subseteq \M^{p,d}(0,T)$, and we set $\M^p(0,T):=\M^{p,1}(0,T)$, $\H^p(0,T):=\H^{p,1}(0,T)$.

    Set $\S(0,T):=\S^1(0,T)$ and let $B^i$ denote the $i$-th component of $B$ for $1\leq i\leq n$. Define the map $\mathcal I_i:\,\S(0,T) \rightarrow \L^2(T)$ by
    \begin{equation*}
        \mathcal I_i(X):= \int_0^T X_s \d B^i_s := \sum_{k=0}^{m-1} \xi_k \l( B^i_{t_{k+1}} - B^i_{t_k}\r)
    \end{equation*}
    for each
    \begin{equation*}
        X = \sum_{k=0}^{m-1} \xi_k \ind_{[t_k,t_{k+1})}.
    \end{equation*}
    The map $\mathcal I_i$ is linear and continuous with respect to $\l\|\cdot\r\|_{\M^2}$ and, thus, can be uniquely continuously extended to $\M^2(0,T)$.
    For $0\leq t\leq s\leq T$ and $X\in \M^2(0,T)$, define
    \begin{equation*}
        \int_t^s X_u \d B^i_u := \mathcal I_i(X\ind_{[t,s)}).
    \end{equation*}
    The quadratic variation of $B$ is a map $\l<B\r>:\,\Rbb_+\times \Omega \rightarrow \mathbb S^n_+$ defined componentwise by
    \begin{equation*}
        \l<B^i,B^j\r>_t := B^i_t B^j_t - \int_0^t B^i \d B^j_s - \int_0^t B^j \d B^i_s, \qquad t\geq 0
    \end{equation*}
    for $1\leq i,j\leq n$. For $1\leq i,j\leq n$, define the map $\mathcal Q_{ij}:\,\S(0,T)\rightarrow \L^1(T)$ by
    \begin{equation*}
        \mathcal Q_{ij}(X):= \int_0^T X_s \d \l<B^i,B^j\r>_s := \sum_{k=0}^{m-1} \xi_k \l( \l<B^i,B^j\r>_{t_{k+1}} - \l<B^i,B^j\r>_{t_k}\r)
    \end{equation*}
    for each
    \begin{equation*}
        X = \sum_{k=0}^{m-1} \xi_k \ind_{[t_k,t_{k+1})}.
    \end{equation*}
    The map $\mathcal Q_{ij}$ is linear and continuous with respect to $\l\| \cdot \r\|_{\M^1}$ and, thus, can be uniquely continuously extended to $\M^1(0,T)$.
    For $0\leq t\leq s\leq T$ and $X\in \M^1(0,T)$, define
    \begin{equation*}
        \int_t^s X_u \d \l<B^i,B^j\r>_u := \mathcal Q_{ij}(X\ind_{[t,s)}).
    \end{equation*}

    Let $\beta_k, \alpha_{kij}\in \M^1(0,T)$ and $\gamma_{ki} \in \M^2(0,T)$ for $1\leq k\leq d$, $1\leq i,j \leq n$. We say that $X\in \M^{1,d}(0,T)$ satisfies
    \begin{align*}
        \d X_s &= \beta(s) \d s + \alpha(s) \d \l<B\r>_s + \gamma(s) \d B_s, \qquad t\leq s\leq T,
    \end{align*}
    if the components $X^k$, $1\leq k\leq d$, satisfy 
    \begin{equation*}
        X^k_s - X^k_t = \int_t^s \beta_k(u) \d u + \sum_{i,j=1}^n \int_t^s \alpha_{kij}(u) \d\l<B^i,B^j\r>_u + \sum_{i=1}^n \int_t^s \gamma_{ki}(u) \d B^i_u
    \end{equation*}
    quasi-surely for all $t\leq s\leq T$. 
    
    For a $G$-SDE with initial condition $X_t=\xi$, we are not interested in the behavior on $0\leq s< t$. Thus, we reduce our attention to the space
    \begin{align*}
        \H^{p,d}(t,T)&:=\l\{ X\in \H^{p,d}(0,T)\,:\, \E\l[ \sup_{0\leq s < t}\l\|X_s\r\|^p\r]=0\r\}.
    \end{align*}    
    We say that the $G$-SDE
    \begin{align}
        \d X_s &= b\!\l(s,X_s\r) \d s + h\!\l(s,X_s\r) \d \l<B\r>_s + g\!\l(s,X_s\r) \d B_s, & t\leq s\leq T, \label{eq:GSDE}\\
        X_t &= \xi_t \label{eq:initial}
    \end{align}
    admits a unique solution $X\in\H^{2,d}(t,T)$ if there exists a $X\in\H^{2,d}(t,T)$ that satisfies \eqref{eq:GSDE} with $X_t=\xi$ quasi-surely and, for any $X,Y\in \H^{2,d}(t,T)$ that satisfy \eqref{eq:GSDE} with $X_t=Y_t=\xi$ quasi-surely, we have $\l\|X-Y\r\|_{\H^2}=0$.

    \section{Preliminary Results}\label{sec:prelim}
    
    In this section, we establish growth and continuity properties of the solution map under the following assumptions on the coefficients.
    \begin{ass}\label{ass:1-lipschitz}
        Let $b:\, [0,T]\times\Omega\times\Rbb^d\times\L^{2,d}\rightarrow\Rbb^d$, $h:\, [0,T]\times\Omega\times\Rbb^d\times\L^{2,d}\rightarrow\Rbb^{d\times n\times n}$, and $g:\, [0,T]\times\Omega\times\Rbb^d\times\L^{2,d}\rightarrow\Rbb^{d\times n}$ be such that the following holds for all components $f=b_k,h_{kij},g_{ki}$, $1\leq i,j\leq n$, $1\leq k\leq d$.

        \begin{enumerate}
            \item 
            We have $f(\cdot,x,\xi)\ind{[s,T]}\in\M^2(0,T)$ for all $x\in\Rbb^d$, $\xi\in\F^d(\CF_s)$ and $0\leq s\leq T$.
            
            \item 
            There exists a $q_0$-integrable $\alpha_0:\,[0,T]\rightarrow [1,\infty)$ with $q_0\geq 2$ such that
            \begin{align*}
                \l|f\!\l(s,\omega, x,\xi\r) - f\!\l(s,\omega, y,\eta\r)\r|
                &\leq \alpha_0\!\l(s\r) \l( \l\|x-y\r\| +\l\|\xi-\eta\r\|_{\L^{2}}\r)
            \end{align*}
            for all $x,y\in\Rbb^d$, $\xi,\eta\in\L^{2,d}$, $0\leq s\leq T$ and $\omega\in\Omega$.
        \end{enumerate}
    \end{ass}
    For convenience, let us define the set of coefficients
    \begin{align*}
        F:=\l\{ b_k, h_{kij}, g_{ki}\,:\,1\leq k\leq d,\; 1\leq i,j\leq n\r\}.
    \end{align*}
    
    \begin{cor}\label{cor:growth}
        If Assumption~\ref{ass:1-lipschitz} is satisfied, then the following holds for all components $f=b_k,h_{kij},g_{ki}$, $1\leq i,j\leq n$, $1\leq k\leq d$. 
        
        There exists an integrable $\kappa:\,[0,T] \rightarrow [1,\infty)$ and a process $K\in \M^1(0,T)$ such that
        \begin{equation*}
            \l| f\l(s,\omega,x,\xi\r) \r|^2 
            \leq \kappa\!\l(s\r)\l( \l\|x\r\|^2 + \l\|\xi\r\|_{\L^{2}}^2\r) + K_s(\omega)
        \end{equation*}
        for all $x\in\Rbb^d$, $\xi\in\L^{2,d}$, $0\leq s\leq T$ and $\omega\in\Omega$.
    \end{cor}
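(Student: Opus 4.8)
The plan is to obtain the quadratic growth bound directly from the Lipschitz estimate in Assumption~\ref{ass:1-lipschitz}(2) by comparing $f(s,\omega,x,\xi)$ with its value $f(s,\omega,0,0)$ at the origin, and then to absorb the resulting ``constant'' process $f(\cdot,0,0)$ into $K$ using Assumption~\ref{ass:1-lipschitz}(1).

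Fix a component $f\in F$. First I would apply Assumption~\ref{ass:1-lipschitz}(2) with $y=0\in\Rbb^d$ and $\eta=0\in\L^{2,d}$ together with the triangle inequality to get
\begin{equation*}
    \l|f\!\l(s,\omega,x,\xi\r)\r| \leq \l|f\!\l(s,\omega,0,0\r)\r| + \alpha_0\!\l(s\r)\l(\l\|x\r\| + \l\|\xi\r\|_{\L^{2}}\r)
\end{equation*}
for all $x\in\Rbb^d$, $\xi\in\L^{2,d}$, $0\leq s\leq T$ and $\omega\in\Omega$. Squaring and applying the elementary inequality $(a+b)^2\leq 2a^2+2b^2$ twice --- once to split off the term $|f(s,\omega,0,0)|$ and once inside $\l\|x\r\|+\l\|\xi\r\|_{\L^{2}}$ --- gives
\begin{equation*}
    \l|f\!\l(s,\omega,x,\xi\r)\r|^2 \leq 4\,\alpha_0\!\l(s\r)^2\l(\l\|x\r\|^2 + \l\|\xi\r\|_{\L^{2}}^2\r) + 2\,\l|f\!\l(s,\omega,0,0\r)\r|^2.
\end{equation*}
So it suffices to take $\kappa(s):=4\,\alpha_0(s)^2$ and $K_s(\omega):=2\,|f(s,\omega,0,0)|^2$. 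Since $\alpha_0$ takes values in $[1,\infty)$ we have $\kappa\geq 4\geq 1$; and since $\alpha_0$ is $q_0$-integrable with $q_0\geq 2$ on the finite interval $[0,T]$, Hölder's inequality (equivalently, the inclusion $L^{q_0}([0,T])\subseteq L^{2}([0,T])$) gives $\alpha_0\in L^{2}([0,T])$, whence $\kappa$ is integrable.

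The only point needing a little care --- and the main, albeit routine, obstacle --- is to verify $K\in\M^1(0,T)$. By Assumption~\ref{ass:1-lipschitz}(1) applied with $s=0$, $x=0$ and $\xi=0\in\F^d(\CF_0)$ we have $f(\cdot,0,0)\in\M^2(0,T)$; I would then use that the squaring map $Y\mapsto|Y|^2$ carries $\M^2(0,T)$ into $\M^1(0,T)$ continuously. Indeed, it sends a step process to a step process, and for step processes $Y,Z$ (hence, by density, for all $Y,Z\in\M^2(0,T)$)
\begin{equation*}
    \bigl\|\,|Y|^2-|Z|^2\,\bigr\|_{\M^1} \leq \int_0^T \E\!\l[\,|Y_s-Z_s|\,\bigl(|Y_s|+|Z_s|\bigr)\,\r]\d s \leq \|Y-Z\|_{\M^2}\,\bigl(\|Y\|_{\M^2}+\|Z\|_{\M^2}\bigr)
\end{equation*}
by the Cauchy--Schwarz inequality for the sublinear expectation $\E$ and then for the Lebesgue integral in $s$. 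Applying this to $f(\cdot,0,0)$ shows $|f(\cdot,0,0)|^2\in\M^1(0,T)$, so $K\in\M^1(0,T)$, and the displayed bound is exactly the claim. Finally, a single pair $(\kappa,K)$ valid for every $f\in F$ simultaneously is obtained by taking the pointwise maximum, respectively the sum, over the finitely many components.
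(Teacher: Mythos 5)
Your proof is correct and follows essentially the same route as the paper's: compare $f(s,\omega,x,\xi)$ with $f(s,\omega,0,0)$ via the Lipschitz estimate, square, and set $\kappa:=4\alpha_0^2$ and $K:=2|f(\cdot,0,0)|^2$. The only difference is that you spell out in detail why $|f(\cdot,0,0)|^2\in\M^1(0,T)$ via the continuity of the squaring map $\M^2\to\M^1$, a step the paper's proof simply asserts from Assumption~\ref{ass:1-lipschitz}.1.
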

    \begin{proof}
        The continuity condition in Assumption~\ref{ass:1-lipschitz}.2 implies
        \begin{align*}
            \l| f\!\l(s,\omega,x,\xi\r)\r|^2
            &\leq 
            2 \l|f\!\l(s,\omega, x,\xi\r) - f\!\l(s,\omega, 0,0\r)\r|^2 + 2 \l|f\!\l(s,\omega, 0,0\r)\r|^2 
            \\&\leq 
            4 \alpha_0\!\l(s\r)^2 \l( \l\|x\r\|^2 +\l\|\xi\r\|_{\L^{2}}^2\r) + 2 \l|f\!\l(s,\omega, 0,0\r)\r|^2,
        \end{align*}
        and, clearly, $\kappa:=4\alpha_0^2\geq 1$ is integrable.
        
        Finally, Assumption~\ref{ass:1-lipschitz}.1 implies that $K:=\l| f\!\l(\cdot,0,0\r)\r|^2\in\M^1(0,T)$, where $0$ denotes the origin in $\Rbb^d\subseteq \L^{2,d}(0)$.
    \end{proof}

    Thus, we conclude that Assumption~\ref{ass:1-lipschitz} is stronger than Assumption~3.1 in \cite{bollweg_mean-field_2025} and, thus, Theorem~3.12 in \cite{bollweg_mean-field_2025} immediately yields the existence of unique solutions.

    \begin{prp}
        If Assumption~\ref{ass:1-lipschitz} is satisfied, then the $G$-SDEs
        \begin{align*}
            \d X^{t,\xi}_s
            &=
            b\!\l(s,X^{t,\xi}_s,X^{t,\xi}_s \r) \d s
            + h\!\l(s,X^{t,\xi}_s,X^{t,\xi}_s \r) \d \l<B\r>_s
            + g\!\l(s,X^{t,\xi}_s,X^{t,\xi}_s \r) \d B_s, 
            & t\leq s\leq T, \\ 
            X^{t,\xi}_t &= \xi. \numberthis\label{eq:MF-GSDE-xi}
            \\
            \d X^{t,x,\xi}_s 
            &= 
            b\!\l(s,X^{t,x,\xi}_s,X^{t,\xi}_s \r) \d s
            + h\!\l(s,X^{t,x,\xi}_s,X^{t,\xi}_s \r) \d \l<B\r>_s
            + g\!\l(s,X^{t,x,\xi}_s,X^{t,\xi}_s \r) \d B_s, 
            & t\leq s\leq T, \\
            X^{t,x,\xi}_t &= x \numberthis\label{eq:MF-GSDE-x}
        \end{align*}
        admit unique solutions $X^{t,\xi}, X^{t,x,\xi} \in\H^{2,d}(t,T)$.
    \end{prp}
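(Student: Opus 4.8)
The plan is to dispatch the two equations in turn, each by reduction to an existing well-posedness result.

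Equation \eqref{eq:MF-GSDE-xi} is exactly of the mean-field type studied in \cite{bollweg_mean-field_2025}, so I would obtain its well-posedness directly from Theorem~3.12 there. The point is simply that Assumption~\ref{ass:1-lipschitz} is stronger than the hypotheses of that theorem: Assumption~\ref{ass:1-lipschitz}.2 is the Lipschitz bound in the spatial and $\L^{2}$-arguments required there, Assumption~\ref{ass:1-lipschitz}.1 supplies the $\M^2(0,T)$-membership of the coefficients, and the linear-growth hypothesis is precisely the content of Corollary~\ref{cor:growth}. Hence Theorem~3.12 of \cite{bollweg_mean-field_2025} applies and gives a unique $X^{t,\xi}\in\H^{2,d}(t,T)$.

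For \eqref{eq:MF-GSDE-x}, I would treat $X^{t,\xi}$ as now fixed and read the equation as an ordinary (non-mean-field) $G$-SDE in the unknown $X^{t,x,\xi}$ with coefficients $\bar b(s,\omega,y):=b\!\l(s,\omega,y,X^{t,\xi}_s\r)$ and, analogously, $\bar h,\bar g$. One then checks that $\bar b,\bar h,\bar g$ satisfy the hypotheses of a standard $G$-SDE existence/uniqueness theorem. The spatial Lipschitz bound $\l|\bar b(s,\omega,y)-\bar b(s,\omega,z)\r|\leq\alpha_0(s)\l\|y-z\r\|$ is immediate from Assumption~\ref{ass:1-lipschitz}.2, and the growth bound follows from Corollary~\ref{cor:growth}: the extra term $\kappa(s)\l\|X^{t,\xi}_s\r\|_{\L^{2}}^2$ is harmless because $\sup_{t\leq s\leq T}\l\|X^{t,\xi}_s\r\|_{\L^{2}}^2<\infty$ (since $X^{t,\xi}\in\H^{2,d}(t,T)$), so it can be absorbed into a new $\M^1(0,T)$-term. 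The step that needs genuine work is $\bar b(\cdot,y)\ind_{[t,T]}\in\M^2(0,T)$ for each $y\in\Rbb^d$: I would approximate $X^{t,\xi}\in\H^{2,d}(t,T)\subseteq\M^{2,d}(0,T)$ by simple processes $X^{(m)}\in\S^d(0,T)$, observe that the frozen-at-$X^{(m)}$ coefficient lies in $\M^2(0,T)$ by applying Assumption~\ref{ass:1-lipschitz}.1 on each constancy interval of $X^{(m)}$, and then pass to the limit via the bound $\E\!\l[\l|b(s,\cdot,y,X^{(m)}_s)-b(s,\cdot,y,X^{t,\xi}_s)\r|^2\r]\leq\alpha_0(s)^2\l\|X^{(m)}_s-X^{t,\xi}_s\r\|_{\L^{2}}^2$ (from Assumption~\ref{ass:1-lipschitz}.2), Hölder's inequality in the time variable — needed because $\alpha_0$ is only $q_0$-integrable, not bounded — and completeness of $\M^2(0,T)$. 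With $\bar b,\bar h,\bar g$ in hand, the standard $G$-SDE well-posedness result (e.g.\ Theorem~3.12 of \cite{bollweg_mean-field_2025} applied to $\bar b,\bar h,\bar g$ with the deterministic initial datum $x$, or the $G$-SDE theory of \cite{peng_nonlinear_2019}) yields a unique $X^{t,x,\xi}\in\H^{2,d}(t,T)$.

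The only genuinely non-formal ingredient is this last claim — that substituting an $\H^{2,d}(t,T)$-process into the last argument of the coefficients still produces admissible $\M^2(0,T)$-integrands; everything else is either a restatement of Corollary~\ref{cor:growth} or a direct appeal to a quoted theorem, so I expect the approximation argument for the frozen coefficients to be the main (and only real) obstacle. If Theorem~3.12 of \cite{bollweg_mean-field_2025} is stated broadly enough to cover equations of the form \eqref{eq:MF-GSDE-x} directly, the second part collapses to the same one-line argument as the first.
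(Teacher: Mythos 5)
Your proof is correct and, for \eqref{eq:MF-GSDE-xi}, coincides with the paper's (implicit) argument: the proposition carries no separate proof, and the sentence preceding it simply observes that Assumption~\ref{ass:1-lipschitz} together with Corollary~\ref{cor:growth} is stronger than the hypotheses of Theorem~3.12 in \cite{bollweg_mean-field_2025}, which "immediately yields the existence of unique solutions" — plural, i.e.\ the paper treats both $X^{t,\xi}$ and $X^{t,x,\xi}$ as covered by that single citation. Where you deviate is in giving \eqref{eq:MF-GSDE-x} its own treatment: freeze $X^{t,\xi}\in\H^{2,d}(t,T)$, reduce to a standard non-mean-field $G$-SDE in $X^{t,x,\xi}$, and verify the $\M^2(0,T)$-membership of the frozen coefficients via simple-process approximation using the Lipschitz bound in the $\L^2$-argument from Assumption~\ref{ass:1-lipschitz}.2. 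This is a valid alternative decomposition; it essentially reconstructs, from scratch, the bookkeeping that the paper delegates to \cite{bollweg_mean-field_2025} — in particular to Corollary~3.4 there, which the paper later invokes explicitly in the proof of Lemma~\ref{lem:coefficients-differentiable-1} for exactly this kind of $\M^p$-membership step (substituting an $\H^{2,d}$-process into a coefficient argument). What the paper's route buys is brevity; what yours buys is self-containedness and an explicit record of where the $q_0$-integrability of $\alpha_0$ is used. Your closing hedge that the second part "collapses" if Theorem~3.12 is stated broadly enough is exactly the stance the paper takes.
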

    
    In particular, we deduce that the solution map
    \begin{equation*}
        \Rbb^d\times \L^{2,d}(t) \rightarrow \H^{2,d}(t,T) \times \H^{2,d}(t,T),
        \qquad
        (x,\xi) \mapsto (X^{t,x,\xi},X^{t,\xi})
    \end{equation*}
    is well-defined. Further, Corollary~\ref{cor:growth} implies that the solution map is of linear growth. More precisely, we have the following growth properties.

    \begin{lem}\label{lem:H2-bound-xi}
        If Assumption~\ref{ass:1-lipschitz} is satisfied, then we have
        \begin{equation*}
            \E\l[ \sup_{t\leq w\leq T} \l\| X^{t,\xi}_w\r\|^2 \r]  \lesssim 1+\l\| \xi \r\|_{\L^2}^2
        \end{equation*}
        for all $0\leq t\leq T$ and $\xi\in\L^{2,d}(t)$.
    \end{lem}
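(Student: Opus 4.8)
The plan is to set up a Grönwall estimate for the nondecreasing function
$\phi(w):=\E\!\big[\sup_{t\leq u\leq w}\|X^{t,\xi}_u\|^2\big]$, $w\in[t,T]$, which is finite (already at $w=T$) because $X^{t,\xi}\in\H^{2,d}(t,T)$. Writing the solution of \eqref{eq:MF-GSDE-xi} in integral form, using $\|a+b+c+d\|^2\leq 4(\|a\|^2+\|b\|^2+\|c\|^2+\|d\|^2)$ together with the subadditivity of $\E$, I would bound $\phi(w)$ by $4\|\xi\|_{\L^2}^2$ plus four times the supremum-in-time $\E$-norms of the drift integral, the $\d\langle B\rangle$-integral and the $\d B$-integral of the coefficients evaluated along $X^{t,\xi}$.

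For the drift term, Cauchy–Schwarz in time gives $\sup_{t\leq u\leq w}\|\int_t^u b(s,X^{t,\xi}_s,X^{t,\xi}_s)\,\d s\|^2\leq T\int_t^w\|b(s,X^{t,\xi}_s,X^{t,\xi}_s)\|^2\,\d s$, and interchanging $\E$ with $\int_t^w\d s$ (Tonelli for the sublinear expectation) reduces matters to $\int_t^w\E[\|b(s,X^{t,\xi}_s,X^{t,\xi}_s)\|^2]\,\d s$. Here Corollary~\ref{cor:growth}, applied componentwise, yields $\|b(s,\omega,x,\eta)\|^2\leq\kappa(s)(\|x\|^2+\|\eta\|_{\L^2}^2)+K_s(\omega)$ for an integrable $\kappa\geq 1$ and a $K\in\M^1(0,T)$; evaluating at $x=X^{t,\xi}_s(\omega)$, $\eta=X^{t,\xi}_s$ and taking $\E$ (the mean-field slot contributes the constant $\|X^{t,\xi}_s\|_{\L^2}^2=\E[\|X^{t,\xi}_s\|^2]$), this term is $\lesssim\int_t^w\kappa(s)\phi(s)\,\d s+\|K\|_{\M^1}$. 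For the $\d\langle B\rangle$-term I would additionally use that, $\Sigma$ being compact, $\d\langle B\rangle_s\leq c\,\d s$ quasi-surely for a constant $c=c(\Sigma)$, which reduces it to the same estimate. For the stochastic-integral term I would invoke the Burkholder–Davis–Gundy inequality for $G$-stochastic integrals, $\E[\sup_{t\leq u\leq w}\|\int_t^u g(s,X^{t,\xi}_s,X^{t,\xi}_s)\,\d B_s\|^2]\lesssim\int_t^w\E[\|g(s,X^{t,\xi}_s,X^{t,\xi}_s)\|^2]\,\d s$, and again apply Corollary~\ref{cor:growth}.

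Collecting the three bounds gives $\phi(w)\lesssim 1+\|\xi\|_{\L^2}^2+\int_t^w\kappa(s)\phi(s)\,\d s$ for all $w\in[t,T]$, with all implicit constants depending only on $T$, $\Sigma$, $\kappa$ and $\|K\|_{\M^1}$, hence independent of $t$ and $\xi$. Since $\phi$ is finite and $\kappa$ is integrable, Grönwall's inequality gives $\phi(T)\lesssim(1+\|\xi\|_{\L^2}^2)\exp\!\big(C\int_0^T\kappa(s)\,\d s\big)\lesssim 1+\|\xi\|_{\L^2}^2$, which is the assertion since $\phi(T)=\E[\sup_{t\leq w\leq T}\|X^{t,\xi}_w\|^2]$. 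The only points requiring care are the Tonelli-type interchange of the sublinear expectation with the time integral and the $G$-BDG inequality — both standard in the $G$-framework — and the bookkeeping that keeps every constant free of $t$ and $\xi$; I do not expect a genuine obstacle here, as the estimate is the $G$-analogue of the classical a priori bound for mean-field SDEs.
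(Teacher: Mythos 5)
Your argument is correct and follows essentially the same route as the paper: the paper simply packages your three separate estimates (the $\d s$-integral via Cauchy--Schwarz and subadditivity, the $\d\langle B\rangle$-integral via $\d\langle B\rangle_s \leq \overline\sigma^2 \d s$, and the $\d B$-integral via $G$-BDG) into the single auxiliary Lemma~\ref{lem:conditional}, then applies Corollary~\ref{cor:growth} and Grönwall exactly as you do. One small terminological caveat: what you call a ``Tonelli''-type interchange is in fact only the one-sided inequality $\E[\int_t^w X_s\,\d s]\leq \int_t^w\E[X_s]\,\d s$ (subadditivity of $\E$, cf.\ Lemma~\ref{lem:conditional-dt}), not an equality; since you only need that direction, the proof is unaffected.
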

    \begin{proof}
        By Lemma~\ref{lem:conditional} and Corollary~\ref{cor:growth}, we have for all $t\leq s\leq T$
        \begin{flalign*}
            \E\l[ \sup_{t\leq w\leq s}\l\| X^{t,\xi}_w\r\|^2 \r]
            &\lesssim
            \l\| \xi\r\|_{\L^2}^2 + \sum_{f\in F}\int_t^s \E\l[ \l| f\!\l(u,X^{t,\xi}_u,X^{t,\xi}_u\r)\r|^2\r] \d u
            \\&\leq 
            \l\| \xi\r\|_{\L^2}^2 + \int_t^s \E\l[\kappa\!\l(u\r)  \l(\l\|X^{t,\xi}_u\r\|^2 + \l\|X^{t,\xi}_u\r\|_{\L^2}^2\r) +K_u\r] \d u
            \\&\lesssim
            1 + \l\| \xi\r\|_{\L^2}^2 + \int_t^s \kappa\!\l(u\r) \E\l[ \l\|X^{t,\xi}_u\r\|^2 \r] \d u
            ,
        \end{flalign*}
        and Grönwall's inequality yields the desired result.
    \end{proof}

    \begin{lem}\label{lem:H2-bound}
        If Assumption~\ref{ass:1-lipschitz} is satisfied, then there exists a $K\in\M^1(0,T)$ such that
        \begin{equation*}
            \E\l[ \sup_{t\leq w\leq s} \l\| X^{t,x,\xi}\r\|^2 \,\Big|\,\CF_t\r]  \lesssim \int_t^s \E\l[ K_u\,\big|\,\CF_t\r] \d u +\l\| x \r\|^2 +\l\|\xi\r\|_{\L^2}^2
        \end{equation*}
        for all $0\leq t\leq s\leq T$, $x\in\Rbb^d$ and $\xi\in\L^{2,d}(t)$.
    \end{lem}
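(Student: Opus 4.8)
The plan is to run the argument of Lemma~\ref{lem:H2-bound-xi} conditionally on $\CF_t$. First, I would regard \eqref{eq:MF-GSDE-x} as a $G$-SDE in the single unknown $X^{t,x,\xi}$, whose coefficients $f\!\l(\cdot,\cdot,X^{t,\xi}_\cdot\r)$, $f\in F$, are random and time-dependent (the mean-field slot being frozen to the already-constructed process $X^{t,\xi}$) and whose initial datum $x$ is deterministic. Applying the conditional a~priori estimate of Lemma~\ref{lem:conditional} then gives, for all $t\leq s\leq T$,
\begin{equation*}
    \E\l[\sup_{t\leq w\leq s}\l\|X^{t,x,\xi}_w\r\|^2\,\Big|\,\CF_t\r]
    \lesssim \l\|x\r\|^2 + \sum_{f\in F}\int_t^s\E\l[\l|f\!\l(u,X^{t,x,\xi}_u,X^{t,\xi}_u\r)\r|^2\,\Big|\,\CF_t\r]\d u ,
\end{equation*}
the leading term being $\l\|x\r\|^2$ since $x$ is deterministic.

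Next I would bound the integrand. Corollary~\ref{cor:growth} provides an integrable $\kappa$ and a $K_0\in\M^1(0,T)$, both depending only on the coefficients, with
\begin{equation*}
    \sum_{f\in F}\l|f\!\l(u,X^{t,x,\xi}_u,X^{t,\xi}_u\r)\r|^2
    \leq \kappa\!\l(u\r)\l(\l\|X^{t,x,\xi}_u\r\|^2 + \l\|X^{t,\xi}_u\r\|_{\L^2}^2\r) + K_{0,u}
\end{equation*}
pointwise in $\omega$. Taking $\E[\,\cdot\,|\,\CF_t]$, exploiting sub-additivity together with the fact that $\kappa(u)$ and $\l\|X^{t,\xi}_u\r\|_{\L^2}^2$ are deterministic, and estimating $\l\|X^{t,\xi}_u\r\|_{\L^2}^2 = \E[\l\|X^{t,\xi}_u\r\|^2]\lesssim 1+\l\|\xi\r\|_{\L^2}^2$ by Lemma~\ref{lem:H2-bound-xi}, I would reach
\begin{align*}
    \E\l[\sup_{t\leq w\leq s}\l\|X^{t,x,\xi}_w\r\|^2\,\Big|\,\CF_t\r]
    &\lesssim \l\|x\r\|^2 + \l(1+\l\|\xi\r\|_{\L^2}^2\r)\int_t^s\kappa\!\l(u\r)\d u + \int_t^s\E\l[K_{0,u}\,\big|\,\CF_t\r]\d u \\
    &\quad + \int_t^s\kappa\!\l(u\r)\,\E\l[\sup_{t\leq w\leq u}\l\|X^{t,x,\xi}_w\r\|^2\,\Big|\,\CF_t\r]\d u .
\end{align*}

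To close the estimate I would apply Grönwall's inequality in the variable $s$ (pathwise, for quasi-every $\omega$), noting that the forcing term above is non-decreasing in $s$ and that $\int_t^s\kappa(u)\d u\leq\int_0^T\kappa(u)\d u<\infty$ is a fixed constant, so the exponential factor Grönwall produces is absorbed into $\lesssim$. This removes the last integral and, using $\l\|\xi\r\|_{\L^2}^2\int_t^s\kappa(u)\d u\lesssim\l\|\xi\r\|_{\L^2}^2$, leaves
\begin{equation*}
    \E\l[\sup_{t\leq w\leq s}\l\|X^{t,x,\xi}_w\r\|^2\,\Big|\,\CF_t\r]
    \lesssim \l\|x\r\|^2 + \l\|\xi\r\|_{\L^2}^2 + \int_t^s\kappa\!\l(u\r)\d u + \int_t^s\E\l[K_{0,u}\,\big|\,\CF_t\r]\d u .
\end{equation*}
Since $\kappa$ is deterministic and integrable it lies in $\M^1(0,T)$ and $\int_t^s\kappa(u)\d u=\int_t^s\E[\kappa(u)\,|\,\CF_t]\d u$, so setting $K:=K_0+\kappa\in\M^1(0,T)$ — which, like $K_0$ and $\kappa$, does not depend on $t$, $x$ or $\xi$ — turns the previous display into precisely the claimed inequality.

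I expect the main obstacle to be the bookkeeping around this last step rather than any deep difficulty. On the one hand, the conditional Grönwall argument requires upgrading the family of quasi-sure inequalities indexed by $s$ to a single quasi-sure inequality holding simultaneously for all $s\in[t,T]$ before Grönwall can be invoked $\omega$-wise; this follows from the monotonicity in $s$ of both sides together with a standard density argument in $s$. On the other hand, the constructed $K$ must be uniform in $(t,x,\xi)$, which is exactly why the mean-field contribution $\l\|X^{t,\xi}_u\r\|_{\L^2}^2$ is controlled through Lemma~\ref{lem:H2-bound-xi} and the resulting constant pushed into $\kappa$, instead of being folded directly into $K$ (which would make $K$ depend on $\xi$).
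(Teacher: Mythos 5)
Your proof is correct and follows the paper's proof essentially verbatim: the conditional a~priori estimate from Lemma~\ref{lem:conditional}, the linear-growth bound from Corollary~\ref{cor:growth}, the bound $\|X^{t,\xi}_u\|_{\L^2}^2\lesssim 1+\|\xi\|_{\L^2}^2$ from Lemma~\ref{lem:H2-bound-xi}, and then Grönwall. The one refinement you add — folding the deterministic integrable $\kappa$ into $K$ so that the constant $1$ produced by Lemma~\ref{lem:H2-bound-xi} is absorbed into $\int_t^s\E[K_u\,|\,\CF_t]\,\d u$ — is a bookkeeping cleanup that the paper's proof glosses over.
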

    \begin{proof}
        By Lemma~\ref{lem:conditional} and Corollary~\ref{cor:growth}, we have
        \begin{flalign*}
            \quad&
            \E\l[ \sup_{t\leq w\leq s}\l\| X^{t,x,\xi}_w\r\|^2 \,\Big|\,\CF_t\r]
            &&\\
            &\lesssim
            \l\| x \r\|^2 + \sum_{f\in F} \int_t^s \E\l[ \l| f\!\l(u,X^{t,x,\xi}_u,X^{t,\xi}_u\r)\r|^2\,\Big|\,\CF_t\r] \d u
            \\&\leq 
            \l\| x \r\|^2 + \int_t^s \E\l[ K_u\,\big|\,\CF_t\r] \d u + \int_t^s \kappa\!\l(u\r) \l( \E\l[ \l\|X^{t,x,\xi}_u\r\|^2 \,\Big|\,\CF_t\r] + \l\|X^{t,\xi}_u\r\|_{\L^2}^2 \r)\d u 
            \\&\lesssim
            \l\|x\r\|^2 + \int_t^s \E\l[ K_u\,\big|\,\CF_t\r] \d u + \l\|\xi \r\|_{\L^2}^2 
            + \int_t^s \kappa\!\l(u\r) \E\l[ \l\| X^{t,x,\xi}_u \r\|^2 ~\Big|~\CF_t\r] \d u
            ,
        \end{flalign*}
        where we used Lemma~\ref{lem:H2-bound-xi} in the last step.
        Finally, Grönwall's inequality yields the desired result.
    \end{proof}

    \begin{rmk}
        By taking the sublinear expectation, Lemma~\ref{lem:H2-bound} immediately yields
        \begin{align*}
            \E\l[ \sup_{t\leq w\leq T} \l\| X^{t,x,\xi}\r\|^2 \r]  \lesssim \l\| x \r\|^2 +\l\|\xi\r\|_{\L^2}^2,
        \end{align*}
        which is analogous to the result in Lemma~\ref{lem:H2-bound-xi}.
        Many of the results for $X^{t,x,\xi}$ are stated in a conditional form so that we apply them to the concatenation $X^{t,x,\xi}\big|_{x=\xi}$ which, as we show in Lemma~\ref{lem:concatenation-identity}, is indifferent from $X^{t,\xi}$.
    \end{rmk}

    \begin{lem}\label{lem:xi-eta-2-bound} 
        If Assumption~\ref{ass:1-lipschitz} is satisfied, then 
        \begin{equation*}
            \E \l[ \sup_{t\leq s\leq T} \l\| X^{t,\xi}_s - X^{t,\eta}_s\r\|^2\r] 
            \lesssim \l\|\xi-\eta\r\|_{\L^2}^2
        \end{equation*}
        for all $0\leq t\leq T$ and $\xi,\eta\in\L^{2,d}(t)$.
    \end{lem}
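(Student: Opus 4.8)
The plan is to run a Gr\"onwall argument on the quantity
\begin{equation*}
    \Phi(s):=\E\l[ \sup_{t\leq w\leq s}\l\| X^{t,\xi}_w - X^{t,\eta}_w\r\|^2 \r], \qquad t\leq s\leq T,
\end{equation*}
which is finite for every $s$ by Lemma~\ref{lem:H2-bound-xi}. The starting point is that, by linearity of the stochastic integrals in \eqref{eq:MF-GSDE-xi}, the difference $X^{t,\xi}-X^{t,\eta}$ satisfies a $G$-SDE with initial value $\xi-\eta\in\L^{2,d}(t)$ and with drift, quadratic-variation and diffusion coefficients equal to the differences $f\!\l(u,X^{t,\xi}_u,X^{t,\xi}_u\r)-f\!\l(u,X^{t,\eta}_u,X^{t,\eta}_u\r)$, $f\in F$. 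Applying Lemma~\ref{lem:conditional} to this equation, exactly as in the proof of Lemma~\ref{lem:H2-bound-xi}, would give
\begin{equation*}
    \Phi(s)\lesssim \l\|\xi-\eta\r\|_{\L^2}^2 + \sum_{f\in F}\int_t^s \E\l[ \l| f\!\l(u,X^{t,\xi}_u,X^{t,\xi}_u\r)-f\!\l(u,X^{t,\eta}_u,X^{t,\eta}_u\r)\r|^2\r]\d u.
\end{equation*}

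Next I would estimate the integrand with the Lipschitz bound of Assumption~\ref{ass:1-lipschitz}.2, which yields
\begin{equation*}
    \l| f\!\l(u,X^{t,\xi}_u,X^{t,\xi}_u\r)-f\!\l(u,X^{t,\eta}_u,X^{t,\eta}_u\r)\r| \leq \alpha_0\!\l(u\r)\l( \l\|X^{t,\xi}_u-X^{t,\eta}_u\r\| + \l\|X^{t,\xi}_u-X^{t,\eta}_u\r\|_{\L^2}\r).
\end{equation*}
Squaring and taking $\E$, and using that $\l\|X^{t,\xi}_u-X^{t,\eta}_u\r\|_{\L^2}$ is deterministic (so that it factors out of $\E$) together with $\E\l[\l\|X^{t,\xi}_u-X^{t,\eta}_u\r\|\r]\leq\l\|X^{t,\xi}_u-X^{t,\eta}_u\r\|_{\L^2}$, the three contributions collapse to give
\begin{equation*}
    \E\l[\l| f\!\l(u,X^{t,\xi}_u,X^{t,\xi}_u\r)-f\!\l(u,X^{t,\eta}_u,X^{t,\eta}_u\r)\r|^2\r] \leq 4\,\alpha_0\!\l(u\r)^2\,\E\l[\l\|X^{t,\xi}_u-X^{t,\eta}_u\r\|^2\r]\leq 4\,\alpha_0\!\l(u\r)^2\,\Phi(u).
\end{equation*}
Inserting this into the previous display and absorbing the factor $4|F|$ into the proportionality constant, I would obtain $\Phi(s)\lesssim \l\|\xi-\eta\r\|_{\L^2}^2 + \int_t^s \alpha_0\!\l(u\r)^2\,\Phi(u)\,\d u$ for all $t\leq s\leq T$.

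Finally, since $\alpha_0$ is $q_0$-integrable with $q_0\geq 2$, the function $\alpha_0^2$ is integrable on $[0,T]$, so Gr\"onwall's inequality applies and gives $\Phi(T)\lesssim\l\|\xi-\eta\r\|_{\L^2}^2$, which is the assertion. I do not expect a serious obstacle here: this is the standard stability estimate for (mean-field) SDEs, and the only point needing a little care is the bookkeeping with the sublinear expectation --- in particular that $\l\|\cdot\r\|_{\L^2}$ is a constant that may be pulled out of $\E$, so that no problematic cross term of the form $\E\l[\l\|a\r\|\cdot\l\|b\r\|_{\L^2}\r]$ appears --- but this is precisely the manipulation already carried out in Corollary~\ref{cor:growth} and Lemma~\ref{lem:H2-bound-xi}.
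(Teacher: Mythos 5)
Your proof is correct and follows the same route as the paper's: apply Lemma~\ref{lem:conditional} to the difference $X^{t,\xi}-X^{t,\eta}$, estimate the integrand via the Lipschitz bound in Assumption~\ref{ass:1-lipschitz}.2 (where the $\L^2$-norm term equals $\E\bigl[\|X^{t,\xi}_u-X^{t,\eta}_u\|^2\bigr]^{1/2}$ and is folded back into the same quantity), and close with Grönwall using square-integrability of $\alpha_0$. The only cosmetic difference is that you track the explicit constant $4$ whereas the paper absorbs constants into $\lesssim$.
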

    \begin{proof}
        By Lemma~\ref{lem:conditional}, we have for all $t\leq s\leq T$
        \begin{flalign*}
            \quad&
            \E\l[ \sup_{t\leq w\leq s} \l\| X^{t,\xi}_w - X^{t,\eta}_w\r\|^2\r]
            &&
            \\&\lesssim
            \l\|\xi-\eta\r\|_{\L^2}^2 + \sum_{f\in F} \int_t^s \E\l[ \l| f\!\l(u,X^{t,\xi}_u,X^{t,\xi}_u\r) - f\!\l(u,X^{t,\eta}_u,X^{t,\eta}_u\r) \r|^2 \r] \d u
            \\&\lesssim
            \l\|\xi-\eta\r\|_{\L^2}^2 + \int_t^s \alpha_0\!\l(u\r)^2  \E\l[ \sup_{t\leq w\leq u}\l\| X^{t,\xi}_w - X^{t,\eta}_w \r\|^2 \r] \d u
            .
        \end{flalign*}
        Finally, Grönwall's inequality yields the desired result.
    \end{proof}

    \begin{lem}\label{lem:xxi-yeta-p-bound} 
        Let $1\leq p\leq q_0$. If Assumption~\ref{ass:1-lipschitz} is satisfied, then
        \begin{equation*}
            \E\l[ \sup_{t\leq s\leq T}\l\| X^{t,x,\xi}_s - X^{t,y,\eta}_s \r\|^p \,\Big|\,\CF_t \r] \lesssim \l\| x- y\r\|^p + \l\|\xi-\eta \r\|_{\L^2}^p
        \end{equation*}
        for all $0\leq t\leq T$, $\xi,\eta\in \L^{2,d}(t)$ and $x,y\in\Rbb^d$.
    \end{lem}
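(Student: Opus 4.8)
The plan is to run the same Lipschitz--Grönwall scheme used in the proofs of Lemmas~\ref{lem:xi-eta-2-bound} and~\ref{lem:H2-bound}, now in conditional form and in the $p$-th moment. The key structural observation is that in the $G$-SDE~\eqref{eq:MF-GSDE-x} the mean-field slot of the coefficients is occupied by $X^{t,\xi}$ (respectively $X^{t,\eta}$), \emph{not} by $X^{t,x,\xi}$ itself; hence the discrepancy of the mean-field arguments contributes only the \emph{deterministic} quantity $\l\|X^{t,\xi}_u-X^{t,\eta}_u\r\|_{\L^2}$, which has already been controlled in Lemma~\ref{lem:xi-eta-2-bound}. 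Consequently there is no circular dependence and a single application of Grönwall's inequality will close the estimate.

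Concretely, set $\Delta_s:=\l\|X^{t,x,\xi}_s-X^{t,y,\eta}_s\r\|$. Subtracting the integral forms of~\eqref{eq:MF-GSDE-x} for the data $(x,\xi)$ and $(y,\eta)$ and applying the conditional a priori estimate of Lemma~\ref{lem:conditional} gives, for all $t\le s\le T$,
\begin{align*}
\E\l[\sup_{t\le w\le s}\Delta_w^p\,\Big|\,\CF_t\r]
\lesssim \l\|x-y\r\|^p+\sum_{f\in F}\int_t^s \E\l[\l|f\!\l(u,X^{t,x,\xi}_u,X^{t,\xi}_u\r)-f\!\l(u,X^{t,y,\eta}_u,X^{t,\eta}_u\r)\r|^p\,\Big|\,\CF_t\r]\d u .
\end{align*}
By Assumption~\ref{ass:1-lipschitz}.2 together with $(a+b)^p\lesssim a^p+b^p$, each coefficient difference is bounded by $\alpha_0(u)^p\bigl(\Delta_u^p+\l\|X^{t,\xi}_u-X^{t,\eta}_u\r\|_{\L^2}^p\bigr)$; since the last term is deterministic, taking conditional expectations and invoking Lemma~\ref{lem:xi-eta-2-bound} yields
\begin{align*}
\E\l[\sup_{t\le w\le s}\Delta_w^p\,\Big|\,\CF_t\r]
\lesssim \l\|x-y\r\|^p+\l\|\xi-\eta\r\|_{\L^2}^p\int_t^s\alpha_0(u)^p\d u+\int_t^s\alpha_0(u)^p\,\E\l[\sup_{t\le w\le u}\Delta_w^p\,\Big|\,\CF_t\r]\d u .
\end{align*}

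Because $\alpha_0\ge 1$ and $p\le q_0$, we have $\alpha_0^p\le\alpha_0^{q_0}$, so $\alpha_0^p$ is integrable on $[0,T]$; in particular $\int_t^s\alpha_0(u)^p\d u$ is bounded, and the first two terms on the right combine to $\lesssim\l\|x-y\r\|^p+\l\|\xi-\eta\r\|_{\L^2}^p$. Grönwall's inequality with the integrable weight $\alpha_0^p$ then gives the asserted bound. The only delicate point is the a priori finiteness of $s\mapsto\E[\sup_{t\le w\le s}\Delta_w^p\mid\CF_t]$ required to apply Grönwall: for $p=2$ this follows from Lemma~\ref{lem:H2-bound} applied to $X^{t,x,\xi}$ and $X^{t,y,\eta}$, and for general $1\le p\le q_0$ it follows from the $\H^p$-type a priori bound underlying Lemma~\ref{lem:conditional} (or, if one prefers, by running the above estimate for the process stopped at the exit time of a large ball and passing to the limit). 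This finiteness, together with the integrability $p\le q_0$ of $\alpha_0^p$, is the main thing to watch; everything else is the routine Lipschitz-plus-Grönwall argument already employed in this section.
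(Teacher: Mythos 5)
Your proof is correct and follows essentially the same route as the paper's: apply Lemma~\ref{lem:conditional}, invoke the Lipschitz bound of Assumption~\ref{ass:1-lipschitz}.2, pull the deterministic term $\l\|X^{t,\xi}_u-X^{t,\eta}_u\r\|_{\L^2}^p$ out via Lemma~\ref{lem:xi-eta-2-bound}, and close with Grönwall using the integrable weight $\alpha_0^p$. Your added remarks on the a priori finiteness needed for Grönwall and on the integrability of $\alpha_0^p$ (via $\alpha_0\geq 1$ and $p\leq q_0$) are sound and fill in details the paper leaves implicit.
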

    \begin{proof}
        By Lemma~\ref{lem:conditional}, we have for all $t\leq s\leq T$
        \begin{flalign*}
            \quad&
            \E\l[ \sup_{t\leq w\leq s}\l\| X^{t,x,\xi}_w - X^{t,y,\eta}_w \r\|^p \,\Big|\,\CF_t\r]
            &&
            \\&\lesssim
            \l\|x-y\r\|^p + \sum_{f\in F} \int_t^s \E\l[ \l| f\!\l(u,X^{t,x,\xi}_u,X^{t,\xi}_u\r) - f\!\l(u,X^{t,y,\eta}_u,X^{t,\eta}_u\r) \r|^p \,\Big|\,\CF_t\r] \d u
            \\&\leq
            \l\|x-y\r\|^p + \int_t^s \alpha_0\!\l(u\r)^p \l( \E\l[ \l\| X^{t,x,\xi}_u - X^{t,y,\eta}_u \r\|^p \,\Big|\,\CF_t\r] + \l\|X^{t,\xi}_u - X^{t,\eta}_u\r\|_{\L^2}^p \r) \d u 
            \\&\lesssim
            \l\|x-y\r\|^p + \l\| \xi - \eta\r\|_{\L^2}^p + \int_t^s \alpha_0\!\l(u\r)^p \E\l[  \l\| X^{t,x,\xi}_u - X^{t,y,\eta}_u \r\|^2 \,\Big|\,\CF_t\r] \d u 
            ,
        \end{flalign*}
        where the last step follows from Lemma~\ref{lem:xi-eta-2-bound}.
        Finally, Grönwall's inequality yields the desired result.
    \end{proof}

    For $\eta\in\L^{1,d}$, we can define the concatenation
    \begin{equation*}
        X^{t,\eta,\xi}:\, [0,T]\times \Omega \rightarrow \Rbb^d, \qquad
        (s,\omega)\mapsto X^{t,\eta,\xi}_s(\omega):=X^{t,x,\xi}_s(\omega)\Big|_{x=\eta(\omega)}.
    \end{equation*}
    
    \begin{lem}\label{lem:concatenation-H2}
        If Assumption~\ref{ass:1-lipschitz} is satisfied, then $X^{t,\eta,\xi}\in \H^{2,d}(t,T)$ for all $0\leq t\leq T$ and $\xi,\eta\in\L^{2,d}(t)$.
    \end{lem}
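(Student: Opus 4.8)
The plan is a density argument: I would first prove the claim for simple $\eta$ by hand, and then extend to arbitrary $\eta\in\L^{2,d}(t)$ by approximation, using Lemma~\ref{lem:xxi-yeta-p-bound} to control the error. So suppose first that $\eta=\sum_{k=1}^m x_k\ind_{A_k}$ with $x_1,\dots,x_m\in\Rbb^d$ and $\{A_k\}_{k\leq m}\subseteq\CF_t$ a finite partition of $\Omega$. Then, by construction of the concatenation, $X^{t,\eta,\xi}=\sum_{k=1}^m X^{t,x_k,\xi}\ind_{A_k}$ quasi-surely. Each $X^{t,x_k,\xi}$ lies in $\H^{2,d}(t,T)$, and since $A_k\in\CF_t$ the process $X^{t,x_k,\xi}\ind_{A_k}$ again lies in $\H^{2,d}(t,T)$: approximating $X^{t,x_k,\xi}$ by simple processes that vanish on $[0,t)$, multiplication by $\ind_{A_k}$ with $A_k\in\CF_t\subseteq\CF_{t_j}$ for the mesh points $t_j\geq t$ preserves the simple structure and does not increase the $\|\cdot\|_{\H^2}$-distance. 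Being a finite sum of elements of the vector space $\H^{2,d}(t,T)$, $X^{t,\eta,\xi}\in\H^{2,d}(t,T)$.

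For arbitrary $\eta\in\L^{2,d}(t)$, I would choose simple $\eta_n$ as above with $\|\eta-\eta_n\|_{\L^2}\to 0$ (such simple random variables being dense in $\L^{2,d}(t)$). For two such data $\eta_n=\sum_k x_k\ind_{A_k}$ and $\eta_m=\sum_l y_l\ind_{B_l}$, on $A_k\cap B_l\in\CF_t$ one has $X^{t,\eta_n,\xi}_s-X^{t,\eta_m,\xi}_s=X^{t,x_k,\xi}_s-X^{t,y_l,\xi}_s$, so pulling the $\CF_t$-measurable indicators out of the conditional sublinear expectation and applying Lemma~\ref{lem:xxi-yeta-p-bound} with $p=2$ — whose $\|\xi-\eta\|_{\L^2}$-term vanishes here, the second argument being the same $\xi$ — gives
\begin{align*}
    \E\l[ \sup_{t\leq s\leq T}\l\| X^{t,\eta_n,\xi}_s - X^{t,\eta_m,\xi}_s\r\|^2 \,\Big|\,\CF_t\r]
    &= \sum_{k,l}\ind_{A_k\cap B_l}\,\E\l[ \sup_{t\leq s\leq T}\l\| X^{t,x_k,\xi}_s - X^{t,y_l,\xi}_s\r\|^2 \,\Big|\,\CF_t\r] \\
    &\lesssim \sum_{k,l}\ind_{A_k\cap B_l}\l\| x_k - y_l\r\|^2 = \l\|\eta_n - \eta_m\r\|^2 .
\end{align*}
Taking $\E$ on both sides and using the tower property $\E\!\l[\E[\,\cdot\mid\CF_t]\r]=\E[\,\cdot\,]$ yields $\|X^{t,\eta_n,\xi}-X^{t,\eta_m,\xi}\|_{\H^2}^2\lesssim\|\eta_n-\eta_m\|_{\L^2}^2\to 0$, so $(X^{t,\eta_n,\xi})_n$ is Cauchy in the complete space $\H^{2,d}(t,T)$ and converges there to some $Y$; as $\H^{2,d}(t,T)$ is closed in $\H^{2,d}(0,T)$ — the condition $\E[\sup_{0\leq s<t}\|X_s\|^2]=0$ being stable under $\|\cdot\|_{\H^2}$-convergence — we have $Y\in\H^{2,d}(t,T)$.

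It remains to identify $Y$ with $X^{t,\eta,\xi}$, and this is the step I expect to be the main obstacle. Passing to a subsequence, $X^{t,\eta_n,\xi}\to Y$ and $\eta_n\to\eta$ quasi-surely, using that $\L^2$-convergence implies convergence in capacity and hence quasi-sure convergence along a subsequence. Since the jointly measurable version of $(x,\omega)\mapsto X^{t,x,\xi}_s(\omega)$ underlying the definition of the concatenation is quasi-surely continuous in $x$ — a consequence of the Hölder-type estimate in Lemma~\ref{lem:xxi-yeta-p-bound} via the Kolmogorov continuity criterion — the substitution commutes with the limit, so $X^{t,\eta_n,\xi}_s=X^{t,x,\xi}_s\big|_{x=\eta_n}\to X^{t,x,\xi}_s\big|_{x=\eta}=X^{t,\eta,\xi}_s$ quasi-surely, whence $Y=X^{t,\eta,\xi}$ quasi-surely and $X^{t,\eta,\xi}\in\H^{2,d}(t,T)$. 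Everything except this identification is a routine estimate; the identification, by contrast, genuinely needs both the capacity/quasi-sure upgrade of $\L^2$-convergence and a space-continuous version of the solution map, so that passing to the limit under the substitution $x=\eta_n(\omega)$ is legitimate.
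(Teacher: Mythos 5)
Your route is genuinely different from the paper's. The paper observes that $(x,y)\mapsto X^{t,x,\xi}-X^{t,y,\xi}$ is Lipschitz into $\H^{2,d}(0,T)\subseteq\M^{2,d}(0,T)$, invokes Lemma~A.4 of \cite{bollweg_mean-field_2025} (a prepackaged aggregation result for Lipschitz-in-$x$ families) to get $X^{t,\eta,\xi}\in\M^{2,d}(t,T)$, and then uses the conditional bound of Lemma~\ref{lem:H2-bound} together with the aggregation property to bound the $\H^2$-norm. You instead build the concatenation from scratch via simple $\eta$ and a Cauchy argument. Your Cauchy step is correct: the pointwise decomposition for simple $\eta$, the local property $\E[\ind_A\,\cdot\mid\CF_t]=\ind_A\E[\,\cdot\mid\CF_t]$ for $A\in\CF_t$, the application of Lemma~\ref{lem:xxi-yeta-p-bound}, and the tower argument all go through, and closedness of $\H^{2,d}(t,T)$ in $\H^{2,d}(0,T)$ is a fair observation.

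The genuine gap is exactly where you flag it, in the identification $Y=X^{t,\eta,\xi}$, and it is not closeable by the argument you give. Your plan is to upgrade the $\L^2$-estimate of Lemma~\ref{lem:xxi-yeta-p-bound} to quasi-sure continuity of $x\mapsto X^{t,x,\xi}_s(\omega)$ via the Kolmogorov continuity criterion. But Kolmogorov--Chentsov over a $d$-dimensional parameter space requires a moment bound of the form $\E\bigl[\sup_s\|X^{t,x,\xi}_s-X^{t,y,\xi}_s\|^{p}\bigr]\lesssim\|x-y\|^{d+\beta}$ with $\beta>0$, i.e., it needs $p>d$. Under Assumption~\ref{ass:1-lipschitz} the best available exponent is $p=q_0$, which is only guaranteed to be $\geq 2$. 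So for $d\geq 2$ with the minimal $q_0=2$ your Kolmogorov step simply does not apply, and without pathwise continuity in $x$ you cannot commute the substitution $x=\eta_n(\omega)$ with the quasi-sure limit. (There is a secondary issue: even when Kolmogorov applies, it only produces \emph{some} continuous modification; identifying it with the jointly measurable version underlying the definition of the concatenation requires an additional argument.) This is precisely what the cited Lemma~A.4 circumvents: it gives aggregation in $\M^{2,d}$ directly from the $\M^2$-Lipschitz estimate, without passing through pathwise continuity and so without any constraint linking $q_0$ and $d$. If you want a self-contained density argument, the cleaner fix is to take your Cauchy limit $Y$ as the \emph{definition} of $X^{t,\eta,\xi}$ and then check consistency with the paper's definition at the level of $\M^{2,d}$-limits, rather than trying to force a quasi-sure pointwise identification.
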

    \begin{proof}
        Lemma~\ref{lem:xxi-yeta-p-bound} implies $\l(X^{t,x,\xi}-X^{t,y,\xi}\r)\in \H^{2,d}(0,T)\subseteq \M^{2,d}(0,T)$ and, thus, we immediately get $X^{t,\eta,\xi}\in \M^{2,d}(t,T)$ due to Lemma~A.4 in \cite{bollweg_mean-field_2025}.

        Moreover, Lemma~\ref{lem:H2-bound} yields
        \begin{align*}
            \E\l[ \sup_{t\leq w\leq T} \l\|X^{t,\eta,\xi}_w\r\|^2 \r] 
            &= 
            \E\l[ \E\l[ \sup_{t\leq w\leq T}\l\| X^{t,x,\xi}_w \r\|^2 \,\Big|\,\CF_t \r]\bigg|_{x=\eta} \r]
            \lesssim
            \l\|K\r\|_{\M^1} + \l\|\eta\r\|_{\L^2}^2 + \l\|\xi\r\|_{\L^2}^2
            < \infty
            .
        \end{align*}
    \end{proof}
    
    \begin{lem}\label{lem:concatenation-identity} 
        If Assumption~\ref{ass:1-lipschitz} is satisfied, then
        \begin{equation*}
            \l\| X^{t,\xi,\xi} - X^{t,\xi} \r\|_{\H^2}=0
        \end{equation*}
        for all $0\leq t\leq T$ and $\xi\in \L^{2,d}(t)$.
    \end{lem}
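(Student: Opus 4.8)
The plan is to recognise the concatenation $X^{t,\xi,\xi}$ as a solution of the \emph{frozen} $G$-SDE obtained from \eqref{eq:MF-GSDE-xi} by keeping the measure-type argument fixed equal to the already constructed process $X^{t,\xi}$, and then to invoke the uniqueness of solutions of that equation. Concretely, I would consider
\begin{equation*}
    \d Y_s = b\!\l(s,Y_s,X^{t,\xi}_s\r)\d s + h\!\l(s,Y_s,X^{t,\xi}_s\r)\d\l<B\r>_s + g\!\l(s,Y_s,X^{t,\xi}_s\r)\d B_s,\qquad t\le s\le T,\quad Y_t=\xi,
\end{equation*}
where the third slot is the fixed process $X^{t,\xi}\in\H^{2,d}(t,T)$. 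Its coefficients $(s,\omega,y)\mapsto b(s,\omega,y,X^{t,\xi}_s)$, and likewise for $h,g$, are Lipschitz in $y$ with the integrable constant $\alpha_0$ by Assumption~\ref{ass:1-lipschitz}.2, and by Corollary~\ref{cor:growth} together with Lemma~\ref{lem:H2-bound-xi} their value at $y=0$ lies in $\M^2(0,T)$; hence the frozen equation is covered by the existence and uniqueness theory of \cite{bollweg_mean-field_2025} (with trivial dependence on the measure argument) and has a unique solution in $\H^{2,d}(t,T)$, which by construction is $X^{t,\xi}$.

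Next I would treat simple initial data $\xi_n=\sum_k x_k\ind_{A_k}$ with $x_k\in\Rbb^d$ and $A_k\in\CF_t$ a finite partition of $\Omega$. By the very definition of the concatenation, $X^{t,\xi_n,\xi}_s=\sum_k\ind_{A_k}X^{t,x_k,\xi}_s$ pointwise. Since each $\ind_{A_k}$ is $\CF_t$-measurable it factors through the integrals $\int_t^s\cdot\,\d B_u$ and $\int_t^s\cdot\,\d\l<B\r>_u$ (the pasting property of the $G$-integrals, checked first on simple integrands and extended by density), and the partition identity $\sum_k\ind_{A_k}\,b(u,X^{t,x_k,\xi}_u,X^{t,\xi}_u)=b(u,X^{t,\xi_n,\xi}_u,X^{t,\xi}_u)$, and likewise for $h,g$, holds because $X^{t,\xi_n,\xi}_u=X^{t,x_k,\xi}_u$ on $A_k$. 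Multiplying the integral form of \eqref{eq:MF-GSDE-x} for $x=x_k$ by $\ind_{A_k}$ and summing over the finite partition therefore shows that $X^{t,\xi_n,\xi}$ solves the frozen equation with initial datum $\xi_n$; by uniqueness it coincides in $\H^2$ with the unique frozen-SDE solution started from $\xi_n$.

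Finally, I would pick $\xi_n\to\xi$ in $\L^{2,d}(t)$ with $\xi_n$ simple. On the one hand, an elementary Grönwall estimate for the frozen equation (exactly as in the proof of Lemma~\ref{lem:xi-eta-2-bound}, applied to the two initial data $\xi_n$ and $\xi$) gives $\|X^{t,\xi_n,\xi}-X^{t,\xi}\|_{\H^2}\lesssim\|\xi_n-\xi\|_{\L^2}\to 0$. On the other hand, Lemma~\ref{lem:xxi-yeta-p-bound} with $p=2$ and $\eta=\xi$, together with the conditional-expectation-freezing argument already used in the proof of Lemma~\ref{lem:concatenation-H2} and the tower property of $\E$, gives $\|X^{t,\xi_n,\xi}-X^{t,\xi,\xi}\|_{\H^2}\lesssim\|\xi_n-\xi\|_{\L^2}\to 0$. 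Since $X^{t,\xi,\xi}\in\H^{2,d}(t,T)$ by Lemma~\ref{lem:concatenation-H2}, the triangle inequality for $\|\cdot\|_{\H^2}$ now yields $\|X^{t,\xi,\xi}-X^{t,\xi}\|_{\H^2}=0$.

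The main obstacle is the step with simple initial data: one has to justify carefully that $\CF_t$-measurable indicators can be pulled through the $G$-It\^o and $\d\l<B\r>$ integrals taken from time $t$ onwards, and one has to verify that the frozen equation genuinely falls under the existence and uniqueness theory despite its coefficients now carrying an $\omega$-dependence through $X^{t,\xi}$. Both points are standard, but they are precisely what makes the identification rigorous; once they are in place, the two Grönwall estimates in the last step and the passage to the limit are routine.
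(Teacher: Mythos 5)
Your proposal is correct, but it takes a genuinely different route from the paper. The paper's proof is a one-shot conditional Grönwall argument: it derives, for fixed $x\in\Rbb^d$, the conditional estimate
\begin{equation*}
    \E\!\l[ \sup_{t\leq w\leq T}\l\| X^{t,x,\xi}_w - X^{t,\xi}_w \r\|^2 \,\Big|\,\CF_t \r] \lesssim \l\|x-\xi\r\|^2,
\end{equation*}
using only Assumption~\ref{ass:1-lipschitz}.2 together with Lemma~\ref{lem:conditional} and Grönwall, and then invokes the aggregation property once, evaluating at $x=\xi$, so that the right-hand side vanishes and $\l\|X^{t,\xi,\xi}-X^{t,\xi}\r\|_{\H^2}^2=\E\!\l[\E\!\l[\cdot\,|\,\CF_t\r]\big|_{x=\xi}\r]=0$. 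You instead go through the ``frozen'' equation, identify the concatenation with the unique frozen-SDE solution on simple initial data via the pasting property of the $G$-integrals, and close the argument by density in $\L^{2,d}(t)$ together with two Lipschitz-type continuity estimates. Both are sound. What the paper's approach buys is brevity and a clean reuse of the aggregation property as a black box; the key point is that the same quantity $\|x-\xi\|^2$ that survives Grönwall collapses to zero upon substitution, so no approximation is needed. What your approach buys is that it makes the mechanism behind the aggregation step explicit (where $\CF_t$-measurable indicators enter the stochastic integrals), at the cost of additional bookkeeping: you need the pasting identity for the $G$-It\^o and $\d\l<B\r>$ integrals, the identification of the frozen equation within the existence theory of~\cite{bollweg_mean-field_2025}, and a two-variable aggregation when passing from Lemma~\ref{lem:xxi-yeta-p-bound} to $\|X^{t,\xi_n,\xi}-X^{t,\xi,\xi}\|_{\H^2}\lesssim\|\xi_n-\xi\|_{\L^2}$ --- all of which are standard but do require the care you flagged. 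If you want to align more closely with the paper, note that once the conditional estimate above is available, no approximation by simple $\xi_n$ is required; the whole density step can be replaced by a single substitution $x=\xi$.
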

    \begin{proof}
        By Lemma~\ref{lem:conditional}, we have for all $t\leq s\leq T$
        \begin{flalign*}
            \quad&
            \E\l[ \sup_{t\leq w\leq s}\l\| X^{t,x,\xi}_w - X^{t,\xi}_w \r\|^2 \,\Big|\,\CF_t \r] 
            &&\\
            & \lesssim \l\|x-\xi\r\|^2 + \sum_{f\in F} \int_t^s \E\l[ \l| f\!\l(u,X^{t,x,\xi}_u,X^{t,\xi}_u\r) - f\!\l(u,X^{t,\xi}_u,X^{t,\xi}_u\r) \r|^2\,\Big|\,\CF_t\r] \d u
            \\&\leq 
            \l\|x-\xi\r\|^2 + \int_t^s \alpha_0\!\l(u\r)^2 \E\l[ \l\| X^{t,x,\xi}_u - X^{t,\xi}_w \r\|^2 \,\Big|\, \CF_t\r] \d u
        \end{flalign*}
        and Grönwall's inequality yields
        \begin{equation*}
            \E\l[ \sup_{t\leq w\leq T}\l\| X^{t,x,\xi}_w - X^{t,\xi}_w \r\|^2 \,\Big|\,\CF_t \r] \lesssim \l\|x-\xi\r\|^2.
        \end{equation*}
        Finally, the aggregation property implies
        \begin{align*}
            \l\|X^{t,\xi,\xi} - X^{t,\xi}\r\|_{\H^2} 
            &= 
            \E\l[ \E\l[ \sup_{t\leq w\leq T} \l\|X^{t,x,\xi}_w - X^{t,\xi}_w \r\|^2\,\Big|\,\CF_t\r] \bigg|_{x=\xi} \r]
            =0.
        \end{align*}
    \end{proof}

    \section{First Order Derivatives}\label{sec:derivative-1}
    In this section, we show that the solution map $(x,\xi)\mapsto X^{t,x,\xi}$ is Fréchet differentiable for Fréchet differentiable coefficients with Lipschitz and bounded Fréchet derivatives.
    Before we turn to the differentiability results, let us agree on some definitions and recall the fundamental theorem of calculus.

    \begin{dfn}
        Let $V$ and $W$ be normed real vector spaces with norms $\l\|\cdot\r\|_V$ and $\l\|\cdot\r\|_W$ respectively. A map $f:\,V \rightarrow W$ is called Fréchet differentiable if, for every $v_0\in V$, there exists a continuous linear operator $\D{} f(v_0):\, V \rightarrow W $ such that
        \begin{equation*}
            \lim_{\l\|v\r\|_V \rightarrow 0} \frac{\l\| f\!\l(v_0 +v\r) - f\!\l(v_0\r) - \D{}f\!\l(v_0\r)v \r\|_W}{\l\|v\r\|_V} = 0,
        \end{equation*}
        and the map
        \begin{equation*}
            \D{}f:\, V \rightarrow B(V,W), \qquad v\mapsto \D{}f(v)
        \end{equation*}
        is called the Fréchet derivative of $f$, where $B(V,W)$ denotes the space of all bounded linear operators $L:\,V\rightarrow W$.

        A Fréchet differentiable map $f:\,V \rightarrow W$ is called continuously Fréchet differentiable if the Fréchet derivative $v\mapsto \D {} f(v)$ is continuous with respect to the operator norm. Let $\C^{1}(V)$ denote the space of all continuously Fréchet differentiable maps $f:\, V\rightarrow \Rbb$.
    \end{dfn}

    In Section~\ref{sec:derivative-2}, we repeatedly use the following version of the fundamental theorem of calculus.
    \begin{lem}
        Let $V$ and $W$ be normed real vector spaces.
        If $f:\,V \rightarrow W$ is continuously Fréchet differentiable, then
        \begin{equation*}
            f\!\l(v_0 + v\r) - f\!\l(v\r) = \int_0^1 \D{}f\!\l(v_0+\lambda v\r) v \d \lambda 
        \end{equation*}
        for all $v,v_0\in V$.
    \end{lem}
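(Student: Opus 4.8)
The plan is to reduce the statement to the one-dimensional fundamental theorem of calculus by restricting $f$ to the segment joining $v_0$ and $v_0+v$. Fix $v,v_0\in V$ and consider the affine curve $\gamma:\,[0,1]\rightarrow V$, $\gamma(\lambda):=v_0+\lambda v$, which is differentiable with constant derivative $\gamma'(\lambda)=v$. Set $\phi:=f\circ\gamma:\,[0,1]\rightarrow W$. By the chain rule for Fréchet derivatives, $\phi$ is differentiable on $[0,1]$ with $\phi'(\lambda)=\D{}f(v_0+\lambda v)\,v$. Since $f$ is continuously Fréchet differentiable, the map $\lambda\mapsto \D{}f(v_0+\lambda v)$ is continuous into $B(V,W)$ for the operator norm, and hence $\lambda\mapsto\phi'(\lambda)=\D{}f(v_0+\lambda v)v$ is continuous into $W$; in particular the Riemann integral $\int_0^1\phi'(\lambda)\d\lambda$ is well defined (in all our applications $W$ is a Banach space). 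It thus remains to show $\phi(1)-\phi(0)=\int_0^1\phi'(\lambda)\d\lambda$, which is exactly the asserted identity since $\phi(1)=f(v_0+v)$ and $\phi(0)=f(v_0)$.

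To establish $\phi(1)-\phi(0)=\int_0^1\phi'(\lambda)\d\lambda$ for a continuously differentiable $W$-valued function $\phi$, I would test against continuous linear functionals: for any $\ell$ in the topological dual $W^\ast$, the scalar function $\psi:=\ell\circ\phi:\,[0,1]\rightarrow\Rbb$ is continuously differentiable with $\psi'(\lambda)=\ell(\phi'(\lambda))$, so the classical fundamental theorem of calculus gives $\ell(\phi(1))-\ell(\phi(0))=\int_0^1\ell(\phi'(\lambda))\d\lambda=\ell\!\l(\int_0^1\phi'(\lambda)\d\lambda\r)$, where the last equality uses that a continuous linear functional commutes with the Riemann integral of a continuous integrand. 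Since, by the Hahn--Banach theorem, $W^\ast$ separates the points of $W$, this forces $\phi(1)-\phi(0)=\int_0^1\phi'(\lambda)\d\lambda$.

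Alternatively, one can avoid the Hahn--Banach detour by a direct argument: setting $R(t):=\phi(t)-\phi(0)-\int_0^t\phi'(\lambda)\d\lambda$, one checks $R(0)=0$ and $R'(t)=0$ for all $t\in[0,1]$, so the mean value inequality for vector-valued functions (which only uses the norm of $W$) yields $\l\|R(t)\r\|_W\leq t\sup_{0\leq\lambda\leq t}\l\|R'(\lambda)\r\|_W=0$, hence $R\equiv 0$. Either way, no genuine obstacle arises: the only points requiring care are the correct application of the chain rule to the affine curve $\gamma$ and the observation that continuity of $\lambda\mapsto\D{}f(v_0+\lambda v)$ in operator norm transfers to continuity of the integrand in $W$, both of which are immediate from the definitions.
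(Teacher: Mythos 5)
The paper states this lemma without proof, treating it as the standard vector-valued fundamental theorem of calculus, so there is no paper proof to compare against. Your argument is correct and is the textbook way to establish it: restrict to the affine segment, apply the chain rule to get $\phi'(\lambda)=\D{}f(v_0+\lambda v)v$, and then either (a) test against functionals $\ell\in W^\ast$ and use the scalar FTC plus Hahn--Banach separation, or (b) note that $R(t):=\phi(t)-\phi(0)-\int_0^t\phi'(\lambda)\d\lambda$ has vanishing derivative and apply the mean value inequality. You also correctly flag the two hypotheses that the statement leaves implicit: the Riemann integral of a $W$-valued continuous function needs completeness of $W$ (which holds in every application in the paper, where $W$ is $\M^1$, $\H^1$ or $\H^2$), and the left-hand side of the display as printed has a typo and should read $f(v_0+v)-f(v_0)$, which is what your $\phi(1)-\phi(0)$ computation actually yields. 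No gaps; both variants of your closing step are sound.
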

    
    \begin{ass}\label{ass:2-first-derivative}
        Let $b:\, [0,T]\times \Omega \times\Rbb^d\times\L^{2,d}\rightarrow\Rbb^d$, $h:\, [0,T]\times\Omega\times \Rbb^d\times\L^{2,d}\rightarrow\Rbb^{d\times n\times n}$, and $g:\, [0,T]\times \Omega\times\Rbb^d\times\L^{2,d}\rightarrow\Rbb^{d\times n}$ be such that the following holds for all components $f=b_k,h_{kij},g_{ki}$ with $1\leq i,j\leq n$, $1\leq k\leq d$.

        \begin{enumerate}
            \item 
            We have $f(s,\omega,x,\cdot)\in\C^{1}(\L^{2,d})$ and $f(s,\omega,\cdot,\xi)\in\C^{1}(\Rbb^d)$ for all $0\leq s\leq T$, $\omega\in\Omega$, $x\in\Rbb^d$ and $\xi\in\L^{2,d}$. 

            \item
            There exists a $q_1$-integrable $\alpha_1:\,[0,T]\rightarrow [1,\infty)$ with $q_1\geq 2$ such that
            \begin{align*}
                \l| \D x f\!\l(s,\omega,x,\xi\r)z - \D x f\!\l(s,\omega,y,\eta\r)z \r|
                & \leq \alpha_1\!\l(s\r) \l\|z\r\| \l( \l\|x-y\r\|  + \l\|\xi-\eta\r\|_{\L^2} \r)
                , 
               \\
                \l| \D \xi f\!\l(s,\omega,x,\xi\r)\zeta - \D \xi f\!\l(s,\omega,y,\eta\r)\zeta \r|
                & \leq \alpha_1\!\l(s\r) \l\|\zeta\r\|_{\L^2} \l( \l\|x-y\r\|  + \l\|\xi-\eta\r\|_{\L^2} \r),
                \\
                \l| \D \xi f\!\l(s,\omega,x,\xi \r) \eta \r| &\leq \alpha_1\!\l(s\r)\l\|\eta\r\|_{\L^1}
            \end{align*}
            for all $x,y,z\in\Rbb^d$, $\xi,\eta,\zeta\in\L^{2,d}$, $0\leq s\leq T$ and $\omega\in\Omega$, where $\D x f\!\l(s,\omega,x,\xi\r)$ and $\D \xi f\!\l(s,\omega,x,\xi \r)$ denote the Fréchet derivatives of $f$ with respect to $x$ and $\xi$ respectively.
        \end{enumerate}
    \end{ass}

    \begin{rmk}
        Note that Assumption~\ref{ass:1-lipschitz} yields bounds for $\D x f$ and $\D\xi f$ which are uniform in $(\omega,x,\xi)$ and $q_0$-integrable in $s$.
        To be specific, we have the following bounds for all components $f=b_k,h_{kij},g_{ki}$, $1\leq i,j\leq n$, $1\leq k\leq d$,
        \begin{align*}
            \l| \D x f\!\l(s,\omega,x,\xi\r) y \r| &\leq \alpha_0(s) \l\|y\r\|
            &
            \l| \D \xi f\!\l(s,\omega,x,\xi\r) \eta \r| &\leq \alpha_0(s) \l\|\eta\r\|_{\L^2}
            .
            \numberthis\label{ineq:1-derivative-bound}
        \end{align*} 
        for all $x,y\in \Rbb^d$, $\xi,\eta\in \L^{2,d}$, $0\leq s\leq T$ and $\omega\in \Omega$.
    \end{rmk}

    Moreover, Assumption~\ref{ass:2-first-derivative} implies that the Fréchet derivatives of the coefficients are in $\M^2(0,T)$. More precisely, we have the following results.

    \begin{lem}\label{lem:coefficients-differentiable-1}
        If Assumption~\ref{ass:1-lipschitz} and \ref{ass:2-first-derivative} are satisfied, then the the following holds for for all components $f=b_k,h_{kij},g_{ki}$ with $1\leq i,j\leq n$, $1\leq k\leq d$. 
        The map
        \begin{equation*}
            \H^{2,d}(0,T)\times \H^{2,d}(0,T)\rightarrow \M^1(0,T), 
            \qquad (X,Y)\mapsto f\!\l(\cdot,X,Y\r)
        \end{equation*}
        is Fréchet differentiable in each argument with Fréchet derivatives $\D x f\!\l(\cdot,X,Y\r)$ and $\D \xi f\!\l(\cdot,X,Y\r)$ at $(X,Y)$ respectively.
    \end{lem}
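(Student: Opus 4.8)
The plan is to fix a base point $(X,Y)\in\H^{2,d}(0,T)\times\H^{2,d}(0,T)$ and verify directly, via the version of the fundamental theorem of calculus stated above, that the stated operators are the partial Fréchet derivatives at $(X,Y)$. Throughout one must keep the Notation convention in mind: in $f(s,\omega,X_s,Y_s)$ the third slot is the pathwise value $X_s(\omega)\in\Rbb^d$, while the fourth slot carries the random variable $Y_s\in\L^{2,d}$, and this asymmetry is what makes the two estimates below slightly different.

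First I would introduce the candidate operators. For $Z\in\H^{2,d}(0,T)$ set
\begin{equation*}
    \l(L_X Z\r)_s(\omega) := \D x f\!\l(s,\omega,X_s(\omega),Y_s\r) Z_s(\omega),
    \qquad
    \l(L_Y Z\r)_s(\omega) := \D \xi f\!\l(s,\omega,X_s(\omega),Y_s\r) Z_s,
\end{equation*}
both linear in $Z$. That $f(\cdot,X,Y)$, $L_X Z$ and $L_Y Z$ actually lie in the completion $\M^1(0,T)$ (and not merely in the space of integrable measurable maps) follows by approximating $X$, $Y$ and $Z$ by simple processes and using the continuity of $f$, $\D x f$, $\D\xi f$ in their spatial arguments together with Corollary~\ref{cor:growth} and \eqref{ineq:1-derivative-bound}, exactly as in Lemma~A.4 of \cite{bollweg_mean-field_2025}. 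Boundedness of $L_X$ and $L_Y$ is immediate from \eqref{ineq:1-derivative-bound}: since $\alpha_0$ is integrable on $[0,T]$ and $\l\|Z_s\r\|\leq\sup_{0\leq u\leq T}\l\|Z_u\r\|$,
\begin{equation*}
    \l\|L_X Z\r\|_{\M^1} \leq \int_0^T \alpha_0(s)\,\E\l[\l\|Z_s\r\|\r]\,\d s \lesssim \l\|Z\r\|_{\H^2},
    \qquad
    \l\|L_Y Z\r\|_{\M^1} \leq \int_0^T \alpha_0(s)\,\l\|Z_s\r\|_{\L^2}\,\d s \lesssim \l\|Z\r\|_{\H^2}.
\end{equation*}

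Next, for differentiability in $X$, I would fix $Z\in\H^{2,d}(0,T)$; by Assumption~\ref{ass:2-first-derivative}.1 the map $x\mapsto f(s,\omega,x,Y_s)$ is in $\C^1(\Rbb^d)$, so the fundamental theorem of calculus above gives, for every $(s,\omega)$,
\begin{equation*}
    f\!\l(s,\omega,X_s(\omega)+Z_s(\omega),Y_s\r) - f\!\l(s,\omega,X_s(\omega),Y_s\r)
    = \int_0^1 \D x f\!\l(s,\omega,X_s(\omega)+\lambda Z_s(\omega),Y_s\r) Z_s(\omega)\,\d\lambda.
\end{equation*}
Subtracting $(L_X Z)_s(\omega)$ and applying the first Lipschitz estimate of Assumption~\ref{ass:2-first-derivative}.2 (with $z=Z_s(\omega)$, $x=X_s(\omega)+\lambda Z_s(\omega)$, $y=X_s(\omega)$, $\xi=\eta=Y_s$) bounds the integrand by $\alpha_1(s)\l\|Z_s(\omega)\r\|\cdot\l\|\lambda Z_s(\omega)\r\|\leq\alpha_1(s)\l\|Z_s(\omega)\r\|^2$, whence
\begin{equation*}
    \l\| f\!\l(\cdot,X+Z,Y\r) - f\!\l(\cdot,X,Y\r) - L_X Z\r\|_{\M^1}
    \leq \int_0^T \alpha_1(s)\,\E\l[\l\|Z_s\r\|^2\r]\,\d s
    \lesssim \l\|Z\r\|_{\H^2}^2
\end{equation*}
using the integrability of $\alpha_1$; dividing by $\l\|Z\r\|_{\H^2}$ and letting $\l\|Z\r\|_{\H^2}\to0$ identifies $L_X=\D x f(\cdot,X,Y)$ as the Fréchet derivative in the first argument. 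The $Y$-direction is the same computation: $\xi\mapsto f(s,\omega,X_s(\omega),\xi)$ is in $\C^1(\L^{2,d})$, the fundamental theorem of calculus gives the analogous identity with $\D\xi f$, and subtracting $(L_Y Z)_s(\omega)$ and using the second Lipschitz estimate of Assumption~\ref{ass:2-first-derivative}.2 bounds the integrand by $\alpha_1(s)\l\|Z_s\r\|_{\L^2}^2$ uniformly in $\omega$, so that $\l\| f(\cdot,X,Y+Z) - f(\cdot,X,Y) - L_Y Z\r\|_{\M^1}\leq\int_0^T\alpha_1(s)\l\|Z_s\r\|_{\L^2}^2\,\d s\lesssim\l\|Z\r\|_{\H^2}^2$, identifying $L_Y=\D\xi f(\cdot,X,Y)$.

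The differentiability estimates are immediate once the first-order expansion is in place, so the main obstacles are bookkeeping rather than analysis: keeping straight that the third slot of $f$ is evaluated pathwise (which is why the $X$-remainder is controlled in $\E[\l\|Z_s\r\|^2]$) while the fourth carries an $\L^{2,d}$-element (which is why the $Y$-remainder is controlled in $\l\|Z_s\r\|_{\L^2}^2$, already deterministic), both being dominated by $\l\|Z\r\|_{\H^2}^2$; and verifying that $f(\cdot,X,Y)$ and the two derivative processes genuinely belong to the completion $\M^1(0,T)$, which is where the approximation argument behind Lemma~A.4 in \cite{bollweg_mean-field_2025} is needed.
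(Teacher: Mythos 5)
Your proof is correct and follows essentially the same route as the paper's: identify the candidate partial derivatives, invoke the fundamental-theorem-of-calculus identity (justified by Assumption~\ref{ass:2-first-derivative}.1), and control the remainder in each variable via the corresponding Lipschitz estimate of Assumption~\ref{ass:2-first-derivative}.2, yielding the $\l\|Z\r\|_{\H^2}^2\int_0^T\alpha_1(s)\,\d s$ bound. You are somewhat more explicit than the paper about boundedness of the candidate operators and about $\M^1(0,T)$-membership of $\D x f(\cdot,X,Y)Z$ and $\D\xi f(\cdot,X,Y)Z$ (the paper defers that to the subsequent lemma and a citation), but these are refinements of the same argument, not a different one.
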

    \begin{proof}
        Assumption~\ref{ass:1-lipschitz} implies that $f(\cdot,X,Y)\in\M^1(0,T)$ for all $X,Y\in\H^{2,d}(0,T)$, cf. Corollary~3.4 in \cite{bollweg_mean-field_2025}. 
        Thus, the map $(X,Y)\mapsto f(\cdot,X,Y)$ is well-defined.
        
        Let $X,Y,Z\in\H^{2,d}(0,T)$.
        Since $f(s,\omega,\cdot,\xi)\in\C^{1}(\Rbb^d)$ for all $0\leq s\leq T$, $\omega\in\Omega$ and $\xi\in\L^{2,d}$, we have
        \begin{flalign*}
            \quad
            &\l\| f\!\l(\cdot,X+Z,Y\r) - f\!\l(\cdot,X,Y\r) - \D x f\!\l(\cdot,X,Y\r) Z \r\|_{\M^1}
            &&\\
            &=
            \int_0^T \E\l[ \l| f\!\l(s,X_s+Z_s,Y_s\r) - f\!\l(s,X_s,Y_s\r) - \D x f\!\l(s,X_s,Y_s\r) Z_s \r| \r] \d s
            \\&\leq 
            \int_0^T \E\l[ \int_0^1 \l|  \D x f\!\l(s,X_s+\lambda Z_s,Y_s\r) Z_s - \D x f\!\l(s,X_s,Y_s\r) Z_s \r| \d \lambda \r] \d s
            \\&\leq 
            \int_0^T \alpha_1\!\l(s\r) \E\l[\l\|Z_s\r\|^2 \r] \ ds
            \\&\leq 
            \l\|Z\r\|_{\H^2}^2 \int_0^T \alpha_1\!\l(s\r) \d s.
        \end{flalign*}
        Analogously, since $f(s,\omega,x,\cdot)\in\C^{1}(\L^{2,d})$ for all $0\leq s\leq T$, $\omega\in\Omega$ and $x\in\Rbb^d$, we have
        \begin{flalign*}
            \quad 
            &\l\| f\!\l(\cdot,X,Y+Z\r) - f\!\l(\cdot,X,Y\r) - \D \xi f\!\l(\cdot,X,Y\r) Z \r\|_{\M^1}
            &&\\
            &=
            \int_0^T \E\l[ \l| f\!\l(s,X_s,Y_s+Z_s\r) - f\!\l(s,X_s,Y_s\r) - \D \xi f\!\l(s,X_s,Y_s\r) Z_s \r| \r] \d s
            \\&\leq 
            \int_0^T \E\l[ \int_0^1 \l| \D \xi f\!\l(s,X_s,Y_s+\lambda Z_s\r) Z_s - \D \xi f\!\l(s,X_s,Y_s\r) Z_s \r| \d \lambda \r] \d s
            \\&\leq 
            \int_0^T \alpha_1\!\l(s\r) \l\| Z_s\r\|_{\L^2}^2 \d s
            \\&\leq 
            \l\|Z\r\|_{\H^2}^2 \int_0^T \alpha_1\!\l(s\r) \d s
            .
        \end{flalign*}
        The integrability of $\alpha_1$ implies
        \begin{align*}
            \lim_{\l\|Z\r\|_{\H^2} \rightarrow 0} \frac{\l\| f\!\l(\cdot,X+Z,Y\r) - f\!\l(\cdot,X,Y\r) - \D x f\!\l(\cdot,X,Y\r) Z \r\|_{\M^1}}{\l\|Z\r\|_{\H^2}} &= 0, \\
            \lim_{\l\|Z\r\|_{\H^2} \rightarrow 0} \frac{\l\| f\!\l(\cdot,X,Y+Z\r) - f\!\l(\cdot,X,Y\r) - \D \xi f\!\l(\cdot,X,Y\r) Z \r\|_{\M^1}}{\l\|Z\r\|_{\H^2}} &= 0.
        \end{align*}
        That is, the map $(X,Y)\mapsto f(\cdot,X,Y)$ is Fréchet differentiable in each argument.
    \end{proof}

    \begin{lem}\label{lem:coefficients-1}
        If Assumption~\ref{ass:1-lipschitz} and \ref{ass:2-first-derivative} are satisfied, then $\D xf\!\l(\cdot,X,Y\r) Z, \D \xi f\!\l(\cdot,X,Y\r) Z\in \M^2(0,T)$
        for all components $f=b_k,h_{kij},g_{ki}$, $1\leq i,j\leq n$, $1\leq k\leq d$ and $X,Y,Z\in\H^{2,d}(0,T)$.
    \end{lem}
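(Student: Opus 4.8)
The plan is to treat the claim in two stages: first that $\D x f(\cdot,X,Y)Z$ and $\D \xi f(\cdot,X,Y)Z$ have finite $\M^2$-seminorm, which is immediate, and then that they genuinely lie in $\M^2(0,T)$ — the substantive point, since $\M^2(0,T)$ is a completion and so a finite seminorm is not by itself sufficient. For the first stage I would use the uniform derivative bounds \eqref{ineq:1-derivative-bound}: pointwise in $(s,\omega)$ they give $|\D x f(s,\omega,X_s(\omega),Y_s)Z_s(\omega)|\le\alpha_0(s)\|Z_s(\omega)\|$ and $|\D \xi f(s,\omega,X_s(\omega),Y_s)Z_s|\le\alpha_0(s)\|Z_s\|_{\L^2}$, so squaring, applying $\E[\cdot]$ and integrating in $s$ gives
\begin{equation*}
    \|\D x f(\cdot,X,Y)Z\|_{\M^2}^2\vee\|\D \xi f(\cdot,X,Y)Z\|_{\M^2}^2\le\|Z\|_{\H^2}^2\int_0^T\alpha_0(s)^2\,\d s<\infty,
\end{equation*}
where finiteness uses that $\alpha_0$ is $q_0$-integrable with $q_0\ge2$ on the finite interval $[0,T]$. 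Since $W\mapsto\D x f(\cdot,X,Y)W$ and $W\mapsto\D \xi f(\cdot,X,Y)W$ are linear, applying this bound to differences shows both maps are Lipschitz from $\H^{2,d}(0,T)$ into the Banach space $\M^2(0,T)$, so it suffices to prove membership for $Z$ in the dense subspace $\S^d(0,T)$. Writing $Z=\sum_k\zeta_k\ind_{[t_k,t_{k+1})}$ with each $\zeta_k\in\F^d(\CF_{t_k})$ bounded, one has $\D x f(\cdot,X,Y)Z=\sum_k(\D x f(\cdot,X,Y)\zeta_k)\ind_{[t_k,t_{k+1})}$ and likewise for $\D \xi$, so since $\M^2(0,T)$ is a vector space the problem reduces to showing that $(\D x f(\cdot,X,Y)\zeta)\ind_{[a,b)}$ and $(\D \xi f(\cdot,X,Y)\zeta)\ind_{[a,b)}$ lie in $\M^2(0,T)$ for a bounded $\CF_a$-measurable $\zeta$ and $0\le a<b\le T$.

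The key idea is to realise these restricted processes as limits in $\M^2(0,T)$ of difference quotients of $f$ itself. For $\epsilon>0$ I would set $q_\epsilon:=\epsilon^{-1}(f(\cdot,X+\epsilon\zeta\ind_{[a,b)},Y)-f(\cdot,X,Y))$, perturbing $X$ by $\zeta$ only on $[a,b)$ so that $X+\epsilon\zeta\ind_{[a,b)}\in\H^{2,d}(0,T)$. By Assumption~\ref{ass:2-first-derivative}.1 and the fundamental theorem of calculus,
\begin{equation*}
    q_\epsilon(s)=\ind_{[a,b)}(s)\int_0^1\D x f(s,X_s+\lambda\epsilon\zeta,Y_s)\zeta\,\d\lambda,
\end{equation*}
which converges pointwise, as $\epsilon\to0$, to $(\D x f(\cdot,X,Y)\zeta)\ind_{[a,b)}$. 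Moreover each $q_\epsilon$ lies in $\M^2(0,T)$, being a difference of compositions of the coefficient $f$ with processes in $\H^{2,d}(0,T)$, and such compositions belong to $\M^2(0,T)$ under Assumption~\ref{ass:1-lipschitz}: this is the $\M^2$-analogue of Corollary~3.4 in \cite{bollweg_mean-field_2025}, obtained by approximating the processes by simple ones, invoking Assumption~\ref{ass:1-lipschitz}.1 (together with Corollary~\ref{cor:growth}) for the deterministic-spatial-argument case, and reducing the random $\CF_a$-measurable spatial arguments arising from simple processes to that case by partitioning $\Omega$ into $\CF_a$-sets.

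It then remains to check that $(q_\epsilon)_{\epsilon>0}$ is Cauchy in $\M^2(0,T)$ as $\epsilon\to0$. By the first Lipschitz estimate in Assumption~\ref{ass:2-first-derivative}.2 and the boundedness of $\zeta$,
\begin{equation*}
    |q_\epsilon(s)-q_{\epsilon'}(s)|\le\ind_{[a,b)}(s)\int_0^1\alpha_1(s)\,\|\zeta\|\,\|\lambda\epsilon\zeta-\lambda\epsilon'\zeta\|\,\d\lambda\le\tfrac12\,\alpha_1(s)\,\|\zeta\|_\infty^2\,|\epsilon-\epsilon'|,
\end{equation*}
hence $\|q_\epsilon-q_{\epsilon'}\|_{\M^2}^2\le\tfrac14\|\zeta\|_\infty^4|\epsilon-\epsilon'|^2\int_0^T\alpha_1(s)^2\,\d s\to0$, using that $\alpha_1$ is $q_1$-integrable with $q_1\ge2$. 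Consequently $q_\epsilon$ converges in $\M^2(0,T)$, and since it also converges pointwise to $(\D x f(\cdot,X,Y)\zeta)\ind_{[a,b)}$, the latter coincides (up to a quasi-sure modification) with the $\M^2$-limit and therefore lies in $\M^2(0,T)$. The argument for $(\D \xi f(\cdot,X,Y)\zeta)\ind_{[a,b)}$ is verbatim the same, now perturbing $Y$ by $\zeta$ on $[a,b)$, using the second Lipschitz estimate in Assumption~\ref{ass:2-first-derivative}.2 and the bound $\|\zeta\|_{\L^2}\le\|\zeta\|_\infty$; nothing in the argument depends on which component is considered, so it applies to every $f=b_k,h_{kij},g_{ki}$.

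The step I expect to cost the most work is the claim that compositions $f(\cdot,V,W)$ of the coefficients with $V,W\in\H^{2,d}(0,T)$ actually belong to $\M^2(0,T)$ — rather than merely having finite seminorm — which is the usual subtlety attached to $\M^2(0,T)$ being defined as a completion; it is handled just as the $\M^1$-statement (Corollary~3.4) in \cite{bollweg_mean-field_2025}, the additional $\M^2$-integrability being furnished by Assumption~\ref{ass:1-lipschitz}.1 and Corollary~\ref{cor:growth}. Everything else is a routine combination of the fundamental theorem of calculus, the explicit Lipschitz bounds of Assumption~\ref{ass:2-first-derivative}, and elementary estimates.
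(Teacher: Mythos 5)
Your proposal is correct but takes a genuinely different, and considerably longer, route than the paper's. The paper's own proof is two lines: it cites Lemma~\ref{lem:coefficients-differentiable-1} to obtain $\D x f(\cdot,X,Y)Z,\,\D \xi f(\cdot,X,Y)Z\in\M^1(0,T)$ (those Fréchet derivatives are, by that lemma, bounded linear operators into $\M^1(0,T)$), and then produces the same $\M^2$-seminorm bound you write down, concluding $\M^2(0,T)$-membership. Your concern that a finite seminorm alone cannot place an object in a completion is fair in the abstract, but the paper is not making that naked inference: it already has $\M^1$-membership together with pointwise domination by $\alpha_0(\cdot)\l\|Z_\cdot\r\|$, which is the standard sufficient condition for upgrading from $\M^1$ to $\M^p$; you never invoke Lemma~\ref{lem:coefficients-differentiable-1} and therefore do not see this shortcut. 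Instead you reconstruct $\M^2$-membership from scratch, reducing to simple $Z$ via the Lipschitz estimate, realising $(\D x f(\cdot,X,Y)\zeta)\ind_{[a,b)}$ as the $\M^2$-Cauchy limit of the difference quotients $q_\epsilon$ of $f$ itself, and leaning on an $\M^2$-analogue of Corollary~3.4 of \cite{bollweg_mean-field_2025} to place each $q_\epsilon$ in $\M^2(0,T)$. That argument is sound as far as it goes, but the composition lemma you invoke is itself nontrivial and only sketched, and it carries roughly the same technical weight as the $\M^1$-plus-finite-norm inference you set out to avoid — so you have relocated the work rather than eliminated it. What your route buys is self-containedness, avoiding any appeal to the structure of the $\M^p$-completions; what the paper's route buys is brevity, by reusing the analysis already carried out in Lemma~\ref{lem:coefficients-differentiable-1}.
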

    \begin{proof}
        Lemma~\ref{lem:coefficients-differentiable-1} implies $\D xf\!\l(\cdot,X,Y\r) Z, \D \xi f\!\l(\cdot,X,Y\r) Z\in \M^1(0,T)$ for all $X,Y,Z\in\H^{2,d}(0,T)$.
        Moreover, the bound in \eqref{ineq:1-derivative-bound} yields
        \begin{align*}
            \int_0^T \E\l[ \l| \D x f\!\l(s,X_s,Y_s\r) Z_s \r|^2 \r] \d s 
            &\leq 
            \int_0^T \alpha_0\!\l(s\r)^2 \E\l[ \l\| Z_s \r\|^2\r ] \d s
            \lesssim 
            \l\|Z\r\|_{\H^2}^2
            < \infty,
        \end{align*}
        and
        \begin{align*}
            \int_0^T \E\l[ \l| \D \xi f\!\l(s,X_s,Y_s\r) Z_s \r|^2 \r] \d s 
            &\leq 
            \int_0^T \alpha_0\!\l(s\r)^2 \l\|Z_s\r\|_{\L^2}^2 \d s
            \lesssim 
            \l\|Z\r\|_{\H^2}^2
            < \infty
        \end{align*}
        since $\alpha_0$ is square-integrable and $Z\in\H^{2,d}(0,T)$.
        Hence, $\D xf\!\l(\cdot,X,Y\r) Z, \D \xi f\!\l(\cdot,X,Y\r) Z\in \M^2(0,T)$ for all $X,Y,Z\in\H^{2,d}(0,T)$.
    \end{proof}
    
    \begin{lem}\label{lem:A-linear}
        If Assumptions~\ref{ass:1-lipschitz} and \ref{ass:2-first-derivative} are satisfied, then the $G$-SDE
        \begin{align*}
            \d A^{t,x,\xi,y}_s 
            &= \D x b\!\l(s,X^{t,x,\xi}_s,X^{t,\xi}_s\r) A^{t,x,\xi,y}_s \d s  + \D x 
            h\!\l(s,X^{t,x,\xi}_s,X^{t,\xi}_s\r) A^{t,x,\xi,y}_s \d \l<B\r>_s  
            \\&\quad 
            + \D x g\!\l(s,X^{t,x,\xi}_s,X^{t,\xi}_s\r) A^{t,x,\xi,y}_s \d B_s, \qquad t\leq s\leq T, \\
            A^{t,x,\xi,y}_t &= y.
            \numberthis\label{eq:SDE-x-derivative}
        \end{align*}        
        admits a unique solution $A^{t,x,\xi,y}\in \H^{2,d}(t,T)$
        for all $0\leq t\leq T$, $x,y\in\Rbb^d$ and $\xi\in\L^{2,d}(t)$.
        Moreover, the map
        \begin{equation*}
            \Rbb^d \rightarrow \H^{2,d}(t,T),\qquad y\mapsto A^{t,x,\xi,y}
        \end{equation*}
        is linear.
    \end{lem}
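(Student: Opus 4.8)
The plan is to read \eqref{eq:SDE-x-derivative} as an ordinary linear $G$-SDE whose (random, progressively measurable) coefficients fit into the existence and uniqueness framework already invoked for \eqref{eq:MF-GSDE-x}, and then to deduce linearity of $y\mapsto A^{t,x,\xi,y}$ from uniqueness. Fix $0\leq t\leq T$, $x,y\in\Rbb^d$ and $\xi\in\L^{2,d}(t)$, and abbreviate $X:=X^{t,x,\xi}$, $Y:=X^{t,\xi}$, both of which lie in $\H^{2,d}(t,T)\subseteq\H^{2,d}(0,T)$. I would introduce the coefficients $\tilde b(s,\omega,a,\zeta):=\D x b\!\l(s,\omega,X_s(\omega),Y_s\r)a$ and $\tilde h,\tilde g$ defined analogously with $h,g$ in place of $b$; these are coefficients on $[0,T]\times\Omega\times\Rbb^d\times\L^{2,d}$ that happen not to depend on the mean-field slot $\zeta$, and the mean-field $G$-SDE they generate, with initial time $t$ and initial datum $y$, is exactly \eqref{eq:SDE-x-derivative}, since $\tilde b(s,A_s,A_s)=\D x b(s,X_s,Y_s)A_s$ and likewise for $\tilde h,\tilde g$.

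The first task is then to verify that $\tilde b,\tilde h,\tilde g$ satisfy Assumption~\ref{ass:1-lipschitz}; since Assumption~\ref{ass:1-lipschitz} is stronger than Assumption~3.1 in \cite{bollweg_mean-field_2025}, Theorem~3.12 there yields a unique solution $A^{t,x,\xi,y}\in\H^{2,d}(t,T)$. Condition~1 of Assumption~\ref{ass:1-lipschitz} holds because for a constant $a\in\Rbb^d$ the process $a\ind_{[s,T)}$ belongs to $\H^{2,d}(0,T)$, so Lemma~\ref{lem:coefficients-1} gives $\D x f\!\l(\cdot,X,Y\r)a\in\M^2(0,T)$ for every component $f$ of $b,h,g$, and multiplying by $\ind_{[s,T]}$ keeps it in $\M^2(0,T)$. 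Condition~2 holds because the uniform bound \eqref{ineq:1-derivative-bound} gives
\begin{equation*}
\l|\D x f\!\l(s,\omega,X_s(\omega),Y_s\r)(a-a')\r|\leq\alpha_0(s)\l\|a-a'\r\|\leq\alpha_0(s)\l(\l\|a-a'\r\|+\l\|\zeta-\zeta'\r\|_{\L^2}\r)
\end{equation*}
for all $a,a'\in\Rbb^d$, $\zeta,\zeta'\in\L^{2,d}$, $0\leq s\leq T$ and $\omega\in\Omega$, with $\alpha_0$ being $q_0$-integrable and $q_0\geq2$.

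Once existence and uniqueness are in hand, linearity follows formally: for $y_1,y_2\in\Rbb^d$ and $c_1,c_2\in\Rbb$, the process $A:=c_1A^{t,x,\xi,y_1}+c_2A^{t,x,\xi,y_2}$ again lies in $\H^{2,d}(t,T)$ (a vector space), and since each map $a\mapsto\D x f(s,X_s,Y_s)a$ and each of the $\d s$-, $\d\l<B\r>$- and $\d B$-integrals is linear, $A$ solves \eqref{eq:SDE-x-derivative} with initial condition $c_1y_1+c_2y_2$; by uniqueness $\l\|A-A^{t,x,\xi,c_1y_1+c_2y_2}\r\|_{\H^2}=0$, which is precisely the asserted linearity of $y\mapsto A^{t,x,\xi,y}$.

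I do not anticipate a genuine obstacle here: the statement is a direct instance of the linear $G$-SDE theory, and the only points that need care are the $\M^2$-integrability of the frozen derivative coefficients along the solution (supplied by Lemma~\ref{lem:coefficients-1}) and their uniform boundedness by the integrable weight $\alpha_0$ (supplied by \eqref{ineq:1-derivative-bound}); after that, existence and uniqueness is a citation and linearity is immediate from uniqueness.
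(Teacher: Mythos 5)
Your proposal is correct and lands on essentially the same pair of facts the paper uses (integrability of the frozen derivative coefficients via Lemma~\ref{lem:coefficients-1} and the uniform Lipschitz bound \eqref{ineq:1-derivative-bound}), but the packaging differs slightly in both halves. For existence and uniqueness, the paper simply observes that the coefficients of \eqref{eq:SDE-x-derivative} are Lipschitz and in $\M^2(0,T)$ and invokes standard linear $G$-SDE theory without reframing, whereas you embed \eqref{eq:SDE-x-derivative} back into the mean-field formulation (with a dummy mean-field slot) so that Theorem~3.12 of the cited reference applies verbatim; both routes are valid, and yours has the small advantage of reusing the one existence theorem already in play rather than appealing to a separate linear theory. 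For linearity, the paper writes out a Grönwall estimate showing $\|A^{t,x,\xi,y+\lambda z}-A^{t,x,\xi,y}-\lambda A^{t,x,\xi,z}\|_{\H^2}=0$ directly, which is in effect the uniqueness argument redone by hand; you instead note that $c_1A^{t,x,\xi,y_1}+c_2A^{t,x,\xi,y_2}$ is itself a solution with datum $c_1y_1+c_2y_2$ and cite uniqueness once, which is cleaner and equivalent. No gaps.
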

    \begin{proof}
        By Lemma~\ref{lem:coefficients-1}, the coefficients in \eqref{eq:SDE-x-derivative} are in $\M^2(0,T)$. Moreover, they are Lipschitz continuous and, thus \eqref{eq:SDE-x-derivative}
        admits a unique solution $A^{t,x,\xi,y}\in \H^{2,d}(t,T)$ for all $0\leq t\leq T$, $x,y\in \Rbb^d$ and $\xi\in\L^{2,d}(t)$.
        In particular, we deduce that the map $y\mapsto A^{t,x,\xi,y}$ is well-defined.

        Let $\lambda\in\Rbb$.
        By Lemma~\ref{lem:conditional}, we have for all $t\leq s\leq T$
        \begin{flalign*}
            \quad 
            \E&\l[ \sup_{t\leq w\leq s} \l\| A^{t,x,\xi,y+\lambda z}_w - A^{t,x,\xi,y}_w - \lambda A^{t,x,\xi,z}_w \r\|^2 \r]
            &&
            \\&\lesssim
            \sum_{f\in F} \int_t^s \E\l[ \l| \D x f\!\l(u,X^{t,x,\xi}_u,X^{t,\xi}_u\r) \l( A^{t,x,\xi,y+\lambda z}_u - A^{t,x,\xi,y}_u - \lambda A^{t,x,\xi,z}_u \r) \r|^2 \r] \d u
            \\&\leq
            \int_t^s \alpha_0\!\l(u\r)^2 \E\l[ \sup_{t\leq w\leq u}\l\| A^{t,x,\xi,y+\lambda z}_w - A^{t,x,\xi,y}_w - \lambda A^{t,x,\xi,z}_w \r\|^2 \r] \d u.
        \end{flalign*}
        Finally, Grönwall's inequality yields 
        \begin{equation*}
            \l\| A^{t,x,\xi,y+\lambda z}_u - A^{t,x,\xi,y}_u - \lambda A^{t,x,\xi,z}_u \r\|_{\H^2}=0.
        \end{equation*}
        Since $\lambda\in\Rbb$ and $y,z\in\Rbb^d$ were arbitrary, we deduce that $y\mapsto A^{t,x,\xi,y}$ is linear.
    \end{proof}

    \begin{lem}\label{lem:Dx-p-bound}
        Let $2\leq p\leq q_0$. 
        If Assumptions~\ref{ass:1-lipschitz} and \ref{ass:2-first-derivative} are satisfied, then
        \begin{equation*}
            \E\l[ \sup_{t\leq s\leq T} \l\| A^{t,x,\xi,y}_s \r\|^p \,\Big|\, \CF_t \r] \lesssim \l\|y\r\|^p
        \end{equation*}
        for all $0\leq t\leq T$, $x,y\in\Rbb^d$ and $\xi\in\L^{2,d}(t)$.
    \end{lem}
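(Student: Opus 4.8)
The plan is to exploit the linear structure of \eqref{eq:SDE-x-derivative}: writing $A:=A^{t,x,\xi,y}$, the coefficients $\D x b\!\l(s,X^{t,x,\xi}_s,X^{t,\xi}_s\r)$, $\D x h\!\l(s,X^{t,x,\xi}_s,X^{t,\xi}_s\r)$ and $\D x g\!\l(s,X^{t,x,\xi}_s,X^{t,\xi}_s\r)$ act linearly on the current value $A_s$ with operator norm bounded by $\alpha_0(s)$, thanks to \eqref{ineq:1-derivative-bound}. The estimate then follows from the conditional moment inequality of Lemma~\ref{lem:conditional} together with Grönwall's inequality, exactly in the spirit of the proofs of Lemmas~\ref{lem:H2-bound} and \ref{lem:xxi-yeta-p-bound}; note that, in contrast to those lemmas, no distributional ($\L^2$-norm) term enters the $A$-equation, so the resulting integral inequality closes directly in $\E\l[ \sup\l\|A\r\|^p\,\big|\,\CF_t\r]$.

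The first and only delicate step is to upgrade the integrability of $A$ from $\H^{2,d}(t,T)$, as provided by Lemma~\ref{lem:A-linear}, to $\H^{p,d}(t,T)$; this is needed so that the left-hand side of the asserted inequality is \emph{a priori} finite, which is what legitimises the Grönwall step below. Since stopping-time localisation is not readily available in the $G$-framework, I would obtain this either from the quantitative well-posedness theory for $G$-SDEs with $q_0$-integrable Lipschitz coefficients in $\H^{p,d}$ (applicable since $p\leq q_0$), or, if a self-contained argument is preferred, from the Picard scheme for \eqref{eq:SDE-x-derivative} started at $A^{(0)}\equiv y$: applying Lemma~\ref{lem:conditional}, the bound \eqref{ineq:1-derivative-bound} and Grönwall to each iterate shows that $\sup_m \l\|A^{(m)}\r\|_{\H^p}<\infty$, so the limit $A$ lies in $\H^{p,d}(t,T)$.

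With $A\in\H^{p,d}(t,T)$ in hand, I would apply Lemma~\ref{lem:conditional} to \eqref{eq:SDE-x-derivative} and then invoke \eqref{ineq:1-derivative-bound} to bound each integrand $\l|\D x f\!\l(u,X^{t,x,\xi}_u,X^{t,\xi}_u\r) A_u\r|^p$ by $\alpha_0(u)^p\l\|A_u\r\|^p\leq \alpha_0(u)^p\sup_{t\leq w\leq u}\l\|A_w\r\|^p$, which yields, for all $t\leq s\leq T$,
\begin{align*}
    \E\l[ \sup_{t\leq w\leq s}\l\|A_w\r\|^p \,\Big|\,\CF_t\r]
    &\lesssim \l\|y\r\|^p + \sum_{f\in F}\int_t^s \E\l[ \l| \D x f\!\l(u,X^{t,x,\xi}_u,X^{t,\xi}_u\r) A_u \r|^p \,\Big|\,\CF_t\r] \d u \\
    &\lesssim \l\|y\r\|^p + \int_t^s \alpha_0\!\l(u\r)^p\, \E\l[ \sup_{t\leq w\leq u}\l\|A_w\r\|^p \,\Big|\,\CF_t\r] \d u.
\end{align*}
Since $\alpha_0\geq 1$ and $p\leq q_0$ we have $\alpha_0^p\leq \alpha_0^{q_0}$, which is integrable on $[0,T]$; Grönwall's inequality applied to $s\mapsto \E\l[ \sup_{t\leq w\leq s}\l\|A_w\r\|^p \,\big|\,\CF_t\r]$ then gives $\E\l[ \sup_{t\leq s\leq T}\l\|A_s\r\|^p \,\big|\,\CF_t\r]\lesssim \l\|y\r\|^p$, with constants independent of $x$ and $\xi$. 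The main obstacle is therefore located entirely in the a priori $\H^{p,d}$-integrability of the first step; once that is secured, the remainder is a routine Grönwall argument of the type already used several times in Section~\ref{sec:prelim}.
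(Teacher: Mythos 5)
Your proposal is correct and follows essentially the same route as the paper: apply Lemma~\ref{lem:conditional} to the $A$-equation, bound each integrand by $\alpha_0(u)^p\sup_{t\leq w\leq u}\|A_w\|^p$ via \eqref{ineq:1-derivative-bound}, and close with Grönwall. The one thing you do that the paper does not is flag the need for a priori $\H^{p,d}$-integrability of $A$ to legitimise the Grönwall step; this is a genuine subtlety that the paper silently passes over here (and in the analogous Lemmas~\ref{lem:xxi-yeta-p-bound} and \ref{lem:Y-p-bound}), and your suggested remedy via the Picard iterates for \eqref{eq:SDE-x-derivative} is the natural way to supply the missing justification.
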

    \begin{proof}
        By Lemma~\ref{lem:conditional}, we have for all $t\leq s\leq T$ that
        \begin{flalign*}
            \quad
            \E&\l[ \sup_{t\leq w\leq s} \l\| A^{t,x,\xi,y}_w y\r\|^p \,\Big|\,\CF_t \r]
            &&
            \\&\lesssim 
            \l\|y\r\|^p +  \sum_{f\in F} \int_t^s \E\l[ \l|  \D x f\!\l(u,X^{t,x,\xi}_u,X^{t,\xi}_u\r) A^{t,x,\xi,y}_u \r|^p \,\Big|\,\CF_t \r] \d u
            \\&\lesssim
            \l\|y\r\|^p + \int_t^s \alpha_0\!\l(u\r)^p \E\l[ \sup_{t\leq w\leq u}\l\| A^{t,x,\xi,y}_w\r\|^p \,\Big|\,\CF_t \r] \d u
            .
        \end{flalign*}
        Grönwall's inequality yields the desired result.
    \end{proof}
    
    \begin{prp}\label{prp:x-differentiability}
        Let $0\leq t\leq T$ and $\xi \in\L^{2,d}(t)$. If Assumptions~\ref{ass:1-lipschitz} and \ref{ass:2-first-derivative} are satisfied, then
        the map
        \begin{equation*}
            \Rbb^d\rightarrow\H^{2,d}(t,T), \qquad x \mapsto X^{t,x,\xi}
        \end{equation*}
        is Fréchet differentiable with Fréchet derivative
        \begin{equation*}
            \D x X^{t,x,\xi}:\,
            \Rbb^d \rightarrow \H^{2,d}(t,T)
            ,\qquad
            y\mapsto \D x X^{t,x,\xi}y:=A^{t,x,\xi,y}
        \end{equation*}
        at $x\in \Rbb^d$.
    \end{prp}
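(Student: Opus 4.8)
The plan is to check, directly from the definition of the Fréchet derivative, that the bounded linear operator $y\mapsto A^{t,x,\xi,y}$ --- which is linear and well-defined by Lemma~\ref{lem:A-linear}, and bounded from $\Rbb^d$ to $\H^{2,d}(t,T)$ because $\l\|A^{t,x,\xi,y}\r\|_{\H^2}\lesssim\l\|y\r\|$ by Lemma~\ref{lem:Dx-p-bound} with $p=2$ --- is the Fréchet derivative of $x\mapsto X^{t,x,\xi}$. Fixing $x$ and abbreviating
\begin{equation*}
    \Delta^{y}:=X^{t,x+y,\xi}-X^{t,x,\xi}\in\H^{2,d}(t,T),\qquad R^{y}:=\Delta^{y}-A^{t,x,\xi,y}\in\H^{2,d}(t,T),
\end{equation*}
it suffices to prove $\l\|R^{y}\r\|_{\H^2}=o(\l\|y\r\|)$ as $\l\|y\r\|\to0$.

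First I would set up the $G$-SDE solved by $R^{y}$. One has $R^{y}_t=0$, and, since the mean-field argument $X^{t,\xi}$ is common to $X^{t,x+y,\xi}$ and $X^{t,x,\xi}$ (it depends only on $\xi$, not on the spatial initial datum), the fundamental theorem of calculus applied to the map $f(s,\cdot,X^{t,\xi}_s)$ for each component $f\in F$ gives
\begin{align*}
    f\!\l(s,X^{t,x+y,\xi}_s,X^{t,\xi}_s\r)&-f\!\l(s,X^{t,x,\xi}_s,X^{t,\xi}_s\r)-\D x f\!\l(s,X^{t,x,\xi}_s,X^{t,\xi}_s\r)A^{t,x,\xi,y}_s\\
    &=I^{f,y}_s+\D x f\!\l(s,X^{t,x,\xi}_s,X^{t,\xi}_s\r)R^{y}_s,
\end{align*}
where
\begin{equation*}
    I^{f,y}_s:=\int_0^1\big[\D x f(s,X^{t,x,\xi}_s+\lambda\Delta^{y}_s,X^{t,\xi}_s)-\D x f(s,X^{t,x,\xi}_s,X^{t,\xi}_s)\big]\Delta^{y}_s\,\d\lambda.
\end{equation*}
Hence $R^{y}$ solves a linear $G$-SDE whose $\d s$, $\d\l<B\r>_s$ and $\d B_s$ coefficients are $I^{f,y}_s+\D x f(s,X^{t,x,\xi}_s,X^{t,\xi}_s)R^{y}_s$ for $f=b_k$, $f=h_{kij}$ and $f=g_{ki}$ respectively. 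Assumption~\ref{ass:2-first-derivative}.2 gives the pointwise bound $\l|I^{f,y}_s\r|\le\tfrac{1}{2}\alpha_1(s)\l\|\Delta^{y}_s\r\|^2$, and \eqref{ineq:1-derivative-bound} gives $\l|\D x f(s,X^{t,x,\xi}_s,X^{t,\xi}_s)R^{y}_s\r|\le\alpha_0(s)\l\|R^{y}_s\r\|$; together with the moment bounds on $\Delta^{y}$ from Lemma~\ref{lem:xxi-yeta-p-bound}, these also show that the coefficients of the $R^{y}$-equation have the integrability required for the $G$-SDE a priori estimate to apply.

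I would then apply the conditional a priori estimate (Lemma~\ref{lem:conditional}) to this $G$-SDE, insert the two bounds, and use Grönwall's inequality --- with $\alpha_0^2$ as the integrable kernel --- to obtain
\begin{equation*}
    \E\l[\sup_{t\le w\le T}\l\|R^{y}_w\r\|^2\,\Big|\,\CF_t\r]\lesssim\int_t^T\alpha_1(u)^2\,\E\l[\l\|\Delta^{y}_u\r\|^4\,\Big|\,\CF_t\r]\d u.
\end{equation*}
Taking $\E$ (tower property and the Fubini-type inequality for the sublinear expectation), using that $\alpha_1^2$ is integrable and the bound $\E\l[\l\|\Delta^{y}_u\r\|^4\r]\lesssim\l\|y\r\|^4$ from Lemma~\ref{lem:xxi-yeta-p-bound} with $p=4$, yields $\l\|R^{y}\r\|_{\H^2}^2\lesssim\l\|y\r\|^4$, hence $\l\|R^{y}\r\|_{\H^2}/\l\|y\r\|\lesssim\l\|y\r\|\to0$, which is the assertion. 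The step I expect to be the main obstacle is the control of the quadratic error term $\int_t^T\alpha_1(u)^2\,\E[\l\|\Delta^{y}_u\r\|^4]\,\d u$: bounding it by $O(\l\|y\r\|^4)$ uses a fourth moment of the increment $\Delta^{y}$, which Lemma~\ref{lem:xxi-yeta-p-bound} supplies when the integrability exponent $q_0$ is at least $4$; if only second moments are available one must instead exploit that $I^{f,y}_s$ is simultaneously bounded by $2\alpha_0(s)\l\|\Delta^{y}_s\r\|$ and run a dominated-convergence argument, using that $\l\|\Delta^{y}_s\r\|\to0$ quasi-surely as $y\to0$ (Lemma~\ref{lem:xxi-yeta-p-bound} with $p=2$) together with the integrable dominating function $4\alpha_0(s)^2\l\|\Delta^{y}_s\r\|^2$. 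The remaining steps --- the fundamental-theorem-of-calculus identity, the membership $R^{y}\in\H^{2,d}(t,T)$, and the Grönwall estimate --- are routine given the preliminary lemmas.
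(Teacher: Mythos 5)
Your argument is correct and follows essentially the same route as the paper: the fundamental-theorem-of-calculus decomposition of $f(\cdot,X^{t,x+y,\xi},X^{t,\xi})-f(\cdot,X^{t,x,\xi},X^{t,\xi})-\D x f(\cdot,X^{t,x,\xi},X^{t,\xi})A^{t,x,\xi,y}$ into the quadratic remainder $I^{f,y}$ plus the linear-in-$R^y$ term, the a priori estimate from Lemma~\ref{lem:conditional}, the bound $\l|I^{f,y}_s\r|\lesssim\alpha_1(s)\l\|\Delta^y_s\r\|^2$ from Assumption~\ref{ass:2-first-derivative}.2, the fourth-moment control $\E[\sup\|\Delta^y\|^4]\lesssim\|y\|^4$ from Lemma~\ref{lem:xxi-yeta-p-bound}, and Grönwall with kernel $\alpha_0^2$; the paper even uses the very same intermediate inequality $\l\|R^y\r\|_{\H^2}^2\lesssim\l\|y\r\|^4$.

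Your parenthetical observation about the exponent is a genuine one: the fourth-moment application of Lemma~\ref{lem:xxi-yeta-p-bound} requires $4\le q_0$, while Assumption~\ref{ass:1-lipschitz} only guarantees $q_0\ge 2$ and Proposition~\ref{prp:x-differentiability} does not restate $q_0\ge 4$ as a hypothesis; the paper's proof uses $\E[\sup\|Y\|^4]\lesssim\|y\|^4$ without comment, so it implicitly assumes $q_0\ge 4$ exactly as you do (all subsequent results in the section do state this explicitly). Your proposed fallback via the alternative bound $|I^{f,y}_s|\le 2\alpha_0(s)\|\Delta^y_s\|$ plus a dominated-convergence argument would need some care in the sublinear-expectation setting (quasi-sure convergence and the usual dominated convergence theorem do not transfer automatically), but you flag it as a contingency rather than rely on it, so it does not affect the correctness of the main line of reasoning.
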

    \begin{proof}
        By Lemma~\ref{lem:A-linear}, the map $\D x X^{t,x,\xi}:\,y \mapsto A^{t,x,\xi,y}$ is linear.        
        Set $Y:=X^{t,x+y,\xi}-X^{t,x,\xi}$, then
        \begin{equation}
            \E\l[ \sup_{t\leq s\leq T} \l\|Y_s\r\|^4 \r]
            =
            \E\l[ \sup_{t\leq s\leq T} \l\|X^{t,x+y,\xi}_s-X^{t,x,\xi}_s\r\|^4 \r]  
            \lesssim 
            \l\|y\r\|^4 \label{ineq:Y-bound}
        \end{equation}
        due to Lemma~\ref{lem:xxi-yeta-p-bound}.
        By Lemma~\ref{lem:conditional}, we have for all $t\leq s\leq T$ that
        \begin{flalign*}
            \quad
            \E&\l[ \sup_{t\leq w\leq s} \l\| X^{t,x+y,\xi}_w - X^{t,x,\xi}_w - A^y_w\r\|^2\r]
            &&
            \\&\lesssim
            \sum_{f\in F} \int_t^s \E\l[ \l| f\!\l(u,X^{t,x+y,\xi}_u,X^{t,\xi}_u\r) - f\!\l(u,X^{t,x,\xi}_u,X^{t,\xi}_u\r) - \D x f\!\l(u,X^{t,x,\xi}_u,X^{t,\xi}_u\r) A^{t,x,\xi,y}_u \r|^2 \r] \d u
            \\& =
            \sum_{f\in F} \int_t^s \E\l[ \l| \int_0^1 \D x f\!\l(u,X^{t,x,\xi}_u+\lambda Y_u,X^{t,\xi}_u\r) Y_u \d \lambda - \D x f\!\l(u,X^{t,x,\xi}_u,X^{t,\xi}_u\r) A^{t,x,\xi,y}_u \r|^2 \r] \d u
            \\&\lesssim
            \sum_{f\in F} \int_t^s \int_0^1 \E\l[ \l| \D x f\!\l(u,X^{t,x,\xi}_u+\lambda Y_u,X^{t,\xi}_u\r) Y_u - \D x f\!\l(u,X^{t,x,\xi}_u,X^{t,\xi}_u\r) Y_u \r|^2 \r] \d \lambda \d u 
            \\&\quad +
            \sum_{f\in F} \int_t^s \E\l[ \l| \D x f\!\l(u,X^{t,x,\xi}_u,X^{t,\xi}_u\r) \!\l( Y_u - A^{t,x,\xi,y}_u \r) \r|^2 \r] \d u
            \\&\leq 
            \int_t^s \alpha_1\!\l(u\r)^2 \E\l[ \l\| Y_u \r\|^4 \r] + \alpha_0\!\l(u\r)^2 
            \E\l[ \l\| Y_u - A^{t,x,\xi,y}_u \r\|^2 \r] \d u
            \\&\lesssim 
            \l\|y\r\|^4
            + \int_t^s \alpha_0\!\l(u\r)^2 \E\l[ \sup_{t\leq w\leq u}\l\| X^{t,x+y,\xi}_w-X^{t,x,\xi}_w - A^{t,x,\xi,y}_w \r\|^2 \r] \d u
            ,
        \end{flalign*}
        where the last step follows from \eqref{ineq:Y-bound}. 
        Finally, Grönwall's inequality yields
        \begin{equation*}
            \l\| X^{t,x+y,\xi} - X^{t,x,\xi} - A^{t,x,\xi,y} \r\|_{\H^2}^2
            \lesssim \l\|y\r\|^4.
        \end{equation*}
        Thus,
        \begin{equation*}
            \lim_{\|y\|\rightarrow 0} \frac{\l\|X^{t,x+y,\xi} - X^{t,x,\xi} - A^{t,x,\xi,y}\r\|_{\H^2}}{\|y\|}
            = 0,
        \end{equation*}
        i.e., $\D x X^{t,x,\xi}:\,y\mapsto A^{t,x,\xi,y}$ is the Fréchet derivative of $x\mapsto X^{t,x,\xi}$ at $x\in\Rbb^d$.
    \end{proof}

    Next, we show that the map $x\mapsto X^{t,x,\xi}$ is continuously Fréchet differentiable.

    \begin{lem}\label{lem:Dx-Dx}
        Let $2\leq p\leq \l(q_1\wedge \frac{q_0}2\r)$. 
        If Assumptions~\ref{ass:1-lipschitz} and \ref{ass:2-first-derivative} are satisfied with $q_0\geq 4$, then
        \begin{equation*}
            \E\l[ \sup_{t\leq s\leq T} \l\| \D x X^{t,x,\xi}_sz  - \D x X^{t,y,\eta}_sz \r\|^p \,\Big| \,\CF_t\r] \lesssim \l\|z\r\|^p \l( \l\| x-y\r\|^p + \l\|\xi-\eta\r\|_{\L^2}^p\r)
        \end{equation*}
        for all $0\leq t\leq T$, $x,y,z\in\Rbb^d$ and $\xi,\eta\in\L^{2,d}(t)$.
    \end{lem}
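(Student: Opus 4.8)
The plan is to run the ``difference equation plus conditional Grönwall'' scheme used throughout this section, now on the difference of two derivative processes. By Proposition~\ref{prp:x-differentiability}, $\D x X^{t,x,\xi}_s z = A^{t,x,\xi,z}_s$, where $A^{t,x,\xi,z}$ solves the linear $G$-SDE \eqref{eq:SDE-x-derivative} with initial value $z$. Set $\Delta := A^{t,x,\xi,z} - A^{t,y,\eta,z}$, so that $\Delta_t = 0$. For each component $f\in F$, the corresponding coefficient of $\Delta$ is $\D x f\!\l(u,X^{t,x,\xi}_u,X^{t,\xi}_u\r)A^{t,x,\xi,z}_u - \D x f\!\l(u,X^{t,y,\eta}_u,X^{t,\eta}_u\r)A^{t,y,\eta,z}_u$, which I would split, by adding and subtracting $\D x f\!\l(u,X^{t,x,\xi}_u,X^{t,\xi}_u\r)A^{t,y,\eta,z}_u$, into $\D x f\!\l(u,X^{t,x,\xi}_u,X^{t,\xi}_u\r)\Delta_u$ plus $\l(\D x f\!\l(u,X^{t,x,\xi}_u,X^{t,\xi}_u\r) - \D x f\!\l(u,X^{t,y,\eta}_u,X^{t,\eta}_u\r)\r)A^{t,y,\eta,z}_u$. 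By \eqref{ineq:1-derivative-bound} the first term is bounded by $\alpha_0(u)\l\|\Delta_u\r\|$, and by the Lipschitz estimate for $\D x f$ in Assumption~\ref{ass:2-first-derivative}.2 the second is bounded by $\alpha_1(u)\l\|A^{t,y,\eta,z}_u\r\|\l(\l\|X^{t,x,\xi}_u - X^{t,y,\eta}_u\r\| + \l\|X^{t,\xi}_u - X^{t,\eta}_u\r\|_{\L^2}\r)$.

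Applying Lemma~\ref{lem:conditional} together with $(a+b)^p\lesssim a^p+b^p$ then gives, for all $t\le s\le T$, that $\E\l[\sup_{t\le w\le s}\l\|\Delta_w\r\|^p\,\big|\,\CF_t\r]$ is $\lesssim \int_t^s \alpha_0(u)^p\,\E\l[\sup_{t\le w\le u}\l\|\Delta_w\r\|^p\,\big|\,\CF_t\r]\d u + \int_t^s \alpha_1(u)^p\,\E\l[\l\|A^{t,y,\eta,z}_u\r\|^p\l(\l\|X^{t,x,\xi}_u - X^{t,y,\eta}_u\r\| + \l\|X^{t,\xi}_u - X^{t,\eta}_u\r\|_{\L^2}\r)^p\,\big|\,\CF_t\r]\d u$. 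In the last integrand, the factor $\l\|X^{t,\xi}_u - X^{t,\eta}_u\r\|_{\L^2}$ is deterministic and $\lesssim \l\|\xi-\eta\r\|_{\L^2}$ by Lemma~\ref{lem:xi-eta-2-bound}, while for the contribution of $\l\|X^{t,x,\xi}_u - X^{t,y,\eta}_u\r\|$ I would use the conditional Cauchy--Schwarz inequality followed by Lemma~\ref{lem:Dx-p-bound} and Lemma~\ref{lem:xxi-yeta-p-bound} at exponent $2p$ (legitimate precisely because $2p\le q_0$, which is where the hypotheses $q_0\ge 4$ and $p\le q_0/2$ enter), obtaining $\E\l[\l\|A^{t,y,\eta,z}_u\r\|^{2p}\,\big|\,\CF_t\r]^{1/2}\E\l[\l\|X^{t,x,\xi}_u - X^{t,y,\eta}_u\r\|^{2p}\,\big|\,\CF_t\r]^{1/2} \lesssim \l\|z\r\|^p\l(\l\|x-y\r\|^p + \l\|\xi-\eta\r\|_{\L^2}^p\r)$. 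Since $p\le q_1$ and $p\le q_0/2\le q_0$ and $\alpha_0,\alpha_1\ge 1$ are $q_0$- resp.\ $q_1$-integrable, the powers $\alpha_0^p$ and $\alpha_1^p$ are integrable on $[0,T]$, so the second integral above is $\lesssim \l\|z\r\|^p\l(\l\|x-y\r\|^p + \l\|\xi-\eta\r\|_{\L^2}^p\r)$.

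Grönwall's inequality applied to $s\mapsto \E\l[\sup_{t\le w\le s}\l\|\Delta_w\r\|^p\,\big|\,\CF_t\r]$ with the integrable kernel $\alpha_0^p$ then yields the asserted bound. The main obstacle is the bookkeeping of integrability exponents: one must check that doubling $p$ after Cauchy--Schwarz keeps the exponent within the range in which Lemmas~\ref{lem:Dx-p-bound} and \ref{lem:xxi-yeta-p-bound} apply (whence $q_0\ge 2p$, i.e.\ $q_0\ge 4$ and $p\le q_0/2$), and that the Lipschitz modulus $\alpha_1$ still contributes an integrable factor $\alpha_1^p$ (whence $p\le q_1$); once these constraints are in place the estimate closes exactly as in the rest of the section.
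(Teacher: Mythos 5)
Your proposal is correct and follows essentially the same approach as the paper: split the coefficient difference by adding and subtracting an intermediate term, control one piece via the uniform bound on $\D x f$ (feeding Grönwall) and the other via the Lipschitz modulus $\alpha_1$, then use conditional Cauchy--Schwarz with Lemmas~\ref{lem:Dx-p-bound}, \ref{lem:xxi-yeta-p-bound} and \ref{lem:xi-eta-2-bound} at the doubled exponent before closing with Grönwall. The only cosmetic difference is which intermediate term you insert (the paper keeps the factor $\D x X^{t,x,\xi}_u z$ in the Lipschitz term while you keep $A^{t,y,\eta,z}_u$); both choices are symmetric and the exponent bookkeeping $2p\le q_0$ and $p\le q_1$ enters in exactly the same way.
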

    \begin{proof}
        By Lemma~\ref{lem:conditional}, we have for all $t\leq s\leq T$ that
        \begin{flalign*}
            \quad
            \E&\l[ \sup_{t\leq w\leq s} \l\| \D x X^{t,x,\xi}_w z - \D x X^{t,y,\eta}_w z \r\|^p \,\Big| \,\CF_t \r]
            &&\\
            &\lesssim
            \sum_{f\in F} \int_t^s \E\l[ \l| \D x f\!\l(u,X^{t,x,\xi}_u,X^{t,\xi}_u\r) \D x X^{t,x,\xi}_u z - \D x f\!\l(u,X^{t,y,\eta}_u,X^{t,\eta}_u\r) \D x X^{t,y,\eta}_u z \r|^p \,\Big| \,\CF_t\r] \d u
            \\& \lesssim
            \sum_{f\in F} \int_t^s \E\l[ \l| \D x f\!\l(u,X^{t,x,\xi}_u,X^{t,\xi}\r) \D x X^{t,x,\xi}_u z- \D x f\!\l(u,X^{t,y,\eta}_u,X^{t,\eta}\r) \D x X^{t,x,\xi}_u z\r|^p \,\Big| \,\CF_t\r] \d u
            \\&\qquad + 
            \sum_{f\in F} \int_t^s \E\l[ \l| \D x f\!\l(u,X^{t,y,\eta}_u,X^{t,\eta}\r) \l(\D x X^{t,x,\xi}_u z - \D x X^{t,y,\eta}_u z\r)\r|^p \,\Big| \,\CF_t \r] \d u
            \\&\leq 
            \int_t^s \alpha_1\!\l(u\r)^p \E\l[ \l\| \D x X^{t,x,\xi}_u z\r\|^p \l\| X^{t,x,\xi}_u - X^{t,y,\eta}_u\r\|^p \,\Big|\,\CF_t\r] \d u 
            \\&\leq
            \int_t^s \alpha_1\!\l(u\r)^p \E\l[ \l\| \D x X^{t,x,\xi}_u z\r\|^{2p} \,\Big|\,\CF_t\r]^{\frac12} \E\l[ \l\| X^{t,x,\xi}_u - X^{t,y,\eta}_u\r\|^{2p} \,\Big|\,\CF_t\r]^{\frac12} \d u 
            \\&\quad + 
            \int_t^s \alpha_1\!\l(u\r)^p \l\| X^{t,\xi}_u - X^{t,\eta}_u \r\|_{\L^2}^p \E\l[ \l\| \D x X^{t,x,\xi}_u z\r\|^p \,\Big|\,\CF_t\r] \d u
            \\&\quad +
            \int_t^s \alpha_0\!\l(u\r)^p \E\l[ \l\| \D x X^{t,x,\xi}_u z - \D x X^{t,y,\eta}_u z\r\|^p \,\Big| \,\CF_t \r] \d u
            \\&\lesssim
            \l\|z\r\|^p \l( \l\|x-y\r\|^p + \l\|\xi-\eta\r\|_{\L^2}^p \r) + \int_t^s \alpha_0\!\l(u\r)^p \E\l[ \l\| \D x X^{t,x,\xi}_u z - \D x X^{t,y,\eta}_u z\r\|^p \,\Big| \,\CF_t \r] \d u
            ,
        \end{flalign*}
        where the last step follows from Lemmas~\ref{lem:xi-eta-2-bound}, \ref{lem:xxi-yeta-p-bound} and \ref{lem:Dx-p-bound}.
        Finally, Grönwall's inequality yields the desired result.
    \end{proof}
    
    \begin{cor}\label{cor:Dx-continuous}
        Let $0\leq t\leq T$, $\xi\in\L^{2,d}(t)$. If Assumptions~\ref{ass:1-lipschitz} and \ref{ass:2-first-derivative} are satisfied with $q_0\geq 4$, then the map
        \begin{equation*}
            \Rbb^d \rightarrow \H^{2,d}(t,T),\qquad x\mapsto X^{t,x,\xi}
        \end{equation*}
        is continuously Fréchet differentiable.
    \end{cor}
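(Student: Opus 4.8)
The plan is to deduce the statement directly from Proposition~\ref{prp:x-differentiability} and the Lipschitz estimate of Lemma~\ref{lem:Dx-Dx}, so that no new analytic work is needed. Proposition~\ref{prp:x-differentiability} already gives the Fréchet differentiability of $x\mapsto X^{t,x,\xi}$ together with the identification $\D x X^{t,x,\xi}:\,y\mapsto A^{t,x,\xi,y}$ of its derivative, so all that remains is to show that $x\mapsto \D x X^{t,x,\xi}$ is continuous with respect to the operator norm on $B(\Rbb^d,\H^{2,d}(t,T))$.

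First I would note that, since $q_0\geq 4$ and $q_1\geq 2$, the exponent $p=2$ satisfies $2\leq p\leq q_1\wedge \frac{q_0}{2}$, so Lemma~\ref{lem:Dx-Dx} applies with $p=2$. Taking $\eta=\xi$ there yields, for all $x,y,z\in\Rbb^d$,
\begin{equation*}
    \E\l[ \sup_{t\leq s\leq T} \l\| \D x X^{t,x,\xi}_s z - \D x X^{t,y,\xi}_s z \r\|^2 \,\Big|\,\CF_t\r] \lesssim \l\|z\r\|^2\,\l\|x-y\r\|^2 .
\end{equation*}
I would then apply the (unconditional) sublinear expectation to both sides and use the tower property $\E\l[\E\l[\,\cdot\,\big|\,\CF_t\r]\r]=\E\l[\,\cdot\,\r]$ to remove the conditioning, which gives
\begin{align*}
    \l\| \D x X^{t,x,\xi}z - \D x X^{t,y,\xi}z \r\|_{\H^2}^2
    &= \E\l[ \sup_{t\leq s\leq T}\l\| \D x X^{t,x,\xi}_s z - \D x X^{t,y,\xi}_s z\r\|^2 \r] \\
    &\lesssim \l\|z\r\|^2\,\l\|x-y\r\|^2
\end{align*}
for all $x,y,z\in\Rbb^d$.

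To conclude, I would use that the domain $\Rbb^d$ is finite-dimensional, so the operator norm of a bounded linear map $L:\,\Rbb^d\rightarrow\H^{2,d}(t,T)$ equals $\sup_{z\neq 0}\l\|Lz\r\|_{\H^2}/\l\|z\r\|$; dividing the previous display by $\l\|z\r\|^2$ and taking the supremum over $z\neq 0$ yields
\begin{equation*}
    \l\| \D x X^{t,x,\xi} - \D x X^{t,y,\xi} \r\|_{\textnormal{op}} \lesssim \l\|x-y\r\|
\end{equation*}
for all $x,y\in\Rbb^d$. Thus $x\mapsto \D x X^{t,x,\xi}$ is globally Lipschitz, hence continuous, in the operator norm, and together with Proposition~\ref{prp:x-differentiability} this shows that $x\mapsto X^{t,x,\xi}$ is continuously Fréchet differentiable.

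There is essentially no genuine obstacle here, as the entire analytic content sits in Lemma~\ref{lem:Dx-Dx}; the only points that deserve a line of justification are the passage from the conditional bound to the unconditional one through the tower property of the sublinear expectation, and the elementary observation that on the finite-dimensional space $\Rbb^d$ the operator norm is controlled by the estimate obtained one direction $z$ at a time. The role of the standing hypothesis $q_0\geq 4$ is precisely to make $p=2$ an admissible exponent in Lemma~\ref{lem:Dx-Dx}.
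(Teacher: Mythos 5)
Your proof is correct and follows the paper's own route: the paper likewise cites Lemma~\ref{lem:Dx-Dx} (with $\eta=\xi$, $p=2$), passes to the unconditional bound, and divides by $\|z\|$ to get the Lipschitz estimate for $x\mapsto\D x X^{t,x,\xi}$ in operator norm. You simply spell out the intermediate steps (admissibility of $p=2$, removal of the $\CF_t$-conditioning via the consistency property of the sublinear conditional expectation) that the paper leaves implicit.
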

    \begin{proof}
        Lemma~\ref{lem:Dx-Dx} implies that
        \begin{equation*}
            \sup_{0\neq z\in \Rbb^d}\frac{\l\| \D x X^{t,x,\xi}z - \D x X^{t,y,\xi}z\r\|_{\H^2}}{\l\|z\r\|}
            \lesssim \l\| x-y\r\|,
        \end{equation*}
        i.e., $x\mapsto \D x X^{t,x,\xi}$ is continuous with respect to the operator norm.
    \end{proof}

    \begin{lem}\label{lem:Dx-concatenation}
        Let $0\leq t\leq T$ and $\xi,\eta,\zeta\in\L^2(t)$.
        If Assumptions~\ref{ass:1-lipschitz} and \ref{ass:2-first-derivative} are satisfied with $q_0\geq 4$, then $\D x X^{t,\eta,\xi}\zeta\in\H^{2,d}(t,T)$ with
        \begin{equation*}
            \E\l[ \sup_{t\leq w\leq T}\l\| \D x X^{t,\eta,\xi}_w\zeta \r\|^2 \r] \lesssim \l\|\zeta\r\|_{\L^2}^2,
        \end{equation*}
        where $\D x X^{t,\eta,\xi}\zeta$ denotes the map
        \begin{equation*}
            [0,T]\times \Omega \rightarrow \Rbb^{d},
            \qquad
            (s,\omega)\mapsto \D x X^{t,\eta,\xi}_s\zeta(\omega):= A^{t,x,\xi,y}_s(\omega)\Big|_{x=\eta(\omega), y=\zeta(\omega)}.
        \end{equation*}
    \end{lem}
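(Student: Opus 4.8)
The plan is to follow the same two-step pattern as in the proof of Lemma~\ref{lem:concatenation-H2}. First I would establish that the map
\begin{equation*}
    \Rbb^d\times\Rbb^d\rightarrow\H^{2,d}(0,T),\qquad (x,y)\mapsto A^{t,x,\xi,y}
\end{equation*}
is continuous with at most linear growth, so that the process obtained by substituting $(x,y)=(\eta,\zeta)$ is a well-defined element of $\M^{2,d}(t,T)$ by Lemma~A.4 in \cite{bollweg_mean-field_2025}; then I would upgrade this to membership in $\H^{2,d}(t,T)$ and prove the claimed bound by invoking the conditional estimate of Lemma~\ref{lem:Dx-p-bound} together with the aggregation property of the conditional sublinear expectation, exactly as in Lemma~\ref{lem:concatenation-H2}.

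For the continuity step, recall that $A^{t,x,\xi,y}\in\H^{2,d}(t,T)\subseteq\H^{2,d}(0,T)$ by Lemma~\ref{lem:A-linear}, and that the same lemma gives the linearity $A^{t,x',\xi,y}-A^{t,x',\xi,y'}=A^{t,x',\xi,y-y'}$ up to an $\H^2$-null set. Combining this with Lemma~\ref{lem:Dx-p-bound} for $p=2$ (taking the sublinear expectation of the conditional bound) and with Lemma~\ref{lem:Dx-Dx} for $p=2$ (which is admissible since $q_0\geq4$, with $\xi$ held fixed), the triangle inequality yields
\begin{equation*}
    \l\| A^{t,x,\xi,y}-A^{t,x',\xi,y'}\r\|_{\H^2}
    \leq \l\| A^{t,x,\xi,y}-A^{t,x',\xi,y}\r\|_{\H^2} + \l\| A^{t,x',\xi,y-y'}\r\|_{\H^2}
    \lesssim \l\|y\r\|\,\l\|x-x'\r\| + \l\|y-y'\r\|,
\end{equation*}
which gives joint continuity (indeed Lipschitz continuity on bounded sets) and linear growth. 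Lemma~A.4 in \cite{bollweg_mean-field_2025}, applied to this family with the $\Rbb^{2d}$-valued datum $(\eta,\zeta)\in\L^{2,2d}(t)$, then shows that $(s,\omega)\mapsto A^{t,x,\xi,y}_s(\omega)\big|_{x=\eta(\omega),\,y=\zeta(\omega)}=\D x X^{t,\eta,\xi}_s\zeta(\omega)$ belongs to $\M^{2,d}(t,T)$. Afterwards, Lemma~\ref{lem:Dx-p-bound} with $p=2$ gives $\E\big[\sup_{t\leq w\leq T}\l\|A^{t,x,\xi,y}_w\r\|^2\,\big|\,\CF_t\big]\lesssim\l\|y\r\|^2$, and the aggregation property yields
\begin{equation*}
    \E\l[ \sup_{t\leq w\leq T} \l\| \D x X^{t,\eta,\xi}_w\zeta \r\|^2 \r]
    = \E\l[ \E\l[ \sup_{t\leq w\leq T} \l\| A^{t,x,\xi,y}_w \r\|^2 \,\Big|\, \CF_t \r]\Big|_{x=\eta,\,y=\zeta} \r]
    \lesssim \E\l[ \l\|\zeta\r\|^2 \r] = \l\|\zeta\r\|_{\L^2}^2 < \infty,
\end{equation*}
which, combined with $\D x X^{t,\eta,\xi}\zeta\in\M^{2,d}(t,T)$ and the fact that its paths inherit the $\H^{2,d}(t,T)$-structure of the $A^{t,x,\xi,y}$ (in particular they vanish q.s. on $[0,t)$), gives $\D x X^{t,\eta,\xi}\zeta\in\H^{2,d}(t,T)$.

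The main obstacle is the careful treatment of the \emph{double} substitution $x=\eta$, $y=\zeta$: while the conditional bound of Lemma~\ref{lem:Dx-p-bound} handles the $\H^2$-estimate cleanly through aggregation, the more delicate point is verifying that the substituted process genuinely lies in $\M^{2,d}(t,T)$ rather than being merely bounded in $\H^2$. Here the linearity of $y\mapsto A^{t,x,\xi,y}$ from Lemma~\ref{lem:A-linear} together with the joint continuity estimate above is precisely what makes Lemma~A.4 in \cite{bollweg_mean-field_2025} applicable; one should check that the version of that lemma being used (or an iteration of it over the two variables) indeed accommodates substitution in both arguments simultaneously. Beyond that, the argument is a routine assembly of Lemmas~\ref{lem:A-linear}, \ref{lem:Dx-p-bound} and \ref{lem:Dx-Dx}, whose own proofs already absorb the Grönwall estimates.
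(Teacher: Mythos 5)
Your argument takes a genuinely different route from the paper's. You transpose the two-step pattern of Lemma~\ref{lem:concatenation-H2} --- first show membership in $\M^{2,d}(t,T)$ via the concatenation lemma, then bound the $\H^2$ norm by aggregation --- whereas the paper instead introduces an auxiliary linear $G$-SDE
\begin{align*}
    \d Y_s = \D x b\!\l(s,X^{t,\eta,\xi}_s,X^{t,\xi}_s\r)Y_s \d s + \D x h\!\l(s,X^{t,\eta,\xi}_s,X^{t,\xi}_s\r)Y_s \d\l<B\r>_s + \D x g\!\l(s,X^{t,\eta,\xi}_s,X^{t,\xi}_s\r)Y_s \d B_s,\quad Y_t=\zeta,
\end{align*}
whose coefficients are Lipschitz and in $\M^2(0,T)$ because $X^{t,\eta,\xi}\in\H^{2,d}(t,T)$ was already established. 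Well-posedness gives $Y\in\H^{2,d}(t,T)$ directly, and a conditional Grönwall estimate plus the aggregation property shows $\l\|\D x X^{t,\eta,\xi}\zeta - Y\r\|_{\H^2}=0$, i.e. the concatenation \emph{is} the solution of an SDE that is a priori in $\H^{2,d}(t,T)$. That identification is the whole point of the paper's argument and sidesteps the two weak spots in yours.

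The first weak spot: your estimate on $(x,y)\mapsto A^{t,x,\xi,y}$ is not globally Lipschitz --- the bound on $\l\|A^{t,x,\xi,y}-A^{t,x',\xi,y}\r\|_{\H^2}$ carries a factor $\l\|y\r\|$ --- so the appeal to Lemma~A.4 in \cite{bollweg_mean-field_2025} is not a direct transcription of the way it is used in Lemma~\ref{lem:concatenation-H2}, where the family $x\mapsto X^{t,x,\xi}$ is uniformly Lipschitz (Lemma~\ref{lem:xxi-yeta-p-bound}). You acknowledge this and suggest an iteration; that can be made to work (e.g.\ expand $A^{t,x,\xi,y}$ linearly in $y$ over basis directions, which \emph{are} uniformly Lipschitz in $x$, and then concatenate in $y$), but as written the step is not justified.

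The second weak spot is more substantive: membership in $\M^{2,d}(t,T)$ together with $\l\|\cdot\r\|_{\H^2}<\infty$ does not by itself place a process in the completion $\H^{2,d}(t,T)$ --- one still needs approximability by step processes in the $\H^2$ norm, and nothing you write delivers that. You are, to be fair, mimicking the phrasing of Lemma~\ref{lem:concatenation-H2}, so the paper apparently takes such an identification for granted there (presumably because of additional structure supplied by the concatenation lemma in \cite{bollweg_mean-field_2025}). But the paper's proof of the present lemma deliberately does \emph{not} rely on that shortcut; it obtains $\H^{2,d}(t,T)$ membership by identifying the concatenation with the unique solution $Y$ of an SDE, which is in $\H^{2,d}(t,T)$ by construction. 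If you wish to keep your route, you should either make the $\M^{2,d}\cap\{\text{finite }\H^2\}\Rightarrow\H^{2,d}$ implication explicit, or fall back on the SDE-identification trick, which is shorter and avoids the issue altogether.

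Your handling of the final bound via aggregation and Lemma~\ref{lem:Dx-p-bound} coincides with the paper's and is correct.
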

    \begin{proof}
        We have $X^{t,\eta,\xi}\in\H^{2,d}(t,T)$ due to Corollary~\ref{lem:concatenation-H2}.
        Moreover, the SDE
        \begin{align*}
            \d Y_s
            &= \D x b\!\l(s,X^{t,\eta,\xi}_s,X^{t,\xi}_s\r)Y_s \d s  + \D x 
            h\!\l(s,X^{t,\eta,\xi}_s,X^{t,\xi}_s\r)Y_s \d \l<B\r>_s  
            \\&\quad 
            + \D x g\!\l(s,X^{t,\eta,\xi}_s,X^{t,\xi}_s\r)Y_s \d B_s, \qquad t\leq s\leq T, \\
            Y_t &= \zeta.
        \end{align*}
        has a unique solution $Y\in\H^{2,d}(t,T)$ since the coefficients are Lipschitz continuous and in $\M^2(0,T)$.

        By Lemma~\ref{lem:conditional}, we have for all $t\leq s\leq T$
        \begin{flalign*}
            \quad
            \E&\l[ \sup_{t\leq w\leq s}\l\|A^{t,x,\xi,y}_w - Y_w\r\|^2\,\Big|\,\CF_t\r]
            &&
            \\&\lesssim
            \l\| y-\zeta\r\|^2 +
            \sum_{f\in F} \int_t^s \E\l[ \l| \D x f\!\l(u,X^{t,x,\xi}_u,X^{t,\xi}_u\r)A^{t,x,\xi,y}_u - \D x f\!\l(u,X^{t,\eta,\xi}_u,X^{t,\xi}_u\r)Y_u \r|^2\,\Big|\,\CF_t \r]\d u
            \\&\lesssim
            \l\| y-\zeta\r\|^2 +
            \int_t^s \alpha_1\!\l(u\r)^2 \E\l[ \l\| X^{t,x,\xi}_u -X^{t,\eta,\xi}_u\r\|^2 \l\| A^{t,x,\xi,y}_u \r\|^2 \,\Big|\,\CF_t \r]\d u
            \\&\quad
            +\int_t^s \alpha_0\!\l(u\r)^2 \E\l[ \l\| A^{t,x,\xi,y}_u - Y_u \r\|^2 \,\Big|\,\CF_t \r]\d u
            \\&\leq 
            \l\| y-\zeta\r\|^2 +
            \int_t^s \alpha_1\!\l(u\r)^2 \E\l[ \l\| X^{t,x,\xi}_u -X^{t,z,\xi}_u\r\|^4\,\Big|\,\CF_t\r]^{\frac12}\Big|_{z=\eta} \E\l[\l\| A^{t,x,\xi,y}_u \r\|^4 \,\Big|\,\CF_t \r]^{\frac12}\d u
            \\&\quad
            +\int_t^s \alpha_0\!\l(u\r)^2 \E\l[ \l\| A^{t,x,\xi,y}_u - Y_u \r\|^2 \,\Big|\,\CF_t \r]\d u
            \\&\lesssim 
            \l\| y-\zeta\r\|^2 + \l\|x-\eta\r\|^2 \l\|y\r\|^2 
            +\int_t^s \alpha_0\!\l(u\r)^2 \E\l[ \sup_{t\leq w\leq u}\l\| A^{t,x,\xi,y}_w - Y_w \r\|^2 \,\Big|\,\CF_t \r]\d u
        \end{flalign*}
        due to Lemmas~\ref{lem:xxi-yeta-p-bound} and \ref{lem:Dx-p-bound}.
        Grönwall's inequality implies
        \begin{equation*}
            \E\l[ \sup_{t\leq w\leq T}\l\|A^{t,x,\xi,y}_w - Y_w\r\|^2\,\Big|\,\CF_t\r] \lesssim \l\| y-\zeta\r\|^2 + \l\|x-\eta\r\|^2 \l\|y\r\|^2
        \end{equation*}
        and, thus,
        \begin{equation*}
            \l\| \D x X^{t,\eta,\xi}\zeta - Y \r\|_{\H^2}^2
            =\E\l[ \E\l[ \sup_{t\leq w\leq T}\l\|A^{t,x,\xi,y}_w - Y_w\r\|^2\,\Big|\,\CF_t \r] \bigg|_{x=\eta,\,y=\zeta}\r]
            =
            0.
        \end{equation*}
        That is, $\D x X^{t,\eta,\xi}\zeta = Y\in \H^{2,d}(t,T)$. Finally, we have
        \begin{align*}
            \E\l[ \sup_{t\leq w\leq T}\l\| \D x X^{t,\eta,\xi}\zeta \r\|^2 \r]
            &= \E\l[ \E\l[ \sup_{t\leq w\leq T}\l\| \D x X^{t,x,\xi}z \r\|^2 \,\Big|\,\CF_t\r]\bigg|_{x=\eta,\,z=\zeta} \r]
            \lesssim \E\l[ \l\| \zeta \r\|^2 \r]
        \end{align*}
        due to Lemma~\ref{lem:Dx-p-bound}.
    \end{proof}

    \begin{cor}\label{cor:Dx-Dx-L1}
        If Assumptions~\ref{ass:1-lipschitz} and \ref{ass:2-first-derivative} are satisfied with $q_0\geq 4$, then
        \begin{equation*}
            \E\l[ \sup_{t\leq w\leq T}\l\| \D x X^{t,\eta,\xi}_w \zeta - \D x X^{t,\nu,\chi}_w \zeta\r\| \r] \lesssim \l\|\zeta\r\|_{\L^2}\l( \l\| \eta-\nu\r\|_{\L^2} + \l\| \xi-\chi \r\|_{\L^2}\r)
        \end{equation*}
        for all $0\leq t\leq T$ and $\xi,\eta,\zeta,\nu,\chi\in\L^{2,d}(t)$.
    \end{cor}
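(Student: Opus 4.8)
The plan is to upgrade the conditional estimate of Lemma~\ref{lem:Dx-Dx} to the stated unconditional one by the same aggregation argument used in the proof of Lemma~\ref{lem:concatenation-identity}. For deterministic parameters $x_1,x_2,z\in\Rbb^d$ set
\begin{equation*}
    \Phi^{x_1,x_2,z}:=A^{t,x_1,\xi,z}-A^{t,x_2,\chi,z},
\end{equation*}
which belongs to $\H^{2,d}(t,T)$ by Lemma~\ref{lem:A-linear} and which, by the definition of the concatenation in Lemma~\ref{lem:Dx-concatenation}, satisfies
\begin{equation*}
    \D x X^{t,\eta,\xi}_w\zeta-\D x X^{t,\nu,\chi}_w\zeta=\Phi^{x_1,x_2,z}_w\big|_{x_1=\eta,\,x_2=\nu,\,z=\zeta}
\end{equation*}
quasi-surely, since $\eta,\nu,\zeta\in\L^{2,d}(t)$ are $\CF_t$-measurable.

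The steps are then as follows. First, since $\xi,\chi\in\L^{2,d}(t)$, I would invoke Lemma~\ref{lem:Dx-Dx} with $p=2$ — admissible because $q_1\geq 2$ and, by hypothesis, $q_0\geq 4$ — to obtain
\begin{equation*}
    \E\l[\sup_{t\leq w\leq T}\l\|\Phi^{x_1,x_2,z}_w\r\|^2\,\Big|\,\CF_t\r]\lesssim\l\|z\r\|^2\l(\l\|x_1-x_2\r\|^2+\l\|\xi-\chi\r\|_{\L^2}^2\r)
\end{equation*}
for all deterministic $x_1,x_2,z$. Second, the conditional Hölder inequality $\E[\,\cdot\,|\CF_t]\leq\E[(\,\cdot\,)^2|\CF_t]^{1/2}$ turns this into the same bound with the exponents halved. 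Third, since $\eta,\nu,\zeta$ are $\CF_t$-measurable, the aggregation property permits substituting $x_1=\eta$, $x_2=\nu$, $z=\zeta$ inside the conditional expectation and then taking the sublinear expectation, which yields
\begin{align*}
    \E\l[\sup_{t\leq w\leq T}\l\|\D x X^{t,\eta,\xi}_w\zeta-\D x X^{t,\nu,\chi}_w\zeta\r\|\r]
    &=\E\l[\E\l[\sup_{t\leq w\leq T}\l\|\Phi^{x_1,x_2,z}_w\r\|\,\Big|\,\CF_t\r]\bigg|_{x_1=\eta,\,x_2=\nu,\,z=\zeta}\r]\\
    &\lesssim\E\l[\l\|\zeta\r\|\l(\l\|\eta-\nu\r\|+\l\|\xi-\chi\r\|_{\L^2}\r)\r]\\
    &\leq\l\|\zeta\r\|_{\L^2}\l(\l\|\eta-\nu\r\|_{\L^2}+\l\|\xi-\chi\r\|_{\L^2}\r),
\end{align*}
where the final inequality is Hölder's inequality for the sublinear expectation, namely $\E[\l\|\zeta\r\|\l\|\eta-\nu\r\|]\leq\l\|\zeta\r\|_{\L^2}\l\|\eta-\nu\r\|_{\L^2}$ and $\E[\l\|\zeta\r\|]\leq\l\|\zeta\r\|_{\L^2}$.

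I expect the only point requiring care to be the joint measurability of $(x_1,x_2,z,\omega)\mapsto\Phi^{x_1,x_2,z}(\omega)$ underlying the aggregation step; this is precisely what is checked in Lemma~\ref{lem:Dx-concatenation} when $\D x X^{t,\eta,\xi}\zeta$ is identified with an element of $\H^{2,d}(t,T)$, so no new argument is needed. Everything else is a verbatim repetition of the conditional-to-unconditional passage already carried out in Lemmas~\ref{lem:concatenation-identity} and \ref{lem:Dx-concatenation}.
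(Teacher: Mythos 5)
Your proof is correct and follows essentially the same route as the paper's: aggregate to pass from the conditional $\L^2$ estimate of Lemma~\ref{lem:Dx-Dx} (with $p=2$) through a conditional Jensen step to the unconditional $\L^1$ bound, then apply the Cauchy–Schwarz inequality for $\E$. The extra care you take in naming $\Phi^{x_1,x_2,z}$ and in pointing to Lemma~\ref{lem:Dx-concatenation} for the joint measurability underlying the substitution $x_1=\eta,\,x_2=\nu,\,z=\zeta$ is exactly what the paper leaves implicit.
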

    \begin{proof}
        Lemma~\ref{lem:Dx-concatenation} together with the aggregation property yield
        \begin{flalign*}
            \quad &
            \E\l[ \sup_{t\leq w\leq T}\l\| \D x X^{t,\eta,\xi}_w \zeta - \D x X^{t,\nu,\chi}_w \zeta\r\| \r]
            &&
            \\&\leq \E\l[ \E\l[ \sup_{t\leq w\leq T}\l\| \D x X^{t,x,\xi}_w z - \D x X^{t,y,\chi}_w z\r\|^2 \,\Big|\, \CF_t \r]^{\frac12} \bigg|_{x=\eta,\,y=\nu,\,z=\zeta}\r]
            \\&\lesssim
            \E\l[ \l\|\zeta\r\| \l( \l\|\eta-\nu\r\| + \l\|\xi-\chi\r\|_{\L^2}\r)\r]
            \\&\lesssim
            \l\|\zeta\r\|_{\L^2} \l( \l\| \eta-\nu\r\|_{\L^2} + \l\| \xi-\chi \r\|_{\L^2}\r).
        \end{flalign*}
    \end{proof}
    
    \begin{lem}\label{lem:H1-diff}
        Let $0\leq t\leq T$ and $\xi\in\L^2(t)$. 
        If Assumptions~\ref{ass:1-lipschitz} and \ref{ass:2-first-derivative} are satisfied with $q_0\geq 4$, then
        \begin{equation*}
            \lim_{\l\|\eta\r\|_{\L^2} \rightarrow 0} \frac{\l\| X^{t,\xi+\eta,\xi+\eta} - X^{t,\xi,\xi+\eta} - \D x X^{t,\xi,\xi}\eta \r\|_{\H^1}}{\l\|\eta\r\|_{\L^2}}=0,
        \end{equation*}
        where the limit is taken over $\eta\in \L^{2,d}(t)$.
    \end{lem}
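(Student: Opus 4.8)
The plan is to insert the intermediate linearisation $\D x X^{t,\xi,\xi+\eta}\eta$ --- the $x$-derivative with the mean-field argument frozen at $\xi+\eta$ --- and to split
\begin{equation*}
    X^{t,\xi+\eta,\xi+\eta} - X^{t,\xi,\xi+\eta} - \D x X^{t,\xi,\xi}\eta = R_1 + R_2,
\end{equation*}
where $R_1:=X^{t,\xi+\eta,\xi+\eta} - X^{t,\xi,\xi+\eta} - \D x X^{t,\xi,\xi+\eta}\eta$ and $R_2:=\D x X^{t,\xi,\xi+\eta}\eta - \D x X^{t,\xi,\xi}\eta$; both lie in $\H^{2,d}(t,T)$ by Lemmas~\ref{lem:concatenation-H2} and \ref{lem:Dx-concatenation}. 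It suffices to prove $\l\|R_1\r\|_{\H^1}\lesssim\l\|\eta\r\|_{\L^2}^2$ and $\l\|R_2\r\|_{\H^1}\lesssim\l\|\eta\r\|_{\L^2}^2$, since the quotient in the statement is then $\lesssim\l\|\eta\r\|_{\L^2}\to 0$.

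For $R_1$, I would first rerun the proof of Proposition~\ref{prp:x-differentiability} with the conditional sublinear expectation $\E[\,\cdot\,|\,\CF_t]$ in place of $\E$ --- using the conditional estimate of Lemma~\ref{lem:xxi-yeta-p-bound} with $p=4$ (this needs $q_0\geq 4$), the a priori estimate Lemma~\ref{lem:conditional}, the fundamental theorem of calculus, and the Lipschitz bound for $\D x f$ from Assumption~\ref{ass:2-first-derivative}.2 applied with the mean-field argument equal to $X^{t,\xi+\eta}_u$ on both sides --- to obtain, for all deterministic $x,y\in\Rbb^d$ and with constant independent of $x,y$,
\begin{equation*}
    \E\l[ \sup_{t\leq s\leq T} \l\| X^{t,x+y,\xi+\eta}_s - X^{t,x,\xi+\eta}_s - A^{t,x,\xi+\eta,y}_s \r\|^2 \,\Big|\, \CF_t \r] \lesssim \l\|y\r\|^4 .
\end{equation*}
Substituting $x=\xi$, $y=\eta$ and invoking the aggregation property identifies the left-hand side with $\E[\sup_{t\leq s\leq T}\l\|(R_1)_s\r\|^2\,|\,\CF_t]$ and the right-hand side with $\lesssim\l\|\eta\r\|^4$; conditional Jensen followed by the tower property $\E[\,\cdot\,]=\E[\E[\,\cdot\,|\,\CF_t]]$ then gives $\l\|R_1\r\|_{\H^1}\leq\E\big[\E[\sup_{t\leq s\leq T}\l\|(R_1)_s\r\|^2\,|\,\CF_t]^{1/2}\big]\lesssim\E[\l\|\eta\r\|^2]=\l\|\eta\r\|_{\L^2}^2$. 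Taking the square root \emph{before} the outer expectation is essential, so that only the $\L^2$-integrability of $\eta$ is used.

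For $R_2$, I would run a Grönwall estimate on the linear $G$-SDE \eqref{eq:SDE-x-derivative}. Writing the equation for $A^{t,x,\xi+\eta,y}-A^{t,x,\xi,y}$ and adding and subtracting $\D x f(s,X^{t,x,\xi+\eta}_s,X^{t,\xi+\eta}_s)A^{t,x,\xi,y}_s$ in each coefficient, Lemma~\ref{lem:conditional} bounds $\E[\sup_{t\leq w\leq s}\l\|A^{t,x,\xi+\eta,y}_w-A^{t,x,\xi,y}_w\r\|^2\,|\,\CF_t]$ by an $\alpha_0(\cdot)^2$-weighted copy of itself plus the inhomogeneity $\int_t^s\alpha_1(u)^2\,\E[(\l\|X^{t,x,\xi+\eta}_u-X^{t,x,\xi}_u\r\|^2+\l\|X^{t,\xi+\eta}_u-X^{t,\xi}_u\r\|_{\L^2}^2)\l\|A^{t,x,\xi,y}_u\r\|^2\,|\,\CF_t]\,\d u$. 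Bounding the inhomogeneity by Cauchy--Schwarz under $\E[\,\cdot\,|\,\CF_t]$ together with Lemma~\ref{lem:xxi-yeta-p-bound} ($p=4$), Lemma~\ref{lem:xi-eta-2-bound} and Lemma~\ref{lem:Dx-p-bound} ($p=4$) gives $\lesssim\l\|\eta\r\|_{\L^2}^2\l\|y\r\|^2$, so Grönwall yields $\E[\sup_{t\leq s\leq T}\l\|A^{t,x,\xi+\eta,y}_s-A^{t,x,\xi,y}_s\r\|^2\,|\,\CF_t]\lesssim\l\|\eta\r\|_{\L^2}^2\l\|y\r\|^2$. Substituting $x=\xi$, $y=\eta$, using aggregation, conditional Jensen, the tower property and $\l\|\eta\r\|_{\L^1}\leq\l\|\eta\r\|_{\L^2}$ then gives $\l\|R_2\r\|_{\H^1}\lesssim\E[\l\|\eta\r\|_{\L^2}\l\|\eta\r\|]=\l\|\eta\r\|_{\L^2}\l\|\eta\r\|_{\L^1}\leq\l\|\eta\r\|_{\L^2}^2$.

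The two Grönwall arguments are routine, following the template of the earlier lemmas. The step needing the most care is the bookkeeping around the substitution $x=\xi$, $y=\eta$: the conditional a priori estimates must be proved for \emph{deterministic} $x,y$ (with constants independent of $x,y$ and of the frozen mean-field argument) before the $\CF_t$-measurable $\xi,\eta$ are plugged in via aggregation, and the conditional estimate must be square-rooted before integration so that the stray factor $\l\|\eta\r\|$ is only ever paired with an $\L^2$-quantity. The genuinely new ingredient is the $R_2$ bound, i.e. the Lipschitz dependence of the linearised flow $A^{t,x,\zeta,y}$ on its mean-field parameter $\zeta$, where the fourth-moment bounds --- hence the hypothesis $q_0\geq 4$ --- are exactly what is needed to close the Cauchy--Schwarz step.
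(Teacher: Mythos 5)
Your proof is correct, but it takes a noticeably longer route than the paper, which disposes of the lemma in about three lines. The paper applies the fundamental theorem of calculus to $x\mapsto X^{t,x,\xi+\eta}$ (legitimate by Corollary~\ref{cor:Dx-continuous}), substitutes $x=\xi$, $y=\eta$ via aggregation, and then bounds the resulting integrand $\D x X^{t,\xi+\lambda\eta,\xi+\eta}\eta-\D x X^{t,\xi,\xi}\eta$ directly by Corollary~\ref{cor:Dx-Dx-L1}, whose right-hand side $\lesssim\l\|\eta\r\|_{\L^2}(\lambda\l\|\eta\r\|_{\L^2}+\l\|\eta\r\|_{\L^2})$ captures in one shot both the variation in the first slot \emph{and} the shift of the mean-field parameter from $\xi+\eta$ to $\xi$. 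Your decomposition into $R_1$ (Taylor remainder at frozen mean-field $\xi+\eta$) and $R_2$ (mean-field shift of the linearised flow) is sound, and the bookkeeping you flag — proving the conditional estimates for deterministic $x,y$, taking the square root under the outer $\E$ before aggregating, and pairing the stray $\l\|\eta\r\|$ with an $\L^2$-quantity — is exactly right; the conditional Jensen step $\E[\,(\cdot)^{1/2}\,|\,\CF_t]\leq\E[\,\cdot\,|\,\CF_t]^{1/2}$ that you invoke holds for sublinear conditional expectations. But both halves re-derive material already in the paper: your $R_2$ bound is precisely Lemma~\ref{lem:Dx-Dx} with $p=2$ and $x=y$ (whose aggregated $\L^1$ form is Corollary~\ref{cor:Dx-Dx-L1}), and your conditional version of Proposition~\ref{prp:x-differentiability} for $R_1$ is also subsumed by Lemma~\ref{lem:Dx-Dx} once you feed it into the FTC integral rather than peeling off the endpoint derivative. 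So the approach works, but citing Corollaries~\ref{cor:Dx-continuous} and~\ref{cor:Dx-Dx-L1} directly would have spared you both Grönwall arguments.
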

    \begin{proof}
        Due to Corollary~\ref{cor:Dx-continuous}, the map $x\mapsto X^{t,x,\xi+\eta}$ is continuously differentiable.
        In particular, we have
        \begin{equation*}
            X^{t,x+y,\xi+\eta}_s - X^{t,x,\xi+\eta}_s = \int_0^1 \D x X^{t,x+\lambda y,\xi+\eta}_s \eta \d \lambda
        \end{equation*}
        q.s. for all $t\leq s\leq T$.
        Thus, Corollary~\ref{cor:Dx-Dx-L1} yields
        \begin{flalign*}
            \quad
            \E&\l[ \sup_{t\leq s\leq T} \l\| X^{t,\xi+\eta,\xi+\eta}_s - X^{t,\xi,\xi+\eta}_s - \D x X^{t,\xi,\xi}_s\eta \r\| \r]
            &&
            \\&\leq 
            \int_0^1 \E\l[ \sup_{t\leq s\leq T}  \l\|  \D x X^{t,\xi+\lambda \eta,\xi+\eta}_s\eta - \D x X^{t,\xi,\xi}_s\eta  \r\| \r] \d \lambda
            \\&\lesssim
            \l\|\eta\r\|_{\L^2}^2
        \end{flalign*}
        which implies the desired result.
    \end{proof}

    \begin{lem}\label{lem:Y-SDE}
        If Assumptions~\ref{ass:1-lipschitz} and \ref{ass:2-first-derivative} are satisfied with $q_0\geq 4$, then there $G$-SDEs
        \begin{align*}
            \d Y^{t,\xi,\eta}_s 
            &= 
            \l[ \D x b\!\l(s,X^{t,\xi}_s, X^{t,\xi}_s\r) Y^{t,\xi,\eta}_s
            + 
            \D \xi b\!\l(s,X^{t,\xi}_s, X^{t,\xi}_s\r)\!\l(\!\D x X^{t,\xi,\xi}_s\eta + Y^{t,\xi,\eta}_s\r) \r]\! \d s
            \\ & \quad +
            \l[ \D x h\!\l(s,X^{t,\xi}_s, X^{t,\xi}_s\r) Y^{t,\xi,\eta}_s 
            + 
            \D \xi h\!\l(s,X^{t,\xi}_s, X^{t,\xi}_s\r)\!\l(\!\D x X^{t,\xi,\xi}_s\eta + Y^{t,\xi,\eta}_s\r) \r]\! \d \l<B\r>_s
            \\ & \quad + 
            \l[ \D x g\!\l(s,X^{t,\xi}_s, X^{t,\xi}_s\r) Y^{t,\xi,\eta}_s
            +  
            \D \xi g\!\l(s,X^{t,\xi}_s, X^{t,\xi}_s\r)\!\l(\!\D x X^{t,\xi,\xi}_s\eta + Y^{t,\xi,\eta}_s\r) \r]\! \d B_s, \\
            &\hspace{12.75cm} t\leq s\leq T,
            \\
            Y^{t,\xi,\eta}_t&=\eta,
            \numberthis\label{eq:SDE-xi-derivative-xi}
            \\
            \d Y^{t,x,\xi,\eta}_s 
            &= 
            \l[ \D x b\!\l(s,X^{t,x,\xi}_s, X^{t,\xi}_s\r) Y^{t,x,\xi,\eta}_s
            + 
            \D \xi b\!\l(s,X^{t,x,\xi}_s, X^{t,\xi}_s\r)\!\l(\!\D x X^{t,\xi,\xi}_s\eta + Y^{t,\xi,\eta}_s\r) \r]\! \d s
            \\ & \quad +
            \l[ \D x h\!\l(s,X^{t,x,\xi}_s, X^{t,\xi}_s\r) Y^{t,x,\xi,\eta}_s 
            + 
            \D \xi h\!\l(s,X^{t,x,\xi}_s, X^{t,\xi}_s\r)\!\l(\!\D x X^{t,\xi,\xi}_s\eta + Y^{t,\xi,\eta}_s\r) \r]\! \d \l<B\r>_s
            \\ & \quad + 
            \l[ \D x g\!\l(s,X^{t,x,\xi}_s, X^{t,\xi}_s\r) Y^{t,x,\xi,\eta}_s
            +  
            \D \xi g\!\l(s,X^{t,x,\xi}_s, X^{t,\xi}_s\r)\!\l(\!\D x X^{t,\xi,\xi}_s\eta + Y^{t,\xi,\eta}_s\r) \r]\! \d B_s, \\
            &\hspace{12.75cm} t\leq s\leq T,
            \\
            Y^x_t&=\eta.
            \numberthis\label{eq:SDE-xi-derivative-x}
        \end{align*}
        admit unique solutions $Y^{t,\xi,\eta},Y^{t,x,\xi,\eta} \in\H^{2,d}(t,T)$ for all $0\leq t\leq T$, $x\in\Rbb^d$ and $\xi,\eta\in\L^{2,d}(t)$.
        Moreover, the map
        \begin{equation*}
            \L^{2,d}(t)\rightarrow\H^{2,d}(t,T),\qquad \eta \mapsto Y^{t,x,\xi,\eta}
        \end{equation*}
        is linear.
    \end{lem}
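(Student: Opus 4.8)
The plan is to treat both equations as \emph{linear} $G$-SDEs, the key point being that \eqref{eq:SDE-xi-derivative-xi} is not an ordinary but a \emph{mean-field} $G$-SDE: since $\D \xi f$ is a bounded operator on $\L^{2,d}$, the coefficients in \eqref{eq:SDE-xi-derivative-xi} at a given $\omega$ depend on the whole random variable $Y^{t,\xi,\eta}_s$, not just on $Y^{t,\xi,\eta}_s(\omega)$. I would therefore recast \eqref{eq:SDE-xi-derivative-xi} in the form \eqref{eq:MF-SDE-0} and apply Theorem~3.12 in \cite{bollweg_mean-field_2025}. Once $Y^{t,\xi,\eta}$ has been constructed, \eqref{eq:SDE-xi-derivative-x} becomes an ordinary linear $G$-SDE---its $\D\xi$-term acts on $Y^{t,\xi,\eta}$, hence is a \emph{known} process---and the remaining existence claim and the linearity statement are both obtained by the Lemma~\ref{lem:conditional}-plus-Grönwall scheme already used in Lemma~\ref{lem:A-linear}.

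Concretely, fix $0\le t\le T$ and $\xi,\eta\in\L^{2,d}(t)$; for $y\in\Rbb^d$, $\zeta\in\L^{2,d}$ and each component $f=b_k,h_{kij},g_{ki}$ set
\begin{equation*}
    \tilde f(s,\omega,y,\zeta):=\D x f\!\l(s,\omega,X^{t,\xi}_s,X^{t,\xi}_s\r)\,y+\D \xi f\!\l(s,\omega,X^{t,\xi}_s,X^{t,\xi}_s\r)\!\l(\D x X^{t,\xi,\xi}_s\eta+\zeta\r).
\end{equation*}
Then a process $Y\in\H^{2,d}(t,T)$ solves \eqref{eq:SDE-xi-derivative-xi} if and only if it solves the mean-field $G$-SDE \eqref{eq:MF-SDE-0} with $\tilde b,\tilde h,\tilde g$ in place of $b,h,g$ and initial datum $\eta$. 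I would check Assumption~\ref{ass:1-lipschitz} for $\tilde f$: the bound $\l|\tilde f(s,\omega,y,\zeta)-\tilde f(s,\omega,y',\zeta')\r|\le\alpha_0(s)\l(\l\|y-y'\r\|+\l\|\zeta-\zeta'\r\|_{\L^2}\r)$ is immediate from \eqref{ineq:1-derivative-bound} and $\alpha_0$ is $q_0$-integrable with $q_0\ge2$; the integrability of $\tilde f(\cdot,x,\zeta)$ follows from Lemma~\ref{lem:coefficients-1} applied with $X=Y=X^{t,\xi}\in\H^{2,d}(0,T)$ and, in turn, $Z\equiv x$, $Z:=\zeta\ind_{[s_0,T)}\in\S^d(0,T)$, and $Z:=\D x X^{t,\xi,\xi}\eta$, the last of which lies in $\H^{2,d}(0,T)$ by Lemma~\ref{lem:Dx-concatenation}. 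Theorem~3.12 in \cite{bollweg_mean-field_2025} then yields a unique solution $Y^{t,\xi,\eta}\in\H^{2,d}(t,T)$ of \eqref{eq:SDE-xi-derivative-xi}.

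With $Y^{t,\xi,\eta}$ fixed, the process $\D\xi f\!\l(\cdot,X^{t,x,\xi},X^{t,\xi}\r)\!\l(\D x X^{t,\xi,\xi}\eta+Y^{t,\xi,\eta}\r)$ lies in $\M^2(0,T)$ by Lemma~\ref{lem:coefficients-1} (now with $Z:=\D x X^{t,\xi,\xi}\eta+Y^{t,\xi,\eta}\in\H^{2,d}(0,T)$), and $\D x f\!\l(\cdot,X^{t,x,\xi},X^{t,\xi}\r)$ is a matrix-valued process bounded by $\alpha_0$ and in $\M^2(0,T)$; hence \eqref{eq:SDE-xi-derivative-x} is an ordinary linear $G$-SDE with $\M^2(0,T)$ coefficients that are $\alpha_0$-Lipschitz, and it admits a unique solution $Y^{t,x,\xi,\eta}\in\H^{2,d}(t,T)$ exactly as in Lemma~\ref{lem:A-linear}. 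For linearity I would first note that $\zeta\mapsto\D x X^{t,\xi,\xi}_s\zeta$ is linear, because by Lemmas~\ref{lem:concatenation-identity} and \ref{lem:Dx-concatenation} it is the solution operator of the fixed homogeneous linear $G$-SDE with coefficients $\D x f\!\l(\cdot,X^{t,\xi},X^{t,\xi}\r)$ and initial value $\zeta$, and that $\D\xi f$ is linear in its last argument. Fixing $\eta_1,\eta_2\in\L^{2,d}(t)$ and $\lambda\in\Rbb$, the coefficients of \eqref{eq:SDE-xi-derivative-xi} do not depend on $\eta$, so $W:=Y^{t,\xi,\eta_1+\lambda\eta_2}-Y^{t,\xi,\eta_1}-\lambda Y^{t,\xi,\eta_2}$ solves the homogeneous linear $G$-SDE obtained from \eqref{eq:SDE-xi-derivative-xi} by deleting the $\D\xi b\!\l(\cdot,X^{t,\xi},X^{t,\xi}\r)\D x X^{t,\xi,\xi}\eta$-term and its $h$- and $g$-analogues, with $W_t=0$; Lemma~\ref{lem:conditional}, \eqref{ineq:1-derivative-bound} (using $\l\|W_u\r\|_{\L^2}^2=\E\l[\l\|W_u\r\|^2\r]$) and Grönwall's inequality then give $\l\|W\r\|_{\H^2}=0$, so $\eta\mapsto Y^{t,\xi,\eta}$ is linear. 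Consequently the free term of \eqref{eq:SDE-xi-derivative-x} depends linearly on $\eta$, while its $\D x f$-coefficient does not and its initial value is $\eta$, so $W':=Y^{t,x,\xi,\eta_1+\lambda\eta_2}-Y^{t,x,\xi,\eta_1}-\lambda Y^{t,x,\xi,\eta_2}$ solves the corresponding homogeneous linear $G$-SDE with $W'_t=0$, and the same argument gives $\l\|W'\r\|_{\H^2}=0$.

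The step I expect to be the main obstacle is the first one: recognising that \eqref{eq:SDE-xi-derivative-xi} must be handled as a mean-field $G$-SDE rather than an ordinary one, and carrying out the bookkeeping needed to see that its coefficients---in particular the free term $\D\xi f\!\l(\cdot,X^{t,\xi},X^{t,\xi}\r)\D x X^{t,\xi,\xi}\eta$---fit Assumption~\ref{ass:1-lipschitz}. This is precisely where Lemmas~\ref{lem:Dx-concatenation} and \ref{lem:coefficients-1} enter; everything afterwards recycles the linear-SDE existence and the Grönwall estimates already established in the paper.
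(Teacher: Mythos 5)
Your proposal is correct and follows the same strategy as the paper's proof (verify the coefficients are in $\M^2(0,T)$ and Lipschitz, invoke the mean-field existence theorem from \cite{bollweg_mean-field_2025}, then obtain linearity by a Grönwall argument), but it is sharper on two points worth flagging. First, you make explicit that \eqref{eq:SDE-xi-derivative-xi} is a \emph{mean-field} $G$-SDE: the $\D \xi f$-terms act on $Y^{t,\xi,\eta}_s$ as an element of $\L^{2,d}$, not pointwise, so one has to recast the equation in the form \eqref{eq:MF-SDE-0}, check Assumption~\ref{ass:1-lipschitz} for the resulting coefficients $\tilde f$ (using Lemmas~\ref{lem:Dx-concatenation} and \ref{lem:coefficients-1}), and only then apply Theorem~3.12 of \cite{bollweg_mean-field_2025}; the paper's one-line invocation of existence elides this recasting. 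Second, you correctly recognise that linearity of $\eta\mapsto Y^{t,x,\xi,\eta}$ cannot be obtained directly: after subtracting, the $\D\xi f$-contribution from \eqref{eq:SDE-xi-derivative-x} is $\D\xi f(\cdot)\bigl(Y^{t,\xi,\eta_1+\lambda\eta_2}-Y^{t,\xi,\eta_1}-\lambda Y^{t,\xi,\eta_2}\bigr)$ (after the $\D x X^{t,\xi,\xi}$-terms cancel by Lemma~\ref{lem:A-linear}), so one must first prove linearity of $\eta\mapsto Y^{t,\xi,\eta}$ from \eqref{eq:SDE-xi-derivative-xi} before the Grönwall estimate for $W'$ closes. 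The paper's displayed estimate has $\D\xi f$ applied to the $Y^{t,x,\xi,\cdot}$-difference rather than the $Y^{t,\xi,\cdot}$-difference, which reads like a typo and skips precisely this intermediate step; your two-stage argument (linearity of $Y^{t,\xi,\cdot}$ first, then of $Y^{t,x,\xi,\cdot}$) fills that gap cleanly.
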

    \begin{proof}
        We have $\D x X^{t,\xi,\xi} \eta\in\H^{2,d}(t,T)$ due to Lemma~\ref{lem:Dx-concatenation}. Thus, Lemma~\ref{lem:coefficients-differentiable-1} implies that the coefficients in \eqref{eq:SDE-xi-derivative-xi} are in $\M^2(0,T)$. Since they are Lipschitz continuous, \eqref{eq:SDE-xi-derivative-xi} admits a unique solution $Y^{t,\xi,\eta}\in\H^{2,d}(t,T)$.
        
        Similarly, since $Y^{t,\xi,\eta}\in \H^{2,d}(t,T)$, the coefficients in \eqref{eq:SDE-xi-derivative-x} are in $\M^2(0,T)$ and Lipschitz continuous and, thus, \eqref{eq:SDE-xi-derivative-x} admits a unique solution $Y^{t,x,\xi,\eta}\in\H^{2,d}(t,T)$.

        Let $\eta,\zeta\in\L^{2,d}(t)$ and $\lambda\in\Rbb$.
        Lemma~\ref{lem:conditional} yields for all $t\leq s\leq T$
        \begin{flalign*}
            \quad
            \E&\l[ \sup_{t\leq w\leq s}\l\| Y^{t,x,\xi,\eta+\lambda\zeta}_w - Y^{t,x,\xi,\eta}_w - \lambda Y^{t,x,\xi,\zeta}_w \r\|^2\r]
            &&
            \\&\lesssim
            \sum_{f\in F} \int_t^s \E\l[ \l| \D x f\!\l(u,X^{t,x,\xi}_u,X^{t,\xi}_u\r)\!\l(Y^{t,x,\xi,\eta+\lambda\zeta}_u - Y^{t,x,\xi,\eta}_u - \lambda Y^{t,x,\xi,\zeta}_u\r) \r|^2 \r] \d u
            \\&\quad
            + \sum_{f\in F} \int_t^s \E\l[ \l| \D \xi f\!\l(u,X^{t,x,\xi}_u,X^{t,\xi}_u\r)\!\l(Y^{t,x,\xi,\eta+\lambda\zeta}_u - Y^{t,x,\xi,\eta}_u - \lambda Y^{t,x,\xi,\zeta}_u\r) \r|^2 \r] \d u
            \\&\lesssim
            \int_t^s \alpha_0\!\l(u\r)^2 \E\l[ \l\| Y^{t,x,\xi,\eta+\lambda\zeta}_u - Y^{t,x,\xi,\eta}_u - \lambda Y^{t,x,\xi,\zeta}_u \r\|^2 \r] \d u,
        \end{flalign*}
       and Grönwall's inequality yields $\l\| Y^{t,x,\xi,\eta+\lambda\zeta} - Y^{t,x,\xi,\eta} - \lambda Y^{t,x,\xi,\zeta}\r\|_{\H^2}=0$.
    \end{proof}

    \begin{lem}\label{lem:Y-2-bound}
        If Assumptions~\ref{ass:1-lipschitz} and \ref{ass:2-first-derivative} are satisfied with $q_0\geq 4$, then 
        \begin{equation*}
            \E\l[ \sup_{t\leq w\leq T}\l\| Y^{t,\xi,\eta}_w \r\|^2 \r] \lesssim \l\|\eta\r\|_{\L^2}^2
        \end{equation*}
        for all $0\leq t\leq T$ and $\xi,\eta\in\L^{2,d}(t)$.
    \end{lem}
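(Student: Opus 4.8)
The plan is to treat \eqref{eq:SDE-xi-derivative-xi} as a linear $G$-SDE for $Y^{t,\xi,\eta}$ with an inhomogeneous forcing term built from $\D x X^{t,\xi,\xi}\eta$, apply the conditional a priori estimate of Lemma~\ref{lem:conditional}, control every coefficient contribution through the uniform derivative bounds \eqref{ineq:1-derivative-bound}, absorb the forcing term using Lemma~\ref{lem:Dx-concatenation}, and conclude with Grönwall's inequality. Since Lemma~\ref{lem:Y-SDE} guarantees $Y^{t,\xi,\eta}\in\H^{2,d}(t,T)$, the left-hand side is already finite and it suffices to establish the stated proportionality.

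Applying Lemma~\ref{lem:conditional} to \eqref{eq:SDE-xi-derivative-xi} gives, for all $t\leq s\leq T$,
\begin{align*}
\E\l[\sup_{t\leq w\leq s}\l\|Y^{t,\xi,\eta}_w\r\|^2\r]
&\lesssim
\l\|\eta\r\|_{\L^2}^2
+ \sum_{f\in F}\int_t^s \E\l[\l| \D x f\!\l(u,X^{t,\xi}_u,X^{t,\xi}_u\r) Y^{t,\xi,\eta}_u
+ \D \xi f\!\l(u,X^{t,\xi}_u,X^{t,\xi}_u\r)\!\l(\D x X^{t,\xi,\xi}_u\eta + Y^{t,\xi,\eta}_u\r)\r|^2\r]\d u.
\end{align*}
For the $\D x f$ contribution I would use the first bound in \eqref{ineq:1-derivative-bound} to get $\l|\D x f\!\l(u,X^{t,\xi}_u,X^{t,\xi}_u\r) Y^{t,\xi,\eta}_u\r| \leq \alpha_0(u)\l\|Y^{t,\xi,\eta}_u\r\|$ quasi-surely. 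For the $\D\xi f$ contribution, I would first note that $\D x X^{t,\xi,\xi}_u\eta + Y^{t,\xi,\eta}_u$ lies in $\L^{2,d}(u)$ by Lemmas~\ref{lem:Dx-concatenation} and \ref{lem:Y-SDE}, so the second bound in \eqref{ineq:1-derivative-bound} applies and yields the deterministic estimate $\l|\D \xi f\!\l(u,X^{t,\xi}_u,X^{t,\xi}_u\r)\!\l(\D x X^{t,\xi,\xi}_u\eta + Y^{t,\xi,\eta}_u\r)\r| \leq \alpha_0(u)\l\|\D x X^{t,\xi,\xi}_u\eta + Y^{t,\xi,\eta}_u\r\|_{\L^2}$. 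Combining the two contributions and using $\l\|\D x X^{t,\xi,\xi}_u\eta + Y^{t,\xi,\eta}_u\r\|_{\L^2}^2 \lesssim \l\|\D x X^{t,\xi,\xi}_u\eta\r\|_{\L^2}^2 + \E\l[\sup_{t\leq w\leq u}\l\|Y^{t,\xi,\eta}_w\r\|^2\r]$, the $u$-integrand above is controlled by $\alpha_0(u)^2\l(\l\|\D x X^{t,\xi,\xi}_u\eta\r\|_{\L^2}^2 + \E\l[\sup_{t\leq w\leq u}\l\|Y^{t,\xi,\eta}_w\r\|^2\r]\r)$.

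Next I would invoke Lemma~\ref{lem:Dx-concatenation}, which gives $\l\|\D x X^{t,\xi,\xi}_u\eta\r\|_{\L^2}^2 \leq \E\l[\sup_{t\leq w\leq T}\l\|\D x X^{t,\xi,\xi}_w\eta\r\|^2\r] \lesssim \l\|\eta\r\|_{\L^2}^2$; together with the square-integrability of $\alpha_0$, the previous estimate reduces to
\begin{align*}
\E\l[\sup_{t\leq w\leq s}\l\|Y^{t,\xi,\eta}_w\r\|^2\r]
\lesssim \l\|\eta\r\|_{\L^2}^2 + \int_t^s \alpha_0\!\l(u\r)^2\,\E\l[\sup_{t\leq w\leq u}\l\|Y^{t,\xi,\eta}_w\r\|^2\r]\d u ,
\end{align*}
and Grönwall's inequality then yields $\E\l[\sup_{t\leq w\leq T}\l\|Y^{t,\xi,\eta}_w\r\|^2\r] \lesssim \l\|\eta\r\|_{\L^2}^2$, as claimed.

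The only delicate point, and the nearest thing to an obstacle, is the treatment of the $\D\xi f$ term: one must recognise that $\D x X^{t,\xi,\xi}_u\eta + Y^{t,\xi,\eta}_u$ is a genuine element of $\L^{2,d}(u)$, so that \eqref{ineq:1-derivative-bound} produces a deterministic $\L^2$-bound rather than a pathwise one, and that the forcing contribution $\D x X^{t,\xi,\xi}_u\eta$ has already been estimated by $\l\|\eta\r\|_{\L^2}$ via Lemma~\ref{lem:Dx-concatenation}. Everything else is the standard Grönwall scheme used repeatedly throughout this section.
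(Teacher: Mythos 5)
Your proof is correct and follows essentially the same route as the paper: apply Lemma~\ref{lem:conditional}, bound the $\D x f$ and $\D \xi f$ contributions via \eqref{ineq:1-derivative-bound}, control $\l\|\D x X^{t,\xi,\xi}_u\eta\r\|_{\L^2}^2$ by $\l\|\eta\r\|_{\L^2}^2$ using Lemma~\ref{lem:Dx-concatenation}, absorb the remaining $Y$-terms into the Grönwall integrand, and conclude with Grönwall's inequality. The explanatory remarks about why the $\D\xi f$ term gives a deterministic $\L^2$-bound are an accurate articulation of a step the paper leaves implicit, but the argument is the same.
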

    \begin{proof}
        By Lemma~\ref{lem:conditional}, we have for all $t\leq s\leq T$
        \begin{flalign*}
            \quad
            \E&\l[\sup_{t\leq w\leq s}\l\| Y^{t,\xi,\eta}_w\r\|^2 \r]
            &&
            \\&\lesssim
            \l\|\eta\r\|_{\L^2}^2 +
            \sum_{f\in F} \int_t^s \E\l[ \l| \D x f\!\l(u,X^{t,\xi}_u,X^{t,\xi}_u\r)Y^{t,\xi,\eta}_u \r|^2 \r] \d u
            \\&\quad 
            + \sum_{f\in F} \int_t^s \E\l[ \l| \D \xi f\!\l(u,X^{t,\xi}_u,X^{t,\xi}_u\r)\!\l(\D x X^{t,\xi,\xi}_u\eta + Y^{t,\xi,\eta}_u\r) \r|^2\r] \d u
            \\&\lesssim
            \l\|\eta\r\|_{\L^2}^2 +
            \int_t^s \alpha_0\!\l(u\r)^2 \l( \E\l[\l\|Y^{t,\xi,\eta}_u\r\|^2\r] + \l\|\D x X^{t,\xi,\xi}_u \eta \r\|_{\L^2}^2 \r) \d u
            \\&\lesssim
            \l\|\eta\r\|_{\L^2}^2 +
            \int_t^s \alpha_0\!\l(u\r)^2 \E\l[\l\|Y^{t,\xi,\eta}_u\r\|^2\r] \d u
        \end{flalign*}
        due to Lemma~\ref{lem:Dx-concatenation}. Finally, Grönwall's inequality yields the desired result.
    \end{proof}

    \begin{lem}\label{lem:Y-p-bound}
        Let $2\leq p\leq q_0$. 
        If Assumptions~\ref{ass:1-lipschitz} and \ref{ass:2-first-derivative} are satisfied with $q_0\geq 4$, then 
        \begin{align*}
            \E\l[ \sup_{t\leq w\leq T}\l\| Y^{t,\xi,\eta}_w \r\|^p \,\Big|\,\CF_t\r] &\lesssim \l\|\eta\r\|^p + \l\|\eta\r\|_{\L^2}^p, \\
            \E\l[ \sup_{t\leq w\leq T}\l\| Y^{t,x,\xi,\eta}_w \r\|^p \,\Big|\,\CF_t\r] &\lesssim \l\|\eta\r\|^p + \l\|\eta\r\|_{\L^2}^p
        \end{align*}
        for all $0\leq t\leq T$, $x\in\Rbb^d$ and $\xi,\eta\in\L^{2,d}(t)$.
    \end{lem}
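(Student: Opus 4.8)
The plan is to treat both estimates with the scheme used repeatedly in this section: apply the conditional moment estimate (Lemma~\ref{lem:conditional}) to the linear $G$-SDEs \eqref{eq:SDE-xi-derivative-xi} and \eqref{eq:SDE-xi-derivative-x}, bound their coefficients by \eqref{ineq:1-derivative-bound}, and close with Grönwall's inequality. The key structural observation is that in both equations the only term coupling back to the unknown through its \emph{pointwise} norm is the one carrying $\D x f$; every occurrence of $\D x X^{t,\xi,\xi}_s\eta$ and $Y^{t,\xi,\eta}_s$ inside a $\D\xi f$-term is multiplied by a Fréchet $\xi$-derivative, which by \eqref{ineq:1-derivative-bound} is dominated in the $\L^2$-norm. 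Such contributions can therefore be handled as deterministic forcing terms via the unconditional bounds $\l\|\D x X^{t,\xi,\xi}_s\eta\r\|_{\L^2}\lesssim\l\|\eta\r\|_{\L^2}$ from Lemma~\ref{lem:Dx-concatenation} and $\l\|Y^{t,\xi,\eta}_s\r\|_{\L^2}\lesssim\l\|\eta\r\|_{\L^2}$ from Lemma~\ref{lem:Y-2-bound}, both available under the standing hypothesis $q_0\geq 4$.

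Concretely, for the first estimate I would apply Lemma~\ref{lem:conditional} to \eqref{eq:SDE-xi-derivative-xi} and then use \eqref{ineq:1-derivative-bound} together with $|\D x f(\cdot)z|\leq\alpha_0\l\|z\r\|$, $|\D\xi f(\cdot)\zeta|\leq\alpha_0\l\|\zeta\r\|_{\L^2}$, Lemmas~\ref{lem:Dx-concatenation} and \ref{lem:Y-2-bound}, and $(a+b)^p\lesssim a^p+b^p$ to obtain, for $t\leq s\leq T$,
\begin{equation*}
    \E\l[\sup_{t\leq w\leq s}\l\|Y^{t,\xi,\eta}_w\r\|^p\,\Big|\,\CF_t\r]\lesssim \l\|\eta\r\|^p + \l\|\eta\r\|_{\L^2}^p + \int_t^s \alpha_0\!\l(u\r)^p\,\E\l[\sup_{t\leq w\leq u}\l\|Y^{t,\xi,\eta}_w\r\|^p\,\Big|\,\CF_t\r]\d u.
\end{equation*}
Since $p\leq q_0$ and $\alpha_0$ is $q_0$-integrable, $\alpha_0^p$ is integrable and Grönwall's inequality yields the first bound. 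For the second estimate I would argue identically starting from \eqref{eq:SDE-xi-derivative-x}: the forcing term $\D\xi f\!\l(u,X^{t,x,\xi}_u,X^{t,\xi}_u\r)\!\l(\D x X^{t,\xi,\xi}_u\eta + Y^{t,\xi,\eta}_u\r)$ still involves only $\D x X^{t,\xi,\xi}_u\eta$ and $Y^{t,\xi,\eta}_u$, hence is bounded by $\alpha_0(u)\l(\l\|\D x X^{t,\xi,\xi}_u\eta\r\|_{\L^2}+\l\|Y^{t,\xi,\eta}_u\r\|_{\L^2}\r)\lesssim\alpha_0(u)\l\|\eta\r\|_{\L^2}$ as before, while the only self-referential term is $\D x f\!\l(u,X^{t,x,\xi}_u,X^{t,\xi}_u\r)Y^{t,x,\xi,\eta}_u$, bounded by $\alpha_0(u)\l\|Y^{t,x,\xi,\eta}_u\r\|$. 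This gives the same Grönwall-type inequality with $Y^{t,\xi,\eta}$ replaced by $Y^{t,x,\xi,\eta}$ and initial contribution $\l\|\eta\r\|^p$, and Grönwall closes it.

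I do not anticipate a real obstacle. The one point requiring care is resisting the temptation to prove a conditional $L^p$-refinement of Lemma~\ref{lem:Dx-concatenation} for the data $\D x X^{t,\xi,\xi}_u\eta$: it enters only through a $\D\xi f$-coefficient and is therefore already controlled by the unconditional $\L^2$-bound. The other mild subtlety is to notice that the inhomogeneity of the $x$-equation \eqref{eq:SDE-xi-derivative-x} is driven by $Y^{t,\xi,\eta}$ and not by $Y^{t,x,\xi,\eta}$, so that Lemma~\ref{lem:Y-2-bound} — rather than a circular self-reference — supplies the required bound on that term; the $\Rbb^d$-valued initial condition $\eta$ is responsible for the $\l\|\eta\r\|^p$ summand through the conditional estimate of Lemma~\ref{lem:conditional}.
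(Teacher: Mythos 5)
Your proposal is correct and takes essentially the same route as the paper: apply Lemma~\ref{lem:conditional}, bound the $\D x f$-term by $\alpha_0(u)\|\cdot\|$ and the $\D\xi f$-term by $\alpha_0(u)\|\cdot\|_{\L^2}$ via \eqref{ineq:1-derivative-bound}, absorb the forcing $\|\D x X^{t,\xi,\xi}_u\eta\|_{\L^2}+\|Y^{t,\xi,\eta}_u\|_{\L^2}\lesssim\|\eta\|_{\L^2}$ using Lemmas~\ref{lem:Dx-concatenation} and \ref{lem:Y-2-bound}, and close with Grönwall, treating the $x$-equation \eqref{eq:SDE-xi-derivative-x} in the same way after observing that its $\D\xi f$-inhomogeneity is driven by the already-bounded $Y^{t,\xi,\eta}$ rather than $Y^{t,x,\xi,\eta}$. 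Your closing remarks about avoiding a conditional refinement of Lemma~\ref{lem:Dx-concatenation} and about the non-circularity of the $x$-equation match the paper's logic exactly.
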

    \begin{proof}
        By Lemma~\ref{lem:conditional}, we have for all $t\leq s\leq T$
        \begin{flalign*}
            \quad
            \E&\l[\sup_{t\leq w\leq s}\l\| Y^{t,\xi,\eta}_w\r\|^p \,\Big|\,\CF_t\r]
            &&
            \\&\lesssim
            \l\|\eta\r\|^p +
            \sum_{f\in F} \int_t^s \E\l[ \l| \D x f\!\l(u,X^{t,\xi}_u,X^{t,\xi}_u\r)Y^{t,\xi,\eta}_u \r|^p \,\Big|\,\CF_t \r] \d u
            \\&\quad 
            + \sum_{f\in F} \int_t^s \E\l[ \l| \D \xi f\!\l(u,X^{t,\xi}_u,X^{t,\xi}_u\r)\!\l(\D x X^{t,\xi,\xi}_u\eta + Y^{t,\xi,\eta}_u\r) \r|^p \,\Big|\,\CF_t\r] \d u
            \\&\lesssim
            \l\|\eta\r\|^p +
            \int_t^s \alpha_0\!\l(u\r)^p \l( \E\l[\l\|Y^{t,\xi,\eta}_u\r\|^p\,\Big|\,\CF_t\r] + \l\|\D x X^{t,\xi,\xi}_u \eta \r\|_{\L^2}^p + \l\|Y^{t,\xi,\eta}_u \r\|_{\L^2}^p \r) \d u
            \\&\lesssim
            \l\|\eta\r\|^p + \l\|\eta\r\|_{\L^2}^p + 
            \int_t^s \alpha_0\!\l(u\r)^p \E\l[\l\|Y^{t,\xi,\eta}_u\r\|^p\,\Big|\,\CF_t\r] \d u
            ,
        \end{flalign*}
        and Grönwall's inequality yields the desired result for $Y^{t,\xi,\eta}$.
        
        Analogously, we have for $Y^{t,x,\xi,\eta}$ that
        \begin{flalign*}
            \quad
            \E&\l[\sup_{t\leq w\leq s}\l\| Y^{t,x,\xi,\eta}_w\r\|^p\,\Big|\,\CF_t \r]
            &&
            \\&\lesssim
            \l\|\eta\r\|^p +
            \sum_{f\in F} \int_t^s \E\l[ \l| \D x f\!\l(u,X^{t,x,\xi}_u,X^{t,\xi}_u\r)Y^{t,x,\xi,\eta}_u \r|^p \,\Big|\,\CF_t  \r] \d u 
            \\&\quad 
            + \sum_{f\in F} \int_t^s \E\l[ \l| \D \xi f\!\l(u,X^{t,x,\xi}_u,X^{t,\xi}_u\r)\!\l(\D x X^{t,\xi,\xi}_u\eta + Y^{t,\xi,\eta}_u\r) \r|^p \,\Big|\,\CF_t \r] \d u
            \\&\lesssim
            \l\|\eta\r\|^p +
            \int_t^s \alpha_0\!\l(u\r)^p \l( \E\l[\l\|Y^{t,x,\xi,\eta}_u\r\|^p \,\Big|\,\CF_t  \r] + \l\|\D x X^{t,\xi,\xi}_u \eta \r\|_{\L^2}^p + \l\|Y^{t,\xi,\eta}_u\r\|_{\L^2}^p\r) \d u
            \\&\lesssim
            \l\|\eta\r\|^p +
            \l\|\eta\r\|_{\L^2}^p + \int_t^s \alpha_0\!\l(u\r)^p \E\l[\l\|Y^{t,x,\xi,\eta}_u\r\|^p \,\Big|\,\CF_t  \r] \d u,
        \end{flalign*}
        and Grönwall's inequality yields the desired result for $Y^{t,x,\xi,\eta}$.
    \end{proof}

    \begin{lem}\label{lem:Y-concatenation}
        Let $0\leq t\leq T$ and $\xi,\eta\in\L^{2,d}(t)$.
        If Assumptions~\ref{ass:1-lipschitz} and \ref{ass:2-first-derivative} are satisfied with $q_0\geq 4$, then
        \begin{equation*}
            \l\| Y^{t,\xi,\eta} - Y^{t,\xi,\xi,\eta}\r\|_{\H^2}=0,
        \end{equation*}
        where $ Y^{t,\xi,\xi,\eta}$ denotes the map
        \begin{equation*}
            [0,T]\times \Omega \rightarrow \Rbb^d, \qquad (s,\omega)\mapsto Y^{t,\xi,\xi,\eta}_s(\omega):=Y^{t,x,\xi,\eta}_s(\omega)|_{x=\xi(\omega)}. 
        \end{equation*}
    \end{lem}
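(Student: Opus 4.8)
The plan is to follow the blueprint of Lemma~\ref{lem:concatenation-identity}: for a fixed deterministic vector $x\in\Rbb^d$ I would compare $Y^{t,x,\xi,\eta}$ with $Y^{t,\xi,\eta}$, derive a conditional $\H^2$-estimate whose right-hand side is controlled by $\l\|x-\xi\r\|^2$ (up to a factor depending only on $\eta$), and then pass to the concatenation by substituting $x=\xi$ and invoking the aggregation property, at which point this control vanishes.

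First I would write down the equation for the difference $D:=Y^{t,x,\xi,\eta}-Y^{t,\xi,\eta}$. Comparing \eqref{eq:SDE-xi-derivative-x} with \eqref{eq:SDE-xi-derivative-xi}, both processes start from $\eta$, so $D_t=0$, and, for each coefficient $f\in F$, the corresponding part of the driver of $D$ is the sum of
\begin{equation*}
    \D x f\!\l(u,X^{t,x,\xi}_u,X^{t,\xi}_u\r)D_u + \l[\D x f\!\l(u,X^{t,x,\xi}_u,X^{t,\xi}_u\r) - \D x f\!\l(u,X^{t,\xi}_u,X^{t,\xi}_u\r)\r]Y^{t,\xi,\eta}_u
\end{equation*}
and
\begin{equation*}
    \l[\D \xi f\!\l(u,X^{t,x,\xi}_u,X^{t,\xi}_u\r) - \D \xi f\!\l(u,X^{t,\xi}_u,X^{t,\xi}_u\r)\r]\!\l(\D x X^{t,\xi,\xi}_u\eta + Y^{t,\xi,\eta}_u\r).
\end{equation*}
The crucial point is that the inhomogeneous argument $\D x X^{t,\xi,\xi}_u\eta + Y^{t,\xi,\eta}_u$ is \emph{the same} in both equations, so only the evaluation points of the Fréchet derivatives differ. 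I would then apply Lemma~\ref{lem:conditional} to bound $\E\l[\sup_{t\leq w\leq s}\l\|D_w\r\|^2\,\big|\,\CF_t\r]$ by the time-integral of the squared conditional expectation of the driver difference, and split this into three contributions according to the three summands above: the first is controlled by \eqref{ineq:1-derivative-bound}, producing $\alpha_0\!\l(u\r)^2\,\E\l[\sup_{t\leq w\leq u}\l\|D_w\r\|^2\,\big|\,\CF_t\r]$; the second and third are handled by the Lipschitz bounds for $\D x f$ and $\D\xi f$ in Assumption~\ref{ass:2-first-derivative}.2 (applied pointwise in $\omega$ in the $\Rbb^d$-slot, with the $\L^{2,d}$-slot contributing nothing since it equals $X^{t,\xi}_u$ in both), producing, up to a constant, $\alpha_1\!\l(u\r)^2$ times $\E\l[\l\|Y^{t,\xi,\eta}_u\r\|^2\l\|X^{t,x,\xi}_u-X^{t,\xi}_u\r\|^2\,\big|\,\CF_t\r]$ and $\l\|\D x X^{t,\xi,\xi}_u\eta+Y^{t,\xi,\eta}_u\r\|_{\L^2}^2\,\E\l[\l\|X^{t,x,\xi}_u-X^{t,\xi}_u\r\|^2\,\big|\,\CF_t\r]$ respectively.

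Next I would bound these two error terms by $\l\|x-\xi\r\|^2$ up to an $\eta$-dependent factor. For the third one, Lemmas~\ref{lem:Dx-concatenation} and \ref{lem:Y-2-bound} give $\l\|\D x X^{t,\xi,\xi}_u\eta+Y^{t,\xi,\eta}_u\r\|_{\L^2}^2\lesssim\l\|\eta\r\|_{\L^2}^2$, which is deterministic and hence pulls out of the conditional expectation, while the estimate established inside the proof of Lemma~\ref{lem:concatenation-identity} gives $\E\l[\l\|X^{t,x,\xi}_u-X^{t,\xi}_u\r\|^2\,\big|\,\CF_t\r]\lesssim\l\|x-\xi\r\|^2$. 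For the second (cross) term I would apply the conditional Cauchy--Schwarz inequality together with Lemma~\ref{lem:Y-p-bound} at $p=4$, which yields $\E\l[\l\|Y^{t,\xi,\eta}_u\r\|^4\,\big|\,\CF_t\r]\lesssim\l\|\eta\r\|^4+\l\|\eta\r\|_{\L^2}^4$, and the $p=4$ analogue of the Lemma~\ref{lem:concatenation-identity} estimate, $\E\l[\l\|X^{t,x,\xi}_u-X^{t,\xi}_u\r\|^4\,\big|\,\CF_t\r]\lesssim\l\|x-\xi\r\|^4$, which holds by the very same Grönwall argument as soon as $q_0\geq 4$. Using that $\alpha_0^2$ and $\alpha_1^2$ are integrable, all of this combines to
\begin{equation*}
    \E\l[\sup_{t\leq w\leq s}\l\|D_w\r\|^2\,\Big|\,\CF_t\r]\lesssim\l(\l\|\eta\r\|^2+\l\|\eta\r\|_{\L^2}^2\r)\l\|x-\xi\r\|^2+\int_t^s\alpha_0\!\l(u\r)^2\,\E\l[\sup_{t\leq w\leq u}\l\|D_w\r\|^2\,\Big|\,\CF_t\r]\d u,
\end{equation*}
and Grönwall's inequality yields $\E\l[\sup_{t\leq w\leq T}\l\|Y^{t,x,\xi,\eta}_w-Y^{t,\xi,\eta}_w\r\|^2\,\big|\,\CF_t\r]\lesssim\l(\l\|\eta\r\|^2+\l\|\eta\r\|_{\L^2}^2\r)\l\|x-\xi\r\|^2$. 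Finally, by the definition of the concatenation $Y^{t,\xi,\xi,\eta}=Y^{t,x,\xi,\eta}|_{x=\xi}$ and the aggregation property,
\begin{equation*}
    \l\|Y^{t,\xi,\eta}-Y^{t,\xi,\xi,\eta}\r\|_{\H^2}^2=\E\l[\E\l[\sup_{t\leq w\leq T}\l\|Y^{t,x,\xi,\eta}_w-Y^{t,\xi,\eta}_w\r\|^2\,\Big|\,\CF_t\r]\bigg|_{x=\xi}\r]\lesssim\E\l[\l(\l\|\eta\r\|^2+\l\|\eta\r\|_{\L^2}^2\r)\l\|x-\xi\r\|^2\bigg|_{x=\xi}\r]=0,
\end{equation*}
since $\l\|x-\xi\r\|^2\big|_{x=\xi}=0$.

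\textbf{Main obstacle.} The delicate point is the cross term in the second summand: because $Y^{t,\xi,\eta}_u$ and $X^{t,x,\xi}_u-X^{t,\xi}_u$ are correlated random variables, one cannot simply factor a norm out of the conditional expectation, so the splitting via conditional Cauchy--Schwarz forces the use of fourth conditional moments. This is exactly where the hypothesis $q_0\geq 4$ enters, both through Lemma~\ref{lem:Y-p-bound} with $p=4$ and through re-running the Grönwall estimate of Lemma~\ref{lem:concatenation-identity} at $p=4$ rather than $p=2$; everything else reduces to a routine Grönwall argument of the kind already used repeatedly above.
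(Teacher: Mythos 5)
Your proposal is correct and follows essentially the same route as the paper's proof: the same decomposition of the driver difference into the three summands, the same use of the Lipschitz bounds from Assumption~\ref{ass:2-first-derivative}.2 on the differentiated coefficients, the same conditional Cauchy--Schwarz step (forcing $q_0\geq 4$), Grönwall, and then the aggregation property at $x=\xi$. The only cosmetic difference is that the paper obtains the fourth-moment bound $\E\l[\l\|X^{t,x,\xi}_u-X^{t,\xi}_u\r\|^4\,\big|\,\CF_t\r]^{1/2}\lesssim\l\|x-\xi\r\|^2$ by invoking Lemma~\ref{lem:xxi-yeta-p-bound} with $p=4$ and substituting $y=\xi$, whereas you re-run the Grönwall argument of Lemma~\ref{lem:concatenation-identity} at $p=4$; these are equivalent.
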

    \begin{proof}
        Set $Z:=\D x X^{t,\xi,\xi}\eta + Y^{t,\xi,\eta}$, then $\l\|Z\r\|_{\H^2} \lesssim \l\|\eta\r\|_{\L^2}$ due to Lemmas~\ref{lem:Dx-concatenation} and \ref{lem:Y-2-bound}.
        By Lemma~\ref{lem:conditional}, we have for all $t\leq s\leq T$
        \begin{flalign*}
            \quad
            \E&\l[\sup_{t\leq w\leq s}\l\| Y^{t,\xi,\eta}_w - Y^{t,x,\xi,\eta}_w\r\|^2 \,\Big|\,\CF_t \r]
            &&
            \\&\lesssim
            \sum_{f\in F} \int_t^s \E\l[\l| \D x f\!\l(u,X^{t,\xi}_u,X^{t,\xi}_u\r)Y^{t,\xi,\eta}_u - \D x f\!\l(u,X^{t,x,\xi}_u,X^{t,\xi}_u\r) Y^{t,\xi,\eta}_u \r|^2 \,\Big|\,\CF_t \r] \d u
            \\&\quad
            + \sum_{f\in F} \int_t^s \E\l[ \l| \D x f\!\l(u,X^{t,x,\xi}_u,X^{t,\xi}_u\r)\!\l(Y^{t,\xi,\eta}_u - Y^{t,x,\xi,\eta}_u\r) \r|^2 \,\Big|\,\CF_t\r] \d u
            \\&\quad
            + \sum_{f\in F} \int_t^s \E\l[\l| \D \xi f\!\l(u,X^{t,\xi}_u,X^{t,\xi}_u\r) Z_u - \D \xi f\!\l(u,X^{t,x,\xi}_u,X^{t,\xi}_u\r) Z_u \r|^2 \,\Big|\,\CF_t\r] \d u
            \\&\lesssim
            \int_t^s \alpha_1\!\l(u\r)^2 \E\l[\l\| X^{t,\xi}_u - X^{t,x,\xi}_u\r\|^2 \l\| Y^{t,\xi,\eta}_u \r\|^2 \,\Big|\,\CF_t \r] + \alpha_0\!\l(u\r)^2 \E\l[\l\| Y^{t,\xi,\eta}_u - Y^{t,x,\xi,\eta}_u\r\|^2 \,\Big|\,\CF_t \r] \d u
            \\&\quad 
            + \int_t^s \alpha_1\!\l(u\r)^2 \l\| Z_u \r\|_{\L^2}^2 \E\l[\l\| X^{t,\xi}_u - X^{t,x,\xi}_u\r\|^2 \,\Big|\,\CF_t \r] \d u
            \\&\leq 
            \int_t^s \alpha_1\!\l(u\r)^2 \E\l[\l\| X^{t,y,\xi}_u - X^{t,x,\xi}_u\r\|^4 \,\Big|\,\CF_t \r]^{\frac12} \Big|_{y=\xi} \E\l[\l\| Y^{t,\xi,\eta}_u \r\|^4 \,\Big|\,\CF_t \r]^{\frac12} \d u
            \\&\quad 
            + \int_t^s \alpha_0\!\l(u\r)^2 \E\l[\l\| Y^{t,\xi,\eta}_u - Y^{t,x,\xi,\eta}_u \r\|^2 \,\Big|\,\CF_t \r] \d u 
            \\&\quad 
            + \int_t^s \alpha_1\!\l(u\r)^2 \l\| Z_u \r\|_{\L^2}^2 \E\l[\l\| X^{t,y,\xi}_u - X^{t,x,\xi}_u\r\|^2 \,\Big|\,\CF_t \r] \Big|_{y=\xi}\d u
            \\&\lesssim
            \l\| \xi - x\r\|^2 \l(\l\|\zeta\r\|^2 + \l\|\zeta\r\|_{\L^2}^2 \r) 
            + \int_t^s \alpha_0\!\l(u\r)^2 \E\l[\l\| Y^{t,\xi,\eta}_u - Y^{t,x,\xi,\eta}_u \r\|^2 \,\Big|\,\CF_t \r] \d u
            .
        \end{flalign*}
        Grönwall's inequality yields
        \begin{equation*}
            \E\l[\sup_{t\leq w\leq s}\l\| Y^{t,\xi,\eta}_w - Y^{t,x,\xi,\eta}_w\r\|^2 \,\Big|\,\CF_t \r] \lesssim \l\| \xi - x\r\|^2 \l(\l\|\zeta\r\|^2 + \l\|\zeta\r\|_{\L^2}^2 \r)
        \end{equation*}
        and, thus, the aggregation property implies
        \begin{align*}
            \l\|Y^{t,\xi,\eta}-Y^{t,\xi,\xi,\eta}\r\|_{\H^2}^2 
            &= \E\l[ \E\l[\sup_{t\leq w\leq T}\l\| Y^{t,\xi,\eta}_w - Y^{t,x,\xi,\eta}_w\r\|^2 \,\Big|\,\CF_t \r]\bigg|_{x=\xi} \r]
            = 0.
        \end{align*}
    \end{proof}

    \begin{lem}\label{lem:Y-Y-p-bound}
        Let $2\leq p\leq \l( q_1 \wedge \frac{q_0}2\r)$.
        If Assumptions~\ref{ass:1-lipschitz} and \ref{ass:2-first-derivative} are satisfied with $q_0\geq 4$, then 
        \begin{align*}
            \E\l[ \sup_{t\leq w\leq T}\l\| Y^{t,x,\xi,\zeta}_w - Y^{t,y,\eta,\zeta}_w \r\|^p\,\Big|\,\CF_t\r] 
            & \lesssim \l\|\zeta\r\|_{\L^2}^p \l( \l\|x-y\r\|^p +\l\|\xi-\eta\r\|_{\L^2}^p\r)
        \end{align*}
        for all $0\leq t\leq T$, $x,y\in\Rbb^d$ and $\xi,\eta,\zeta\in\L^{2,d}(t)$.
    \end{lem}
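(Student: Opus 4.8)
The plan is to run the same Grönwall scheme used repeatedly in this section, now applied to the difference process $W:=Y^{t,x,\xi,\zeta}-Y^{t,y,\eta,\zeta}$. Since both processes start from $\zeta$ at time $t$, the process $W$ solves a linear $G$-SDE with $W_t=0$ whose $f$-coefficient is the difference $\Delta_f$ of the $f$-coefficients of \eqref{eq:SDE-xi-derivative-x} taken at the parameter pairs $(x,\xi)$ and $(y,\eta)$. By Lemma~\ref{lem:conditional} this gives, for all $t\le s\le T$,
\begin{equation*}
    \E\l[\sup_{t\le w\le s}\l\|W_w\r\|^p\,\Big|\,\CF_t\r]\lesssim\sum_{f\in F}\int_t^s\E\l[\l|\Delta_f(u)\r|^p\,\Big|\,\CF_t\r]\d u,
\end{equation*}
so the whole task is to dominate each $\Delta_f$ by a constant multiple of $\l\|\zeta\r\|_{\L^2}^p\l(\l\|x-y\r\|^p+\l\|\xi-\eta\r\|_{\L^2}^p\r)$ plus a term that feeds Grönwall's inequality.

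For this I would add and subtract the coefficients evaluated at $(y,\eta)$ but applied to the $(x,\xi)$-objects and write $\Delta_f(u)=\D x f\l(u,X^{t,y,\eta}_u,X^{t,\eta}_u\r)W_u+R^1_f(u)+R^2_f(u)+R^3_f(u)$, where $R^1_f(u)=\l[\D x f\l(u,X^{t,x,\xi}_u,X^{t,\xi}_u\r)-\D x f\l(u,X^{t,y,\eta}_u,X^{t,\eta}_u\r)\r]Y^{t,x,\xi,\zeta}_u$, $R^2_f$ is the corresponding increment of $\D\xi f$ applied to the common mean-field term $\D x X^{t,\xi,\xi}_u\zeta+Y^{t,\xi,\zeta}_u$, and $R^3_f=\D\xi f\l(u,X^{t,y,\eta}_u,X^{t,\eta}_u\r)$ applied to $\l(\D x X^{t,\xi,\xi}_u\zeta+Y^{t,\xi,\zeta}_u\r)-\l(\D x X^{t,\eta,\eta}_u\zeta+Y^{t,\eta,\zeta}_u\r)$. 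The first summand is bounded by $\alpha_0(u)\l\|W_u\r\|$ via \eqref{ineq:1-derivative-bound} and yields the Grönwall loop $\int_t^s\alpha_0(u)^p\,\E[\sup_{t\le w\le u}\l\|W_w\r\|^p\mid\CF_t]\d u$. For $R^1_f$ and $R^2_f$ I would apply the Lipschitz bounds on $\D x f$ and $\D\xi f$ from Assumption~\ref{ass:2-first-derivative}.2, split off the deterministic contribution $\l\|X^{t,\xi}_u-X^{t,\eta}_u\r\|_{\L^2}$, bound the remaining $\l\|X^{t,x,\xi}_u-X^{t,y,\eta}_u\r\|$-contribution by a conditional Cauchy--Schwarz inequality, and then invoke Lemmas~\ref{lem:xxi-yeta-p-bound} and \ref{lem:xi-eta-2-bound} for the increments of the state processes, Lemma~\ref{lem:Y-p-bound} for $Y^{t,x,\xi,\zeta}_u$, and Lemmas~\ref{lem:Dx-concatenation} and \ref{lem:Y-2-bound} to bound $\l\|\D x X^{t,\xi,\xi}_u\zeta+Y^{t,\xi,\zeta}_u\r\|_{\L^2}\lesssim\l\|\zeta\r\|_{\L^2}$. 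This is exactly where the hypotheses enter: $p\le q_1$ makes $\alpha_1^p$ integrable, and $q_0\ge 4$ together with $2p\le q_0$ makes the $2p$-th moment estimates of Lemmas~\ref{lem:xxi-yeta-p-bound} and \ref{lem:Y-p-bound} available. This bounds the $R^1_f$- and $R^2_f$-contributions by the asserted right-hand side.

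The step I expect to be the main obstacle is $R^3_f$, which requires stability of the first-order derivative processes with respect to the mean-field parameter. Using the $\l\|\cdot\r\|_{\L^1}$-type bound for $\D\xi f$ in Assumption~\ref{ass:2-first-derivative}.2, one has $\l|R^3_f(u)\r|\le\alpha_1(u)\l(\l\|\D x X^{t,\xi,\xi}_u\zeta-\D x X^{t,\eta,\eta}_u\zeta\r\|_{\L^1}+\l\|Y^{t,\xi,\zeta}_u-Y^{t,\eta,\zeta}_u\r\|_{\L^1}\r)$. The first norm is precisely controlled by Corollary~\ref{cor:Dx-Dx-L1}, which gives $\lesssim\l\|\zeta\r\|_{\L^2}\l\|\xi-\eta\r\|_{\L^2}$. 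For the second norm I would first establish, by running an analogous Grönwall estimate for the lowercase equation \eqref{eq:SDE-xi-derivative-xi}, the companion bound $\E[\sup_{t\le w\le T}\l\|Y^{t,\xi,\zeta}_w-Y^{t,\eta,\zeta}_w\r\|]\lesssim\l\|\zeta\r\|_{\L^2}\l\|\xi-\eta\r\|_{\L^2}$; this is self-contained because there all coefficients are evaluated along $X^{t,\xi}$ and $X^{t,\eta}$, whose difference is controlled in $\L^2$ by Lemma~\ref{lem:xi-eta-2-bound}, the mean-field source-increment is again handled by Corollary~\ref{cor:Dx-Dx-L1} together with the companion bound inside the Grönwall loop, and Lemmas~\ref{lem:Y-2-bound} and \ref{lem:Y-p-bound} absorb the $\l\|Y^{t,\xi,\zeta}_u\r\|$-factors. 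Since $\l\|\cdot\r\|_{\L^1}\le\E[\sup_{t\le w\le T}\l\|\cdot\r\|]$, this produces $\E[\l|R^3_f(u)\r|^p\mid\CF_t]\lesssim\alpha_1(u)^p\l\|\zeta\r\|_{\L^2}^p\l\|\xi-\eta\r\|_{\L^2}^p$.

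Collecting the four contributions, integrating in $u$, and using that $\alpha_0^p$ and $\alpha_1^p$ are integrable, one arrives at
\begin{align*}
    \E\l[\sup_{t\le w\le s}\l\|W_w\r\|^p\,\Big|\,\CF_t\r]
    &\lesssim\l\|\zeta\r\|_{\L^2}^p\l(\l\|x-y\r\|^p+\l\|\xi-\eta\r\|_{\L^2}^p\r)\\
    &\quad+\int_t^s\alpha_0(u)^p\,\E\l[\sup_{t\le w\le u}\l\|W_w\r\|^p\,\Big|\,\CF_t\r]\d u
\end{align*}
for all $t\le s\le T$, and Grönwall's inequality closes the argument.
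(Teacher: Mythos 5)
Your overall scheme is the right one and matches the paper almost exactly: write $W=Y^{t,x,\xi,\zeta}-Y^{t,y,\eta,\zeta}$, apply Lemma~\ref{lem:conditional}, split the coefficient difference into a $\D x f$-applied-to-$W$ piece plus the three remainders $R^1_f$, $R^2_f$, $R^3_f$, and handle $R^1_f$, $R^2_f$ by the Lipschitz bounds of Assumption~\ref{ass:2-first-derivative}.2 together with conditional Cauchy--Schwarz, Lemmas~\ref{lem:xxi-yeta-p-bound}, \ref{lem:xi-eta-2-bound}, \ref{lem:Y-p-bound}, and the $\H^2$-bound $\l\|\D x X^{t,\xi,\xi}\zeta + Y^{t,\xi,\zeta}\r\|_{\H^2}\lesssim\l\|\zeta\r\|_{\L^2}$. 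You also correctly isolate the obstruction: $R^3_f$ forces you to control $\l\|Y^{t,\xi,\zeta}_u-Y^{t,\eta,\zeta}_u\r\|_{\L^1}$.

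The gap is the claim that the companion bound $\E\l[\sup_{t\le w\le T}\l\|Y^{t,\xi,\zeta}_w-Y^{t,\eta,\zeta}_w\r\|\r]\lesssim\l\|\zeta\r\|_{\L^2}\l\|\xi-\eta\r\|_{\L^2}$ is ``self-contained'' via a direct Grönwall estimate on \eqref{eq:SDE-xi-derivative-xi}. Lemma~\ref{lem:conditional} is only available for $p\geq 2$, so you would need $\E\l[\sup\l\|Y^{t,\xi,\zeta}_w-Y^{t,\eta,\zeta}_w\r\|^2\r]$ first (then Jensen). But the $\D x f$-increment term produces $\E\l[\l\|X^{t,\xi}_u-X^{t,\eta}_u\r\|^2\l\|Y^{t,\xi,\zeta}_u\r\|^2\r]$, and any Hölder split of this needs a fourth moment of either $X^{t,\xi}-X^{t,\eta}$ or $Y^{t,\xi,\zeta}$. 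Neither is available: Lemma~\ref{lem:xi-eta-2-bound} is only an $\L^2$-estimate for the unconcatenated difference, and the unconditional fourth moment implicit in Lemma~\ref{lem:Y-p-bound} would require $\zeta\in\L^{4,d}$. The paper avoids this by \emph{not} running Grönwall independently on \eqref{eq:SDE-xi-derivative-xi}: it takes the conditional inequality you already proved (the analogue of \eqref{ineq:Y-Y-p-0}) with $p=2$, invokes the concatenation identity of Lemma~\ref{lem:Y-concatenation}, applies conditional Jensen, substitutes $x=\xi,\,y=\eta$, and uses Cauchy--Schwarz in the \emph{outer} sublinear expectation between $\l\|\zeta\r\|$ and $\l\|\xi-\eta\r\|$ each at the first power, so only $\L^2$-norms appear. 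This gives $\E\l[\sup\l\|Y^{t,\xi,\zeta}_w-Y^{t,\eta,\zeta}_w\r\|\r]\lesssim\l\|\zeta\r\|_{\L^2}\l\|\xi-\eta\r\|_{\L^2}+\l(\int_t^s\alpha_1(u)^2\l\|Y^{t,\xi,\zeta}_u-Y^{t,\eta,\zeta}_u\r\|_{\L^1}^2\d u\r)^{1/2}$, and a further Grönwall closes it, after which you can plug back into your conditional bound. In short, Lemma~\ref{lem:Y-concatenation} is the missing ingredient: the $\L^1$-stability of $Y^{t,\cdot,\zeta}$ must be deduced from the conditional estimate for $Y^{t,x,\cdot,\zeta}$, not established separately.
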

    \begin{proof}
        Set $Z^{\xi}:= \D x X^{t,\xi,\xi}\zeta + Y^{t,\xi,\zeta}$ and $Z^{\eta}:= \D x X^{t,\eta,\eta}\zeta + Y^{t,\eta,\zeta}$,
        then
        \begin{align*}
            \l\| Z^\xi \r\|_{\H^2} + \l\| Z^\eta \r\|_{\H^2} &\lesssim \l\|\zeta\r\|_{\L^2}
        \end{align*}
        due to Lemmas~\ref{lem:Dx-concatenation} and \ref{lem:Y-2-bound}. Moreover,
        \begin{align*}
            \l\| Z^\xi_s - Z^\eta_s \r\|_{\L^1} 
            &\leq 
            \l\| \D x X^{t,\xi,\xi}_s \zeta - \D x X^{t,\eta,\eta}_s \zeta \r\|_{\L^1} + \l\|Y^{t,\xi,\zeta}_s - Y^{t,\eta,\zeta}_s \r\|_{\L^1}
            \\&\lesssim
            \l\|\zeta\r\|_{\L^2}\l\|\xi-\eta\r\|_{\L^2} + \l\|Y^{t,\xi,\zeta}_s - Y^{t,\eta,\zeta}_s \r\|_{\L^1}
            \numberthis\label{ineq:Z-Z}
        \end{align*}
        for all $t\leq s\leq T$ due to Corollary~\ref{cor:Dx-Dx-L1}.
        
        By Lemma~\ref{lem:conditional}, we have for all $t\leq s\leq T$
        \begin{flalign*}
            \quad& 
            \E\l[ \sup_{t\leq w\leq s}\l\| Y^{t,x,\xi,\zeta}_w - Y^{t,y,\eta,\zeta}_w \r\|^p\,\Big|\,\CF_t\r]
            &&
            \\&\lesssim
            \sum_{f\in F} \int_t^s \E\l[ \l| \D x f\!\l(u,X^{t,x,\xi}_u,X^{t,\xi}_u\r)Y^{t,x,\xi,\zeta}_u - \D x f\!\l(u,X^{t,y,\eta}_u,X^{t,\eta}_u\r) Y^{t,x,\xi,\zeta}_u \r|^p\,\Big|\,\CF_t \r] \d u
            \\&\quad 
            + \sum_{f\in F} \int_t^s \E\l[ \l| \D x f\!\l(u,X^{t,y,\eta}_u,X^{t,\eta}_u\r)\!\l( Y^{t,x,\xi,\zeta}_u - Y^{t,y,\eta,\zeta}_u\r)  \r|^p\,\Big|\,\CF_t \r] \d u
            \\&\quad
            + \sum_{f\in F} \int_t^s \E\l[ \l|  \D \xi f\!\l(u,X^{t,x,\xi}_u,X^{t,\xi}_u\r)Z^{\xi}_u  - \D \xi f\!\l(u,X^{t,y,\eta}_u,X^{t,\eta}_u\r) Z^{\xi}_u \r|^p\,\Big|\,\CF_t \r] \d u
            \\&\quad 
            + \sum_{f\in F} \int_t^s \E\l[ \l| \D \xi f\!\l(u,X^{t,y,\eta}_u,X^{t,\eta}_u\r)\!\l( Z^{\xi}_u - Z^{\eta}_u\r)  \r|^p\,\Big|\,\CF_t \r] \d u
            \\&\lesssim
            \int_t^s \alpha_1\!\l(u\r)^p \E\l[ \l\| Y^{t,x,\xi,\zeta}_u \r\|^p \l\| X^{t,x,\xi}_u - X^{t,y,\eta}_u\r\|^p \,\Big|\,\CF_t \r] \d u
            \\&\quad 
            +\int_t^s  \alpha_1\!\l(u\r)^p \l\| X^{t,\xi}_u - X^{t,\eta}_u \r\|_{\L^2}^p \E\l[ \l\| Y^{t,x,\xi,\zeta}_u \r\|^p \,\Big|\,\CF_t \r] \d u
            \\&\quad 
            + \int_t^s \alpha_0\!\l(u\r)^p \E\l[ \l\| Y^{t,x,\xi,\zeta}_u - Y^{t,y,\eta,\zeta}_u \r\|^p\,\Big|\,\CF_t \r] \d u
            \\&\quad
            + \int_t^s \alpha_1\!\l(u\r)^p \l\|Z^\xi_u\r\|_{\L^2}^p \l( \E\l[ \l\| X^{t,x,\xi}_u - X^{t,y,\eta}_u\r\|^p \,\Big|\,\CF_t\r] + \l\| X^{t,\xi}_u - X^{t,\eta}_u \r\|_{\L^2}^p \r)  \d u
            \\&\quad 
            + \int_t^s \alpha_1\!\l(u\r)^p \l\| Z^\xi_u - Z^\eta_u \r\|_{\L^1}^p \d u
            \\&\lesssim
            \int_t^s \alpha_1\!\l(u\r)^p \l( \l\|\zeta\r\|^p + \l\|\zeta\r\|_{\L^2}^p \r) 
            \l( \l\|x-y\r\|^p + \l\|\xi-\eta\r\|_{\L^2}^p\r) \d u
            \\&\quad 
            +\int_t^s  \alpha_1\!\l(u\r)^p \l\| \xi-\eta \r\|_{\L^2}^p \l( \l\|\zeta\r\|^p + \l\|\zeta\r\|_{\L^2}^p \r) \d u
            \\&\quad 
            + \int_t^s \alpha_0\!\l(u\r)^p \E\l[ \l\| Y^{t,x,\xi,\zeta}_u - Y^{t,y,\eta,\zeta}_u \r\|^p\,\Big|\,\CF_t \r] \d u
            \\&\quad
            + \int_t^s \alpha_1\!\l(u\r)^p \l\|\zeta \r\|_{\L^2}^p \l( \l\|x-y\r\|^p + \l\|\xi-\eta\r\|_{\L^2}^p\r)  \d u
            \\&\quad 
            + \int_t^s \alpha_1\!\l(u\r)^p \l\|\zeta\r\|_{\L^2}^p\l\|\xi-\eta\r\|_{\L^2}^p + \alpha_1\!\l(u\r)^p \l\|Y^{t,\xi,\zeta}_u - Y^{t,\eta,\zeta}_u \r\|_{\L^1}^p \d u
            \\&\lesssim
            \l( \l\|\zeta\r\|^p + \l\|\zeta\r\|_{\L^2}^p \r) 
            \l( \l\|x-y\r\|^p + \l\|\xi-\eta\r\|_{\L^2}^p\r)
            +\int_t^s 
            \alpha_1\!\l(u\r)^p \l\|Y^{t,\xi,\zeta}_u - Y^{t,\eta,\zeta}_u \r\|_{\L^1}^p \d u
            \\&\quad 
            + \int_t^s \alpha_0\!\l(u\r)^p \E\l[ \l\| Y^{t,x,\xi,\zeta}_u - Y^{t,y,\eta,\zeta}_u \r\|^p\,\Big|\,\CF_t \r] \d u
            .
        \end{flalign*}
        due to \eqref{ineq:Z-Z} and Lemmas~\ref{lem:xxi-yeta-p-bound} and \ref{lem:Y-p-bound}.
        Further, Grönwall's inequality implies that
        \begin{flalign*}
            \quad &
            \E\l[ \sup_{t\leq w\leq s}\l\| Y^{t,x,\xi,\zeta}_w - Y^{t,y,\eta,\zeta}_w \r\|^p\,\Big|\,\CF_t\r]
            &&
            \\&\lesssim
            \l( \l\|\zeta\r\|^p + \l\|\zeta\r\|_{\L^2}^p \r) 
            \l( \l\|x-y\r\|^p + \l\|\xi-\eta\r\|_{\L^2}^p\r)
            +\int_t^s 
            \alpha_1\!\l(u\r)^p \l\|Y^{t,\xi,\zeta}_u - Y^{t,\eta,\zeta}_u \r\|_{\L^1}^p \d u
            \numberthis\label{ineq:Y-Y-p-0}
        \end{flalign*}
        for all $t\leq s\leq T$. From Lemma~\ref{lem:Y-concatenation} and \eqref{ineq:Y-Y-p-0} we obtain
        \begin{flalign*}
            \quad &
            \E\l[ \sup_{t\leq w\leq s}\l\| Y^{t,\xi,\zeta}_w - Y^{t,\eta,\zeta}_w \r\|\r]^2
            &&
            \\&\leq 
            \E\l[ \E\l[ \sup_{t\leq w\leq s}\l\| Y^{t,x,\xi,\zeta}_w - Y^{t,y,\eta,\zeta}_w \r\|^2\,\Big|\,\CF_t\r]^{\frac12} \bigg|_{x=\xi,\,y=\eta}\r]^2
            \\&\lesssim
            \E\l[ \l( \l\|\zeta\r\| + \l\|\zeta\r\|_{\L^2} \r) 
            \l( \l\|\xi-\eta\r\| + \l\|\xi-\eta\r\|_{\L^2}\r) \r]^2
            + \int_t^s \alpha_1\!\l(u\r)^2 \l\|Y^{t,\xi,\zeta}_u - Y^{t,\eta,\zeta}_u \r\|_{\L^1}^2 \d u
            \\&\lesssim 
            \l\|\zeta\r\|_{\L^2}^2 \l\|\xi-\eta\r\|_{\L^2}^2 + \int_t^s 
            \alpha_1\!\l(u\r)^2 \l\|Y^{t,\xi,\zeta}_u - Y^{t,\eta,\zeta}_u \r\|_{\L^1}^2 \d u
            ,
        \end{flalign*}
        and Grönwall's inequality yields
        \begin{equation*}
            \E\l[ \sup_{t\leq w\leq s}\l\| Y^{t,\xi,\zeta}_w - Y^{t,\eta,\zeta}_w \r\|\r]
            \lesssim 
            \l\|\zeta\r\|_{\L^2} \l\|\xi-\eta\r\|_{\L^2}.
        \end{equation*}
        
        Hence, \eqref{ineq:Y-Y-p-0} becomes
        \begin{align*}
            \E\l[ \sup_{t\leq w\leq T}\l\| Y^{t,x,\xi,\zeta}_w - Y^{t,y,\eta,\zeta}_w \r\|^p\,\Big|\,\CF_t\r]
            \lesssim \l( \l\|\zeta\r\|^p + \l\|\zeta\r\|_{\L^2}^p \r) 
            \l( \l\|x-y\r\|^p + \l\|\xi-\eta\r\|_{\L^2}^p\r).
        \end{align*}
    \end{proof}

    We immediately obtain the followin corollary.
    \begin{cor}\label{cor:Y-Y-2-bound}
        If Assumptions~\ref{ass:1-lipschitz} and \ref{ass:2-first-derivative} are satisfied with $q_0\geq 4$, then 
        \begin{align*}
            \E\l[ \sup_{t\leq w\leq T}\l\| Y^{t,\xi,\zeta}_w - Y^{t,\eta,\zeta}_w \r\|\r]
            \lesssim 
            \l\|\zeta\r\|_{\L^2} \l\|\xi-\eta\r\|_{\L^2}
        \end{align*}
        for all $0\leq t\leq T$ and $\xi,\eta,\zeta\in\L^{2,d}(t)$.
    \end{cor}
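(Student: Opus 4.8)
The plan is essentially to observe that the claimed inequality has \emph{already} been proved: it is precisely the intermediate estimate
\begin{equation*}
    \E\l[ \sup_{t\leq w\leq s}\l\| Y^{t,\xi,\zeta}_w - Y^{t,\eta,\zeta}_w \r\|\r]
    \lesssim \l\|\zeta\r\|_{\L^2}\l\|\xi-\eta\r\|_{\L^2}
\end{equation*}
that is obtained in the course of proving Lemma~\ref{lem:Y-Y-p-bound}, immediately before this bound is substituted back into \eqref{ineq:Y-Y-p-0} to close the general-$p$ estimate. So the proof I would actually write down is a one-line pointer to that argument; taking $s=T$ then gives the stated form.

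For the record, here is the self-contained skeleton. First I would invoke the concatenation identity of Lemma~\ref{lem:Y-concatenation}, which gives $\l\| Y^{t,\xi,\zeta} - Y^{t,\xi,\xi,\zeta}\r\|_{\H^2}=0$ and $\l\| Y^{t,\eta,\zeta} - Y^{t,\eta,\eta,\zeta}\r\|_{\H^2}=0$, so that it suffices to estimate the concatenated processes $Y^{t,x,\xi,\zeta}\big|_{x=\xi}$ and $Y^{t,y,\eta,\zeta}\big|_{y=\eta}$. By the aggregation property and conditional Jensen's inequality,
\begin{equation*}
    \E\l[ \sup_{t\leq w\leq s}\l\| Y^{t,\xi,\zeta}_w - Y^{t,\eta,\zeta}_w \r\|\r]
    \leq
    \E\l[ \E\l[ \sup_{t\leq w\leq s}\l\| Y^{t,x,\xi,\zeta}_w - Y^{t,y,\eta,\zeta}_w \r\|^2\,\Big|\,\CF_t\r]^{\frac12}\bigg|_{x=\xi,\,y=\eta}\r].
\end{equation*}
Into the right-hand side I would feed the conditional bound \eqref{ineq:Y-Y-p-0} with $p=2$ --- crucially the version that still carries the self-referential term $\int_t^s\alpha_1\!\l(u\r)^2\l\|Y^{t,\xi,\zeta}_u - Y^{t,\eta,\zeta}_u\r\|_{\L^1}^2\d u$, and \emph{not} the finished statement of Lemma~\ref{lem:Y-Y-p-bound}, which would be circular. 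Taking square roots, setting $x=\xi$, $y=\eta$, applying the outer sublinear expectation, and using $\E\l[\l\|\xi-\eta\r\|\r]\leq\l\|\xi-\eta\r\|_{\L^2}$ together with the fact that $\l\|\zeta\r\|_{\L^2}$ is deterministic yields
\begin{equation*}
    \E\l[ \sup_{t\leq w\leq s}\l\| Y^{t,\xi,\zeta}_w - Y^{t,\eta,\zeta}_w \r\|\r]^2
    \lesssim \l\|\zeta\r\|_{\L^2}^2\l\|\xi-\eta\r\|_{\L^2}^2 + \int_t^s\alpha_1\!\l(u\r)^2\l\|Y^{t,\xi,\zeta}_u - Y^{t,\eta,\zeta}_u\r\|_{\L^1}^2\d u.
\end{equation*}
Since $\l\|Y^{t,\xi,\zeta}_u - Y^{t,\eta,\zeta}_u\r\|_{\L^1}\leq\E\l[\sup_{t\leq w\leq u}\l\|Y^{t,\xi,\zeta}_w - Y^{t,\eta,\zeta}_w\r\|\r]$ and $\alpha_1^2$ is integrable, Grönwall's inequality closes the estimate and gives the claim for all $t\leq s\leq T$, in particular for $s=T$.

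I do not expect any genuine obstacle here: the statement is a by-product of work already carried out for Lemma~\ref{lem:Y-Y-p-bound}. The only point that requires care is the logical ordering --- because the proof of Lemma~\ref{lem:Y-Y-p-bound} itself uses this $\L^1$-type bound to bootstrap from $p=2$ to general $p$, the corollary must be extracted from the preliminary estimate \eqref{ineq:Y-Y-p-0} rather than from the completed lemma. For that reason I would present the proof simply as ``this was shown in the course of proving Lemma~\ref{lem:Y-Y-p-bound}.''
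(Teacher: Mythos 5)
Your proposal is correct and matches the paper exactly: the paper prefaces the corollary with ``We immediately obtain the following corollary'' and gives no separate proof, precisely because the displayed bound is the intermediate $\L^1$-estimate derived in the middle of the proof of Lemma~\ref{lem:Y-Y-p-bound} (via the aggregation step, the preliminary bound \eqref{ineq:Y-Y-p-0} with $p=2$, and Grönwall) before it is substituted back to close the general-$p$ case. You have also correctly flagged the one subtlety, namely that one must cite \eqref{ineq:Y-Y-p-0} rather than the finished lemma to avoid circularity.
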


    \begin{lem}\label{lem:Y-diff}
        If Assumptions~\ref{ass:1-lipschitz} and \ref{ass:2-first-derivative} are satisfied with $q_0\geq 4$, then 
        \begin{flalign*}
             \quad&
             \E\l[ \sup_{t\leq w\leq s}\l\| X^{t,x,\xi+\eta}_w - X^{t,x,\xi}_w - Y^{t,x,\xi,\eta}_w \r\|^2 \,\Big|\,\CF_t \r]
             &&
             \\&\lesssim 
             \l\|\eta\r\|_{\L^2}^4 + \int_t^s \alpha_1\!\l(u\r)^2 \E\l[ \l\| X^{t,\xi+\eta}_u - X^{t,\xi}_u - \D x X^{t,\xi,\xi}_u\eta - Y^{t,\xi,\eta}_u \r\| \r]^2 \d u.
        \end{flalign*}
        for all $0\leq t\leq s\leq T$, $x\in\Rbb^d$ and $\xi,\eta\in\L^{2,d}(t)$.
    \end{lem}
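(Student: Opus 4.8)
The plan is to follow the pattern of the proof of Proposition~\ref{prp:x-differentiability}. Write $D:=X^{t,x,\xi+\eta} - X^{t,x,\xi} - Y^{t,x,\xi,\eta}$, and set $\delta_u:=X^{t,\xi+\eta}_u - X^{t,\xi}_u$ and $\Delta_u:=X^{t,x,\xi+\eta}_u - X^{t,x,\xi}_u$. All processes in sight lie in $\H^{2,d}(t,T)$ (by Lemmas~\ref{lem:Y-SDE} and~\ref{lem:Dx-concatenation} and the earlier propositions), so that, for each $u$, the random vector $\delta_u - \D x X^{t,\xi,\xi}_u\eta - Y^{t,\xi,\eta}_u$ is a genuine element of $\L^{1,d}$, and $D$ vanishes at time $t$. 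Subtracting the $G$-SDEs solved by $X^{t,x,\xi+\eta}$, $X^{t,x,\xi}$ and $Y^{t,x,\xi,\eta}$, the component of the $G$-SDE solved by $D$ corresponding to $f\in F$ has, at time $u$, the coefficient
$$\big[f(u,X^{t,x,\xi+\eta}_u,X^{t,\xi+\eta}_u) - f(u,X^{t,x,\xi}_u,X^{t,\xi}_u)\big] - \D x f(u,X^{t,x,\xi}_u,X^{t,\xi}_u)\,Y^{t,x,\xi,\eta}_u - \D\xi f(u,X^{t,x,\xi}_u,X^{t,\xi}_u)\big(\D x X^{t,\xi,\xi}_u\eta + Y^{t,\xi,\eta}_u\big).$$

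Using $f(u,\omega,\cdot,\xi)\in\C^{1}(\Rbb^d)$ and $f(u,\omega,x,\cdot)\in\C^{1}(\L^{2,d})$, I would expand the bracketed difference via the fundamental theorem of calculus, moving first the state argument from $X^{t,x,\xi}_u$ to $X^{t,x,\xi+\eta}_u$ (with the mean-field argument frozen at $X^{t,\xi+\eta}_u$) and then the mean-field argument from $X^{t,\xi}_u$ to $X^{t,\xi+\eta}_u$ (with the state argument frozen at $X^{t,x,\xi}_u$). Adding and subtracting the linearisations at $(X^{t,x,\xi}_u,X^{t,\xi}_u)$ and invoking Assumption~\ref{ass:2-first-derivative}.2 together with \eqref{ineq:1-derivative-bound}, the modulus of the above coefficient is dominated by
$$\alpha_1(u)\|\Delta_u\|\big(\|\Delta_u\|+\|\delta_u\|_{\L^2}\big) + \alpha_1(u)\|\delta_u\|_{\L^2}^2 + \alpha_0(u)\|D_u\| + \alpha_1(u)\big\|\delta_u - \D x X^{t,\xi,\xi}_u\eta - Y^{t,\xi,\eta}_u\big\|_{\L^1}.$$
The crucial term is the last one: it comes from applying the third estimate of Assumption~\ref{ass:2-first-derivative}.2, $|\D\xi f(u,\cdot,\cdot)\zeta|\leq\alpha_1(u)\|\zeta\|_{\L^1}$, to the $\L^{1,d}$-valued direction $\zeta=\delta_u - \D x X^{t,\xi,\xi}_u\eta - Y^{t,\xi,\eta}_u$, so that it is controlled by an $\L^1$-norm, which is precisely the quantity on the right-hand side of the claim.

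Finally I would feed this bound into Lemma~\ref{lem:conditional} (componentwise, and in the same manner for all three integrals of the equation) and take $\E[\,\cdot\mid\CF_t]$ of its square. The $\alpha_0$-contribution is $\alpha_0(u)^2\,\E\big[\sup_{t\leq w\leq u}\|D_w\|^2\mid\CF_t\big]$, which is the Grönwall term. To the first two contributions I would apply the conditional Cauchy–Schwarz inequality together with Lemmas~\ref{lem:xxi-yeta-p-bound} — used with $p=4$, which is exactly where the hypothesis $q_0\geq4$ enters — and~\ref{lem:xi-eta-2-bound}, which give $\E\big[\|\Delta_u\|^4\mid\CF_t\big]\lesssim\|\eta\|_{\L^2}^4$ and $\|\delta_u\|_{\L^2}^2\lesssim\|\eta\|_{\L^2}^2$; after integrating in $u$, and using that $\alpha_1^2$ is integrable (since $q_1\geq2$), these contribute $\lesssim\|\eta\|_{\L^2}^4$. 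The last contribution is deterministic and integrates to $\int_t^s\alpha_1(u)^2\,\E\big[\|X^{t,\xi+\eta}_u - X^{t,\xi}_u - \D x X^{t,\xi,\xi}_u\eta - Y^{t,\xi,\eta}_u\|\big]^2\,\d u$. Collecting these and applying Grönwall's inequality, with the integrable weight $\alpha_0^2$, to absorb the Grönwall term gives the asserted estimate. I expect the only delicate point to be the bookkeeping in the expansion — peeling the increment of each coefficient into exactly the four pieces matching the coefficient of $Y^{t,x,\xi,\eta}$, and recognising $\delta_u - \D x X^{t,\xi,\xi}_u\eta - Y^{t,\xi,\eta}_u$ as an honest element of $\L^{1,d}$ so that the third estimate of Assumption~\ref{ass:2-first-derivative}.2 delivers the $\L^1$-norm (rather than a weaker $\L^2$-norm) on the right; the remainder is the conditional Grönwall machinery used repeatedly in this section.
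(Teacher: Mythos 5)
Your proposal is correct and takes essentially the same route as the paper: the same split of the coefficient increment (state argument first with the mean-field slot frozen at $X^{t,\xi+\eta}$, then the mean-field slot with the state frozen at $X^{t,x,\xi}$), the same FTC expansion and linearisation at $(X^{t,x,\xi}_u,X^{t,\xi}_u)$, the same use of the $\L^1$-bound $|\D\xi f(\cdot)\zeta|\leq\alpha_1\|\zeta\|_{\L^1}$ on $\zeta=\Delta-Z$ to produce the second term on the right-hand side, and the same combination of Lemma~\ref{lem:conditional}, conditional Cauchy--Schwarz, Lemmas~\ref{lem:xxi-yeta-p-bound} (with $p=4$) and~\ref{lem:xi-eta-2-bound}, and Grönwall to finish.
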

    \begin{proof}
        Set 
        \begin{align*}
            \Delta^\xi &:=X^{t,x,\xi+\eta}-X^{t,x,\xi},
            &
            Y&:=Y^{t,x,\xi,\eta},
            \\
            \Delta&:=X^{t,\xi+\eta}-X^{t,\xi},
            &
            Z&:=\D x X^{t,\xi,\xi}\eta + Y^{t,\xi,\eta}.
        \end{align*}
        Lemmas~\ref{lem:Dx-concatenation} and \ref{lem:Y-p-bound} yield
        \begin{equation*}
            \l\|Z\r\|_{\H^2} + \l\|Y\r\|_{\H^2}\lesssim \l\|\eta\r\|_{\L^2}
            .\numberthis\label{ineq:ZY}
        \end{equation*}
        Moreover, Lemma~\ref{lem:H2-bound} implies
        \begin{align*}
            \l\|\Delta\r\|_{\H^2} &\lesssim \l\|\eta\r\|_{\L^2} 
            & 
            \E\l[ \sup_{t\leq w\leq T}\l\|\Delta^\xi_w\r\|^4\,\Big|\,\CF_t\r] &\lesssim \l\|\eta\r\|_{\L^2}^4
            \numberthis\label{ineq:Delta}
        \end{align*}
        
        By Lemma~\ref{lem:conditional}, we have for all $t\leq s\leq T$
        \begin{flalign*}
            \quad&
            \E\l[\sup_{t\leq w\leq s}\l\| \Delta^\xi_w - Y^{t,x,\xi,\eta}_w\r\|^2 \,\Big|\,\CF_t\r]
            &&
            \\&\lesssim
            \sum_{f\in F} \int_t^s \E\l[ \l| f\!\l(u,X^{t,x,\xi+\eta}_u,X^{t,\xi+\eta}_u\r) - f\!\l(u,X^{t,x,\xi}_u,X^{t,\xi+\eta}_u\r) - \D x f\!\l(u,X^{t,x,\xi}_u,X^{t,\xi}_u\r) Y_u \r|^2 \,\Big|\,\CF_t \r] \d u
            \\&\quad 
            + \sum_{f\in F} \int_t^s \E\l[ \l| f\!\l(u,X^{t,x,\xi}_u,X^{t,\xi+\eta}_u\r) - f\!\l(u,X^{t,x,\xi}_u,X^{t,\xi}_u\r)  - \D \xi f\!\l(u,X^{t,x,\xi}_u,X^{t,\xi}_u\r) Z_u\r|^2 \,\Big|\,\CF_t \r] \d u
            \\&\lesssim
            \sum_{f\in F} \int_t^s \int_0^1 \E\l[ \l| \D x f\!\l(u,X^{t,x,\xi}_u + \lambda \Delta^\xi_u ,X^{t,\xi+\eta}_u\r) \Delta^\xi_u - \D x f\!\l(u,X^{t,x,\xi}_u,X^{t,\xi}_u\r) \Delta^\xi_u \r|^2 \,\Big|\,\CF_t \r] \d \lambda \d u
            \\&\quad 
            + \sum_{f\in F} \int_t^s \E\l[ \l| \D x f\!\l(u,X^{t,x,\xi}_u,X^{t,\xi}_u\r)\!\l( \Delta^\xi_u - Y_u\r) \r|^2 \,\Big|\,\CF_t \r] \d u
            \\&\quad 
            + \sum_{f\in F} \int_t^s \int_0^1 \E\l[ \l| \D \xi f\!\l(u,X^{t,x,\xi}_u,X^{t,\xi}_u+\lambda\Delta_u \r) \Delta_u - \D \xi f\!\l(u,X^{t,x,\xi}_u,X^{t,\xi}_u\r) \Delta_u \r|^2 \,\Big|\,\CF_t \r] \d \lambda \d u 
            \\&\quad 
            +\sum_{f\in F} \int_t^s \E\l[ \l| \D \xi f\!\l(u,X^{t,x,\xi}_u,X^{t,\xi}_u\r)\!\l(\Delta_u - Z_u\r)\r|^2 \,\Big|\,\CF_t \r] \d u
            \\&\lesssim
            \int_t^s \alpha_1\!\l(u\r)^2 \l( \E\l[\l\|\Delta^\xi_u\r\|^4\,\Big|\,\CF_t\r] + \l\|\Delta_u\r\|_{\L^2}^2 \E\l[\l\|\Delta^\xi_u\r\|^2\,\Big|\,\CF_t\r] + \l\|\Delta_u\r\|_{\L^2}^4 \r)  \d \lambda \d u
            \\&\quad 
            + \int_t^s 
            \alpha_0\!\l(u\r)^2 \E\l[ \l\| \Delta^\xi_u - Y_u \r\|^2 \,\Big|\,\CF_t \r] 
            + \alpha_1\!\l(u\r)^2 \l\|\Delta_u - Z_u\r\|_{\L^1}^2 
            \d u
            \\&\lesssim
            \l\|\eta\r\|_{\L^2}^4
            + \int_t^s 
            \alpha_0\!\l(u\r)^2 \E\l[ \l\| \Delta^\xi_u - Y_u \r\|^2 \,\Big|\,\CF_t \r] 
            + \alpha_1\!\l(u\r)^2 \l\|\Delta_u - Z_u\r\|_{\L^1}^2 
            \d u
        \end{flalign*}
        due to \eqref{ineq:Delta} and \eqref{ineq:ZY}.
        Finally, Grönwall's inequality implies the desired result.
    \end{proof}

    \begin{lem}\label{lem:xi-differentiability}
        Let $0\leq t\leq T$ and $\xi,\eta\in\L^{2,d}(t)$. 
        If Assumptions~\ref{ass:1-lipschitz} and \ref{ass:2-first-derivative} are satisfied with $q_0\geq 4$, then
        \begin{equation*}
            \lim_{\|\eta\|_{\L^2}\rightarrow 0} \frac{\l\| X^{t,\xi+\eta} - X^{t,\xi} - \D x X^{t,\xi,\xi}\eta - Y^{t,\xi,\eta} \r\|_{\H^{1}}}{\l\|\eta\r\|_{\L^2}} = 0,
        \end{equation*}
        where the limit is taken over $\eta\in\L^{2,d}(t)$.
    \end{lem}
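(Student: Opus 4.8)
The plan is to write the remainder $R := X^{t,\xi+\eta} - X^{t,\xi} - \D x X^{t,\xi,\xi}\eta - Y^{t,\xi,\eta}$ as a sum of two pieces — one handled by Lemma~\ref{lem:H1-diff}, the other by Lemma~\ref{lem:Y-diff} — and then close a Grönwall estimate. By Lemmas~\ref{lem:concatenation-identity} and \ref{lem:Y-concatenation}, together with the fact that $\D x X^{t,\xi,\xi}\eta$ is by definition (Lemma~\ref{lem:Dx-concatenation}) the concatenation of $A^{t,x,\xi,y}$ at $x=\xi$, $y=\eta$, we may replace $X^{t,\xi+\eta}$, $X^{t,\xi}$ and $Y^{t,\xi,\eta}$ by $X^{t,\xi+\eta,\xi+\eta}$, $X^{t,\xi,\xi}$ and $Y^{t,\xi,\xi,\eta}$ without changing any $\H^2$-norm, hence (since $\l\|\cdot\r\|_{\H^1}\leq\l\|\cdot\r\|_{\H^2}$) any $\H^1$-norm. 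Inserting $\pm X^{t,\xi,\xi+\eta}$ then gives $\l\| R\r\|_{\H^1}\leq I_1 + I_2$, where $I_1 := \l\| X^{t,\xi+\eta,\xi+\eta} - X^{t,\xi,\xi+\eta} - \D x X^{t,\xi,\xi}\eta \r\|_{\H^1}$ and $I_2 := \l\| X^{t,\xi,\xi+\eta} - X^{t,\xi,\xi} - Y^{t,\xi,\xi,\eta} \r\|_{\H^1}$. By Lemma~\ref{lem:H1-diff}, $I_1 = o(\l\|\eta\r\|_{\L^2})$ as $\l\|\eta\r\|_{\L^2}\to 0$, so it suffices to show $I_2 \lesssim \l\|\eta\r\|_{\L^2}^2 + I_1$.

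To this end I would track $g(s) := \E\l[ \sup_{t\leq w\leq s}\l\| X^{t,\xi,\xi+\eta}_w - X^{t,\xi,\xi}_w - Y^{t,\xi,\xi,\eta}_w\r\| \r]$ for $t\leq s\leq T$. Since the process inside is the concatenation at $x=\xi$ of $X^{t,x,\xi+\eta}_w - X^{t,x,\xi}_w - Y^{t,x,\xi,\eta}_w$, Jensen's inequality and the aggregation property give $g(s)^2 \leq \E\l[ \E\l[ \sup_{t\leq w\leq s}\l\| X^{t,x,\xi+\eta}_w - X^{t,x,\xi}_w - Y^{t,x,\xi,\eta}_w\r\|^2 \,\Big|\,\CF_t \r]\Big|_{x=\xi}\r]$, and Lemma~\ref{lem:Y-diff} bounds the right-hand side by a constant multiple of $\l\|\eta\r\|_{\L^2}^4 + \int_t^s \alpha_1(u)^2 \l(\E\l\| R_u\r\|\r)^2 \d u$.

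The loop is closed by splitting $R_u$ in the same manner: up to $\H^2$-null sets $R_u = R^{(1)}_u + R^{(2)}_u$, where $R^{(1)}_u$ and $R^{(2)}_u$ denote the integrands of $I_1$ and $I_2$ at time $u$, so that $\E\l\| R_u\r\| \leq \E\l\| R^{(1)}_u\r\| + \E\l\| R^{(2)}_u\r\| \leq I_1 + g(u)$ and hence $\l(\E\l\| R_u\r\|\r)^2 \lesssim I_1^2 + g(u)^2$. Since $\alpha_1$ is square-integrable (as $q_1\geq 2$), this yields $g(s)^2 \lesssim \l\|\eta\r\|_{\L^2}^4 + I_1^2 + \int_t^s \alpha_1(u)^2 g(u)^2 \d u$, and Grönwall's inequality gives $g(T)^2 \lesssim \l\|\eta\r\|_{\L^2}^4 + I_1^2$, i.e. $I_2 = g(T) \lesssim \l\|\eta\r\|_{\L^2}^2 + I_1$. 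Combining this with $\l\| R\r\|_{\H^1} \leq I_1 + I_2$ and dividing by $\l\|\eta\r\|_{\L^2}$, the claim follows from $I_1/\l\|\eta\r\|_{\L^2}\to 0$.

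The main obstacle is the circularity inherent in Lemma~\ref{lem:Y-diff}: its bound on the $x$-frozen remainder already involves the quantity $R$ we are trying to estimate. The concatenation identities of Section~\ref{sec:prelim} (Lemmas~\ref{lem:concatenation-identity}, \ref{lem:Dx-concatenation}, \ref{lem:Y-concatenation}) are precisely what is needed to break $R$ into the piece that Lemma~\ref{lem:H1-diff} dispatches and the piece to which Grönwall is applied; without that decomposition the estimate would remain genuinely self-referential. A minor technical point is that these identifications hold only quasi-surely / up to $\H^2$-null sets, which is harmless since every term in the argument is measured in an $\H^1$- or $\M^1$-type norm.
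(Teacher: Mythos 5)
Your proposal is correct and follows essentially the same route as the paper: apply Lemma~\ref{lem:Y-concatenation} and the aggregation property to reduce to the conditional bound of Lemma~\ref{lem:Y-diff}, insert $\pm X^{t,\xi,\xi+\eta}$ to split the remainder into the piece handled by Lemma~\ref{lem:H1-diff} and the piece to be absorbed by Grönwall, then close the loop and conclude. The only cosmetic difference is that you track the $\H^1$-type quantity $g(s)$ (converting to $\H^2$ via Jensen before invoking Lemma~\ref{lem:Y-diff}), whereas the paper runs Grönwall directly on the $\H^2$-type quantity $\E\bigl[\sup_{t\le w\le s}\|X^{t,\xi,\xi+\eta}_w-X^{t,\xi,\xi}_w-Y^{t,\xi,\eta}_w\|^2\bigr]$; both lead to the same estimate $I_2\lesssim\|\eta\|_{\L^2}^2+I_1$.
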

    \begin{proof}
        By Lemmas~\ref{lem:Y-concatenation} and \ref{lem:Y-diff}, we have
        \begin{flalign*}
            \quad&
            \E\l[\sup_{t\leq w\leq s}\l\| X^{t,\xi,\xi+\eta}_w - X^{t,\xi,\xi}_w - Y^{t,\xi,\eta}_w \r\|^2\r]
            &&
            \\&=
            \E\l[\E\l[ \sup_{t\leq w\leq s}\l\| X^{t,x,\xi+\eta}_w - X^{t,x,\xi}_w - Y^{t,x,\xi,\eta}_w \r\|^2\,\Big|\,\CF_t\r]\Bigg|_{x=\xi}\r]
            \\&\lesssim
            \l\|\eta\r\|_{\L^2}^4 + \int_t^s \alpha_1\!\l(u\r)^2 \E\l[ \l\| X^{t,\xi+\eta}_u - X^{t,\xi}_u - \D x X^{t,\xi,\xi}_u\eta - Y^{t,\xi,\eta}_u \r\|\r]^2 \d u
            \\&\lesssim
            \l\|\eta\r\|_{\L^2}^4 + \int_t^s \alpha_1\!\l(u\r)^2 \E\l[ \l\| X^{t,\xi+\eta,\xi+\eta}_u - X^{t,\xi,\xi+\eta}_u - \D x X^{t,\xi,\xi}_u\eta\r\|\r]^2 \d u
            \\&\quad 
            + \int_t^s \alpha_1\!\l(u\r)^2 \E\l[ \l\| X^{t,\xi,\xi+\eta}_u - X^{t,\xi,\xi}_u - Y^{t,\xi,\eta}_u \r\|^2 \r] \d u
            \\&\lesssim
            \l\|\eta\r\|_{\L^2}^4 + \l\| X^{t,\xi+\eta,\xi+\eta} - X^{t,\xi,\xi+\eta} - \D x X^{t,\xi,\xi}\eta \r\|_{\H^1}^2
            \\&\quad 
            + \int_t^s \alpha_1\!\l(u\r)^2 \E\l[ \l\| X^{t,\xi,\xi+\eta}_u - X^{t,\xi}_u - Y^{t,\xi,\eta}_u \r\|^2 \r] \d u
            ,
        \end{flalign*}
        and Grönwall's inequality yields
        \begin{align*}
            \l\| X^{t,\xi,\xi+\eta} - X^{t,\xi,\xi} - Y^{t,\xi,\eta} \r\|_{\H^2}
            &\lesssim
            \l\|\eta\r\|_{\L^2}^2 + \l\| X^{t,\xi+\eta,\xi+\eta} - X^{t,\xi,\xi+\eta} - \D x X^{t,\xi,\xi}\eta \r\|_{\H^1}
            .\numberthis \label{ineq:xi-diff}
        \end{align*}
        Finally, observe that
        \begin{flalign*}
            \quad& 
            \l\| X^{t,\xi+\eta} - X^{t,\xi} - \D x X^{t,\xi,\xi}\eta - Y^{t,\xi,\eta} \r\|_{\H^{1}} 
            &&
            \\&\leq 
            \l\| X^{t,\xi+\eta,\xi+\eta} - X^{t,\xi,\xi+\eta} - \D x X^{t,\xi,\xi}\eta \r\|_{\H^1} + \l\| X^{t,\xi,\xi+\eta} - X^{t,\xi,\xi} - Y^{t,\xi,\eta} \r\|_{\H^2}
            \\&\lesssim
            \l\|\eta\r\|_{\L^2}^2 + \l\| X^{t,\xi+\eta,\xi+\eta} - X^{t,\xi,\xi+\eta} - \D x X^{t,\xi,\xi}\eta \r\|_{\H^1} 
        \end{flalign*}
        due to \eqref{ineq:xi-diff} and, thus, Lemma~\ref{lem:H1-diff} implies
        \begin{align*} 
            \lim_{\|\eta\|_{\L^2}\rightarrow 0} \frac{\l\| X^{t,\xi+\eta} - X^{t,\xi} - \D x X^{t,\xi,\xi}\eta - Y^{t,\xi,\eta} \r\|_{\H^{1}}}{\l\|\eta\r\|_{\L^2}}
            = 0.
        \end{align*}
    \end{proof}

    \begin{prp}\label{prp:xi-differentiability-xi}
        Let $0\leq t\leq T$.
        If Assumptions~\ref{ass:1-lipschitz} and \ref{ass:2-first-derivative} are satisfied with $q_0\geq 4$, then the map
        \begin{equation*}
            \L^{2,d}(t)\rightarrow\H^{1,d}(t,T), \qquad \xi \mapsto X^{t,\xi}
        \end{equation*}
        is continuously Fréchet differentiable with Fréchet derivative 
        \begin{equation*}
            \D x X^{t,\xi}:\,
            \L^{2,d}(t) \rightarrow \H^{2,d}(t,T),
            \qquad
            \eta \mapsto \D \xi X^{t,\xi}\eta:=\D x X^{t,\xi,\xi}\eta + Y^{t,\xi,\eta}
        \end{equation*}
        at $\xi\in\L^{2,d}(t)$.
    \end{prp}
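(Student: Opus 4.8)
The strategy is to derive the three constituents of the claim from the lemmas of this section: that $\D\xi X^{t,\xi}$ is a bounded linear operator, that it fulfils the defining limit of the Fréchet derivative at each $\xi\in\L^{2,d}(t)$, and that $\xi\mapsto\D\xi X^{t,\xi}$ is continuous in the operator norm of $B\l(\L^{2,d}(t),\H^{1,d}(t,T)\r)$.

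First I would establish that $\D\xi X^{t,\xi}=\D x X^{t,\xi,\xi}(\cdot)+Y^{t,\xi,\cdot}$ is a well-defined bounded linear operator. By Lemma~\ref{lem:Dx-concatenation} the summand $\eta\mapsto\D x X^{t,\xi,\xi}\eta$ lies in $\H^{2,d}(t,T)$ with $\l\|\D x X^{t,\xi,\xi}\eta\r\|_{\H^2}\lesssim\l\|\eta\r\|_{\L^2}$, while Lemma~\ref{lem:Y-SDE} together with Lemma~\ref{lem:Y-2-bound} shows $\eta\mapsto Y^{t,\xi,\eta}$ lies in $\H^{2,d}(t,T)$ with $\l\|Y^{t,\xi,\eta}\r\|_{\H^2}\lesssim\l\|\eta\r\|_{\L^2}$; since $\H^{2,d}(t,T)\subseteq\H^{1,d}(t,T)$, the operator $\D\xi X^{t,\xi}$ maps $\L^{2,d}(t)$ boundedly into $\H^{1,d}(t,T)$. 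Linearity of the first summand follows from the linearity of $y\mapsto A^{t,x,\xi,y}$ (Lemma~\ref{lem:A-linear}) transported through the concatenation exactly as in the aggregation step of Lemma~\ref{lem:Dx-concatenation}; linearity of $\eta\mapsto Y^{t,\xi,\eta}$ follows because the forcing term of the linear $G$-SDE~\eqref{eq:SDE-xi-derivative-xi} depends linearly on $\l(\eta,\D x X^{t,\xi,\xi}\eta,Y^{t,\xi,\eta}\r)$, so the Grönwall argument used in Lemma~\ref{lem:Y-SDE} for $\eta\mapsto Y^{t,x,\xi,\eta}$ applies verbatim (or, equivalently, one identifies $Y^{t,\xi,\eta}$ with the concatenation of the linear family $\eta\mapsto Y^{t,x,\xi,\eta}$ via Lemma~\ref{lem:Y-concatenation}). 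Hence $\D\xi X^{t,\xi}\in B\l(\L^{2,d}(t),\H^{1,d}(t,T)\r)$.

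Differentiability at a fixed $\xi$ is then precisely Lemma~\ref{lem:xi-differentiability}, which states
\begin{equation*}
    \lim_{\l\|\eta\r\|_{\L^2}\to 0}\frac{\l\| X^{t,\xi+\eta}-X^{t,\xi}-\D\xi X^{t,\xi}\eta\r\|_{\H^1}}{\l\|\eta\r\|_{\L^2}}=0,
\end{equation*}
so $\D\xi X^{t,\xi}$ is the Fréchet derivative of $\xi\mapsto X^{t,\xi}$ viewed as a map into $\H^{1,d}(t,T)$. For continuity, fix $\xi,\chi\in\L^{2,d}(t)$ and estimate, for any $\eta\in\L^{2,d}(t)$,
\begin{equation*}
    \l\| \D\xi X^{t,\xi}\eta - \D\xi X^{t,\chi}\eta\r\|_{\H^1}
    \leq \l\|\D x X^{t,\xi,\xi}\eta - \D x X^{t,\chi,\chi}\eta\r\|_{\H^1}
    + \l\| Y^{t,\xi,\eta} - Y^{t,\chi,\eta}\r\|_{\H^1}.
\end{equation*}
Corollary~\ref{cor:Dx-Dx-L1}, applied with both arguments of the first derivative equal to $\xi$, both arguments of the second equal to $\chi$, and $\eta$ as the direction, bounds the first term by a multiple of $\l\|\eta\r\|_{\L^2}\l\|\xi-\chi\r\|_{\L^2}$, and Corollary~\ref{cor:Y-Y-2-bound} bounds the second term by a multiple of $\l\|\eta\r\|_{\L^2}\l\|\xi-\chi\r\|_{\L^2}$. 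Dividing by $\l\|\eta\r\|_{\L^2}$ and taking the supremum over $0\neq\eta\in\L^{2,d}(t)$ gives $\l\|\D\xi X^{t,\xi}-\D\xi X^{t,\chi}\r\|_{B(\L^{2,d}(t),\H^{1,d}(t,T))}\lesssim\l\|\xi-\chi\r\|_{\L^2}$, so $\xi\mapsto\D\xi X^{t,\xi}$ is even Lipschitz continuous, which completes the proof.

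Granting the lemmas of this section, the proof is essentially an exercise in bookkeeping; the only points requiring genuine care are keeping the $\H^{1}$- and $\H^{2}$-topologies apart (the difference quotient converges only in $\H^{1}$, reflecting the loss of one order of integrability incurred by differentiating in the mean-field argument $\xi$) and justifying the linearity — not merely the boundedness — of the concatenated maps $\zeta\mapsto\D x X^{t,\xi,\xi}\zeta$ and $\eta\mapsto Y^{t,\xi,\eta}$. Were the preceding lemmas unavailable, the substantive obstacles would instead be the differentiability estimate of Lemma~\ref{lem:xi-differentiability} and the Lipschitz estimate of Lemma~\ref{lem:Y-Y-p-bound}, both obtained by nested Grönwall arguments run through the concatenation identity of Lemma~\ref{lem:concatenation-identity}.
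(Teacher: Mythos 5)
Your proof is correct and follows essentially the same route as the paper's: it assembles linearity and boundedness from Lemmas~\ref{lem:A-linear}, \ref{lem:Dx-concatenation}, \ref{lem:Y-SDE}, and \ref{lem:Y-2-bound}, identifies the Fréchet limit with Lemma~\ref{lem:xi-differentiability}, and establishes operator-norm continuity via the Lipschitz estimates for the two summands. The only (cosmetic) divergence is in the continuity step, where you invoke Corollaries~\ref{cor:Dx-Dx-L1} and \ref{cor:Y-Y-2-bound} directly, while the paper re-derives the same bound by aggregating the conditional estimates of Lemmas~\ref{lem:Dx-Dx} and \ref{lem:Y-Y-p-bound} together with the concatenation identities~\ref{lem:Dx-concatenation} and \ref{lem:Y-concatenation}; since those corollaries are themselves consequences of the same lemmas, the two arguments are mathematically identical, and your version is slightly shorter. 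You are also right to flag that linearity of $\eta\mapsto Y^{t,\xi,\eta}$ is not stated verbatim in Lemma~\ref{lem:Y-SDE} (which treats $\eta\mapsto Y^{t,x,\xi,\eta}$) but follows either by the same Grönwall argument once one uses linearity of the concatenated $\D x X^{t,\xi,\xi}\eta$, or via the concatenation identity of Lemma~\ref{lem:Y-concatenation} — a small precision the paper leaves implicit.
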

    \begin{proof}
        Lemmas~\ref{lem:A-linear}, \ref{lem:Dx-concatenation}, \ref{lem:Y-SDE} and \ref{lem:Y-2-bound} imply that the map
        \begin{equation*}
            \L^{2,d}(t) \rightarrow \H^{1,d}(t,T), \qquad \eta \mapsto \D x X^{t,\xi,\xi}\eta + Y^{t,\xi,\eta}
        \end{equation*}
        is linear and continuous.
        
        Further, Lemma~\ref{lem:xi-differentiability} implies
        \begin{equation*}
            \lim_{\|\eta\| \rightarrow 0} \frac{\l\| X^{t,x,\xi+\epsilon\eta} - X^{t,x,\xi} - \D x X^{t,\xi,\xi}\eta - Y^{t,\xi,\eta} \r\|_{\H^{1}}}{\l\|\eta\r\|_{\L^2}} = 0.
        \end{equation*}
        
        Finally, observe that
        \begin{flalign*}
            \quad 
            &\l\| \D x X^{t,\xi+\eta,\xi+\eta} \zeta + Y^{t,\xi+\eta,\zeta} - \D x X^{t,\xi,\xi}\zeta - Y^{t,\xi,\zeta} \r\|_{\H^1} 
            &&
            \\&=
            \E\l[ \E\l[ \sup_{t\leq w \leq T}\l\| \D x X^{t,x+y,\xi+\eta}_w z - \D x X^{t,x,\xi}_w z + Y^{t,x+y,\xi+\eta,\zeta}_w - Y^{t,x,\xi,\zeta}_w \r\| \,\Big|\,\CF_t\r] \bigg|_{x=\xi,y=\eta,z=\zeta}\r]
            \\&\lesssim 
            \E\l[ \E\l[ \sup_{t\leq w \leq T}\l\| \D x X^{t,x+y,\xi+\eta}_w z - \D x X^{t,x,\xi}_w z\r\|^2 \,\Big|\,\CF_t\r]^{\frac12}  \bigg|_{x=\xi,y=\eta,z=\zeta}\r]
            \\&\quad 
            +\E\l[ \E\l[ \sup_{t\leq w \leq T}\l\| Y^{t,x+y,\xi+\eta,\zeta}_w - Y^{t,x,\xi,\zeta}_w \r\|^2 \,\Big|\,\CF_t\r]^{\frac12}  \bigg|_{x=\xi,y=\eta,z=\zeta}\r]
            \\&\lesssim
            \l\|\zeta\r\|_{\L^2} \l\|\eta\r\|_{\L^2}
        \end{flalign*}
        due to Lemmas~\ref{lem:Dx-Dx}, \ref{lem:Dx-concatenation}, \ref{lem:Y-concatenation} and \ref{lem:Y-Y-p-bound}. 
        Thus, $\xi \mapsto \D \xi X^{t,\xi}$ is continuous with respect to the operator norm.
    \end{proof}

    \begin{prp}\label{prp:xi-differentiability-x}
        Let $0\leq t\leq T$ and $x\in\Rbb^d$.
        If Assumptions~\ref{ass:1-lipschitz} and \ref{ass:2-first-derivative} are satisfied with $q_0\geq 4$, then the map
        \begin{equation*}
            \L^{2,d}(t)\rightarrow\H^{2,d}(t,T), \qquad \xi \mapsto X^{t,x,\xi}
        \end{equation*}
        is continuously Fréchet differentiable with Fréchet derivative 
        \begin{equation*}
            \D \xi X^{t,x,\xi}:\,
            \L^{2,d}(t)\rightarrow\H^{2,d}(t,T)
            , \qquad 
            \eta \mapsto \D \xi X^{t,x,\xi}\eta:=Y^{t,x,\xi,\eta}
        \end{equation*}
        at $\xi\in \L^{2,d}(t)$.
    \end{prp}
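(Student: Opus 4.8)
The plan is to follow the blueprint of the proof of Proposition~\ref{prp:xi-differentiability-xi}, which is in fact shorter here because the initial datum $x$ is deterministic, so no concatenation/aggregation step is needed. First I would check that the candidate derivative $\D \xi X^{t,x,\xi}\colon \eta \mapsto Y^{t,x,\xi,\eta}$ is a bounded linear operator from $\L^{2,d}(t)$ into $\H^{2,d}(t,T)$: linearity is precisely the last assertion of Lemma~\ref{lem:Y-SDE}, and boundedness follows from Lemma~\ref{lem:Y-p-bound} with $p=2$ after taking the sublinear expectation, which gives $\l\|Y^{t,x,\xi,\eta}\r\|_{\H^2}\lesssim \l\|\eta\r\|_{\L^2}$ for all $\eta\in\L^{2,d}(t)$.

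The differentiability itself is obtained from Lemma~\ref{lem:Y-diff} with $s=T$. Taking the sublinear expectation on both sides, using that $\alpha_1^2$ is integrable (since $q_1\geq 2$) to pull the constant $\int_t^T \alpha_1(u)^2\,\d u$ out of the time integral, and noting that for every $u$ the slice $\E\l[\l\| X^{t,\xi+\eta}_u - X^{t,\xi}_u - \D x X^{t,\xi,\xi}_u\eta - Y^{t,\xi,\eta}_u \r\|\r]$ is dominated by the corresponding $\H^1$-norm, one obtains
\begin{equation*}
    \l\| X^{t,x,\xi+\eta} - X^{t,x,\xi} - Y^{t,x,\xi,\eta} \r\|_{\H^2}^2
    \lesssim \l\|\eta\r\|_{\L^2}^4 + \l\| X^{t,\xi+\eta} - X^{t,\xi} - \D x X^{t,\xi,\xi}\eta - Y^{t,\xi,\eta} \r\|_{\H^1}^2 .
\end{equation*}
By Lemma~\ref{lem:xi-differentiability} the last term is $o\!\l(\l\|\eta\r\|_{\L^2}^2\r)$, hence so is the whole right-hand side, and dividing by $\l\|\eta\r\|_{\L^2}$ and letting $\l\|\eta\r\|_{\L^2}\to 0$ shows that $\eta\mapsto Y^{t,x,\xi,\eta}$ is the Fréchet derivative of $\xi\mapsto X^{t,x,\xi}$ at $\xi$.

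For the continuous dependence of the derivative on $\xi$, I would invoke Lemma~\ref{lem:Y-Y-p-bound} with $y=x$ and $p=2$; taking the sublinear expectation there yields $\l\|Y^{t,x,\xi,\zeta}-Y^{t,x,\eta,\zeta}\r\|_{\H^2}\lesssim \l\|\zeta\r\|_{\L^2}\l\|\xi-\eta\r\|_{\L^2}$ for all $\zeta\in\L^{2,d}(t)$. Since $\zeta\mapsto Y^{t,x,\xi,\zeta}-Y^{t,x,\eta,\zeta}$ is linear by Lemma~\ref{lem:Y-SDE}, this gives
\begin{equation*}
    \l\| \D \xi X^{t,x,\xi} - \D \xi X^{t,x,\eta} \r\|
    = \sup_{0\neq\zeta\in\L^{2,d}(t)} \frac{\l\|Y^{t,x,\xi,\zeta}-Y^{t,x,\eta,\zeta}\r\|_{\H^2}}{\l\|\zeta\r\|_{\L^2}}
    \lesssim \l\|\xi-\eta\r\|_{\L^2},
\end{equation*}
so $\xi\mapsto\D\xi X^{t,x,\xi}$ is (Lipschitz) continuous in the operator norm, which completes the proof.

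The main difficulty, such as it is, lies in the second step: the estimate in Lemma~\ref{lem:Y-diff} couples the $X^{t,x,\xi}$-level remainder to the $X^{t,\xi}$-level remainder only through the latter's $\H^1$-norm — which is the best one can hope for since $\D\xi f$ carries only an $\L^1$-type Lipschitz bound — so the argument genuinely hinges on Lemma~\ref{lem:xi-differentiability} being available in $\H^1$, together with the $L^2$-integrability of $\alpha_1$ that lets one pull the $\H^1$-remainder out of the time integral. Everything else reduces to routine bookkeeping with the conditional Grönwall-type estimates already packaged into the preliminary lemmas, and, unlike in Proposition~\ref{prp:xi-differentiability-xi}, the conclusion is obtained directly in $\H^2$ rather than $\H^1$ because the $x$-variable of $X^{t,x,\xi}$ does not feed back into the mean-field argument $X^{t,\xi}$.
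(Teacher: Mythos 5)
Your proof is correct and follows the same approach as the paper: linearity and boundedness of $\eta\mapsto Y^{t,x,\xi,\eta}$ from Lemmas~\ref{lem:Y-SDE} and \ref{lem:Y-p-bound}, the differentiability from the Grönwall-type estimate of Lemma~\ref{lem:Y-diff} combined with Lemma~\ref{lem:xi-differentiability}, and operator-norm continuity from Lemma~\ref{lem:Y-Y-p-bound}. In fact, your citation of Lemma~\ref{lem:Y-p-bound} for the $\H^2$-bound on $Y^{t,x,\xi,\eta}$ is more precise than the paper's reference to Lemma~\ref{lem:Y-2-bound}, which only covers $Y^{t,\xi,\eta}$.
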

    \begin{proof}
        Lemmas~\ref{lem:Y-SDE} and \ref{lem:Y-2-bound} imply that the map
        \begin{equation*}
            \L^{2,d}(t)\rightarrow \H^{2,d}(t,T),\qquad \eta \mapsto Y^{t,x,\xi,\eta}
        \end{equation*}
        is linear and continuous.
        Moreover, we have
        \begin{equation*}
            \l\| X^{t,x,\xi+\eta} - X^{t,x,\xi} - Y^{t,x,\xi,\eta}\r\|_{\H^2}
            \lesssim
            \l\| \eta\r\|_{\L^2}^2 + \l\| X^{t,\xi+\eta} - X^{t,\xi} - \D x X^{t,\xi,\xi}\eta - Y^{t,\xi,\eta}\r\|_{\H^1}.
        \end{equation*}
        due to Lemma~\ref{lem:Y-diff} and, thus, Lemma~\ref{lem:xi-differentiability} yields
        \begin{equation*}
            \lim_{\|\eta\| \rightarrow 0} \frac{\l\| X^{t,x,\xi+\eta} - X^{t,x,\xi} - Y^{t,x,\xi,\eta} \r\|_{\H^{2}}}{\l\|\eta\r\|_{\L^2}} = 0.
        \end{equation*}

        Finally, observe that
        \begin{flalign*}
            \l\| Y^{t,x,\xi,\zeta} - Y^{t,x,\eta,\zeta}\r\|_{\H^2} \lesssim \l\|\zeta\r\|_{\L^2} \l\|\xi-\eta\r\|_{\L^2} 
        \end{flalign*}
        due to Lemma~\ref{lem:Y-Y-p-bound}. Thus, the map $\xi\mapsto \D \xi X^{t,x,\xi}$ is continuous with respect to the operator norm.
    \end{proof}
    
    \section{Second Order Derivatives}\label{sec:derivative-2}
    For a normed real vector space $V$, let $\C^{2}(V)$ denote the space of all $f\in\C^1(V)$ such that $\D v f\l(\cdot\r)v\in\C^1(V)$ for all $v\in V$ and, for convenience, we set $\D v^2 f(v_0)(v_2,v_1):=\D v \D v f\!\l(v_0\r)v_1\,v_2$ for $v_0,v_1,v_2\in V$.
    
    \begin{ass}\label{ass:3-second-derivative}
        Let $b:\, [0,T]\times\Omega\times\Rbb^d\times\L^{2,d}\rightarrow\Rbb^d$, $h:\, [0,T]\times\Omega\times\Rbb^d\times\L^{2,d}\rightarrow\Rbb^{d\times n\times n}$, and $g:\, [0,T]\times\Omega\times\Rbb^d\times\L^{2,d}\rightarrow\Rbb^{d\times n}$ be such that the following holds for all components $f=b_k,h_{kij},g_{ki}$ with $1\leq i,j\leq n$, $1\leq k\leq d$.
        \begin{enumerate}
            \item 
            We have 
            $ f\!\l(s,\omega,\cdot,\xi\r)\in \C^{2}(\Rbb^d)$,  
            $\D \xi f\!\l(s,\omega,\cdot,\xi\r)\eta \in \C^1(\Rbb^d)$ and
            $\D x f\!\l(s,\omega,x,\cdot\r)y \in \C^1(\L^{2,d})$
            for all $0\leq s\leq T$, $\omega\in\Omega$, $x,y\in\Rbb^d$ and $\xi,\eta\in\L^{2,d}$.
            
            \item 
            There exists a square-integrable $\alpha_2:\,[0,T]\rightarrow [1,\infty)$ such that
            \begin{align*}
                \l| \D x^2 f\!\l(s,\omega,x,\xi\r)\!\l(y,z\r) - \D x^2 f\!\l(s,\omega,v,\xi\r)\!\l(y,z\r) \r| & \leq \kappa\!\l(s\r) \l\|y\r\| \l\|z\r\| \l\| x-v\r\|
                \!,\\
                \l| \D x \D \xi f\!\l(s,\omega,x,\xi\r) \zeta\, z - \D x \D \xi f\!\l(s,\omega,y,\eta\r) \zeta\, z \r| 
                &\leq \alpha_2\!\l(s\r) \l\|z\r\| \l\|\zeta\r\|_{\L^2} \!\l( \l\|x-y\r\| + \l\|\xi-\eta\r\|_{\L^2}\r)
                \!,\\
                \l| \D \xi \D x f\!\l(s,\omega,x,\xi\r) z\,\zeta - \D \xi \D x f\!\l(s,\omega,y,\eta\r) z\,\zeta \r| 
                &\leq \alpha_2\!\l(s\r) \l\|z\r\| \l\|\zeta\r\|_{\L^2} \!\l( \l\|x-y\r\| + \l\|\xi-\eta\r\|_{\L^2}\r)
            \end{align*}
            for all $0\leq s\leq T$, $\omega\in\Omega$, $v,x,y,z\in\Rbb^d$ and $\xi,\eta,\zeta\in\L^{2,d}$.
        \end{enumerate}
    \end{ass}

    \begin{lem}\label{lem:C-SDE}
        Let $0\leq t\leq T$, $x\in\Rbb^d$ and $\xi\in\L^{2,d}(t)$.
        If Assumptions~\ref{ass:1-lipschitz}, \ref{ass:2-first-derivative} and \ref{ass:3-second-derivative} are satisfied with $q_0\geq 4$, then the $G$-SDE
        \begin{align*}
             \d C^{t,x,\xi,y,z}_s  
            &= \D x b\!\l(s,X^{t,x,\xi}_s, X^{t,\xi}_s \r) C^{t,x,\xi,y,z}_s  \d s
            \\&\quad 
            + \D x^2 b\!\l(s,X^{t,x,\xi}_s, X^{t,\xi}_s \r) \!\l( \!\D x X^{t,x,\xi}_sy, \D x X^{t,x,\xi}_s z \r) \d s 
            \\&\quad
            + \D x h\!\l(s,X^{t,x,\xi}_s, X^{t,\xi}_s \r) C^{t,x,\xi,y,z}_s  \d \l<B\r>_s
            \\&\quad
            + \D x^2 h\!\l(s,X^{t,x,\xi}_s, X^{t,\xi}_s \r)  \!\l(\! \D x X^{t,x,\xi}_sy, \D x X^{t,x,\xi}_s z \r) \d \l<B\r>_s
            \\&\quad
            + \D x g\!\l(s,X^{t,x,\xi}_s, X^{t,\xi}_s \r) C^{t,x,\xi,y,z}_s \d B_s
            \\&\quad 
            + \D x^2 g\!\l(s,X^{t,x,\xi}_s, X^{t,\xi}_s \r)  \!\l( \!\D x X^{t,x,\xi}_sy, \D x X^{t,x,\xi}_s z \r) \d B_s, 
            & t\leq s\leq T,
            \\
            C^{t,x,\xi,y,z}_t &= 0
            \numberthis\label{eq:Dxx-derivative-SDE}
        \end{align*}
        admits a unique solution $C^{t,x,\xi,y,z}\in\H^{2,d}(t,T)$ for all $0\leq t\leq T$, $x,y,z\in\Rbb^d$ and $\xi\in\L^{2,d}(t)$. 
        Moreover, the map
        \begin{equation*}
            \Rbb^d\times \Rbb^d \rightarrow \H^{2,d}(t,T),\qquad (y,z)\mapsto C^{t,x,\xi,y,z}
        \end{equation*}
        is bilinear.        
    \end{lem}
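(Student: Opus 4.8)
The plan is to treat \eqref{eq:Dxx-derivative-SDE} exactly like the linear $G$-SDEs in the proofs of Lemmas~\ref{lem:A-linear} and \ref{lem:Y-SDE}: as a function of the unknown $C$ its drift, $\d\l<B\r>$- and $\d B$-coefficients are $\D x b\!\l(s,X^{t,x,\xi}_s,X^{t,\xi}_s\r)C$, $\D x h\!\l(s,X^{t,x,\xi}_s,X^{t,\xi}_s\r)C$ and $\D x g\!\l(s,X^{t,x,\xi}_s,X^{t,\xi}_s\r)C$, plus an inhomogeneous term $\D x^2 f\!\l(s,X^{t,x,\xi}_s,X^{t,\xi}_s\r)\!\l(\D x X^{t,x,\xi}_sy,\D x X^{t,x,\xi}_sz\r)$, $f=b_k,h_{kij},g_{ki}$, that does not involve $C$. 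So the only real task is to show that this inhomogeneous term lies in $\M^2(0,T)$; given that, the coefficients are Lipschitz in $C$ and lie in $\M^2(0,T)$, and existence and uniqueness of $C^{t,x,\xi,y,z}\in\H^{2,d}(t,T)$ — hence well-definedness of $(y,z)\mapsto C^{t,x,\xi,y,z}$ — follow as in Lemmas~\ref{lem:A-linear} and \ref{lem:Y-SDE}.

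To control the inhomogeneous term I would first record a uniform bound on $\D x^2 f$. Assumption~\ref{ass:3-second-derivative} only states Lipschitz continuity of $\D x^2 f$ in $x$, but taking $\eta=\xi$ in Assumption~\ref{ass:2-first-derivative}.2 shows that $x\mapsto\D x f\!\l(s,\omega,x,\xi\r)z$ is $\alpha_1(s)\l\|z\r\|$-Lipschitz, so the mean value theorem gives $\l|\D x^2 f\!\l(s,\omega,x,\xi\r)\!\l(y,z\r)\r|\leq\alpha_1\!\l(s\r)\l\|y\r\|\l\|z\r\|$ uniformly in $(\omega,x,\xi)$. By Lemma~\ref{lem:A-linear}, $\D x X^{t,x,\xi}y=A^{t,x,\xi,y}$ and $\D x X^{t,x,\xi}z=A^{t,x,\xi,z}$ lie in $\H^{2,d}(t,T)$, so the process $\D x^2 f\!\l(\cdot,X^{t,x,\xi},X^{t,\xi}\r)\!\l(\D x X^{t,x,\xi}y,\D x X^{t,x,\xi}z\r)$ is measurable and in $\M^1(0,T)$ by the reasoning of Lemma~\ref{lem:coefficients-1}; combining the bound above with the Cauchy--Schwarz inequality and the fourth moment estimate of Lemma~\ref{lem:Dx-p-bound} (applicable with $p=4$ precisely because $q_0\geq 4$) yields
\begin{align*}
    &\int_0^T \E\!\l[ \l| \D x^2 f\!\l(s,X^{t,x,\xi}_s,X^{t,\xi}_s\r)\!\l(A^{t,x,\xi,y}_s,A^{t,x,\xi,z}_s\r) \r|^2 \r] \d s \\
    &\qquad \leq \int_0^T \alpha_1\!\l(s\r)^2 \E\!\l[\l\| A^{t,x,\xi,y}_s\r\|^4\r]^{\frac12}\E\!\l[\l\| A^{t,x,\xi,z}_s\r\|^4\r]^{\frac12} \d s
    \lesssim \l\|y\r\|^2\l\|z\r\|^2\int_0^T \alpha_1\!\l(s\r)^2\d s<\infty,
\end{align*}
since $\alpha_1$ is $q_1$-integrable with $q_1\geq 2$ and hence square-integrable. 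Thus the inhomogeneous term is in $\M^2(0,T)$, and together with Lemma~\ref{lem:coefficients-1} and the bound \eqref{ineq:1-derivative-bound} on $\D x f$, all coefficients of \eqref{eq:Dxx-derivative-SDE} are in $\M^2(0,T)$ and Lipschitz in $C$ with constant $\alpha_0(s)$; the standard argument (cf.\ the proofs of Lemmas~\ref{lem:A-linear} and \ref{lem:Y-SDE}) then yields the unique solution $C^{t,x,\xi,y,z}\in\H^{2,d}(t,T)$.

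For bilinearity I would fix $t,x,\xi$, take $\lambda\in\Rbb$ and $y,y',z\in\Rbb^d$, and set $D:=C^{t,x,\xi,y+\lambda y',z}-C^{t,x,\xi,y,z}-\lambda C^{t,x,\xi,y',z}$. Then $D_t=0$, and $D$ solves the $G$-SDE with drift, $\d\l<B\r>$- and $\d B$-coefficients $\D x b\!\l(\cdot,X^{t,x,\xi},X^{t,\xi}\r)D$, $\D x h\!\l(\cdot,X^{t,x,\xi},X^{t,\xi}\r)D$, $\D x g\!\l(\cdot,X^{t,x,\xi},X^{t,\xi}\r)D$ and inhomogeneous term equal, by bilinearity of $\D x^2 f$, to $\D x^2 f\!\l(\cdot,X^{t,x,\xi},X^{t,\xi}\r)\!\l(A^{t,x,\xi,y+\lambda y'}-A^{t,x,\xi,y}-\lambda A^{t,x,\xi,y'},\,A^{t,x,\xi,z}\r)$. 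Since $y\mapsto A^{t,x,\xi,y}$ is linear by Lemma~\ref{lem:A-linear} and $\D x^2 f$ is bounded by $\alpha_1(s)$, this term vanishes in $\M^2(0,T)$, so Lemma~\ref{lem:conditional} together with Grönwall's inequality gives $\l\|D\r\|_{\H^2}=0$; hence $y\mapsto C^{t,x,\xi,y,z}$ is linear, and linearity in $z$ follows identically. The step that is not entirely routine is the estimate above: deriving the uniform bound $\l|\D x^2 f\r|\leq\alpha_1$ from the first-order Lipschitz hypothesis and then controlling $\l\|A^{t,x,\xi,y}\r\|^2\l\|A^{t,x,\xi,z}\r\|^2$ via the fourth moment bound of Lemma~\ref{lem:Dx-p-bound}, which is exactly where the hypothesis $q_0\geq 4$ enters.
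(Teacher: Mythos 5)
Your proposal is correct and follows the same route as the paper: solvability via Lipschitz coefficients in $\M^2(0,T)$ controlled through Lemma~\ref{lem:Dx-p-bound}, then bilinearity via Lemma~\ref{lem:conditional} and Grönwall, with the $\D x^2 f$ terms cancelling by linearity of $y\mapsto A^{t,x,\xi,y}$. You have merely made explicit two points the paper leaves implicit — the uniform bound $\l|\D x^2 f\r|\leq\alpha_1$ obtained from the Lipschitz hypothesis on $\D x f$, and the Cauchy–Schwarz argument using the fourth moment estimate ($q_0\geq 4$) to place the inhomogeneous term in $\M^2(0,T)$.
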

    \begin{proof}
        The SDE \eqref{eq:Dxx-derivative-SDE} has a unique solution $C^{t,x,\xi,y,z}\in\H^{2,d}(t,T)$ since the coefficients are Lipschitz and of linear growth due to Lemma~\ref{lem:Dx-p-bound} for any $y,z\in\Rbb^d$.
        Thus, the map $(y,z)\mapsto C^{t,x,\xi,y,z}$ is well-defined.

        Let $\lambda\in\Rbb$ and $v,x,y,z\in\Rbb^d$.
        By Lemma~\ref{lem:conditional}, we have for all $t\leq s\leq T$
        \begin{flalign*}
            \quad& 
            \E\l[ \sup_{t\leq w\leq s}\l\| C^{t,x,\xi,y+\lambda v,z}_w - C^{t,x,\xi,y,z}_w - \lambda C^{t,x,\xi,v,z}_w\r\|^2\r]
            &&
            \\&\lesssim
            \sum_{f\in F} \int_t^s \E\l[ \l| \D x f\!\l(u,X^{t,x,\xi}_u, X^{t,\xi}_u \r)\!\l( C^{t,x,\xi,y+\lambda v,z}_u - C^{t,x,\xi,y,z}_u - \lambda C^{t,x,\xi,v,z}_u \r) \r|^2 \r] \d u
            \\&\lesssim
            \int_t^s \alpha_0\!\l(u\r)^2 \E\l[ \l\| C^{t,x,\xi,y+\lambda v,z}_u - C^{t,x,\xi,y,z}_u - \lambda C^{t,x,\xi,v,z}_u \r\|^2\r] \d u,
        \end{flalign*}
        and Grönwall's inequality implies
        \begin{equation*}
            \l\| C^{t,x,\xi,y+\lambda v,z} - C^{t,x,\xi,y,z} - \lambda C^{t,x,\xi,v,z}\r\|_{\H^2} =0,
        \end{equation*}
        i.e., $y\mapsto C^{t,x,\xi,y,z}$ is linear.
        Analogously, we obtain that $z\mapsto C^{t,x,\xi,y,z}$ is linear.
    \end{proof}

    \begin{lem}\label{lem:C-bound}
        If Assumptions~\ref{ass:1-lipschitz}, \ref{ass:2-first-derivative} and \ref{ass:3-second-derivative} are satisfied with $q_0\geq 4$, then
        \begin{equation*}
            \E\l[ \sup_{t\leq w\leq T}\l\| C^{t,x,\xi,y,z}_w \r\|^2\r] \lesssim \l\|y\r\|^2 \l\|z\r\|^2
        \end{equation*}
        for all $0\leq t\leq T$, $x,y,z\in\Rbb^d$ and $\xi\in\L^{2,d}(t)$.
    \end{lem}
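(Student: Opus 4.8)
The plan is to mimic the proofs of Lemmas~\ref{lem:Dx-p-bound} and~\ref{lem:Y-2-bound}: apply the a priori $\H^2$-estimate for $G$-SDEs (Lemma~\ref{lem:conditional}) to \eqref{eq:Dxx-derivative-SDE} and close the resulting integral inequality with Grönwall's lemma. The coefficients of \eqref{eq:Dxx-derivative-SDE} split into a part that is \emph{linear} in $C^{t,x,\xi,y,z}$, with coefficients $\D x f(s,X^{t,x,\xi}_s,X^{t,\xi}_s)$, and an \emph{inhomogeneous} part $\D x^2 f(s,X^{t,x,\xi}_s,X^{t,\xi}_s)\bigl(\D x X^{t,x,\xi}_s y,\D x X^{t,x,\xi}_s z\bigr)$; I would estimate these two contributions separately.

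First I would invoke Lemma~\ref{lem:conditional} to obtain, for all $t\le s\le T$,
\begin{align*}
\E\l[\sup_{t\le w\le s}\l\|C^{t,x,\xi,y,z}_w\r\|^2\r]
&\lesssim \sum_{f\in F}\int_t^s \E\l[\l|\D x f\!\l(u,X^{t,x,\xi}_u,X^{t,\xi}_u\r) C^{t,x,\xi,y,z}_u\r|^2\r]\d u\\
&\quad + \sum_{f\in F}\int_t^s \E\l[\l|\D x^2 f\!\l(u,X^{t,x,\xi}_u,X^{t,\xi}_u\r)\!\l(\D x X^{t,x,\xi}_u y,\D x X^{t,x,\xi}_u z\r)\r|^2\r]\d u.
\end{align*}
For the first sum the uniform bound $|\D x f(\cdot)\,C_u|\le\alpha_0(\cdot)\,\|C_u\|$ from \eqref{ineq:1-derivative-bound} produces the term $\int_t^s \alpha_0(u)^2\,\E[\sup_{t\le w\le u}\|C^{t,x,\xi,y,z}_w\|^2]\,\d u$ that will feed Grönwall's inequality. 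For the second sum I would first note that the first Lipschitz estimate in Assumption~\ref{ass:2-first-derivative}.2 makes $x\mapsto\D x f(s,\omega,x,\xi)z$ Lipschitz with constant $\alpha_1(s)\|z\|$, hence $|\D x^2 f(s,\omega,x,\xi)(y,z)|\le\alpha_1(s)\|y\|\|z\|$ for all $x,y,z$, so that
\begin{equation*}
\E\l[\l|\D x^2 f\!\l(u,X^{t,x,\xi}_u,X^{t,\xi}_u\r)\!\l(\D x X^{t,x,\xi}_u y,\D x X^{t,x,\xi}_u z\r)\r|^2\r]
\le \alpha_1(u)^2\,\E\l[\l\|\D x X^{t,x,\xi}_u y\r\|^2\l\|\D x X^{t,x,\xi}_u z\r\|^2\r].
\end{equation*}

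It then remains to control this product. Using $\D x X^{t,x,\xi}_u y = A^{t,x,\xi,y}_u$ (Proposition~\ref{prp:x-differentiability}), the Cauchy--Schwarz inequality for the sublinear expectation, and Lemma~\ref{lem:Dx-p-bound} with $p=4$ --- available precisely because $q_0\ge4$, after passing from the conditional to an unconditional bound by taking $\E$ as in the Remark following Lemma~\ref{lem:H2-bound} --- I would get
\begin{equation*}
\E\l[\l\|A^{t,x,\xi,y}_u\r\|^2\l\|A^{t,x,\xi,z}_u\r\|^2\r]
\le \E\l[\l\|A^{t,x,\xi,y}_u\r\|^4\r]^{\frac12}\E\l[\l\|A^{t,x,\xi,z}_u\r\|^4\r]^{\frac12}
\lesssim \l\|y\r\|^2\l\|z\r\|^2.
\end{equation*}
Since $\alpha_1$ is square-integrable, the second sum is thus $\lesssim\|y\|^2\|z\|^2$ uniformly in $s$, and combining it with the first sum gives
\begin{equation*}
\E\l[\sup_{t\le w\le s}\l\|C^{t,x,\xi,y,z}_w\r\|^2\r]
\lesssim \l\|y\r\|^2\l\|z\r\|^2 + \int_t^s \alpha_0(u)^2\,\E\l[\sup_{t\le w\le u}\l\|C^{t,x,\xi,y,z}_w\r\|^2\r]\d u,
\end{equation*}
whence Grönwall's inequality yields the claim. (Equivalently, one may run the whole estimate conditionally on $\CF_t$ and take $\E$ at the end.) The only mildly delicate point is the inhomogeneous term: the quadratic dependence on $\D x X^{t,x,\xi}$ forces one to split off a \emph{fourth} moment of $A^{t,x,\xi,\cdot}$, which is exactly what Lemma~\ref{lem:Dx-p-bound} supplies under the standing assumption $q_0\ge4$; everything else is the by now routine Grönwall machinery.
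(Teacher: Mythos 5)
Your proposal is correct and follows essentially the same route as the paper: apply Lemma~\ref{lem:conditional} to \eqref{eq:Dxx-derivative-SDE}, bound $\D x f$ by $\alpha_0$ and $\D x^2 f$ by $\alpha_1\l\|y\r\|\l\|z\r\|$, control the inhomogeneous term by Cauchy--Schwarz together with the fourth-moment estimate of Lemma~\ref{lem:Dx-p-bound} (using $q_0\geq 4$), and close with Grönwall. You have merely spelled out the two intermediate steps that the paper's inequality chain uses implicitly, namely that the Lipschitz bound on $\D x f$ in Assumption~\ref{ass:2-first-derivative}.2 yields the uniform bound on $\D x^2 f$, and that the mixed quadratic term is handled by a $p=4$ Cauchy--Schwarz argument.
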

    \begin{proof}
        By Lemma~\ref{lem:conditional}, we have for all $t\leq s\leq T$
        \begin{flalign*}
            \quad &
            \E\l[ \sup_{t\leq w\leq s}\l\| C^{t,x,\xi,y,z}_w \r\|^2\r]
            && \\
            &\lesssim
            \sum_{f\in F} \int_t^s \E\l[ \l| \D x^2 f\!\l(u,X^{t,x,\xi}_u, X^{t,\xi}_u \r)\!\l( \!\D xX^{t,x,\xi}_u y, \D xX^{t,x,\xi}_u z\r)\r|^2 \r] \d u
            \\&\quad +
            \sum_{f\in F} \int_t^s \E\l[\l| \D x f\!\l(u,X^{t,x,\xi}_u, X^{t,\xi}_u \r)C^{t,x,\xi,y,z}_u \r|^2 \r] \d u
            \\&\leq 
            \int_t^s \alpha_1\!\l(u\r)^2 \E\l[ \l\| \D xX^{t,x,\xi}_u y\r\|^2 \l\|\D xX^{t,x,\xi}_u z \r\|^2 \r] + \alpha_0\!\l(u\r)^2 \E\l[ \l\| C^{t,x,\xi,y,z}_u \r\|^2 \r] \d u
            \\&\lesssim
            \l\|y\r\|^2 \l\|z\r\|^2 + \int_t^s \alpha_0\!\l(u\r)^2 \E\l[ \l\| C^{t,x,\xi,y,z}_u \r\|^2 \r] \d u.
        \end{flalign*}
        Finally, Grönwall's inequality implies the desired result.
    \end{proof}

    \begin{prp}\label{prp:Dx2-differentiability}
        Let $0\leq t\leq T$.
        If Assumptions~\ref{ass:1-lipschitz}, \ref{ass:2-first-derivative} and \ref{ass:3-second-derivative} are satisfied with $q_0\geq 6 $ and $q_1\geq 3$, then the map
        \begin{equation*}
            \Rbb^d \rightarrow \H^{2,d}(t,T), \qquad x \mapsto X^{t,x,\xi}
        \end{equation*}
        is twice Fréchet differentiable for every $\xi\in\L^{2,d}(t)$.
        More precisely, for every $x\in\Rbb^d$ and $\xi\in\L^{2,d}(t)$, the map
        \begin{equation*}
            \D x^2 X^{t,x,\xi}:\, \Rbb^d\times \Rbb^d \rightarrow \H^{2,d}(t,T), \qquad (y,z)\mapsto \D x^2 X^{t,x,\xi}\!\l(y,z\r):=C^{t,x,\xi,y,z}
        \end{equation*}
        is bilinear and continuous and such that 
        \begin{equation*}
            \lim_{\|y\|\rightarrow 0} \frac{\l\| \D x X^{t,x+y,\xi}z - \D x X^{t,x,\xi}z - \D x ^2 X^{t,x,\xi} \!\l(y,z\r) \r\|_{\H^2}}{\l\|y\r\|} = 0
        \end{equation*}
        for all $z\in\Rbb^d$.
    \end{prp}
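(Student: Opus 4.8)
The plan is to repeat the scheme of Proposition~\ref{prp:x-differentiability} one differentiation order higher, with the solution $A^{t,x,\xi,\cdot}$ of \eqref{eq:SDE-x-derivative} now in the role that $X^{t,x,\xi}$ played there. Bilinearity of $(y,z)\mapsto C^{t,x,\xi,y,z}$ is Lemma~\ref{lem:C-SDE}, and boundedness of this bilinear map is Lemma~\ref{lem:C-bound}, i.e.\ $\l\|C^{t,x,\xi,y,z}\r\|_{\H^2}\lesssim\l\|y\r\|\l\|z\r\|$; since the remainder estimate obtained below is uniform in $z$ with a constant depending only on $t$, it simultaneously gives Fréchet differentiability of $x\mapsto\D xX^{t,x,\xi}$ as a map into $B(\Rbb^d,\H^{2,d}(t,T))$, that is, twice Fréchet differentiability of $x\mapsto X^{t,x,\xi}$ with $\D x^2 X^{t,x,\xi}\!\l(y,z\r)=C^{t,x,\xi,y,z}$.

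Before the main estimate I would record two auxiliary bounds, each obtained by the Grönwall argument used throughout. First, an $\L^p$-version of Lemma~\ref{lem:C-bound}: for $2\leq p\leq\l(q_1\wedge\frac{q_0}2\r)$,
\begin{equation*}
    \E\l[\sup_{t\leq w\leq T}\l\|C^{t,x,\xi,y,z}_w\r\|^p\,\Big|\,\CF_t\r]\lesssim\l\|y\r\|^p\l\|z\r\|^p,
\end{equation*}
which follows from \eqref{eq:Dxx-derivative-SDE} via Lemma~\ref{lem:conditional}, the pointwise bound $\l|\D x^2 f\!\l(s,X^{t,x,\xi}_s,X^{t,\xi}_s\r)\!\l(a,b\r)\r|\leq\alpha_1(s)\l\|a\r\|\l\|b\r\|$ (a consequence of Assumption~\ref{ass:2-first-derivative}.2), Hölder's inequality and Lemma~\ref{lem:Dx-p-bound}. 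Second, a $p$-th order refinement of Proposition~\ref{prp:x-differentiability}: for the same range of $p$,
\begin{equation*}
    \E\l[\sup_{t\leq w\leq T}\l\|X^{t,x+y,\xi}_w-X^{t,x,\xi}_w-A^{t,x,\xi,y}_w\r\|^p\,\Big|\,\CF_t\r]\lesssim\l\|y\r\|^{2p},
\end{equation*}
proved exactly as Proposition~\ref{prp:x-differentiability} but tracking $p$-th moments, using Lemma~\ref{lem:xxi-yeta-p-bound} for the $2p$-th moment of $X^{t,x+y,\xi}-X^{t,x,\xi}$ and Assumption~\ref{ass:2-first-derivative}.2 for the Lipschitz bound on $\D xf$. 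Under $q_0\geq 6$ and $q_1\geq 3$ both bounds are available with $p=3$, which is the case used below.

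Fix $x,z\in\Rbb^d$ and write $Y:=X^{t,x+y,\xi}-X^{t,x,\xi}$, $A:=\D xX^{t,x,\xi}z$, $A^y:=\D xX^{t,x,\xi}y$, $A^{+}:=\D xX^{t,x+y,\xi}z$, $C:=C^{t,x,\xi,y,z}$ and $D:=A^{+}-A-C$, all in $\H^{2,d}(t,T)$ by Lemmas~\ref{lem:A-linear} and \ref{lem:C-SDE}. Subtracting the $G$-SDEs \eqref{eq:SDE-x-derivative} (for $A^{+}$ and $A$) and \eqref{eq:Dxx-derivative-SDE} (for $C$), the process $D$ has zero initial value and, for each component $f\in F$, the associated integrand equals $(\mathrm I)+(\mathrm{II})+(\mathrm{III})+(\mathrm{IV})$ with
\begin{align*}
    (\mathrm I)&=\D xf\!\l(u,X^{t,x+y,\xi}_u,X^{t,\xi}_u\r)D_u, \\
    (\mathrm{II})&=\l(\D xf\!\l(u,X^{t,x+y,\xi}_u,X^{t,\xi}_u\r)-\D xf\!\l(u,X^{t,x,\xi}_u,X^{t,\xi}_u\r)\r)C_u, \\
    (\mathrm{III})&=\int_0^1\l(\D x^2 f\!\l(u,X^{t,x,\xi}_u+\lambda Y_u,X^{t,\xi}_u\r)-\D x^2 f\!\l(u,X^{t,x,\xi}_u,X^{t,\xi}_u\r)\r)\!\l(Y_u,A_u\r)\d\lambda, \\
    (\mathrm{IV})&=\D x^2 f\!\l(u,X^{t,x,\xi}_u,X^{t,\xi}_u\r)\!\l(Y_u-A^y_u,A_u\r),
\end{align*}
where $(\mathrm{III})$ is the remainder of the fundamental theorem of calculus applied to $\D xf\!\l(u,\cdot,X^{t,\xi}_u\r)$, which is admissible by Assumption~\ref{ass:3-second-derivative}.1. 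One has the pointwise bounds $\l|(\mathrm{II})\r|\leq\alpha_1(u)\l\|Y_u\r\|\l\|C_u\r\|$, $\l|(\mathrm{III})\r|\leq\frac12\kappa(u)\l\|Y_u\r\|^2\l\|A_u\r\|$ and $\l|(\mathrm{IV})\r|\leq\alpha_1(u)\l\|Y_u-A^y_u\r\|\l\|A_u\r\|$ from Assumptions~\ref{ass:2-first-derivative}.2 and \ref{ass:3-second-derivative}.2; applying Hölder's inequality together with the sixth-moment bounds of Lemmas~\ref{lem:xxi-yeta-p-bound} and \ref{lem:Dx-p-bound} and the two auxiliary bounds with $p=3$ then gives $\E\l[\l|(\mathrm{II})\r|^2+\l|(\mathrm{III})\r|^2+\l|(\mathrm{IV})\r|^2\,\big|\,\CF_t\r]\lesssim\l(\alpha_1(u)^2+\kappa(u)^2\r)\l\|y\r\|^4\l\|z\r\|^2$. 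Since $(\mathrm I)$ feeds the Grönwall loop via \eqref{ineq:1-derivative-bound} and $\alpha_0^2,\alpha_1^2,\kappa^2=16\alpha_0^4$ are integrable, Lemma~\ref{lem:conditional} and Grönwall's inequality yield $\E\l[\sup_{t\leq w\leq T}\l\|D_w\r\|^2\,\Big|\,\CF_t\r]\lesssim\l\|y\r\|^4\l\|z\r\|^2$, hence $\l\|D\r\|_{\H^2}^2\lesssim\l\|y\r\|^4\l\|z\r\|^2$ and the ratio $\l\|\D xX^{t,x+y,\xi}z-\D xX^{t,x,\xi}z-\D x^2 X^{t,x,\xi}\!\l(y,z\r)\r\|_{\H^2}/\l\|y\r\|\lesssim\l\|y\r\|\l\|z\r\|\to 0$ as $\l\|y\r\|\to 0$.

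The hard part is not a new idea but the moment bookkeeping in the last step. Term $(\mathrm{IV})$ pairs the first-order remainder $X^{t,x+y,\xi}-X^{t,x,\xi}-\D xX^{t,x,\xi}y$ with $\D xX^{t,x,\xi}z$ and so forces the third-order refinement of Proposition~\ref{prp:x-differentiability}, while terms $(\mathrm{II})$ and $(\mathrm{III})$ call for sixth moments of $X^{t,x+y,\xi}-X^{t,x,\xi}$ and of $\D xX^{t,x,\xi}z$ and a third moment of $C^{t,x,\xi,y,z}$; choosing the Hölder exponents so that each of $(\mathrm{II})$, $(\mathrm{III})$, $(\mathrm{IV})$ is $O\!\l(\l\|y\r\|^4\l\|z\r\|^2\r)$ while the relevant moments and the weights $\alpha_1^3$ (from the refined estimate), $\alpha_1^2$ and $\kappa^2$ stay within reach is precisely what forces $q_0\geq 6$ and $q_1\geq 3$.
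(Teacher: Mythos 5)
Your proof is correct and closely parallels the paper's, but the algebraic decomposition of the difference SDE for $D=A^{+}-A-C$ is not the same, which changes which auxiliary lemmas you need. In the paper the four pieces are (i) the Grönwall-feeding term $\D xf(u,X^{t,x,\xi}_u,\cdot)(\Delta^{x,x}_u-C_u)$, (ii) the Lipschitz increment of $\D xf$ paired with $\Delta^{x,x}_u=\D xX^{t,x+y,\xi}_uz-\D xX^{t,x,\xi}_uz$, (iii) your (III), and (iv) your (IV). You instead pair the Lipschitz increment of $\D xf$ with $C_u$ and absorb $\D xf(u,X^{t,x+y,\xi}_u,\cdot)D_u$ into the Grönwall loop. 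Both are exact algebraic identities and both give $O(\|y\|^4\|z\|^2)$, but yours requires a conditional $\L^p$ bound on $C^{t,x,\xi,y,z}$ with $p=3$ that the paper never states (it is easily obtained, as you say, by the same Grönwall scheme as Lemma~\ref{lem:C-bound}), whereas the paper gets its corresponding term directly from Lemma~\ref{lem:Dx-Dx} and Lemma~\ref{lem:xxi-yeta-p-bound}. Similarly, for the bound $\E[\sup\|X^{t,x+y,\xi}-X^{t,x,\xi}-\D xX^{t,x,\xi}y\|^3\,|\,\CF_t]\lesssim\|y\|^6$ the paper does not rerun Proposition~\ref{prp:x-differentiability} at the third moment; it writes the remainder as $\int_0^1(\D xX^{t,x+\lambda y,\xi}-\D xX^{t,x,\xi})y\,\d\lambda$ via Corollary~\ref{cor:Dx-continuous} and invokes Lemma~\ref{lem:Dx-Dx} with $p=3$. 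Your proposed rerun also works but duplicates existing machinery. One small slip worth noting: the $\kappa$ appearing in Assumption~\ref{ass:3-second-derivative}.2 as the Lipschitz modulus of $\D x^2 f$ is not the $\kappa=4\alpha_0^2$ of Corollary~\ref{cor:growth} (the paper itself writes $\alpha_2$ in this role in the proof, and $\alpha_2$ is assumed square-integrable), so the identity $\kappa^2=16\alpha_0^4$ you quote is a misidentification — though the needed integrability of $\kappa^2$ holds under either reading, so the estimate is unaffected.
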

    \begin{proof}
        The map $(y,z)\mapsto C^{t,x,\xi,y,z}$ is bilinear and continuous due to Lemmas~\ref{lem:C-SDE} and \ref{lem:C-bound}.
        Set $\Delta^x:=X^{t,x+y,\xi} - X^{t,x,\xi}$, then
        \begin{align*}
            \E\l[ \sup_{t\leq w\leq T}\l\| \Delta^x_w \r\|^6\,\Big|\,\CF_t \r] &\lesssim \l\|y\r\|^6
            \numberthis\label{ineq:Delta1}
        \end{align*}
        due to Lemma~\ref{lem:xxi-yeta-p-bound}, and Lemma~\ref{lem:Dx-Dx} implies
        \begin{align*}
            \E\l[ \sup_{t\leq w\leq T} \l\|\Delta^x_w - \D x X^{t,x,\xi}_w y\r\|^3\,\Big|\,\CF_t\r]
            &\leq 
            \int_0^1 \E\l[ \sup_{t\leq w\leq T} \l\|  \D x X^{t,x+\lambda y,\xi}_w y   - \D x X^{t,x,\xi}_w y\r\|^3\,\Big|\,\CF_t\r] \d \lambda
            \\&\lesssim
            \l\|y\r\|^6
            .\numberthis\label{ineq:Delta1-DX}
        \end{align*}
        Further, set $\Delta^{x,x}:=\D x X^{t,x+y,\xi}z - \D x X^{t,x,\xi}z$, then Lemma~\ref{lem:Dx-Dx} yields
        \begin{equation*}
            \E\l[ \sup_{t\leq w\leq T}\l\| \Delta^{x,x}_w\r\|^3 \r] \lesssim \l\|y\r\|^3.
            \numberthis\label{ineq:Delta2}
        \end{equation*}
        
        By Lemma~\ref{lem:conditional}, we have for all $t\leq s\leq T$
        \begin{flalign*}
            \quad 
            \E&\l[\sup_{t\leq w\leq s}\l\| \Delta^{x,x}_w - C^{t,x,\xi,y,z}_w\r\|^2 \r]
            &&
            \\&\lesssim
            \sum_{f\in F} \int_t^s \E\bigg[\Big| \D x f\!\l(u,X^{t,x+y,\xi}_u,X^{t,\xi}_u\r) \D x X^{t,x+y,\xi}_uz - \D x f\!\l(u,X^{t,x,\xi}_u,X^{t,\xi}_u\r)\D x X^{t,x,\xi}_u z 
            \\&\quad 
            - \D x f\!\l(u,X^{t,x,\xi}_u,X^{t,\xi}_u\r) C^{t,x,\xi,y,z}_u - \D x ^2 f\!\l(u,X^{t,x,\xi}_u,X^{t,\xi}_u\r) \!\l(\!\D x X^{t,x,\xi}_u y, \D x X^{t,x,\xi}_u z\r) \Big|^2 \bigg] \d u
            \\&\lesssim
            \sum_{f\in F} \int_t^s \int_0^1 \E\l[ \l|  \l( \D x^2 f\!\l(u,X^{t,x,\xi}_u+\lambda \Delta^x_u,X^{t,\xi}_u\r)
            - \D x ^2 f\!\l(u,X^{t,x,\xi}_u,X^{t,\xi}_u\r) \r)\!\l( \Delta^x_u,\D x X^{t,x,\xi}_u z\r) \r|^2 \r] \d \lambda \d u
            \\&\quad +
            \sum_{f\in F} \int_t^s \E\l[ \l| \D x f\!\l(u,X^{t,x+y,\xi}_u,X^{t,\xi}_u\r)\Delta^{x,x}_u - \D x f\!\l(u,X^{t,x,\xi}_u,X^{t,\xi}_u\r)\Delta^{x,x}_u \r|^2 \r] \d u
            \\&\quad +
            \sum_{f\in F} \int_t^s \E\l[ \l| \D x ^2 f\!\l(u,X^{t,x,\xi}_u,X^{t,\xi}_u\r) \!\l(\Delta^x_u - \D x X^{t,x,\xi}y, \D x X^{t,x,\xi} z\r)\r|^2 \r] \d u
            \\&\quad +
            \sum_{f\in F} \int_t^s \E\bigg[ \Big| \D x f\!\l(u,X^{t,x,\xi}_u,X^{t,\xi}_u\r) \!\l(\Delta^{x,x}_u - C^{t,x,\xi,y,z}_u\r)  \Big|^2 \bigg] \d u
            \\&\lesssim
            \int_t^s \alpha_2\!\l(u\r)^2 \E\l[ \l\| \Delta^x_u\r\|^4 \l\|\D x X^{t,x,\xi}_u z\r\|^2 \r] + \alpha_1\!\l(u\r)^2 \E\l[ \l\| \Delta^x_u \r\|^2 \l\|\Delta^{x,x}_u\r\|^2 \r] \d u
            \\&\quad +
            \int_t^s \alpha_1\!\l(u\r)^2 \E\l[ \l\| \Delta^x_u - \D x X^{t,x,\xi}_u y\r\|^2 \l\| \D x X^{t,x,\xi}_u z\r\|^2\r]
            \\&\quad + 
            \int_t^s \alpha_0\!\l(u\r)^2 \E\l[ \l\| \Delta^{x,x}_u - C^{t,x,\xi,y,z}_u \r\|^2 \r]
            \\&\leq 
            \int_t^s \alpha_2\!\l(u\r)^2 \E\l[ \l\| \Delta^x_u\r\|^6 \r]^{\frac23} \E\l[ \l\|\D x X^{t,x,\xi}_u z\r\|^6 \r]^{\frac13} + \alpha_1\!\l(u\r)^2 \E\l[ \l\| \Delta^x_u \r\|^6 \r]^{\frac13} \E\l[ \l\|\Delta^{x,x}_u\r\|^3 \r]^{\frac23} \d u
            \\&\quad 
            + \int_t^s \alpha_1\!\l(u\r)^2 \E\l[ \l\| \Delta^x_u - \D x X^{t,x,\xi}_u y\r\|^3\r]^{\frac23} \E\l[ \l\| \D x X^{t,x,\xi}_u z\r\|^6\r]^{\frac13}
            \\&\quad + 
            \int_t^s \alpha_0\!\l(u\r)^2 \E\l[ \l\| \Delta^{x,x}_u - C^{t,x,\xi,y,z}_u \r\|^2 \r]
            \\&\lesssim
            \l\|y\r\|^4 \l( 1+\l\|z\r\|^2\r) + 
            \int_t^s \alpha_0\!\l(u\r)^2 \E\l[ \l\| \Delta^{x,x}_u - C^{t,x,\xi,y,z}_u \r\|^2 \r]
        \end{flalign*}
        due to \eqref{ineq:Delta1}, \eqref{ineq:Delta1-DX}, \eqref{ineq:Delta2} and Lemma~\ref{lem:Dx-p-bound}.
        Finally, Grönwall's inequality yields
        \begin{equation*}
            \l\| \D x X^{t,x+y,\xi}z - \D x X^{t,x,\xi}z - \D x ^2 X^{t,x,\xi} \!\l(y,z\r) \r\|_{\H^2}\lesssim \l\|y\r\|^2 \l( 1+\l\|z\r\|\r)
        \end{equation*}
        which implies the desired result. 
    \end{proof}

    \begin{lem}\label{lem:D-SDE}
        If Assumption~\ref{ass:1-lipschitz}, \ref{ass:2-first-derivative} and \ref{ass:3-second-derivative} are satisfied with $q_0\geq 6$ and $q_1\geq 3$, then the $G$-SDE
        \begin{align*}
            \d D^{t,x,\xi,y,\eta}_s &= 
            \D x b\!\l(s,X^{t,x,\xi}_s,X^{t,\xi}_s\r) D^{t,x,\xi,y,\eta}_s \d s
            \\&\quad 
            + \D x^2 b\!\l(s,X^{t,x,\xi}_s,X^{t,\xi}_s\r) \!\l(\!\D x X^{t,x,\xi}_s y, \D \xi X^{t,x,\xi}_s \eta\r) \d s
            \\&\quad 
            + \D x \D \xi b\!\l(s,X^{t,x,\xi}_s,X^{t,\xi}_s\r) \!\D \xi X^{t,\xi}_s\eta \, \D x X^{t,x,\xi}_s y \d s
            \\&\quad +
            \D x h\!\l(s,X^{t,x,\xi}_s,X^{t,\xi}_s\r) D^{t,x,\xi,y,\eta}_s \d\l<B\r>_s
            \\&\quad 
            + \D x^2 h\!\l(s,X^{t,x,\xi}_s,X^{t,\xi}_s\r) \!\l(\!\D x X^{t,x,\xi}_s y, \D \xi X^{t,x,\xi}_s \eta\r) \d\l<B\r>_s
            \\&\quad 
            + \D x \D \xi h\!\l(s,X^{t,x,\xi}_s,X^{t,\xi}_s\r) \!\D \xi X^{t,\xi}_s\eta \, \D x X^{t,x,\xi}_s y \d \l<B\r>_s
            \\&\quad +
            \D x g\!\l(s,X^{t,x,\xi}_s,X^{t,\xi}_s\r) D^{t,x,\xi,y,\eta}_s \d B_s
            \\&\quad 
            + \D x^2 g\!\l(s,X^{t,x,\xi}_s,X^{t,\xi}_s\r) \!\l(\!\D x X^{t,x,\xi}_s y, \D \xi X^{t,x,\xi}_s \eta\r) \d B_s
            \\&\quad 
            + \D x \D \xi g\!\l(s,X^{t,x,\xi}_s,X^{t,\xi}_s\r) \!\D \xi X^{t,\xi}_s\eta \, \D x X^{t,x,\xi}_s y \d B_s, 
            & t\leq s\leq T, \\
            D^{t,x,\xi,y,\eta}_t &= 0
            \numberthis\label{eq:D-SDE}
        \end{align*}
        admits a unique solution $D^{t,x,\xi,y,\eta}\in\H^{2,d}(t,T)$ for all 
        $0\leq t\leq T$, $x,y\in\Rbb^d$, $\xi,\eta\in\L^{2,d}(t)$. 
        Moreover, the map
        \begin{equation*}
            \Rbb^d \times \L^{2,d}(t) \mapsto \H^{2,d}(t,T), \qquad
            (y,\eta)\mapsto D^{t,x,\xi,y,\eta}
        \end{equation*}
        is bilinear.
    \end{lem}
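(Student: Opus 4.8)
The plan is to follow the template already used in the proofs of Lemmas~\ref{lem:Y-SDE} and \ref{lem:C-SDE}: first verify that the inhomogeneous (forcing) terms of \eqref{eq:D-SDE} lie in $\M^2(0,T)$, then invoke the existence and uniqueness theory for affine $G$-SDEs with $\M^2$-data and Lipschitz linear part to obtain $D^{t,x,\xi,y,\eta}$, and finally propagate the bilinearity of the forcing terms to $D^{t,x,\xi,y,\eta}$ through a Grönwall estimate.

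For the first step I would start from the observation that the Lipschitz estimates of Assumption~\ref{ass:2-first-derivative}.2 for $\D x f(s,\omega,\cdot,\xi)z$ and $\D \xi f(s,\omega,\cdot,\xi)\zeta$ in the $x$-variable force the uniform bounds $|\D x^2 f(s,\omega,x,\xi)(y,z)|\le\alpha_1(s)\|y\|\,\|z\|$ and $|\D x\D\xi f(s,\omega,x,\xi)\zeta\,z|\le\alpha_1(s)\|\zeta\|_{\L^2}\|z\|$ for all components $f$. Using the identifications $\D x X^{t,x,\xi}_s y=A^{t,x,\xi,y}_s$, $\D\xi X^{t,x,\xi}_s\eta=Y^{t,x,\xi,\eta}_s$ and $\D\xi X^{t,\xi}_s\eta=\D x X^{t,\xi,\xi}_s\eta+Y^{t,\xi,\eta}_s$, the forcing term built from $\D x^2 f$ is bounded pointwise by $\alpha_1(s)\|A^{t,x,\xi,y}_s\|\,\|Y^{t,x,\xi,\eta}_s\|$, and the one built from $\D x\D\xi f$ by $\alpha_1(s)\|\D\xi X^{t,\xi}_s\eta\|_{\L^2}\|A^{t,x,\xi,y}_s\|\lesssim\alpha_1(s)\|\eta\|_{\L^2}\|A^{t,x,\xi,y}_s\|$, where the last step uses Lemmas~\ref{lem:Dx-concatenation} and \ref{lem:Y-2-bound}. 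Applying the conditional Hölder inequality, then the fourth-moment bounds $\E[\sup_w\|A^{t,x,\xi,y}_w\|^4\mid\CF_t]\lesssim\|y\|^4$ and $\E[\sup_w\|Y^{t,x,\xi,\eta}_w\|^4\mid\CF_t]\lesssim\|\eta\|^4+\|\eta\|_{\L^2}^4$ from Lemmas~\ref{lem:Dx-p-bound} and \ref{lem:Y-p-bound} (available since $q_0\ge4$), and finally taking the outer sublinear expectation together with $\E[\|\eta\|^2]=\|\eta\|_{\L^2}^2$, I obtain $\int_t^T\E[|\text{forcing term}|^2]\,\d s\lesssim(1+\|y\|^2)\|\eta\|_{\L^2}^2\int_t^T\alpha_1(s)^2\,\d s<\infty$; for the $\D x\D\xi f$-term one only needs the second-moment bound on $A^{t,x,\xi,y}$. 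Hence all forcing terms of \eqref{eq:D-SDE} belong to $\M^2(0,T)$, and in fact $q_0\ge4$, $q_1\ge2$ already suffice here, the stronger standing hypotheses $q_0\ge6$, $q_1\ge3$ being needed only in the companion second-order $\xi$-differentiability statement.

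Since, in addition, the homogeneous coefficients $\D x b(\cdot,X^{t,x,\xi},X^{t,\xi})$, $\D x h(\cdot,X^{t,x,\xi},X^{t,\xi})$ and $\D x g(\cdot,X^{t,x,\xi},X^{t,\xi})$ lie in $\M^2(0,T)$ by \eqref{ineq:1-derivative-bound} and Lemma~\ref{lem:H2-bound} and act on the unknown $D$ as linear, hence Lipschitz with square-integrable Lipschitz constant $\alpha_0$, operators, \eqref{eq:D-SDE} admits a unique solution $D^{t,x,\xi,y,\eta}\in\H^{2,d}(t,T)$ by the $G$-SDE existence and uniqueness theory (Theorem~3.12 in \cite{bollweg_mean-field_2025} specialised to this affine equation); in particular $(y,\eta)\mapsto D^{t,x,\xi,y,\eta}$ is well-defined. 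For bilinearity, fix $t,x,\xi$. Linearity in $y$ at fixed $\eta$ follows because $y\mapsto\D x X^{t,x,\xi}_s y=A^{t,x,\xi,y}_s$ is linear (Lemma~\ref{lem:A-linear}) and $\D x^2 f(\cdot)(\cdot,\cdot)$ and $\D x\D\xi f(\cdot)(\cdot)(\cdot)$ are bilinear, so the forcing terms are linear in $y$; then $D^{t,x,\xi,y+\lambda v,\eta}-D^{t,x,\xi,y,\eta}-\lambda D^{t,x,\xi,v,\eta}$ solves the homogeneous version of \eqref{eq:D-SDE} with zero initial value, and Lemma~\ref{lem:conditional} together with \eqref{ineq:1-derivative-bound} and Grönwall's inequality gives that its $\H^2$-norm vanishes, exactly as in the proofs of Lemmas~\ref{lem:A-linear}, \ref{lem:Y-SDE} and \ref{lem:C-SDE}. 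Linearity in $\eta$ at fixed $y$ is proved identically, now using that $\eta\mapsto\D\xi X^{t,x,\xi}_s\eta=Y^{t,x,\xi,\eta}_s$ and $\eta\mapsto\D\xi X^{t,\xi}_s\eta=\D x X^{t,\xi,\xi}_s\eta+Y^{t,\xi,\eta}_s$ are linear (Lemmas~\ref{lem:A-linear} and \ref{lem:Y-SDE}), so that the forcing terms are linear in $\eta$; hence $(y,\eta)\mapsto D^{t,x,\xi,y,\eta}$ is bilinear.

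I expect the only genuinely delicate point to be the $\M^2(0,T)$-membership of the $\D x^2 f$-forcing term: one has to apply the moment bounds for $A^{t,x,\xi,y}$ and $Y^{t,x,\xi,\eta}$ in their conditional form and only afterwards take the outer sublinear expectation, so that the dependence on $\eta$ enters solely through $\E[\|\eta\|^2]=\|\eta\|_{\L^2}^2$ rather than through a fourth moment of $\eta$, which need not be finite for a general $\eta\in\L^{2,d}(t)$. Everything else is a routine transcription of arguments already carried out for Lemmas~\ref{lem:Y-SDE} and \ref{lem:C-SDE}.
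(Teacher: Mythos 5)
Your proposal is correct and follows essentially the same route as the paper: check that the homogeneous linear part has square‑integrable Lipschitz coefficients and the forcing terms are in $\M^2(0,T)$ (via the bounds from Lemmas~\ref{lem:Dx-p-bound} and \ref{lem:Y-p-bound}, applied in conditional form before taking the outer expectation), invoke existence/uniqueness for the resulting affine $G$-SDE, and then deduce bilinearity by showing the appropriate difference solves a homogeneous equation with zero initial data and applying Lemma~\ref{lem:conditional} and Grönwall. The only place you go beyond the paper's short proof is your explicit identification of the delicate point (avoiding a raw fourth moment of $\eta$) and your remark that $q_0\geq 4$, $q_1\geq 2$ already suffice for this lemma — both observations are correct but the method is identical.
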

    \begin{proof}
        The SDE \eqref{eq:D-SDE} has a unique solution $D^{t,x,\xi,y,\eta}\in\H^{2,d}(t,T)$ since the coefficients are Lipschitz and of linear growth due to Lemmas~\ref{lem:Dx-p-bound} and \ref{lem:Y-p-bound} for any $y\in\Rbb^d$ and $\eta\in\L^{2,d}(t)$. Thus, the map $(y,\eta)\mapsto D^{t,x,\xi,y,\eta}$ is well defined. 

        Let $\lambda\in\Rbb$, $y,z\in\Rbb^d$ and $\eta,\zeta \in\L^{2,d}(t)$.
        By Lemma~\ref{lem:conditional}, we have for all $t\leq s\leq T$
        \begin{flalign*}
            \quad
            \E&\l[ \sup_{t\leq w\leq s}\l\| D^{t,x,\xi,y+\lambda z,\eta}_w - D^{t,x,\xi,y,\eta}_w - \lambda D^{t,x,\xi,z,\eta}_w \r\|^2 \r]
            &&
            \\&\lesssim
            \sum_{f\in F} \int_t^s \E\l[ \l| \D x f\!\l(u,X^{t,x,\xi}_u,X^{t,\xi}_u\r) \!\l( D^{t,x,\xi,y+\lambda z,\eta}_u - D^{t,x,\xi,y,\eta}_u - \lambda D^{t,x,\xi,z,\eta}_u \r) \r|^2 \r] \d u
            \\&\leq 
            \int_t^s \alpha_0\!\l(u\r)^2 \E\l[ \l\| D^{t,x,\xi,y+\lambda z,\eta}_u - D^{t,x,\xi,y,\eta}_u - \lambda D^{t,x,\xi,z,\eta}_u  \r\|^2 \r] \d u
        \end{flalign*}
        and Grönwall's inequality yields that 
        \begin{equation*}
            \l\|D^{t,x,\xi,y+\lambda z} - D^{t,x,\xi,y} -\lambda D^{t,x,\xi,z}\r\|_{\H^2}=0,
        \end{equation*}
        i.e., $ y\mapsto D^{t,x,\xi,y,\eta}$ is linear.
        Analogously, we obtain that $\eta \mapsto D^{t,x,\xi,y,\eta}$ is linear.
    \end{proof}

    \begin{lem}\label{lem:D-bound}
        If Assumption~\ref{ass:1-lipschitz}, \ref{ass:2-first-derivative} and \ref{ass:3-second-derivative} are satisfied with $q_0\geq 6$ and $q_1\geq 3$, then
        \begin{equation*}
            \E\l[ \sup_{t\leq w\leq T}\l\| D^{t,x,\xi,y,\eta}_w \r\|^2 \r] \lesssim \l\|y\r\|^2 \l\|\eta \r\|_{\L^2}^2.
        \end{equation*}
    \end{lem}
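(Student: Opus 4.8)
The plan is to treat the $G$-SDE \eqref{eq:D-SDE} as a linear equation for $D^{t,x,\xi,y,\eta}$ driven by an inhomogeneous forcing assembled from the first-order sensitivity processes, and to run the standard moment estimate via Lemma~\ref{lem:conditional} followed by Grönwall's inequality, in the spirit of the proofs of Lemmas~\ref{lem:Dx-p-bound}, \ref{lem:Y-p-bound} and \ref{lem:C-bound}. First I would record the identifications $\D x X^{t,x,\xi}_s y = A^{t,x,\xi,y}_s$, $\D \xi X^{t,x,\xi}_s\eta = Y^{t,x,\xi,\eta}_s$ and $\D \xi X^{t,\xi}_s\eta = \D x X^{t,\xi,\xi}_s\eta + Y^{t,\xi,\eta}_s$ coming from Propositions~\ref{prp:x-differentiability}, \ref{prp:xi-differentiability-xi} and \ref{prp:xi-differentiability-x}, together with the uniform deterministic bound $\|\D \xi X^{t,\xi}_u\eta\|_{\L^2}^2 \lesssim \|\eta\|_{\L^2}^2$ for $u\in[t,T]$ (from Lemmas~\ref{lem:Dx-concatenation} and \ref{lem:Y-2-bound}). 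I would also note, as already used in the proof of Lemma~\ref{lem:C-bound}, that Assumption~\ref{ass:2-first-derivative} supplies the uniform bounds $|\D x^2 f(s,\cdot,\cdot)(y,z)| \leq \alpha_1(s)\|y\|\|z\|$ and $|\D x\D \xi f(s,\cdot,\cdot)\zeta\,z| \leq \alpha_1(s)\|\zeta\|_{\L^2}\|z\|$ for every component $f\in F$.

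Applying Lemma~\ref{lem:conditional} to \eqref{eq:D-SDE} with vanishing initial condition, and estimating the three groups of coefficients in \eqref{eq:D-SDE} by \eqref{ineq:1-derivative-bound} and the two bounds above, I expect $\E[\sup_{t\leq w\leq s}\|D^{t,x,\xi,y,\eta}_w\|^2]$ to be controlled, up to a constant, by $\int_t^s \alpha_0(u)^2\,\E[\|D^{t,x,\xi,y,\eta}_u\|^2]\,\d u$, plus the forcing integral $\int_t^s \alpha_1(u)^2\,\E[\|\D x X^{t,x,\xi}_u y\|^2\,\|\D \xi X^{t,x,\xi}_u\eta\|^2]\,\d u$ arising from the $\D x^2 f$ terms, plus the forcing integral $\int_t^s \alpha_1(u)^2\,\|\D \xi X^{t,\xi}_u\eta\|_{\L^2}^2\,\E[\|\D x X^{t,x,\xi}_u y\|^2]\,\d u$ arising from the $\D x\D \xi f$ terms. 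The last of these is harmless: $\E[\|\D x X^{t,x,\xi}_u y\|^2]\lesssim\|y\|^2$ by Lemma~\ref{lem:Dx-p-bound}, the $\L^2$-factor is $\lesssim\|\eta\|_{\L^2}^2$, and $\alpha_1$ is square-integrable, so this integral is $\lesssim\|y\|^2\|\eta\|_{\L^2}^2$.

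The step I expect to be the main obstacle is the forcing integral coming from $\D x^2 f(\cdots)(A^{t,x,\xi,y}_u, Y^{t,x,\xi,\eta}_u)$. Lemma~\ref{lem:Y-p-bound} controls only the \emph{conditional} fourth moment of $Y^{t,x,\xi,\eta}_u$, and moreover with a \emph{pointwise} term $\|\eta\|$ on the right-hand side; taking the sublinear expectation of a fourth power directly would leave a factor $\|\eta\|_{\L^4}^4$, which is too strong for the asserted estimate. The remedy is to estimate conditionally on $\CF_t$ first: by the conditional Cauchy--Schwarz inequality together with Lemmas~\ref{lem:Dx-p-bound} and \ref{lem:Y-p-bound} applied with $p=4$ (admissible since $q_0\geq 4$) one obtains $\E[\|A^{t,x,\xi,y}_u\|^2\|Y^{t,x,\xi,\eta}_u\|^2\mid\CF_t] \lesssim \|y\|^2(\|\eta\|^2 + \|\eta\|_{\L^2}^2)$, and only then take the sublinear expectation, using $\E[\|\eta\|^2]=\|\eta\|_{\L^2}^2$; this bounds the remaining forcing integral by $\|y\|^2\|\eta\|_{\L^2}^2$ as well.

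Assembling the three estimates gives $\E[\sup_{t\leq w\leq s}\|D^{t,x,\xi,y,\eta}_w\|^2] \lesssim \|y\|^2\|\eta\|_{\L^2}^2 + \int_t^s \alpha_0(u)^2\,\E[\sup_{t\leq w\leq u}\|D^{t,x,\xi,y,\eta}_w\|^2]\,\d u$ for all $t\leq s\leq T$, with the left-hand side finite by Lemma~\ref{lem:D-SDE}; since $\alpha_0$ is square-integrable, Grönwall's inequality closes the estimate and yields the claim. I note that only $q_0\geq 4$ and square-integrability of $\alpha_1$ are actually used in this argument, the stronger standing hypotheses $q_0\geq 6$, $q_1\geq 3$ being inherited from the existence statement in Lemma~\ref{lem:D-SDE}.
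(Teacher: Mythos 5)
Your proposal is correct and follows essentially the same route as the paper: apply Lemma~\ref{lem:conditional}, absorb the homogeneous term via Grönwall, and handle the $\D x^2 f$ forcing integral by estimating the product $\E[\|\D x X^{t,x,\xi}_u y\|^2\|\D\xi X^{t,x,\xi}_u\eta\|^2]$ via the aggregation property together with conditional Cauchy--Schwarz and the conditional $p=4$ moment bounds of Lemmas~\ref{lem:Dx-p-bound} and \ref{lem:Y-p-bound}, which is exactly the step the paper uses to avoid the naive $\|\eta\|_{\L^4}^4$ bound. Your closing observation that only $q_0\geq 4$ and square-integrability of $\alpha_1$ are actually invoked in the estimate is also accurate.
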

    \begin{proof}
        By Lemma~\ref{lem:conditional}, we have for all $t\leq s\leq T$
        \begin{flalign*}
            \quad &
            \E\l[ \sup_{t\leq w\leq s}\l\| D^{t,x,\xi,y,\eta}_w \r\|^2 \r]
            &&\\
            &\lesssim
            \sum_{f\in F} \int_t^s \E\l[ \l| \D x f\!\l(u,X^{t,x,\xi}_u,X^{t,\xi}_u\r) D^{t,x,\xi,y,\eta}_u \r|^2 \r] \d u
            \\&\quad +
            \sum_{f\in F} \int_t^s \E\l[ \l| \D x^2 f\!\l(u,X^{t,x,\xi}_u,X^{t,\xi}_u\r) \!\l( \! \D x X^{t,x,\xi}_u y, \D\xi X^{t,x,\xi}_u\eta \r) \r|^2 \r] \d u
            \\&\quad + 
            \sum_{f\in F} \int_t^s \E\l[ \l| \D x \D \xi f\!\l(u,X^{t,x,\xi}_u,X^{t,\xi}_u\r) \! \D\xi X^{t,x,\xi}_u\eta \, \D x X^{t,x,\xi}_u y \r|^2 \r] \d u
            \\&\lesssim
            \int_t^s \alpha_0\!\l(u\r)^2 \E\l[ \l\| D^{t,x,\xi,y,\eta}_u \r\|^2 \r]
            + \alpha_1\!\l(u\r)^2 \E\l[\l\| \D x X^{t,x,\xi}_u y\r\|^2 \l\| \D\xi X^{t,x,\xi}_u\eta \r\|^2 \r] \d u,
        \end{flalign*}
        and Grönwall's inequality implies
        \begin{equation*}
            \E\l[ \sup_{t\leq w\leq s}\l\| D^{t,x,\xi,y,\eta}_w \r\|^2 \r]
            \lesssim
            \int_t^s \alpha_1\!\l(u\r)^2 \E\l[\l\| \D x X^{t,x,\xi}_u y\r\|^2 \l\| \D\xi X^{t,x,\xi}_u\eta \r\|^2 \r] \d u
        \end{equation*}
        for all $t\leq s\leq T$. Finally, observe that for all $t\leq s\leq T$
        \begin{align*}
            \E\l[\l\| \D x X^{t,x,\xi}_s y\r\|^2 \l\| \D\xi X^{t,x,\xi}_s\eta \r\|^2 \r]
            &=
            \E\l[ \E\l[\l\| \D x X^{t,x,\xi}_s y\r\|^2 \l\| \D\xi X^{t,x,\xi}_s z \r\|^2 \,\Big|\, \CF_t \r] \Big|_{z=\zeta}\r]
            \\&\leq 
            \E\l[ \E\l[\l\| \D x X^{t,x,\xi}_s y\r\|^4 \,\Big|\, \CF_t \r]^{\frac12} \E\l[ \l\| \D\xi X^{t,x,\xi}_s z \r\|^4 \,\Big|\, \CF_t \r]^{\frac12} \Big|_{z=\zeta}\r]
            \\&\lesssim
            \l\|y\r\|^2 \l\|\zeta\r\|_{\L^2}^2
        \end{align*}
        due to Lemma~\ref{lem:Dx-p-bound}, which implies the desired result.
    \end{proof}
    
    \begin{prp}
        Let $0\leq t\leq T$ and $\xi,\eta\in\L^{2,d}(t)$.
        If Assumption~\ref{ass:1-lipschitz}, \ref{ass:2-first-derivative} and \ref{ass:3-second-derivative} are satisfied with $q_0\geq 6$ and $q_1\geq 3$, then the map
        \begin{equation*}
            \Rbb^d \rightarrow \H^{2,d}(t,T),\qquad x \mapsto \D \xi X^{t,x,\xi} \eta
        \end{equation*}
        is Fréchet differentiable with Fréchet derivative
        \begin{equation*}
            \D x \D \xi X^{t,x,\xi} \eta:\,
            \Rbb^d \rightarrow \H^{2,d}(t,T)
            , \qquad 
            y \mapsto \D \xi X^{t,x,\xi} \eta\,y:= D^{t,x,\xi,y,\eta}
        \end{equation*}
        at $x\in \Rbb^d$.
    \end{prp}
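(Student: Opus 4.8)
The plan is to show that the claimed derivative is the map $y\mapsto D^{t,x,\xi,y,\eta}$, where $D^{t,x,\xi,y,\eta}$ is the process constructed in Lemma~\ref{lem:D-SDE}. By Lemmas~\ref{lem:D-SDE} and \ref{lem:D-bound} this map is linear and satisfies $\l\|D^{t,x,\xi,y,\eta}\r\|_{\H^2}\lesssim\l\|y\r\|\,\l\|\eta\r\|_{\L^2}$, so it is a bounded linear operator from $\Rbb^d$ to $\H^{2,d}(t,T)$, and only the defining limit remains to be checked. Heuristically $D^{t,x,\xi,y,\eta}$ is obtained by differentiating the $G$-SDE \eqref{eq:SDE-xi-derivative-x} for $Y^{t,x,\xi,\eta}$ with respect to $x$: the parameter $x$ enters the right-hand side only through $X^{t,x,\xi}$ inside the coefficients $\D x f$ and $\D \xi f$ and through the unknown $Y^{t,x,\xi,\eta}$ itself, while the forcing term $\D x X^{t,\xi,\xi}_s\eta+Y^{t,\xi,\eta}_s=\D \xi X^{t,\xi}_s\eta$ does not depend on $x$; applying the chain rule together with $\D x X^{t,x,\xi}_sy=A^{t,x,\xi,y}_s$ and $\D \xi X^{t,x,\xi}_s\eta=Y^{t,x,\xi,\eta}_s$ from Propositions~\ref{prp:x-differentiability} and \ref{prp:xi-differentiability-x} reproduces exactly the coefficients of \eqref{eq:D-SDE}.

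For the rigorous argument, fix $x,y\in\Rbb^d$, write $X^{x}:=X^{t,x,\xi}$, $X^{x+y}:=X^{t,x+y,\xi}$, $\Delta^x:=X^{x+y}-X^x$ and $Z:=\D \xi X^{t,\xi}\eta=\D x X^{t,\xi,\xi}\eta+Y^{t,\xi,\eta}$ (so $\l\|Z\r\|_{\H^2}\lesssim\l\|\eta\r\|_{\L^2}$ by Lemmas~\ref{lem:Dx-concatenation} and \ref{lem:Y-2-bound}), and put $\Psi:=Y^{t,x+y,\xi,\eta}-Y^{t,x,\xi,\eta}-D^{t,x,\xi,y,\eta}$, so that $\Psi_t=0$. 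Subtracting the $G$-SDEs \eqref{eq:SDE-xi-derivative-x} for $Y^{t,x+y,\xi,\eta}$ and $Y^{t,x,\xi,\eta}$ and \eqref{eq:D-SDE} for $D^{t,x,\xi,y,\eta}$, one finds that, componentwise over $f\in F$, the corresponding coefficient of $\Psi$ equals $\D x f\l(u,X^x_u,X^{t,\xi}_u\r)\Psi_u+T^1_u+T^2_u$ with
\begin{align*}
T^1_u&=\l[\D x f\l(u,X^{x+y}_u,X^{t,\xi}_u\r)-\D x f\l(u,X^x_u,X^{t,\xi}_u\r)\r]Y^{t,x+y,\xi,\eta}_u-\D x^2 f\l(u,X^x_u,X^{t,\xi}_u\r)\!\l(A^{t,x,\xi,y}_u,Y^{t,x,\xi,\eta}_u\r),\\
T^2_u&=\l[\D \xi f\l(u,X^{x+y}_u,X^{t,\xi}_u\r)-\D \xi f\l(u,X^x_u,X^{t,\xi}_u\r)\r]Z_u-\D x \D \xi f\l(u,X^x_u,X^{t,\xi}_u\r)Z_u\,A^{t,x,\xi,y}_u.
\end{align*}
Since $\l|\D x f(u,X^x_u,X^{t,\xi}_u)\Psi_u\r|\leq\alpha_0(u)\l\|\Psi_u\r\|$, Lemma~\ref{lem:conditional} gives
\begin{equation*}
\E\l[\sup_{t\leq w\leq s}\l\|\Psi_w\r\|^2\r]\lesssim\sum_{f\in F}\int_t^s\E\l[\l|T^1_u\r|^2+\l|T^2_u\r|^2\r]\d u+\int_t^s\alpha_0(u)^2\E\l[\sup_{t\leq w\leq u}\l\|\Psi_w\r\|^2\r]\d u,
\end{equation*}
so that, after Grönwall's inequality, everything reduces to proving $\sum_{f\in F}\int_t^s\E[|T^1_u|^2+|T^2_u|^2]\,\d u\lesssim\l\|y\r\|^4$.

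This last estimate is the heart of the matter and is carried out by a double Taylor expansion. For $T^1$ I would first apply the fundamental theorem of calculus to $\D x f(u,X^{x+y}_u,X^{t,\xi}_u)-\D x f(u,X^x_u,X^{t,\xi}_u)$ to bring in $\D x^2 f$ evaluated along $X^x_u+\lambda\Delta^x_u$, and then decompose $T^1_u$ into the three pieces obtained by replacing, respectively, $X^x_u+\lambda\Delta^x_u$ by $X^x_u$ (controlled by the Lipschitz estimate on $\D x^2 f$ in Assumption~\ref{ass:3-second-derivative}, which produces the factor $\l\|\Delta^x_u\r\|^2\l\|Y^{t,x+y,\xi,\eta}_u\r\|$), $\Delta^x_u$ by $A^{t,x,\xi,y}_u=\D x X^{t,x,\xi}_uy$ (controlled by $\E[\sup_w\l\|X^{t,x+y,\xi}_w-X^{t,x,\xi}_w-A^{t,x,\xi,y}_w\r\|^3\mid\CF_t]\lesssim\l\|y\r\|^6$, which follows from Lemma~\ref{lem:Dx-Dx} exactly as in the proof of Proposition~\ref{prp:Dx2-differentiability}), and $Y^{t,x+y,\xi,\eta}_u$ by $Y^{t,x,\xi,\eta}_u$ (controlled by Lemma~\ref{lem:Y-Y-p-bound}); here the uniform bound $|\D x^2 f(u,x,\xi)(v,z)|\leq\alpha_1(u)\l\|v\r\|\l\|z\r\|$ implied by the Lipschitz estimate on $\D x f$ in Assumption~\ref{ass:2-first-derivative} is used. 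The term $T^2$ is handled by the same scheme, now invoking the Lipschitz bound on $\D x \D \xi f$ from Assumption~\ref{ass:3-second-derivative} together with $|\D x \D \xi f(u,x,\xi)\zeta z|\leq\alpha_1(u)\l\|\zeta\r\|_{\L^2}\l\|z\r\|$ and $\l\|Z_u\r\|_{\L^2}\lesssim\l\|\eta\r\|_{\L^2}$. All the resulting terms are then closed by Hölder's inequality with respect to $\E[\,\cdot\mid\CF_t]$ and the conditional moment bounds of Lemmas~\ref{lem:xxi-yeta-p-bound}, \ref{lem:Dx-p-bound}, \ref{lem:Y-p-bound} and \ref{lem:Y-Y-p-bound} — for instance $\E[\l\|\Delta^x_u\r\|^4\l\|Y^{t,x+y,\xi,\eta}_u\r\|^2\mid\CF_t]\leq\E[\l\|\Delta^x_u\r\|^6\mid\CF_t]^{2/3}\E[\l\|Y^{t,x+y,\xi,\eta}_u\r\|^6\mid\CF_t]^{1/3}\lesssim\l\|y\r\|^4(\l\|\eta\r\|^2+\l\|\eta\r\|_{\L^2}^2)$ — which is precisely where the hypotheses $q_0\geq 6$ and $q_1\geq 3$ enter. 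Taking sublinear expectations, using the tower property, and applying Grönwall's inequality then yields $\l\|Y^{t,x+y,\xi,\eta}-Y^{t,x,\xi,\eta}-D^{t,x,\xi,y,\eta}\r\|_{\H^2}^2\lesssim\l\|y\r\|^4$, hence $\l\|y\r\|^{-1}\l\|Y^{t,x+y,\xi,\eta}-Y^{t,x,\xi,\eta}-D^{t,x,\xi,y,\eta}\r\|_{\H^2}\to 0$ as $\l\|y\r\|\to 0$, which is the assertion. I expect the main obstacle to be exactly the combinatorics of this double expansion and the accompanying Hölder bookkeeping against the available moment bounds; nothing else is delicate, since the existence, linearity, and boundedness of $D^{t,x,\xi,y,\eta}$ are already supplied by Lemmas~\ref{lem:D-SDE} and \ref{lem:D-bound}.
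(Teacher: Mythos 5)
Your proposal is correct and follows essentially the same route as the paper: subtract the $G$-SDEs \eqref{eq:SDE-xi-derivative-x} at $x+y$ and $x$ from \eqref{eq:D-SDE}, identify the linear-in-$\Psi$ part, telescope the remainder with the fundamental theorem of calculus so that the Lipschitz estimates on $\D x^2 f$ and $\D x \D\xi f$ from Assumption~\ref{ass:3-second-derivative} and the moment bounds of Lemmas~\ref{lem:xxi-yeta-p-bound}, \ref{lem:Dx-Dx}, \ref{lem:Dx-p-bound}, \ref{lem:Y-p-bound} and \ref{lem:Y-Y-p-bound} yield $\lesssim \|y\|^4$ after Hölder with exponents $3/2$ and $3$, and close with Grönwall. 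This is precisely the paper's argument with $\Delta^\xi = Y^{t,x+y,\xi,\eta}-Y^{t,x,\xi,\eta}$ and $\Delta^x = X^{t,x+y,\xi}-X^{t,x,\xi}$, and you correctly locate where $q_0\geq 6$ and $q_1\geq 3$ are used.
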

    \begin{proof}
        By Lemmas~\ref{lem:D-SDE} and \ref{lem:D-bound}, the map $y\mapsto D^{t,x,\xi,y,\eta}$ is linear and continuous.

        Set $\Delta^\xi:=\D\xi X^{t,x+y,\xi}\eta - \D\xi X^{t,x,\xi}\eta$, then Lemma~\ref{lem:Y-Y-p-bound} yields
        \begin{align*}
            \E\l[ \sup_{t\leq w\leq T} \l\| \Delta^\xi_w \r\|^3\,\Big|\,\CF_t\r] \lesssim \l\| \eta\r\|_{\L^2}^3 \l\|y\r\|^3.
            \numberthis\label{ineq:Delta-xi}
        \end{align*}
        As in the proof of Proposition~\ref{prp:Dx2-differentiability}, set $\Delta^x:=X^{t,x+y,\xi} - X^{t,x,\xi}$, then
        \begin{align*}
            \E\l[ \sup_{t\leq w\leq T}\l\| \Delta^x_w \r\|^6\,\Big|\,\CF_t \r] &\lesssim \l\|y\r\|^6,
            &
            \E\l[ \sup_{t\leq w\leq T} \l\|\Delta^x_w - \D x X^{t,x,\xi}_w y\r\|^3\,\Big|\,\CF_t\r] & \lesssim \l\|y\r\|^6.
            \numberthis\label{ineq:Delta-x}
        \end{align*}
        
        By Lemma~\ref{lem:conditional}, we have for $t\leq s\leq T$
        \begin{flalign*}
            \quad 
            \E&\l[ \sup_{t\leq w\leq s}\l\| \Delta^\xi_w - D^{t,x,\xi,y,\eta}_w \r\|^2 \r]
            &&
            \\&\lesssim
            \sum_{f\in F} \int_t^s \E\bigg[ \Big| \D x f\!\l(u,X^{t,x+y,\xi}_u,X^{t,\xi}_u\r) \D \xi X^{t,x+y,\xi}_u \eta 
            + \D \xi f\!\l(u,X^{t,x+y,\xi}_u,X^{t,\xi}_u\r) \D \xi X^{t,\xi}_u \eta
            \\&\quad 
            - \D x f\!\l(u,X^{t,x,\xi}_u,X^{t,\xi}_u\r) \D \xi X^{t,x,\xi}_u \eta 
            - \D \xi f\!\l(u,X^{t,x,\xi}_u,X^{t,\xi}_u\r) \D \xi X^{t,\xi}_u \eta 
            \\&\quad 
            - \D x f\!\l(u,X^{t,x,\xi}_u,X^{t,\xi}_u\r) D^{t,x,\xi,y,\eta}_u - \D x^2 f\!\l(u,X^{t,x,\xi}_u,X^{t,\xi}_u\r)\!\l(\! \D x X^{t,x,\xi}_u y, \D \xi X^{t,x,\xi}_u \eta \r)
            \\&\quad 
            - \D x \D \xi f\!\l(u,X^{t,x,\xi}_u,X^{t,\xi}_u\r) \D \xi X^{t,\xi}_u\eta \,\D x X^{t,x,\xi}_u y \Big|^2 \bigg]
            \\&\lesssim
            \int_t^s \alpha_1\!\l(u\r)^2 \E\l[ \l\|\Delta^\xi_u\r\|^2 \l\| \Delta^x_u\r\|^2 \r]
            + \alpha_2\!\l(u\r)^2 \E\l[ \l\| \Delta^x_u \r\|^4 \l\| \D \xi X^{t,x,\xi}_u\eta \r\|^2 \r] \d u
            \\&\quad +
            \int_t^s \alpha_1\!\l(u\r)^2 \E\l[ \l\| \Delta^x_u - \D x X^{t,x,\xi}_u y \r\|^2 \l\| \D \xi X^{t,x,\xi}_u\eta \r\|^2 \r]
            +\alpha_2\!\l(u\r)^2 \E\l[ \l\| \Delta^x_u \r\|^4 \r] \l\| \D \xi X^{t,\xi}_u \eta \r\|_{\L^2}^2 \d u
            \\&\quad +
            \int_t^s \alpha_1\!\l(u\r)^2 \l\| \D \xi X^{t,\xi}_u\eta \r\|_{\L^2}^2 \E\l[ \l\| \Delta^x_u - \D x X^{t,x,\xi}_u \r\|^2 \r] + \alpha_0\!\l(u\r)^2 \E\l[ \l\| \Delta^\xi_u - D^{t,x,\xi,y,\eta}_u \r\|^2 \r] \d u
            \\&\lesssim
            \int_t^s \alpha_1\!\l(u\r)^2 \E\l[ \l\|\Delta^\xi_u\r\|^3\r]^{\frac23} \E\l[ \l\| \Delta^x_u\r\|^6 \r]^{\frac13}
            + \alpha_2\!\l(u\r)^2 \E\l[ \E\l[ \l\| \Delta^x_u \r\|^6\,\Big|\,\CF_t\r]^{\frac23} \E\l[ \l\| \D \xi X^{t,x,\xi}_u\eta \r\|^6\,\Big|\,\CF_t\r]^{\frac13} \r] \d u
            \\&\quad +
            \int_t^s \alpha_1\!\l(u\r)^2 \E\l[ \E\l[ \l\| \Delta^x_u - \D x X^{t,x,\xi}_u y \r\|^3\,\Big|\,\CF_t\r]^{\frac23} \E\l[ \l\| \D \xi X^{t,x,\xi}_u\eta \r\|^6\,\Big|\,\CF_t\r]^{\frac13} \r] \d u
            \\&\quad +
            \int_t^s \alpha_2\!\l(u\r)^2 \E\l[ \l\| \Delta^x_u \r\|^4 \r] \l\| \D \xi X^{t,\xi}_u \eta \r\|_{\L^2}^2 \d u
            \\&\quad +
            \int_t^s \alpha_1\!\l(u\r)^2 \l\| \D \xi X^{t,\xi}_u\eta \r\|_{\L^2}^2 \E\l[ \l\| \Delta^x_u - \D x X^{t,x,\xi}_u \r\|^2 \r] + \alpha_0\!\l(u\r)^2 \E\l[ \l\| \Delta^\xi_u - D^{t,x,\xi,y,\eta}_u \r\|^2 \r] \d u
            \\&\lesssim
            \l\|y\r\|^4 \l\| \eta \r\|_{\L^2}^2 + \int_t^s \alpha_0\!\l(u\r)^2 \E\l[ \l\| \Delta^\xi_u - D^{t,x,\xi,y,\eta}_u \r\|^2 \r] \d u
        \end{flalign*}
        due to \eqref{ineq:Delta-xi}, \eqref{ineq:Delta-x} and Lemma~\ref{lem:Y-p-bound}. Finally, Grönwall's inequality yields
        \begin{equation*}
            \l\| \D \xi X^{t,x+y,\xi}\eta - \D \xi X^{t,x,\xi}\eta - D^{t,x,\xi,y,\eta} \r\|_{\H^2}
            \lesssim \l\|y\r\|^2 \l\| \eta \r\|_{\L^2}
        \end{equation*}
        which implies the desired result.
    \end{proof}

    \begin{lem}\label{lem:interchange-diff}
        If Assumption~\ref{ass:1-lipschitz}, \ref{ass:2-first-derivative} and \ref{ass:3-second-derivative} are satisfied, then the following holds for all components $f=b_k,h_{kij},g_{ki}$, $1\leq i,j\leq n$, $1\leq k\leq d$,
        \begin{equation*}
            \D x \l[ \D \xi f\!\l(s,x,\xi\r) \eta \r] y = \D \xi \l[ \D x  f\!\l(s,x,\xi\r) y \r] \eta
        \end{equation*}
        for all $0\leq s\leq T$, $x,y\in\Rbb^d$, $\xi,\eta\in\L^{2,d}$ and $\omega\in\Omega$.
    \end{lem}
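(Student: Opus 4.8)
The statement is the Schwarz/Clairaut symmetry of the mixed Fréchet derivative of the coefficient, and the plan is to obtain it from a double-increment argument combined with two applications of the fundamental theorem of calculus (the Lemma above), carried out in each of the two possible orders. Throughout, fix $0\le s\le T$, $\omega\in\Omega$ and a (scalar) component $f\in\{b_k,h_{kij},g_{ki}\}$, write $f(x,\xi)$ for $f(s,\omega,x,\xi)$, fix $x,y\in\Rbb^d$ and $\xi,\eta\in\L^{2,d}$, and consider the second difference
\begin{equation*}
\Delta := f(x+y,\xi+\eta) - f(x+y,\xi) - f(x,\xi+\eta) + f(x,\xi).
\end{equation*}

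First I would evaluate $\Delta$ by differentiating in $x$ and then in $\xi$. The map $\psi:\Rbb^d\to\Rbb$, $v\mapsto f(v,\xi+\eta)-f(v,\xi)$, is continuously Fréchet differentiable, since $f(\cdot,\zeta)\in\C^1(\Rbb^d)$ for each $\zeta$ (Assumption~\ref{ass:2-first-derivative}.1) and $v\mapsto\D x f(v,\zeta)$ is continuous (Assumption~\ref{ass:2-first-derivative}.2); hence the fundamental theorem of calculus gives $\Delta=\psi(x+y)-\psi(x)=\int_0^1\big(\D x f(x+\lambda y,\xi+\eta)y-\D x f(x+\lambda y,\xi)y\big)\,\d\lambda$. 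For each fixed $v$, the map $\zeta\mapsto\D x f(v,\zeta)y$ lies in $\C^1(\L^{2,d})$ by Assumption~\ref{ass:3-second-derivative}.1, so a second application of the fundamental theorem of calculus and Fubini yield
\begin{equation*}
\Delta = \int_0^1\!\!\int_0^1 \D\xi\big[\D x f(x+\lambda y,\xi+\mu\eta)y\big]\eta\,\d\mu\,\d\lambda,
\end{equation*}
and subtracting $\D\xi[\D x f(x,\xi)y]\eta$ under the integral sign, the Lipschitz estimate for $\D\xi\D x f$ in Assumption~\ref{ass:3-second-derivative}.2 gives $|\Delta - \D\xi[\D x f(x,\xi)y]\eta|\le\alpha_2(s)\|y\|\|\eta\|_{\L^2}(\|y\|+\|\eta\|_{\L^2})$. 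Carrying out the same computation in the opposite order — starting from $\chi:\L^{2,d}\to\Rbb$, $\zeta\mapsto f(x+y,\zeta)-f(x,\zeta)\in\C^1(\L^{2,d})$ (Assumption~\ref{ass:2-first-derivative}.1), then using $v\mapsto\D\xi f(v,\zeta)\eta\in\C^1(\Rbb^d)$ (Assumption~\ref{ass:3-second-derivative}.1) and the Lipschitz estimate for $\D x\D\xi f$ — yields $\Delta=\int_0^1\!\int_0^1\D x[\D\xi f(x+\lambda y,\xi+\mu\eta)\eta]y\,\d\lambda\,\d\mu$ and hence $|\Delta - \D x[\D\xi f(x,\xi)\eta]y|\le\alpha_2(s)\|y\|\|\eta\|_{\L^2}(\|y\|+\|\eta\|_{\L^2})$.

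Subtracting the two estimates, the quantity $R(y,\eta):=\D x[\D\xi f(x,\xi)\eta]y-\D\xi[\D x f(x,\xi)y]\eta$ obeys $|R(y,\eta)|\le 2\alpha_2(s)\|y\|\|\eta\|_{\L^2}(\|y\|+\|\eta\|_{\L^2})$ for all $y\in\Rbb^d$ and $\eta\in\L^{2,d}$. Since each of the two mixed derivatives is linear in its free direction, $R$ is bilinear in $(y,\eta)$; replacing $(y,\eta)$ by $(\lambda y,\lambda\eta)$ with $\lambda>0$ and dividing by $\lambda^2$ gives $|R(y,\eta)|\le 2\alpha_2(s)\lambda\|y\|\|\eta\|_{\L^2}(\|y\|+\|\eta\|_{\L^2})$, and letting $\lambda\downarrow0$ forces $R\equiv0$, which is precisely the asserted identity (the choices of $s,\omega,x,y,\xi,\eta$ being arbitrary).

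I expect the main obstacle to be purely the bookkeeping in the middle paragraph: at each step one must confirm that the intermediate map to which the fundamental theorem of calculus is applied is genuinely \emph{continuously} Fréchet differentiable, so that its derivative is continuous and the resulting integrand is well-defined and integrable. This is exactly what the $\C^1$-regularity in Assumptions~\ref{ass:2-first-derivative}.1 and \ref{ass:3-second-derivative}.1, together with the mixed-derivative Lipschitz bounds of Assumption~\ref{ass:3-second-derivative}.2, are designed to supply; beyond this there is no analytic difficulty, the argument being the standard proof of symmetry of the second derivative adapted to the present two-variable (finite-dimensional $\times$ Banach) setting.
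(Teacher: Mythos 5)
Your proposal is correct and follows essentially the same route as the paper: both start from the same second difference $\Delta$, apply the fundamental theorem of calculus twice in each of the two orders to obtain the two iterated integral representations, use the Lipschitz bounds on $\D x\D\xi f$ and $\D\xi\D x f$ from Assumption~\ref{ass:3-second-derivative}.2 to estimate the two remainders, and conclude by a scaling argument. Your closing step, replacing $(y,\eta)$ by $(\lambda y,\lambda\eta)$ and invoking the bilinearity of the difference $R$, is a slightly more explicit rendering of the paper's terse ``divide by $\|y\|\,\|\eta\|_{\L^2}$ and let both tend to zero.''
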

    \begin{proof}
        Let $0\leq s\leq T$, $x,y\in\Rbb^d$, $\xi,\eta\in\L^{2,d}$ and $\omega\in\Omega$.
        We have
        \begin{align*}
            I&:=f\!\l(s,x+y,\xi+\eta,\omega\r) - f\!\l(s,x+y,\xi,\omega\r) - f\!\l(s,x,\xi+\eta,\omega\r) + f\!\l(s,x,\xi,\omega\r)
            \\&=
            \int_0^1 \D \xi f\!\l(s,x+y,\xi+\lambda_1 \eta,\omega\r) \eta - \D \xi f\!\l(s,x,\xi+\lambda_1 \eta,\omega\r) \eta \d \lambda_1
            \\&=
            \int_0^1 \int_0^1 \D x \D \xi f\!\l(s,x+\lambda_2 y,\xi+\lambda_1 \eta,\omega\r) \eta \, y \d \lambda_2 \d \lambda_1
            \\&=
            \D x \D \xi f\!\l(s,x,\xi,\omega\r) \eta \, y + R_1
        \end{align*}
        with 
        \begin{flalign*}
            \quad
            R_1 & := \int_0^1 \int_0^1 \D x \D \xi f\!\l(s,x+\lambda_2 y,\xi+\lambda_1 \eta,\omega\r) \eta \, y - \D x \D \xi f\!\l(s,x,\xi,\omega\r) \eta \, y \d \lambda_2 \d \lambda_1 
            && 
            \\&\leq 
            \int_0^1 \int_0^1 \l| \D x \D \xi f\!\l(s,x+\lambda_2 y,\xi+\lambda_1 \eta,\omega\r) \eta \, y - \D x \D \xi f\!\l(s,x,\xi,\omega\r) \eta \, y \r|
            \d \lambda_2 \d \lambda_1 
            \\&\leq
            \int_0^1 \int_0^1 \alpha_2\!\l(s\r) \l\|y\r\| \l\|\eta\r\|_{\L^2} \l( \lambda_2 \l\|y\r\| + \lambda_1 \l\|\eta\r\|_{\L^2} \r)  \d \lambda_2 \d \lambda_1
            \\&\leq
            \l\|y\r\| \l\|\eta\r\|_{\L^2} \l( \l\|y\r\| + \l\|\eta\r\|_{\L^2} \r).
        \end{flalign*}
        Analogously, we have
        \begin{align*}
            \quad
            I&=f\!\l(s,x+y,\xi+\eta,\omega\r) - f\!\l(s,x,\xi+\eta,\omega\r) - f\!\l(s,x+y,\xi,\omega\r) + f\!\l(s,x,\xi,\omega\r)
            &&
            \\&=
            \int_0^1 \D x f\!\l(s,x+\lambda_1 y,\xi+\eta,\omega\r) y - \D x f\!\l(s,x+\lambda_1 y,\xi,\omega\r) y \d \lambda_1
            \\&=
            \int_0^1 \int_0^1 \D \xi \D x f\!\l(s,x+\lambda_1 y,\xi+\lambda_2 \eta,\omega\r) y \, \eta \d \lambda_2 \d \lambda_1
            \\&=
            \D x \D \xi f\!\l(s,x,\xi,\omega\r) \eta \, y + R_2
        \end{align*}
        with
        \begin{align*}
            R_2 
            & := \int_0^1 \int_0^1 \D \xi \D x f\!\l(s,x+\lambda_1 y,\xi+\lambda_2 \eta,\omega\r) \eta \, y - \D x \D \xi f\!\l(s,x,\xi,\omega\r) \eta \, y \d \lambda_2 \d \lambda_1 
            \\&\leq 
            \l\|y\r\| \l\|\eta\r\|_{\L^2} \l( \l\|y\r\| + \l\|\eta\r\|_{\L^2} \r).
        \end{align*}
        Thus, we get
        \begin{equation*}
            \frac{\l| \D x \D \xi f\!\l(s,x,\xi,\omega\r) \eta \, y - \D x \D \xi f\!\l(s,x,\xi,\omega\r) \eta \, y\r| }{\l\|y\r\| \l\|\eta\r\|_{\L^2}} \lesssim \l\|y\r\| + \l\|\eta\r\|_{\L^2}
        \end{equation*}
        for all $0\leq s\leq T$, $\omega\in \Omega$, $x,y\in \Rbb^d$ and $\xi,\eta\in\L^{2,d}(t)$.
        By letting $\|\eta\|_{\L^2}$ and $\|y\|$ tend to zero, we conclude the desired result.
    \end{proof}

    \begin{prp}
        Let $0\leq t\leq T$, $x\in\Rbb^d$ and $\xi\in\L^{2,d}(t)$. 
        If Assumption~\ref{ass:1-lipschitz}, \ref{ass:2-first-derivative} and \ref{ass:3-second-derivative} are satisfied with $q_0\geq 6$ and $q_1\geq 3$, then the map
        \begin{equation*}
            \L^{2,d}(t) \rightarrow \H^{2,d}(t,T),\qquad \xi \mapsto \D x X^{t,x,\xi} y
        \end{equation*}
        is Fréchet differentiable with Fréchet derivative
        \begin{equation*}
            \D\xi \D x X^{t,x,\xi} y:\,
            \L^{2,d}(t) \rightarrow \H^{2,d}(t,T), \qquad \eta \mapsto \D\xi \D x X^{t,x,\xi} y\,\eta := D^{t,x,\xi,y,\eta}
        \end{equation*}
        at $\xi\in\L^{2,d}(t)$.
    \end{prp}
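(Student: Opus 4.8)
The plan is to mirror the proof of the preceding proposition, which established the symmetric fact that $x\mapsto\D\xi X^{t,x,\xi}\eta$ is Fréchet differentiable with derivative $y\mapsto D^{t,x,\xi,y,\eta}$. Writing $A^{t,x,\xi,y}=\D x X^{t,x,\xi}y$, I would treat the $G$-SDE~\eqref{eq:D-SDE} as the candidate for the $\xi$-derivative of $\xi\mapsto A^{t,x,\xi,y}$ and compare it with the difference quotient of $A^{t,x,\xi,y}$ in $\xi$ through Lemma~\ref{lem:conditional} and Grönwall's inequality. That $\eta\mapsto D^{t,x,\xi,y,\eta}$ is a bounded linear operator is immediate from Lemmas~\ref{lem:D-SDE} and \ref{lem:D-bound}, so the only thing to prove is $\bigl\|A^{t,x,\xi+\eta,y}-A^{t,x,\xi,y}-D^{t,x,\xi,y,\eta}\bigr\|_{\H^2}=o(\|\eta\|_{\L^2})$ as $\|\eta\|_{\L^2}\to 0$.

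First I would assemble the needed estimates. Write $\delta^x:=X^{t,x,\xi+\eta}-X^{t,x,\xi}$ and $\delta^\mu:=X^{t,\xi+\eta}-X^{t,\xi}$. Then: $\E[\sup_{t\le w\le T}\|\delta^x_w\|^p\mid\CF_t]\lesssim\|\eta\|_{\L^2}^p$ for $p\le q_0$ (Lemma~\ref{lem:xxi-yeta-p-bound}), together with the $\H^2$-expansion $\|\delta^x-\D\xi X^{t,x,\xi}\eta\|_{\H^2}=o(\|\eta\|_{\L^2})$ (Proposition~\ref{prp:xi-differentiability-x}); $\|\delta^\mu_s\|_{\L^2}\lesssim\|\eta\|_{\L^2}$ (Lemma~\ref{lem:xi-eta-2-bound}), together with the $\H^1$-expansion $\|\delta^\mu-\D\xi X^{t,\xi}\eta\|_{\H^1}=o(\|\eta\|_{\L^2})$, where $\D\xi X^{t,\xi}\eta=\D x X^{t,\xi,\xi}\eta+Y^{t,\xi,\eta}$ (Proposition~\ref{prp:xi-differentiability-xi}); the increment bound $\E[\sup_{t\le w\le T}\|A^{t,x,\xi+\eta,y}_w-A^{t,x,\xi,y}_w\|^p\mid\CF_t]\lesssim\|y\|^p\|\eta\|_{\L^2}^p$ for $p\le q_1\wedge\tfrac{q_0}{2}$ (Lemma~\ref{lem:Dx-Dx}); and the conditional moment bounds for $\D x X^{t,x,\xi}y$ and $\D\xi X^{t,x,\xi}\eta=Y^{t,x,\xi,\eta}$ from Lemmas~\ref{lem:Dx-p-bound} and \ref{lem:Y-p-bound}.

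Setting $\Xi:=A^{t,x,\xi+\eta,y}-A^{t,x,\xi,y}-D^{t,x,\xi,y,\eta}$, which vanishes at $t$, I would write its $G$-SDE, apply Lemma~\ref{lem:conditional}, and for each component $f\in F$ decompose $\D x f(s,X^{t,x,\xi+\eta}_s,X^{t,\xi+\eta}_s)A^{t,x,\xi+\eta,y}_s-\D x f(s,X^{t,x,\xi}_s,X^{t,\xi}_s)A^{t,x,\xi,y}_s$ by first splitting off $\D x f(s,X^{t,x,\xi}_s,X^{t,\xi}_s)\Xi_s$, which drives the Grönwall loop, and $\D x f(s,X^{t,x,\xi}_s,X^{t,\xi}_s)D^{t,x,\xi,y,\eta}_s$, which cancels the homogeneous part of~\eqref{eq:D-SDE}, and then applying the fundamental theorem of calculus to the remaining difference, first in the spatial argument and then in the measure argument. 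By the interchangeability $\D x \D \xi f=\D \xi \D x f$ of Lemma~\ref{lem:interchange-diff}, the leading order of this expansion is precisely $\D x^2 f(s,X^{t,x,\xi}_s,X^{t,\xi}_s)\bigl(\D x X^{t,x,\xi}_s y,\D\xi X^{t,x,\xi}_s\eta\bigr)+\D x \D \xi f(s,X^{t,x,\xi}_s,X^{t,\xi}_s)\D\xi X^{t,\xi}_s\eta\,\D x X^{t,x,\xi}_s y$, i.e.\ the two inhomogeneous terms of~\eqref{eq:D-SDE}. The errors split into (a) base-point Lipschitz remainders of $\D x^2 f$ and $\D x \D \xi f$, controlled by $\kappa$ and $\alpha_2$ through Assumption~\ref{ass:3-second-derivative}; (b) terms obtained from replacing $\delta^x_s$ by $\D\xi X^{t,x,\xi}_s\eta$ and $A^{t,x,\xi+\eta,y}_s$ by $A^{t,x,\xi,y}_s$, controlled by the $\H^2$-expansion of $\delta^x$ and by Lemma~\ref{lem:Dx-Dx}; and (c) the single measure-mismatch term featuring $\delta^\mu_s-\D\xi X^{t,\xi}_s\eta$. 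After squaring and integrating, Cauchy--Schwarz and the moment bounds of the second paragraph show that (a) and (b) are $o(\|\eta\|_{\L^2}^2)$; it is here that the hypotheses $q_0\ge 6$ and $q_1\ge 3$ are needed, to accommodate the fourth and sixth conditional moments occurring in the products.

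The main obstacle is term (c): since $\xi\mapsto X^{t,\xi}$ is known to be Fréchet differentiable only into $\H^{1,d}(t,T)$ (Proposition~\ref{prp:xi-differentiability-xi}), the difference $\delta^\mu-\D\xi X^{t,\xi}\eta$ is $o(\|\eta\|_{\L^2})$ in $\H^1$ but not a priori in $\H^2$, so a naive Cauchy--Schwarz bound would leave an uncontrolled $\L^2$-norm of it. I expect this to be dealt with exactly as the first-order mismatch is handled in the proof of Lemma~\ref{lem:Y-diff}: the measure argument is expanded so that $\delta^\mu_s-\D\xi X^{t,\xi}_s\eta$ enters only through a first-order measure derivative, where the $\L^1$-estimate $\bigl|\D\xi f(s,\cdot,\cdot)\zeta\bigr|\le\alpha_1(s)\|\zeta\|_{\L^1}$ of Assumption~\ref{ass:2-first-derivative} produces the \emph{deterministic} factor $\|\delta^\mu_s-\D\xi X^{t,\xi}_s\eta\|_{\L^1}$; then, after squaring, integrating, and using $\E[\sup_{t\le w\le T}\|\D x X^{t,x,\xi}_w y\|^2]\lesssim\|y\|^2$ (Lemma~\ref{lem:Dx-p-bound}), this contribution is bounded by $\bigl(\int_t^T\alpha_1(u)^2\,\d u\bigr)\|y\|^2\,\bigl\|X^{t,\xi+\eta}-X^{t,\xi}-\D\xi X^{t,\xi}\eta\bigr\|_{\H^1}^2=o(\|\eta\|_{\L^2}^2)$. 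Combining (a), (b), (c) and invoking Grönwall's inequality once more yields $\bigl\|A^{t,x,\xi+\eta,y}-A^{t,x,\xi,y}-D^{t,x,\xi,y,\eta}\bigr\|_{\H^2}=o(\|\eta\|_{\L^2})$, which is the asserted Fréchet differentiability; boundedness and linearity of the derivative are, as noted, Lemmas~\ref{lem:D-SDE} and \ref{lem:D-bound}.
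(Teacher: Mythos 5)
Your proposal is essentially the paper's proof. You correctly identify the three ingredients: linearity and boundedness of $\eta\mapsto D^{t,x,\xi,y,\eta}$ from Lemmas~\ref{lem:D-SDE} and \ref{lem:D-bound}; the interchange Lemma~\ref{lem:interchange-diff} to match the two $\D\xi\D x f$ / $\D x\D\xi f$ forcing terms in \eqref{eq:D-SDE}; and the Grönwall estimate on $\Xi=A^{t,x,\xi+\eta,y}-A^{t,x,\xi,y}-D^{t,x,\xi,y,\eta}$ with the same decomposition into the homogeneous driving term, a cancellation against the inhomogeneities of \eqref{eq:D-SDE}, and remainders controlled by $\alpha_1$, $\alpha_2$, the conditional moment bounds of Lemmas~\ref{lem:xxi-yeta-p-bound}, \ref{lem:Dx-Dx}, \ref{lem:Dx-p-bound}, \ref{lem:Y-p-bound}, and Cauchy--Schwarz; the exponents $q_0\geq 6$, $q_1\geq 3$ are used exactly as you say. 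You also correctly isolate the only delicate point — the measure-mismatch term $\Delta-\D\xi X^{t,\xi}\eta$, which only converges in $\H^1$ — and the resolution via an $\L^1$-type bound so that the factor appears deterministically and can be pulled outside the conditional expectation.

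The one place where your route differs from the paper's is in the quantitative control of that mismatch. You invoke the Fréchet differentiability of $\xi\mapsto X^{t,\xi}$ from Proposition~\ref{prp:xi-differentiability-xi} to get $\|\Delta-\D\xi X^{t,\xi}\eta\|_{\H^1}=o(\|\eta\|_{\L^2})$, which after squaring is enough. The paper instead applies the fundamental theorem of calculus to $\Delta=\int_0^1\D\xi X^{t,\xi+\lambda\eta}\eta\,\d\lambda$ and then the Lipschitz bounds of Corollaries~\ref{cor:Dx-Dx-L1} and \ref{cor:Y-Y-2-bound}, obtaining the explicit quadratic estimate $\E[\sup_w\|\Delta_w-\D\xi X^{t,\xi}_w\eta\|]\lesssim\|\eta\|_{\L^2}^2$; this is what lets the paper conclude the stronger $\|\Xi\|_{\H^2}\lesssim\|y\|\,\|\eta\|_{\L^2}^2$ rather than merely $o(\|\eta\|_{\L^2})$. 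Both are sufficient for Fréchet differentiability, so this is a cosmetic difference, though the paper's variant gives a rate. One small inaccuracy worth flagging: you cite the $\L^1$-bound on $\D\xi f$ from Assumption~\ref{ass:2-first-derivative}, but the operator that acts on $\Delta-\D\xi X^{t,\xi}\eta$ in the actual expansion is $\D\xi\D x f(\cdot)\D x X^{t,x,\xi}y$, not $\D\xi f$; what is being used (implicitly, also in the paper) is that this \emph{second-order} mixed derivative extends $\L^1$-continuously in the $\xi$-direction, which is the analogue, not literally the same statement.
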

    \begin{proof}
        By Lemmas~\ref{lem:D-SDE} and \ref{lem:D-bound}, the map $\eta \mapsto D^{t,x,\xi,y,\eta}$ is linear and continuous.

        For all components $f=b_k,h_{ijk}, g_{ik}$, $1\leq k\leq d$, $1\leq i,j\leq n$, we have
        \begin{align*}
            \D x \D \xi f\!\l(s,\omega,x,\xi\r) \eta \, y &= \D \xi \D x  f\!\l(s,\omega,x,\xi\r) y \, \eta, 
            \\
            \D x^2 f\!\l(s,\omega,x,\xi\r)\!\l(y,z\r) & = \D x^2 f\!\l(s,\omega,x,\xi\r)\!\l(z,y\r)
        \end{align*}
        for all $0\leq s\leq T$, $\omega\in\Omega$, $x,y,z\in\Rbb^d$ and $\xi,\eta\in\L^{2,d}$ due to Lemma~\ref{lem:interchange-diff} and the symmetry of the second order Fréchet derivative.

        Set 
        \begin{align*}
            \Delta&:= X^{t,\xi+\eta} - X^{t,\xi} \\
            \Delta^\xi &:= X^{t,x,\xi+\eta} - X^{t,x,\xi} \\
            \Delta^{x,\xi} &:= \D x X^{t,x,\xi+\eta} y - \D x X^{t,x,\xi} y
        \end{align*}
        From Lemmas~\ref{lem:xi-eta-2-bound}, \ref{lem:xxi-yeta-p-bound} and \ref{lem:Dx-Dx}, we obtain
        \begin{align*}
            \E\l[ \sup_{t\leq w\leq T} \l\|\Delta_w\r\|^2 \r]
            &\lesssim \l\| \eta \r\|_{\L^2}^2 
            \\
            \E\l[ \sup_{t\leq w\leq T} \l\| \Delta^\xi_w \r\|^6 \r] 
            & \lesssim \l\|\eta\r\|_{\L^2}^6, 
            \\
            \E\l[ \sup_{t\leq w\leq T} \l\| \Delta^{x,\xi}_w \r\|^3 \r] 
            & \lesssim \l\|y\r\|^3 \l\|\eta\r\|_{\L^2}^3.
        \end{align*}
        Moreover, Lemma~\ref{lem:Y-Y-p-bound} yields
        \begin{align*}
            \E\l[ \sup_{t\leq w\leq T} \l\| \Delta^\xi_w - \D \xi X^{t,x,\xi}_w\eta \r\|^3\r]
            &\leq 
            \int_0^1 \E\l[ \sup_{t\leq w\leq T} \l\| \D \xi X^{t,x,\xi+\lambda\eta }_w \eta - \D \xi X^{t,x,\xi}_w \eta \r\|^3 \r] \d \lambda
            \\&\lesssim
            \l\|\eta\r\|_{\L^2}^6,
        \end{align*}
        and we have
        \begin{flalign*}
            \E\l[ \sup_{t\leq w\leq T}\l\| \Delta_w - \D \xi X^{t,\xi}_w\eta \r\| \r]
            &\leq 
            \int_0^1 \l\| \D\xi X^{t,\xi+\lambda\eta }\eta - \D \xi X^{t,\xi}\eta \r\|_{\H^1} \d \lambda 
            \\&\leq 
            \int_0^1 \l\| \D x X^{t,\xi+\lambda \eta,\xi+\lambda \eta}\eta - \D x X^{t,\xi,\xi}\eta \r\|_{\H^1} \d \lambda 
            \\&\quad 
            + \int_0^1 \l\| Y^{t,\xi+\lambda \eta,\eta} - Y^{t,\xi,\eta} \r\|_{\H^1} \d \lambda 
            \\&\lesssim
            \l\|\eta\r\|_{\L^2}^2
        \end{flalign*}
        due to Corollaries~\ref{cor:Dx-Dx-L1} and \ref{cor:Y-Y-2-bound}.
        
        By Lemma~\ref{lem:conditional}, we have for all $t\leq s\leq T$
        \begin{flalign*}
            \quad 
            \E&\l[ \sup_{t\leq w \leq s}\l\| \Delta^{x,\xi}_w - D^{t,x,\xi,y,\eta}_w \r\|^2\r]
            &&
            \\&\lesssim
            \sum_{f\in F} \int_t^s \E\bigg[ \Big| \D x f\!\l(u,X^{t,x,\xi+\eta}_u,X^{t,\xi+\eta}_u\r) \D x X^{t,x,\xi+\eta}_u y - \D x f\!\l(u,X^{t,x,\xi}_u,X^{t,\xi}_u\r) \D x X^{t,x,\xi}_u y
            \\&\quad 
            - \D x f\!\l(u,X^{t,x,\xi}_u,X^{t,\xi}_u\r) D^{t,x,\xi,y,\eta}_u 
            - \D x^2 f\!\l(u,X^{t,x,\xi}_u,X^{t,\xi}_u\r)\!\l(\!\D \xi X^{t,x,\xi}_u\eta, \D x X^{t,x,\xi}_u y \r) 
            \\&\quad 
            - \D \xi \D x f\!\l(u,X^{t,x,\xi}_u,X^{t,\xi}_u\r) \D x X^{t,x,\xi}_u y \, \D \xi X^{t,\xi}_u \eta 
            \Big|^2 \bigg] \d u
            \\&\lesssim
            \int_t^s \alpha_1\!\l(u\r)^2 \E\l[ \l\|\Delta^\xi_u\r\|^4 \l\| \D x X^{t,x,\xi}_uy \r\|^2 \r] \d u
            \\&\quad +
            \int_t^s \alpha_2\!\l(u\r)^2 \l\| \Delta_u\r\|_{\L^2}^2 \l( \l\|\Delta_u\r\|_{\L^2}^2 \E\l[ \l\| \D x X^{t,x,\xi}_u y \r\|^2 \r]  + \E\l[ \l\| \D x X^{t,x,\xi}_u y \r\|^2 \l\| \Delta^\xi_u\r\|^2 \r] \r) \d u
            \\&\quad +
            \int_t^s \alpha_1 \!\l(u\r)^2 \E\l[ \l\|\Delta^\xi_u - \D \xi X^{t,x,\xi}_u\eta\r\|^2 \l\| \D x X^{t,x,\xi}_u y \r\|^2 \r] \d u
            \\&\quad + 
            \int_t^s \alpha_1\!\l(u\r)^2 \l\|\Delta_u - \D \xi X^{t,\xi}_u \eta\r\|_{\L^1}^2 \E\l[ \l\| \D x X^{t,x,\xi}_u y \r\|^2 \r] \d u
            \\&\quad +
            \int_t^s \alpha_1\!\l(u\r)^2 \l( \E\l[ \l\|\Delta^{x,\xi}_u\r\|^2 \l\| \Delta^\xi_u \r\|^2  \r] + \l\| \Delta_u\r\|_{\L^2}^2 \E\l[ \l\| \Delta^{x,\xi}_u \r\|^2 \r]\r) \d u
            \\&\quad +
            \int_t^s \alpha_0\!\l(u\r)^2 \E\l[ \l\| \Delta^{x,\xi}_u - D^{t,x,\xi,y,\eta}_u \r\|^2 \r] \d u
            \\&\lesssim
            \l\|\eta\r\|_{\L^2}^4 \l\|y\r\|^2
            +
            \int_t^s \alpha_0\!\l(u\r)^2 \E\l[ \l\| \Delta^{x,\xi}_u - D^{t,x,\xi,y,\eta}_u \r\|^2 \r] \d u.
        \end{flalign*}
        Finally, Grönwall's inequality yields the desired result.
    \end{proof}

    \section{Discussion on Space of Sublinear Distributions}\label{sec:comparison}
    In \cite{sun_distribution_2023}, the authors consider coefficients that depend on the sublinear distribution of the solution process,  where the sublinear distribution of a random variable $\xi$ is defined as the mapping $\phi \mapsto \E\l[ \phi(\xi)\r]$.    
    More precisely, they introduce the set $\mathcal D$ consisting of all functionals $F:\,\textnormal{Lip}(\Rbb^d)\rightarrow \Rbb$ which satisfy the following properties.
    \begin{enumerate}
        \item \emph{Constant-Preservation}: For all $\phi \in \textnormal{Lip}(\Rbb^d)$ with $\phi\equiv c\in \Rbb$, we have $F(\phi)=c$.
        \item \emph{Monotonicity}: For all $\phi,\psi \in \textnormal{Lip}(\Rbb^d)$ with $\phi \geq \psi$ everywhere, we have $F(\phi) \geq F(\psi)$.
        \item \emph{Positive Homogeneity}: For all $c \geq 0$ and $\phi \in \textnormal{Lip}(\Rbb^d)$, we have $F(c \phi)=c F(\phi)$.
        \item \emph{Subadditivity}: For all $\phi,\psi \in \textnormal{Lip}(\Rbb^d)$, we have $F(\phi + \psi) \leq F(\phi) + F(\psi)$.
        \item \emph{Boundedness}: We have
        \begin{equation*}
            \sup_{\phi \in \textnormal{Lip}_1(\Rbb^d)} \l| F(\phi) - \phi(0) \r| < \infty.
        \end{equation*}
    \end{enumerate}
    Here, $\textnormal{Lip}(\Rbb^d)$ denotes the space of all Lipschitz functions $\phi:\, \Rbb^n \rightarrow \Rbb$ and $\textnormal{Lip}_1(\Rbb^d)\subseteq \textnormal{Lip}(\Rbb^d)$ the subspace of functions with Lipschitz constant smaller or equal to 1.
    Further, the authors define the metric
    \begin{equation*}
        d:\, \mathcal D\times \mathcal D \rightarrow \Rbb, \qquad 
        (F,G)\mapsto d(F,G):=\sup_{\phi \in \textnormal{Lip}_1(\Rbb^d)} \l| F(\phi) - G(\phi) \r|
    \end{equation*}
    and consider a $G$-SDE of the form
    \begin{align*}
        \d X_t &=  b\!\l(t,X_t,F_{X_t}\r) \d t + h\!\l(t,X_t,F_{X_t}\r) \d \l<B\r>_t +  g\!\l(t,X_t,F_{X_t}\r) \d B_t, & 0\leq t\leq T, \\
        X_0 &= x, \numberthis\label{eq:sun-sde}
    \end{align*}
    where $x\in \Rbb^d$ and the coefficients $ b$, $ g$ and $ h$ are defined on $[0,T] \times \Rbb^d \times \mathcal D$ and, for $\xi \in \L^{1,d}$, the functional $F_\xi:\, \textnormal{Lip}(\Rbb^d) \rightarrow \Rbb$ is defined by $\phi \mapsto \E\l[ \phi(\xi)\r]$. 
    Clearly, for any $X$ that satisfies \eqref{eq:sun-sde}, we have $X\in \H^{1,d}(t,T)$ and, in particular, $F_{X_t}\in \mathcal D$ for all $0\leq t\leq T$, cf. also Remark~3.2 in \cite{sun_distribution_2023}.

    The authors show that \eqref{eq:sun-sde} admits a unique solution $X\in \M^{2,d}(0,T)$ for any initial value $x\in \Rbb^d$ when the coefficients satisfy the following assumption, cf. Theorem~4.1 in \cite{sun_distribution_2023}.
    \begin{ass}\label{ass:sun}
        Let $ b:\, [0,T]\times\Rbb^d\times \mathcal D\rightarrow\Rbb^d$, $ h:\, [0,T]\times\Rbb^d\times \mathcal D\rightarrow\Rbb^{d\times n\times n}$, and $ g:\, [0,T]\times\Rbb^d\times \mathcal D\rightarrow\Rbb^{d\times n}$ be such that the following holds for all components $f=\tilde b_k, h_{kij},\tilde g_{ki}$, $1\leq i,j\leq n$, $1\leq k\leq d$.

        \begin{enumerate}
            \item We have $f\!\l(\cdot, x, F\r)\in \M^2(0,T)$ for all $x\in \Rbb^d$ and $F \in \mathcal D$.
            \item There exist a constant $K>0$ such that
            \begin{equation*}
                \l| f\!\l(t,x,F\r) - f\!\l(t,y,G\r)\r| \leq K \l( \l\|x-y\r\|^2 + d\!\l(F,G\r) \r).
            \end{equation*}
        \end{enumerate}
    \end{ass}
    We can embed the formulation from \cite{sun_distribution_2023} into our setting by defining coefficient $\hat b$, $\hat g$ and $\hat h$ on $[0,T]\times \Omega \times \Rbb^d \times \L^{2,d}$ componentwise by
    \begin{align*}
        \hat b_k\!\l(s,\omega,x,\xi\r)&:=  b_k\!\l(s,x,F_{\xi}\r), & 
        \hat h_{kij}\!\l(s,\omega,x,\xi\r)&:=  h_{kij}\!\l(s,x,F_{\xi}\r), & 
        \hat g_{ki}\!\l(s,\omega,x,\xi\r)&:= g_{ki}\!\l(s,x,F_{\xi}\r).
    \end{align*}
    Note that in contrast to the general formulation in \cite{bollweg_mean-field_2025}, the coefficients $\hat b$, $\hat h$ and $\hat g$ are deterministic. 
    Moreover, for the components $\hat f=\hat b_k,\hat h_{kij},\hat g_{ki}$, $1\leq i,j\leq n$, $1\leq k\leq d$, Assumption~\ref{ass:sun} yields
    \begin{align*}
        \l| \hat f(t,\omega, x,\xi)-\hat f(t,\omega,y,\eta) \r|
        &\leq 
        K\l(\l\|x-y\r\| + d(F_\xi,F_\eta)\r)
        \\ &\leq 
        K\l(\l\|x-y\r\| + \l\|\xi-\eta\r\|_{\L^2} \r)
    \end{align*}
    for all $\omega\in\Omega$, $0\leq s\leq T$, $x,y\in\Rbb^d$ and $\xi,\eta\in\L^{2,d}$
    since
    \begin{align*}
        d\l(F_\xi,F_\eta)\r) &= \sup_{\phi \in \textnormal{Lip}_1(\Rbb^d)} \l| \E\l[ \phi(\xi) \r] - \E\l[ \phi(\eta)\r] \r| \leq \E\l[ \l\| \xi-\eta\r\| \r] = \l\|\xi-\eta\r\|_{\L^1} \leq \l\|\xi-\eta\r\|_{\L^2}.
    \end{align*}
    Further, we have $\hat f\!\l(\cdot,x,\xi\r) \ind_{[s,T]} \in \M^2(t,T)$ for all $x\in \Rbb^d$ and $\xi\in \F^{d}(\CF_s)$, $0\leq s\leq T$.
    That is, if the coefficients $b$, $h$ and $g$ satisfy Assumption~\ref{ass:sun}, then the coefficients $\hat b$, $\hat h$ and $\hat g$ satisfy Assumption~\ref{ass:1-lipschitz}. 
    In particular, Theorem~3.12 in \cite{bollweg_mean-field_2025} implies Theorem~4.1 in \cite{sun_distribution_2023}.
    
    Note that $\mathcal D$ is not a vector space and, thus, we need to consider a different notion of differentiability for functions defined on $\mathcal D$.    
    In classical mean-field theory, we encounter a similar issue when considering functions defined on the space of square-integrable distributions $\mathfrak P_2(\Rbb^d)$. 
    By lifting a function $f:\,\mathfrak P_2(\Rbb^d)\rightarrow \Rbb$ to a function $\hat f:\, L^2(\Rbb^d,\Omega,\CF,P)\rightarrow \Rbb$ and considering the Fréchet derivative of the lifted function $\hat f$, Lions developed a useful notion of derivative which is commonly referred to as Lions derivative, see e.g. \cite{cardaliaguet_mean_2018} for more details.
    In the same manner, we might want to lift a function $f:\,\mathcal D\rightarrow \Rbb$ to a function $\hat f:\,\L^{2,d}\rightarrow \Rbb$ such that $\hat f(\xi)=f(F_\xi)$ for all $\xi \in\L^{2,d}$, but it is not immediately clear whether the space $\L^{2,d}$ is rich enough in the sense that
    \begin{equation*}
        \mathcal D =\l\{ F_\xi:\, \textnormal{Lip}(\Rbb^d)\rightarrow \Rbb,\; \phi\mapsto \E\l[\phi(\xi)\r]\,:\,\xi \in \L^{2,d}\r\}=:\mathcal D_0.
    \end{equation*}    
    However, it is sufficient to consider the restriction of the coefficients $b$, $h$ and $g$ in \eqref{eq:sun-sde} to $[0,T]\times \Rbb^d\times \mathcal D_0$ so that $\hat b$, $\hat h$ and $\hat g$ are the respective liftings defined on $[0,T]\times \Rbb^d\times \L^{2,d}$ so that we can define a notion of differentiability for $b$, $h$ and $g$ in terms of the Gateaux or Fréchet derivatives of $\hat b$, $\hat h$ and $\hat g$ respectively.

    In the following, we develop a notion of differentiability for a map $f:\,\mathcal D_0\rightarrow \Rbb$ in terms of the derivative of its lifting $\hat f$. In particular, we need to ensure that the derivative $\partial f$ is such that $\partial f(F_\xi)=\partial f(F_\eta)$ for all $\xi,\eta\in\L^{2,d}$ with $F_\xi=F_\eta$.

    \begin{lem}\label{lem:derivative-0}
        Let $f:\,\mathcal D_0\rightarrow \Rbb$ be such that its lifting $\hat f:\,\L^{2,d}\rightarrow \Rbb$ is Gateaux differentiable at $\xi\in\L^{2,d}$. If $\eta \in \L^{2,d}$ is such that $F_\xi=F_\eta$, then $\hat f$ is Gateaux differentiable at $\eta$ and 
        \begin{align*}
            \partial \hat f\!\l(\xi;\zeta\r) = \partial \hat f\!\l(\eta;\zeta\r)
        \end{align*}
        for all $\zeta\in\L^{2,d}$ such that $\xi$ and $\eta$ are independent of $\zeta$, where $\partial \hat f(\xi;\zeta)$ denotes the Gateaux derivative of $\hat f$ at $\xi$ in the direction $\zeta$.
    \end{lem}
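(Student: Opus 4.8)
The plan is to use that $\hat f$ factors through the sublinear distribution, i.e.\ $\hat f(\psi)=f(F_\psi)$, so that it suffices to prove $F_{\xi+t\zeta}=F_{\eta+t\zeta}$ for every $t\in\Rbb$. Once this is established, $\hat f$ takes the same values along the two shifted lines $t\mapsto\xi+t\zeta$ and $t\mapsto\eta+t\zeta$, so the difference quotient of $\hat f$ at $\eta$ in the direction $\zeta$ coincides with the one at $\xi$, and the limit — which exists by hypothesis — transfers for free.

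For the reduction I would fix $\phi\in\Lip(\Rbb^d)$ with Lipschitz constant $L$ and $t\in\Rbb$, and set $\psi(x,y):=\phi(x+ty)$, a Lipschitz function on $\Rbb^{2d}$, so that $\E[\phi(\xi+t\zeta)]=\E[\psi(\xi,\zeta)]$ is well defined since $\xi,\zeta\in\L^{2,d}\subseteq\L^{1,d}$. Invoking the independence of $\xi$ and $\zeta$, I would rewrite this as an iterated sublinear expectation in which $\xi$ is frozen first,
\begin{equation*}
    \E[\phi(\xi+t\zeta)]=\E\bigl[\bar\psi(\xi)\bigr],\qquad \bar\psi(x):=\E[\phi(x+t\zeta)].
\end{equation*}
The crucial observation is that $\bar\psi$ is again $L$-Lipschitz, since $|\bar\psi(x)-\bar\psi(x')|\le\E[|\phi(x+t\zeta)-\phi(x'+t\zeta)|]\le L\|x-x'\|$, hence $\bar\psi\in\Lip(\Rbb^d)$. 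Because $F_\xi=F_\eta$ as functionals on $\Lip(\Rbb^d)$, this gives $\E[\bar\psi(\xi)]=F_\xi(\bar\psi)=F_\eta(\bar\psi)=\E[\bar\psi(\eta)]$, and running the same computation with $\eta$ in place of $\xi$ — using that $\eta$ is independent of $\zeta$ and that $\bar\psi$ depends only on $\zeta$ — yields $\E[\phi(\eta+t\zeta)]=\E[\bar\psi(\eta)]$. Combining, $\E[\phi(\xi+t\zeta)]=\E[\phi(\eta+t\zeta)]$; since $\phi$ and $t$ were arbitrary and $\L^{2,d}$ is a vector space, this says exactly that $F_{\xi+t\zeta}=F_{\eta+t\zeta}\in\mathcal D_0$ for all $t\in\Rbb$.

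To finish I would note that $t=0$ already gives $\hat f(\xi)=f(F_\xi)=f(F_\eta)=\hat f(\eta)$, while for general $t$ one has $\hat f(\xi+t\zeta)=f(F_{\xi+t\zeta})=f(F_{\eta+t\zeta})=\hat f(\eta+t\zeta)$, so that for $t\neq 0$
\begin{equation*}
    \frac{\hat f(\eta+t\zeta)-\hat f(\eta)}{t}=\frac{\hat f(\xi+t\zeta)-\hat f(\xi)}{t}\longrightarrow\partial\hat f(\xi;\zeta)\qquad\text{as }t\to 0.
\end{equation*}
Hence the left-hand side converges as well, i.e.\ $\hat f$ is Gateaux differentiable at $\eta$ in the direction $\zeta$ with $\partial\hat f(\eta;\zeta)=\partial\hat f(\xi;\zeta)$.

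The step I expect to be the main obstacle is the careful handling of independence, which is not symmetric in the sublinear framework: one must invoke precisely the stated hypotheses ("$\xi$ and $\eta$ independent of $\zeta$") to be entitled to evaluate $\E[\phi(\xi+t\zeta)]$ and $\E[\phi(\eta+t\zeta)]$ as iterated sublinear expectations in which $\xi$, respectively $\eta$, is frozen first, and one must verify that the partially averaged test function $\bar\psi$ remains in $\Lip(\Rbb^d)$, which is exactly what licenses the use of the identity $F_\xi=F_\eta$ on it. If the convention for independence freezes the other variable, the argument is unchanged once one notes instead that $x\mapsto\phi(x+ty)$ is Lipschitz for each fixed $y$ and that $\E[\phi(\xi+ty)]=F_\xi(\phi(\cdot+ty))=F_\eta(\phi(\cdot+ty))=\E[\phi(\eta+ty)]$. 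Everything else is a routine passage to the limit.
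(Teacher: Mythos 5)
Your proof is essentially the same as the paper's, with one point worth flagging about the independence convention. In Peng's sublinear framework, independence is \emph{not} symmetric: $Y$ is independent of $X$ means $\E[\varphi(X,Y)] = \E\bigl[\E[\varphi(x,Y)]\big|_{x=X}\bigr]$, i.e.\ the variable on the left of ``independent of'' is evaluated in the inner expectation with the other one frozen. The hypothesis in the lemma is ``$\xi$ and $\eta$ are independent of $\zeta$'', which licenses the iterated expectation $\E[\phi(\xi+\lambda\zeta)] = \E\bigl[\E[\phi(\xi+x)]\big|_{x=\lambda\zeta}\bigr]$, where $\xi$ is integrated out first with $\zeta$ frozen. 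Your primary route — forming $\bar\psi(x) = \E[\phi(x+t\zeta)]$ with $\xi$ frozen first — instead corresponds to ``$\zeta$ independent of $\xi$'', which is not what is assumed and does not follow from it in the nonlinear setting. However, you explicitly anticipate this and state the other reading at the end: observe that $x\mapsto\phi(x+ty)$ is Lipschitz and apply $F_\xi = F_\eta$ to it, then sublinearly integrate in $y$. That alternative is precisely the paper's argument, and your concluding passage to the limit via equality of difference quotients also matches the paper. So the proposal is sound; the only refinement needed is to make the correct-convention version the main argument rather than the footnoted alternative.
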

    \begin{proof}
        Since $F_\xi=F_\eta$, we have
        \begin{align*}
            \E\l[ \phi(\xi) \r] = \E\l[ \phi(\eta)\r]
        \end{align*}
        for all $\phi\in \Lip(\Rbb^d)$. 
        Let $\phi\in\Lip(\Rbb^d)$, then $y\mapsto \phi(y+x)$ is Lipschitz for all $x\in \Rbb^d$. Since $\xi$ and $\eta$ are independent of $\zeta$, we have
        \begin{align*}
            \E\l[ \phi(\xi + \lambda \zeta) \r] 
            &= \E\l[ \E\l[ \phi(\xi + x)\r] \Big|_{x=\lambda \zeta} \r] 
            = \E\l[ \E\l[ \phi(\eta + x)\r] \Big|_{x=\lambda \zeta} \r] = \E\l[ \phi(\eta+\lambda \zeta) \r].
        \end{align*}
        Since this holds for all $\phi\in\Lip(\Rbb^d)$, we obtain $F_{\xi+\lambda \zeta}=F_{\eta+\lambda \zeta}$ for all $\lambda> 0$. By the Gateaux differentiability of $\hat f$, we have
        \begin{align*}
            0 
            = 
            \lim_{\lambda \rightarrow 0} \frac{\l| \hat f(\xi+\lambda \zeta)-\hat f(\xi) - \lambda \partial \hat f\!\l(\xi;\zeta\r) \r|}{\lambda}
            &= 
            \lim_{\lambda \rightarrow 0} \frac{\l| \hat f(\eta+\lambda \zeta)-\hat f(\eta) - \lambda \partial \hat f\!\l(\xi; \zeta\r) \r|}{\lambda}
            .
        \end{align*}
        Thus, $\hat f$ is Gateaux differentiable at $\eta$ and we conclude $\partial \hat f\!\l(\xi;\zeta \r) = \partial \hat f\!\l(\eta;\zeta\r)$ from the uniqueness of the Gateaux derivative.
    \end{proof}

    From Lemma~\ref{lem:derivative-0}, we immediately obtain the following result. 

    \begin{cor}\label{cor:derivative-0}
        Let $f:\,\mathcal D_0\rightarrow \Rbb$ be such that its lifting $\hat f:\,\L^{2,d}\rightarrow \Rbb$ is Gateaux differentiable at $\xi\in\L^{2,d}$. If $\eta \in \L^{2,d}$ is such that $F_\xi=F_\eta$, then $\hat f$ is Gateaux differentiable at $\eta$ and 
        \begin{align*}
            \partial \hat f\!\l(\xi;x\r) = \partial \hat f\!\l(\eta;x\r)
        \end{align*}
        for all $x\in\Rbb^d$.
    \end{cor}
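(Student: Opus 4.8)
The plan is to deduce this directly from Lemma~\ref{lem:derivative-0} by specialising the perturbation direction to a deterministic one. Concretely, fix $x\in\Rbb^d$ and view it as a constant element of $\L^{2,d}$ via the inclusions $\Rbb^d\subseteq\L^{2,d}(0)\subseteq\L^{2,d}$. I would then apply Lemma~\ref{lem:derivative-0} with $\zeta:=x$. The only hypothesis of that lemma that is not already part of our assumptions is that $\xi$ and $\eta$ be independent of the direction $\zeta$; but for a deterministic vector this is automatic, since $\E\l[\phi(\xi+\lambda\zeta)\r]=\E\l[\E\l[\phi(\xi+y)\r]\big|_{y=\lambda x}\r]$ holds trivially when $\lambda x$ is a constant, and likewise for $\eta$. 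Hence Lemma~\ref{lem:derivative-0} applies and yields both the Gateaux differentiability of $\hat f$ at $\eta$ and the identity $\partial\hat f(\xi;x)=\partial\hat f(\eta;x)$.

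Since $x\in\Rbb^d$ was arbitrary, this already gives the full statement; the only bookkeeping step is to record that the hypothesis $F_\xi=F_\eta$ (carried over from the corollary) is exactly what Lemma~\ref{lem:derivative-0} requires. I do not anticipate any genuine obstacle here: the single point that might seem delicate — the independence requirement in Lemma~\ref{lem:derivative-0} — is vacuous for constant directions because, in the $G$-framework, every random variable is independent of a deterministic one. Thus the proof is essentially a one-line invocation of the preceding lemma with $\zeta$ replaced by a constant.
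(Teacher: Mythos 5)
Your proposal is correct and matches the paper's approach: the paper simply states the corollary follows immediately from Lemma~\ref{lem:derivative-0}, and your observation that the independence hypothesis is vacuous for deterministic $\zeta = x$ (since the factorisation $\E[\phi(\xi+\lambda x)] = \E\l[\E[\phi(\xi+y)]\big|_{y=\lambda x}\r]$ holds trivially for a constant shift) is exactly the reason. Nothing is missing.
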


    For defining the derivative of $f$, we will require its lifting $\hat f$ to be Gateaux differentiable in any deterministic direction with Lipschitz Gateaux derivative. Clearly, this condition is weaker than requiring the lifting $\hat f$ to be Fréchet differentiable everywhere.

    \begin{dfn}
        Let $f:\,\mathcal D_0 \rightarrow \Rbb$. We say that $f$ is differentiable if its lifting $\hat f$ is Gateaux differentiable at $\xi$ in the direction $x$ with Lipschitz Gateaux derivative for any $\xi\in\L^{2,d}$ and $x\in \Rbb^d$, and its derivative $\partial f:\,\mathcal D_0\times \mathcal D_0 \rightarrow \Rbb$ is given by
        \begin{align*}
            \partial f(F_\xi,F_\eta) = F_\eta(x\mapsto \partial f(\xi;x) )=\E\l[ \partial \hat f\!\l( \xi;x\r)\Big|_{x=\eta }\r]
        \end{align*}
    \end{dfn}

    Since the map $x\mapsto \partial f(\xi;x)$ is Lipschitz for any $\xi$, we can apply $F\in \mathcal D_0$ to it. In particular, this ensures that $\partial f(F_\xi, F_\eta)=\partial f(F_\xi,F_\zeta)$ for any $\xi,\eta,\zeta\in \L^{2,d}$ with $F_\eta=F_\zeta$. Moreover, Lemma~\ref{cor:derivative-0} ensures that $\partial f(F_\xi,F_\zeta)=\partial f(F_\eta,F_\zeta)$ for all $\xi,\eta,\zeta \in \L^{2,d}$ with $F_\xi=F_\eta$. Thus, the derivative $\partial f$ is well-defined.
    
    \appendix

    \section{Conditional Sublinear Expectation}\label{sec:aux}
    
    \begin{lem}\label{lem:conditional-dt}
        Let $0\leq t\leq T$ and $X\in\S(0,T)$.
        Then
        \begin{equation*}
            \E\l[ \int_t^T  X_s \d s \,\bigg|\,\CF_t\r] \leq \int_t^T \E\l[ X_s \,|\,\CF_t\r] \d s.
        \end{equation*}
    \end{lem}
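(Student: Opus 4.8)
The plan is to use that $X$ is a finite step process, so that $\int_t^T X_s\,\d s$ is a finite linear combination with non-negative coefficients of the (bounded) values of $X$; the claim then reduces to subadditivity and positive homogeneity of the conditional sublinear expectation $\E[\,\cdot\,|\,\CF_t]$. Concretely, write $X=\sum_{k=0}^{m-1}\xi_k\ind_{[t_k,t_{k+1})}$ with $0=t_0<\dots<t_m=T$ and $\xi_k\in\F(\CF_{t_k})$, and set
\begin{equation*}
    c_k:=\int_t^T \ind_{[t_k,t_{k+1})}(s)\,\d s\ge 0,\qquad 0\le k\le m-1.
\end{equation*}
For every $\omega$ the path $s\mapsto X_s(\omega)$ is a bounded step function, so the pathwise integral is well-defined and
\begin{equation*}
    \int_t^T X_s\,\d s=\sum_{k=0}^{m-1} c_k\,\xi_k,
\end{equation*}
which is a bounded $\CF$-measurable random variable and hence lies in the domain of $\E[\,\cdot\,|\,\CF_t]$.

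By subadditivity of $\E[\,\cdot\,|\,\CF_t]$ and, since $c_k\ge 0$, positive homogeneity, we obtain
\begin{equation*}
    \E\l[ \int_t^T X_s\,\d s \,\Big|\,\CF_t\r]
    =\E\l[ \sum_{k=0}^{m-1} c_k\,\xi_k \,\Big|\,\CF_t\r]
    \le \sum_{k=0}^{m-1} \E\l[ c_k\,\xi_k \,\big|\,\CF_t\r]
    =\sum_{k=0}^{m-1} c_k\,\E\l[ \xi_k \,\big|\,\CF_t\r].
\end{equation*}
Finally, the process $s\mapsto\E[X_s\,|\,\CF_t]$ equals $\E[\xi_k\,|\,\CF_t]$ on $[t_k,t_{k+1})$, hence is again a step function, so that $\int_t^T\E[X_s\,|\,\CF_t]\,\d s=\sum_{k=0}^{m-1} c_k\,\E[\xi_k\,|\,\CF_t]$; combining this with the previous display yields the assertion.

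I do not expect a genuine obstacle here: all random variables appearing are bounded, so subadditivity and positive homogeneity of the conditional sublinear expectation apply without any integrability caveats, and the degenerate case $t=T$ (where $c_k=0$ for all $k$ and both sides vanish) is covered automatically. The only point worth stating carefully is that the finite-sum representation of $\int_t^T X_s\,\d s$ holds pathwise, which is what allows the conditional sublinear expectation to be split across the sum.
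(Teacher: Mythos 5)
Your proposal is correct and follows essentially the same route as the paper: write $X$ as a finite step process, reduce $\int_t^T X_s\,\d s$ to a finite non-negative linear combination of the $\xi_k$, and apply subadditivity together with positive homogeneity of the conditional sublinear expectation. The only cosmetic difference is that the paper refines the partition so that $t=t_0$, whereas you keep the general partition $0=t_0<\dots<t_m=T$ and introduce the weights $c_k$; both handle the boundary interval equivalently.
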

    \begin{proof}
        Since $X\in\S(0,T)$, there exist $m\in\mathbb{N}$, $t=t_0<\ldots<t_m=T$, and $\xi_k\in\F(\CF_{t_k})$, $0\leq k\leq m-1$ such that
        \begin{equation*}
            X\ind{[t,T]}=\sum_{k=0}^{m-1} \xi_k\ind{\l[t_k,t_{k+1}\r)},
        \end{equation*}
        and
        \begin{equation*}
            \int_t^T X_s \d s = \sum_{k=0}^{m-1} \xi_k \l( t_{k+1}-t_k\r).
        \end{equation*}
        Due to the sublinearity of the conditional expectation, we obtain
        \begin{align*}
            \E\l[ \int_t^T X_s \d s \,\bigg|\,\CF_t\r] 
            &= \E\l[ \sum_{k=0}^{m-1} \xi_k \l( t_{k+1}-t_k\r) \,\bigg|\,\CF_t\r]
            \\&\leq \sum_{k=0}^{m-1} \E\l[  \xi_k  \,|\,\CF_t\r] \l( t_{k+1}-t_k\r)
            \\&= \int_t^T \E\l[ X_s \,|\,\CF_t\r] \d s.
        \end{align*}
    \end{proof}

    \begin{cor}\label{cor:conditional-dt}
        Let $p\geq 1$, $0\leq t\leq T$ and $X\in\M^p(0,T)$.
        Then
        \begin{equation*}
            \E\l[ \l| \int_t^T  X_s \d s \r|^p \,\bigg|\,\CF_t\r] \leq \l(T-t\r)^{p-1} \int_t^T \E\l[ \l| X_s\r|^p \,|\,\CF_t\r] \d s.
        \end{equation*}
    \end{cor}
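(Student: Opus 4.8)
The plan is to follow the two–stage pattern of Lemma~\ref{lem:conditional-dt}: first establish the estimate for step processes $X\in\S(0,T)$ by an elementary convexity argument, and then pass to the general case $X\in\M^p(0,T)$ by density.

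For the first stage, fix $X=\sum_{k=0}^{m-1}\xi_k\ind_{[t_k,t_{k+1})}\in\S(0,T)$ with $t=t_0<\dots<t_m=T$ (refining the partition if necessary so that $t$ is a node). Setting $\lambda_k:=(t_{k+1}-t_k)/(T-t)$, the $\lambda_k$ are nonnegative and sum to one, so $\int_t^T X_s\,\d s=(T-t)\sum_k\lambda_k\xi_k$, and the convexity of $u\mapsto|u|^p$ gives the pointwise bound $\l|\int_t^T X_s\,\d s\r|^p\le(T-t)^p\sum_k\lambda_k|\xi_k|^p=(T-t)^{p-1}\int_t^T|X_s|^p\,\d s$. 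Since $\xi_k\in\F(\CF_{t_k})$ is bounded, $|\xi_k|^p\in\F(\CF_{t_k})$, so $|X|^p=\sum_k|\xi_k|^p\ind_{[t_k,t_{k+1})}$ again belongs to $\S(0,T)$; I would then apply $\E[\,\cdot\,|\CF_t]$, using its monotonicity and positive homogeneity, together with Lemma~\ref{lem:conditional-dt} applied to $|X|^p$, to get
\begin{equation*}
\E\l[\l|\int_t^T X_s\,\d s\r|^p\,\Big|\,\CF_t\r]\le(T-t)^{p-1}\E\l[\int_t^T|X_s|^p\,\d s\,\Big|\,\CF_t\r]\le(T-t)^{p-1}\int_t^T\E\l[|X_s|^p\,\big|\,\CF_t\r]\,\d s.
\end{equation*}

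For the second stage, choose $X^n\in\S(0,T)$ with $\|X^n-X\|_{\M^p}\to0$. Taking $\E$ in the step–process estimate applied to the simple processes $X^n-X^m$ shows that $A^n:=\int_t^T X^n_s\,\d s$ is Cauchy in $\L^p$, hence $A^n\to A:=\int_t^T X_s\,\d s$ in $\L^p$; the elementary bound $\bigl||a|^p-|b|^p\bigr|\le p(|a|^{p-1}+|b|^{p-1})|a-b|$ combined with Hölder's inequality, and using that $\sup_n\E[|A^n|^p]<\infty$, then yields $|A^n|^p\to|A|^p$ in $\L^1$, and since $\E[\,\cdot\,|\CF_t]$ is an $\L^1$–contraction, also $\E[|A^n|^p\,|\,\CF_t]\to\E[|A|^p\,|\,\CF_t]$ in $\L^1$. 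On the right–hand side, the same $\L^1$–contractivity and the tower property $\E[\E[\,\cdot\,|\CF_t]]=\E[\,\cdot\,]$ give $\E\bigl[\,\bigl|\int_t^T\E[|X^n_s|^p|\CF_t]\,\d s-\int_t^T\E[|X_s|^p|\CF_t]\,\d s\bigr|\,\bigr]\le\int_t^T\E\bigl[\,\bigl||X^n_s|^p-|X_s|^p\bigr|\,\bigr]\,\d s$, which I would bound, via Hölder's inequality pointwise in $s$ and then in $s$, by a constant multiple of $\|X^n-X\|_{\M^p}\to0$, again exploiting the uniform boundedness of the $\M^p$–norms. Since a quasi–sure inequality is preserved under $\L^1$–limits (pass to a common subsequence along which both sides converge quasi–surely), letting $n\to\infty$ in the step–process inequality yields the claim for $X\in\M^p(0,T)$.

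The \emph{main obstacle} is the second stage: because $u\mapsto|u|^p$ is not Lipschitz when $p>1$, the convergences $|A^n|^p\to|A|^p$ and $|X^n_s|^p\to|X_s|^p$ do not follow directly from $\L^p$– resp.\ $\M^p$–convergence and must be extracted through the Hölder estimates above; in addition one must take care that the interchange of $\E$ with the time integral, the $\L^1$–contractivity and tower property of the conditional sublinear expectation, and the passage to quasi–surely convergent subsequences are all invoked in a way that is valid within the $G$–framework.
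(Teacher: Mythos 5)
Your argument is correct and follows exactly the route the paper's one-line proof alludes to: the pointwise (discrete) Jensen bound on step processes combined with Lemma~\ref{lem:conditional-dt}, followed by a density argument in $\M^p(0,T)$, is precisely what ``from the construction of $\M^p(0,T)$ and Jensen's inequality'' is meant to compress. The extra care you take with the non-Lipschitz map $u\mapsto|u|^p$ in the limiting step is the right technical point to flag, and your Hölder-based handling of it is sound.
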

    \begin{proof}
        This follows immediately from the construction of $\M^p(0,T)$ and Jensen's inequality. 
    \end{proof}
    
    \begin{lem}\label{lem:conditional-dQV}
        Let $a\in\Rbb^n$, $p\geq 1$, $0\leq t\leq T$ and $X\in\M^p(0,T)$.
        Then
        \begin{equation*}
            \E\l[ \l| \int_t^T X_s \d \l<B^a\r>_s\r|^p \, \bigg|\,\CF_t\r] \leq \l( T-t \r)^{p-1} \overline{\sigma}_{aa}^{2p} \int_t^T \E\l[ \l| X_s\r|^p \,|\,\CF_t\r] \d s .
        \end{equation*}
    \end{lem}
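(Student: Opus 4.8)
The plan is to reduce to the case of simple step processes $X\in\S(0,T)$, where the integral against $\d\l<B^a\r>_s$ is controlled pathwise by the integral against $\d s$ because the quadratic variation $\l<B^a\r>$ is quasi-surely dominated by the absolutely continuous measure $\overline\sigma^2_{aa}\,\d s$, and then to invoke Corollary~\ref{cor:conditional-dt}. The passage to general $X\in\M^p(0,T)$ is afterwards a routine density argument, exactly as in the proof of Corollary~\ref{cor:conditional-dt}.

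First I would fix $X\in\S(0,T)$ and, after refining the partition, write $X\ind_{[t,T]}=\sum_k\xi_k\ind_{[t_k,t_{k+1})}$ with $t=t_0<\ldots<t_m=T$ and $\xi_k\in\F(\CF_{t_k})$, so that also $|X|\in\S(0,T)$ and $\int_t^T X_s\,\d\l<B^a\r>_s=\sum_k\xi_k\l(\l<B^a\r>_{t_{k+1}}-\l<B^a\r>_{t_k}\r)$. Using that $\l<B^a\r>$ is nondecreasing and that, under each $P\in\CP$, it is absolutely continuous with density bounded by $\overline\sigma^2_{aa}$ — hence $0\le\l<B^a\r>_{t_{k+1}}-\l<B^a\r>_{t_k}\le\overline\sigma^2_{aa}(t_{k+1}-t_k)$ quasi-surely — the triangle inequality would give the pathwise bound
\begin{equation*}
    \l|\int_t^T X_s\,\d\l<B^a\r>_s\r|\le\overline\sigma^2_{aa}\sum_k|\xi_k|(t_{k+1}-t_k)=\overline\sigma^2_{aa}\int_t^T|X_s|\,\d s\qquad\text{quasi-surely}.
\end{equation*}
Raising this to the $p$-th power, taking $\E[\,\cdot\,|\CF_t]$, and applying Corollary~\ref{cor:conditional-dt} to the step process $|X|$ then produces exactly
\begin{equation*}
    \E\l[\l|\int_t^T X_s\,\d\l<B^a\r>_s\r|^p\,\bigg|\,\CF_t\r]\le\overline\sigma^{2p}_{aa}\,\E\l[\l(\int_t^T|X_s|\,\d s\r)^p\,\bigg|\,\CF_t\r]\le(T-t)^{p-1}\overline\sigma^{2p}_{aa}\int_t^T\E\l[|X_s|^p\,\big|\,\CF_t\r]\,\d s,
\end{equation*}
which is the claim for $X\in\S(0,T)$.

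For general $X\in\M^p(0,T)$ I would pick $X^n\in\S(0,T)$ with $\|X^n-X\|_{\M^p}\to0$; the pathwise bound above, applied to $X^n-X^m$ and then read in $\|\cdot\|_{\L^p}$, shows $\l(\int_t^T X^n_s\,\d\l<B^a\r>_s\r)_n$ is Cauchy in $\L^p$ with limit $\int_t^T X_s\,\d\l<B^a\r>_s$ (here I use $\M^p(0,T)\subseteq\M^1(0,T)$ continuously, so the $\L^p$-limit agrees with the $\M^1$-continuous extension already defined). Since $y\mapsto|y|^p$ maps $\L^p$-convergent sequences to $\L^1$-convergent ones and $\E[\,\cdot\,|\CF_t]$ is an $\L^1$-contraction, the left-hand side of the displayed inequality converges in $\L^1$; a Hölder estimate together with $\sup_n\|X^n\|_{\M^p}<\infty$ shows the right-hand side $\int_t^T\E[|X^n_s|^p\,|\,\CF_t]\,\d s$ converges in $\L^1$ to $\int_t^T\E[|X_s|^p\,|\,\CF_t]\,\d s$. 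Passing to a quasi-surely convergent subsequence preserves the inequality, which completes the argument.

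I expect the only genuinely non-routine ingredient to be the quasi-sure domination $\l<B^a\r>_v-\l<B^a\r>_u\le\overline\sigma^2_{aa}(v-u)$, but this is immediate from the construction of $\CP$: under any $P=P_0\circ(\theta\bullet B)^{-1}$ the quadratic variation of $B^a=\l<a,B\r>$ is absolutely continuous with density $s\mapsto|\theta_s a|^2$, which is bounded by $\overline\sigma^2_{aa}$ by the very definition of the latter. Everything else mirrors the density-and-Jensen argument behind Corollary~\ref{cor:conditional-dt}, so I anticipate no serious obstacle.
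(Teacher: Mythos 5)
Your proof is correct and follows essentially the same route as the paper: obtain the quasi-sure domination $\l|\int_t^T X_s\,\d\l<B^a\r>_s\r|\le\overline\sigma^2_{aa}\int_t^T|X_s|\,\d s$ and then transfer the resulting estimate through Corollary~\ref{cor:conditional-dt}. The two minor points where you diverge are cosmetic rather than structural: the paper invokes Corollary~3.5.5 of \cite{peng_nonlinear_2019} for the increment bound $\l<B^a\r>_v-\l<B^a\r>_u\le\overline\sigma^2_{aa}(v-u)$ where you re-derive it from the definition of $\CP$, and you make explicit the density passage from $\S(0,T)$ to $\M^p(0,T)$ which the paper leaves implicit (as it already does for Corollary~\ref{cor:conditional-dt}).
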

    \begin{proof}
        By Corollary 3.5.5 in \cite{peng_nonlinear_2019}, we have
        \begin{equation*}
            \l| \l<B^a\r>_{t_{k+1}}-\l<B^a\r>_{t_{k}} \r|
            \leq
            \overline{\sigma}_{aa}^2 \l(t_{k+1}-t_k\r).
        \end{equation*}
        Thus, Jensen's inequality yields
        \begin{align*}
            \l| \int_t^T X_s \d \l<B^a\r>_s\r|^p
            &\leq   
            \l| \int_t^T \l|X_s\r| \overline{\sigma}_{aa}^2 \d s \r|^p 
            \\&\leq
            \l(T-t\r)^{p-1} \overline{\sigma}_{aa}^{2p} \int_t^T \l| X_s\r|^p \d s.
        \end{align*}
        Finally, Corollary~\ref{cor:conditional-dt} yields the desired result.
    \end{proof}

    \begin{lem}\label{lem:conditional-dB}
        Let $a\in\Rbb^n$, $p\geq 2$, $0\leq t\leq T$ and $X\in\M^p(0,T)$.
        Then
        \begin{equation*}
            \E\l[ \sup_{t\leq w\leq T}\l| \int_t^w X_s \d B^a_s\r|^p \,\bigg|\,\CF_t\r] 
            \leq 
            \l( T-t\r)^{\frac{p-2}{2}} \overline{\sigma}_{aa}^p \int_t^T \E\l[ \l|X_s\r|^p \,|\, \CF_t\r] \d s.
        \end{equation*}
    \end{lem}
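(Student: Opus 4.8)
The plan is to follow the same template as Lemmas~\ref{lem:conditional-dt} and \ref{lem:conditional-dQV}: first prove the estimate for simple integrands $X\in\S(0,T)$, then extend to $X\in\M^p(0,T)$ by density. The difference with those two lemmas is that $w\mapsto\int_t^w X_s\,\d B^a_s$ is not a pathwise object, so instead of an elementary pointwise bound I would use a conditional maximal inequality combined with the uniform bound on the density of $\langle B^a\rangle$ that was already exploited in Lemma~\ref{lem:conditional-dQV}.

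Concretely, fix $X\in\S(0,T)$, say bounded on a partition $t=t_0<\dots<t_m=T$, and set $M_w:=\int_t^w X_s\,\d B^a_s$, a continuous process with $M_t=0$. I would argue under each $P\in\CP$ separately: under $P$ the coordinate process is a continuous martingale with $\d\langle B^a\rangle_s\le\overline\sigma_{aa}^2\,\d s$ $P$-a.s.\ (Corollary~3.5.5 in \cite{peng_nonlinear_2019}), hence $M$ is a continuous $L^p(P)$-martingale and, by Doob's maximal inequality together with the Burkholder--Davis--Gundy inequality (in the form available in the $G$-calculus, cf.\ Chapter~3 in \cite{peng_nonlinear_2019}),
\begin{equation*}
    E_P\l[\sup_{t\le w\le T}\l|M_w\r|^p\,\Big|\,\CF_t\r]
    \;\lesssim\;
    E_P\l[\l(\int_t^T \l|X_s\r|^2\,\d\langle B^a\rangle_s\r)^{\!p/2}\,\Big|\,\CF_t\r].
\end{equation*}
Using $\d\langle B^a\rangle_s\le\overline\sigma_{aa}^2\,\d s$ and then Hölder's inequality in the time variable gives, pathwise,
\begin{equation*}
    \l(\int_t^T \l|X_s\r|^2\,\d\langle B^a\rangle_s\r)^{\!p/2}
    \le
    \overline\sigma_{aa}^p\l(\int_t^T \l|X_s\r|^2\,\d s\r)^{\!p/2}
    \le
    \overline\sigma_{aa}^p\,(T-t)^{\frac{p-2}{2}}\int_t^T \l|X_s\r|^p\,\d s .
\end{equation*}
Taking $E_P[\,\cdot\,|\CF_t]$, interchanging it with the finite time integral coming from the simple structure of $X$, and bounding $E_P[\,|X_s|^p\,|\,\CF_t]\le\E[\,|X_s|^p\,|\,\CF_t]$ $P$-a.s., one obtains an upper bound that is of the required form and does not depend on $P$; taking the essential supremum over $P\in\CP$ on the left, which represents $\E[\,\cdot\,|\,\CF_t]$, then yields the claim for $X\in\S(0,T)$.

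Finally I would pass from $\S(0,T)$ to $\M^p(0,T)$: choosing $X^{(n)}\in\S(0,T)$ with $\|X^{(n)}-X\|_{\M^p}\to 0$ and applying the estimate just proved to $X^{(n)}-X^{(m)}$ together with $\E[\,\E[\,\cdot\,|\,\CF_t]\,]=\E[\,\cdot\,]$ shows that $\sup_{t\le w\le T}|\int_t^w X^{(n)}_s\,\d B^a_s|$ is Cauchy in $\L^p(T)$; hence the left-hand side of the lemma is well defined, it is continuous along the approximating sequence, and the inequality survives the limit. The step I expect to be the main obstacle is exactly the conditional, sublinear maximal inequality: one has to justify that $\E[\sup_{t\le w\le T}|M_w|^p\,|\,\CF_t]$ coincides q.s.\ with the essential supremum over $P\in\CP$ of $E_P[\sup_{t\le w\le T}|M_w|^p\,|\,\CF_t]$ — this uses the stability of $\CP$ under conditioning/pasting — and, for the exact form of the statement, that the constant from Doob's inequality and the Burkholder--Davis--Gundy inequality collapses to the stated one, which is why one should invoke the sharp $G$-Brownian version of these estimates rather than their classical counterparts.
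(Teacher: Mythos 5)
Your proposal rests on the same two ingredients as the paper's proof: a Burkholder--Davis--Gundy bound for $w\mapsto\int_t^w X_s\,\d B^a_s$, followed by the pathwise density estimate $\d\langle B^a\rangle_s\le\overline\sigma_{aa}^2\,\d s$ and Hölder in time. The paper, however, dispatches the lemma in two lines: it invokes the conditional $G$-BDG inequality directly to get
\begin{equation*}
    \E\l[ \sup_{t\leq w\leq T}\l| \int_t^w X_s \d B^a_s\r|^p \,\bigg|\,\CF_t\r] \leq C_p\,\E\l[ \l| \int_t^T X_s^2 \d \l<B^a\r>_s\r|^{\frac{p}{2}} \,\bigg|\,\CF_t\r],
\end{equation*}
and then cites Lemma~\ref{lem:conditional-dQV}, which already packages the $\d\langle B^a\rangle\le\overline\sigma_{aa}^2\,\d s$ bound and the Hölder step. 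You instead re-derive the conditional BDG estimate $P$-by-$P$, take the essential supremum over $\CP$, and pass from simple to general integrands by density. That is a legitimate (and arguably more careful) way to justify the conditional BDG step, but it duplicates Lemma~\ref{lem:conditional-dQV} and requires the aggregation/pasting argument you yourself flag as the main obstacle; it also invokes Doob's inequality, which is redundant once one uses the supremum form of BDG. One further point in your favour: you correctly observe that the constant does not ``collapse.'' The paper's own computation introduces a $C_p$ which is simply absent from the stated inequality; as written, the lemma should read $\lesssim$ rather than $\leq$ (consistent with its only downstream use in Lemma~\ref{lem:conditional}, which is itself stated with $\lesssim$).
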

    \begin{proof}
        The Burkholder-Davis-Gundy inequality yields
        \begin{align*}
            \E\l[ \sup_{t\leq w\leq T}\l| \int_t^w X_s \d B^a_s\r|^p \,\bigg|\,\CF_t\r] 
            &\leq 
            C_p
            \E\l[ \l| \int_t^T X_s^2 \d \l<B^a\r>_s\r|^{\frac{p}{2}} \,\bigg|\,\CF_t\r]
            \\&\leq
            C_p\l(T-t\r)^{\frac{p-2}{2}} \overline{\sigma}_{aa}^{p} \int_t^T
            \E\l[ \l| X_s \r|^p \,|\,\CF_t\r] \d s,
        \end{align*}
        where the last step follows from Lemma~\ref{lem:conditional-dQV}.
    \end{proof}

    \begin{lem}\label{lem:conditional}
        Let $p\geq 2$, $0\leq t\leq T$, $\xi\in\L^{p,d}(t)$ and $b_{k},h_{kij}, g_{ki}\in \M^p(0,T)$ for $1\leq k\leq d$, $1\leq i,j\leq n$. 
        Let $X$ satisfy
        \begin{align*}
            \d X_s &= b(s) \d s + h(s) \d \l<B\r>_s + g(s) \d B_s, \qquad t\leq s\leq T
            \\ X_t &= \xi.
        \end{align*}
        Then
        \begin{flalign*}
            \quad 
            \E&\l[ \sup_{t\leq s\leq w} \l\|X_s \r\|^p \,\Big| \, \CF_t \r]
            &&
            \\&
            \lesssim \l\|\xi\r\|^p + \sum_{k=1}^d \sum_{i,j=1}^n \int_t^w \E\l[ \l| b_k(s) \r|^p \,|\,\CF_t\r] + \E\l[ \l|h_{kij}(s)\r|^p \,|\,\CF_t\r] + \E\l[ \l|g_{ki}(s)\r|^p \,|\,\CF_t\r] \d s .
        \end{flalign*}
    \end{lem}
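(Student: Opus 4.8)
The statement is a standard a priori $L^p$-estimate for the solution of a linear (in fact, affine with given coefficients) $G$-SDE, conditioned on $\CF_t$, and the proof is a routine combination of the conditional integral estimates from Lemmas~\ref{lem:conditional-dt}, \ref{lem:conditional-dQV} and \ref{lem:conditional-dB} (together with their corollaries) with the triangle inequality. The plan is to write $X_s - \xi$ as the sum of the three integral terms, take the supremum over $t \le s \le w$, raise to the $p$-th power, and apply the elementary inequality $(a_0 + a_1 + \dots + a_m)^p \lesssim a_0^p + \dots + a_m^p$ (with implied constant depending only on $p$ and the fixed number $1 + d(1 + 2n^2)$ of summands) to split the estimate into one contribution per coefficient component.

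First I would fix $t \le w \le T$ and write, componentwise,
\begin{equation*}
    X^k_s = \xi^k + \int_t^s b_k(u)\,\d u + \sum_{i,j=1}^n \int_t^s h_{kij}(u)\,\d\l<B^i,B^j\r>_u + \sum_{i=1}^n \int_t^s g_{ki}(u)\,\d B^i_u
\end{equation*}
for $t \le s \le w$. Taking $\sup_{t \le s \le w}$ inside each term, raising to the power $p$, and using sublinearity and monotonicity of $\E[\,\cdot\,|\,\CF_t]$ together with the finite-summand version of the convexity inequality, one reduces to bounding, for each component, the conditional $p$-th moment of the supremum of the corresponding integral. For the drift term one invokes Corollary~\ref{cor:conditional-dt}; note that the supremum over $s$ of $\l|\int_t^s b_k(u)\,\d u\r|$ is itself dominated by $\int_t^w \l|b_k(u)\r|\,\d u$, so the corollary applies directly after absorbing the factor $(w-t)^{p-1} \le T^{p-1}$ into the $\lesssim$. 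For the quadratic-variation terms one writes $\d\l<B^i,B^j\r>_u$ in terms of $\d\l<B^a\r>_u$ for suitable $a$ (polarisation) and applies Lemma~\ref{lem:conditional-dQV}, again absorbing the constants $(w-t)^{p-1}\overline\sigma_{aa}^{2p}$, which are bounded uniformly in $w$ by constants depending only on $T$, $p$ and the fixed compact set $\Sigma$. For the $G$-Brownian terms one applies Lemma~\ref{lem:conditional-dB} directly, since that lemma already controls the conditional $p$-th moment of the supremum; here $p \ge 2$ is exactly what is needed. Finally, the contribution of $\xi^k$ is just $\l\|\xi\r\|^p$ after summing over $k$, using $\sum_k \l|\xi^k\r|^p \lesssim \l\|\xi\r\|^p$.

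There is essentially no obstacle: every ingredient is already available in the appendix, and the only care needed is bookkeeping — keeping track that all the $w$-dependent prefactors $(w-t)^{p-1}$, $(w-t)^{(p-2)/2}$ and the $\overline\sigma$-powers are bounded by $T$- and $\Sigma$-dependent constants so that they can be hidden in $\lesssim$, and that the number of summands in the convexity inequality is the fixed number $1 + d + dn^2 + dn$, independent of the data. The mild subtlety, if any, is the passage from the space $\S$ (where the integral operators are defined pointwise) to general $X \in \M^p(0,T)$: one either argues by the density of $\S$ and continuity of the integral maps in the relevant norms (as already used when extending $\mathcal I_i$ and $\mathcal Q_{ij}$), or simply notes that Corollary~\ref{cor:conditional-dt} and Lemmas~\ref{lem:conditional-dQV}, \ref{lem:conditional-dB} are already stated for $X \in \M^p(0,T)$, so no approximation step is needed at this level. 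Summing the componentwise bounds over $1 \le k \le d$, $1 \le i,j \le n$ and relabelling yields exactly the claimed inequality.
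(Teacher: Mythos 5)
Your proof is correct and takes essentially the same approach as the paper, which simply states that the lemma ``follows from Corollary~\ref{cor:conditional-dt} and Lemmas~\ref{lem:conditional-dQV} and~\ref{lem:conditional-dB}''; you have merely filled in the bookkeeping that the paper elides — the componentwise decomposition, the convexity inequality for a fixed number of summands, polarisation of $\l<B^i,B^j\r>$ in terms of $\l<B^a\r>$, and the absorption of the $T$- and $\Sigma$-dependent prefactors into $\lesssim$.
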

    \begin{proof}
        Follows from Corollary \ref{cor:conditional-dt} and Lemmas \ref{lem:conditional-dQV} and \ref{lem:conditional-dB}.
    \end{proof}

    \printbibliography
\end{document}